\theoremstyle{plain}
\newtheorem{thm}{Theorem}[section]
\newtheorem{pro}[thm]{Proposition}
\newtheorem{`thm'}[thm]{``Theorem''}
\newtheorem{lem}[thm]{Lemma}
\newtheorem{conj}[thm]{Conjecture}
\newtheorem{dfn-thm}[thm]{Definition-Theorem}
\newtheorem{dfn-pro}[thm]{Definition-Proposition}
\theoremstyle{definition}
\newtheorem{prob}[thm]{Problem}
\newtheorem{dfn}[thm]{Definition}
\newtheorem{fact}[thm]{Fact}
\newtheorem{cau}[thm]{Caution}
\newtheorem{asm}[thm]{Assumption}
\theoremstyle{remark}
\newtheorem{rmk}[thm]{Remark}
\newtheorem{rmks}[thm]{Remarks}
\newtheorem{exs}[thm]{Examples}
\newcommand{\bb}[1]{\mathbb{#1}}
\newcommand{\fra}[1]{\mathfrak{#1}}
\newcommand{\ca}[1]{\mathcal{#1}}
\newcommand{\scr}[1]{\mathscr{#1}}
\newcommand{\Cl}{\mathit{Cliff}}
\newcommand{\innpro}[3]{\left<#1,#2\right>_{#3}}
\newcommand{\Innpro}[3]{\left<\left<#1,#2\right>\right>_{#3}}
\newcommand{\inpr}[3]{\left(#1\middle|#2\right)_{#3}}
\newcommand{\sine}[1]{\frac{\sin#1\theta}{\sqrt{#1\pi}}}
\newcommand{\cosi}[1]{\frac{\cos#1\theta}{\sqrt{#1\pi}}}
\newcommand{\rot}{{\rm rot}}
\newcommand{\alg}{{\rm alg}}
\newcommand{\ind}{{\rm ind}}
\newcommand{\id}{{\rm id}}
\newcommand{\End}{{\rm End}}
\newcommand{\dom}{{\rm dom}}
\newcommand{\op}{{\rm op}}
\newcommand{\Lie}{{\rm Lie}}
\newcommand{\fin}{{\rm fin}}
\newcommand{\red}{{\rm red}}
\newcommand{\Vac}{{\rm Vac}}
\newcommand{\vac}{{\rm vac}}
\newcommand{\Ad}{{\rm Ad}}
\newcommand{\CAR}{{\rm CAR}}
\newcommand{\bra}[1]{\left(#1\right)}
\newcommand{\bbra}[1]{\left\{#1\right\}}
\newcommand{\bbbra}[1]{\left[#1\right]}
\newcommand{\Dirac}{\cancel{\partial}}
\newcommand{\SC}{\mathscr{SC}}
\newcommand{\ud}[1]{\underline{#1}}
\begin{document}
\title{An infinite-dimensional index theorem and the Higson-Kasparov-Trout algebra}
\author{Doman Takata \\
The University of Tokyo}

\date{\today}

\maketitle
\begin{abstract}

We have been studying the index theory for some special infinite-dimensional manifolds with a ``proper cocompact'' actions of the loop group $LT$ of the circle $T$, from the viewpoint of the noncommutative geometry. In this paper, we will introduce the $LT$-equivariant $KK$-theory and we will construct three $KK$-elements: the index element, the Clifford symbol element and the Dirac element. These elements satisfy a certain relation, which should be called the ($KK$-theoretical) index theorem, or the $KK$-theoretical Poincar\'e duality for infinite-dimensional manifolds. We will also discuss the assembly maps. 
\end{abstract}

\tableofcontents

\section{Introduction}

The Atiyah-Singer index theorem is one of the monumental work in differential topology \cite{ASi1,ASi2}. The original theorem was stated for closed manifolds,
but it has been generalized to many cases, for example: closed manifolds with compact group action \cite{ASe1}, families of closed manifolds \cite{ASi4}, operator algebras \cite{Con85,Con94}, and complete Riemannian manifolds with isometric, proper and cocompact group actions \cite{Kas84,Kas15}. Our dream is to formulate and prove an infinite-dimensional version of the Atiyah-Singer index theorem. In particular, we would like to study an infinite-dimensional version of \cite{Kas84,Kas15}.

As a first step of this dream, we have been studying the index problem for some special infinite-dimensional manifolds in \cite{T1,T2,T3}. \cite{T3} is the PhD thesis of the author, which contains \cite{T2}, most of \cite{T1} and some detailed study on proper $LT$-spaces which is to be explained soon. The following is a rough version of our problem of the present paper.
Let $T$ be the circle group, and let $LT$ be its loop group $C^\infty(S^1,T)$. Precise definitions will be explained later.


\begin{prob}\label{Main problem}
For an infinite-dimensional proper $LT$-space $\ca{M}$ equipped with a ``$\tau$-twisted'' $LT$-equivariant line bundle $\ca{L}\to \ca{M}$, and an $LT$-equivariant Spinor bundle $\ca{S}\to \ca{M}$, study an index problem of the ``Dirac operator'' $\ca{D}$ on $\ca{L}\otimes \ca{S}$, from the viewpoint of noncommutative geometry.
\end{prob}

In order to explain the results of the present paper, we need to recall the Kasparov index theorem. Let $X$ be a finite-dimensional complete Riemannian $Spin^c$-manifold, and let $\Gamma$ be a locally compact, second countable and Hausdorff group. Let $Cl_\tau(X):=C_0(X,\Cl(T^*X))$ be the Clifford algebra-valued function algebra. Suppose that $\Gamma$ acts on $X$ isometrically, properly and cocompactly. Then, the following two fundamental $KK$-elements are defined: the Dirac element $[d_X]\in KK_\Gamma(Cl_\tau(X),\bb{C})$ and the Mishchenko line bundle $[c]\in KK(\bb{C},C_0(X)\rtimes \Gamma)$.
We also suppose that the Spinor bundle $S$ of $X$ is $\Gamma$-equivariant, and a $\Gamma$-equivariant Dirac operator $D$ is given. Since $X$ is not supposed to be compact, $D$ can be non-Fredholm. However, in this situation, $D$ is a ``$C^*(\Gamma)$-Fredholm operator'' as follows.

\begin{fact}[\cite{Kas84,Kas15}]\label{Intro Kas index thm}
$(1)$ An analytic index $\ind(D)\in KK(\bb{C},C^*(\Gamma))$ can be defined.

$(2)$ By the given data, the following two $KK$-elements are defined: the index element $[D]\in KK_\Gamma(C_0(X),\bb{C})$ and the Clifford symbol element $[\sigma_D^{Cl}]\in KK_\Gamma(C_0(X),Cl_\tau(X))$. These two elements are related as follows: $[D]=[\sigma_D^{Cl}]\otimes_{Cl_\tau(X)}[d_X]$.

$(3)$ The analytic index $\ind(D)$ is completely determined by the $K$-homology class $[D]$ by the formula $[c]\otimes_{C_0(X)\rtimes \Gamma} j_\Gamma([D])$. Consequently, $\ind(D)$ is determined by the topological data $[\sigma_D^{Cl}]$.
\end{fact}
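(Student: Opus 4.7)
The plan is to treat the three parts in turn, following the framework of Kasparov's equivariant $KK$-theory. For part $(1)$, the construction of $\ind(D) \in KK(\bb{C},C^*(\Gamma))$ uses proper cocompactness: one produces a cutoff function $c \in C_c(X)$ satisfying $\int_\Gamma \gamma \cdot c^2 \, d\gamma = 1$ (in the Haar measure sense). This cutoff turns the space of smooth compactly supported sections of $S$ into a pre-Hilbert module over the convolution algebra, which completes to a Hilbert $C^*(\Gamma)$-module on which $D$ extends to a regular self-adjoint operator. The $\Gamma$-invariance of $D$ together with ellipticity and proper cocompactness makes $(1+D^2)^{-1/2}$ a compact operator on this module, so the bounded transform of $D$ gives an unbounded Kasparov $(\bb{C}, C^*(\Gamma))$-module whose class is $\ind(D)$.

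For part $(2)$, I would represent $[\sigma_D^{Cl}] \in KK_\Gamma(C_0(X), Cl_\tau(X))$ by the Hilbert $Cl_\tau(X)$-module $Cl_\tau(X)$ itself, with $C_0(X)$ acting by pointwise multiplication and the zero operator: this packages only the symbolic, pointwise Clifford data. The element $[D] \in KK_\Gamma(C_0(X), \bb{C})$ is the usual $K$-homology class, represented by the Kasparov module $(L^2(S), D(1+D^2)^{-1/2})$ with $C_0(X)$ acting by multiplication. The factorization $[D] = [\sigma_D^{Cl}] \otimes_{Cl_\tau(X)}[d_X]$ is then checked via Kucerovsky's criterion for the unbounded Kasparov product: the balanced tensor product $Cl_\tau(X) \otimes_{Cl_\tau(X)} L^2(S)$ is canonically $L^2(S)$, and the required connection and positivity conditions reduce to local $Spin^c$ computations expressing that Clifford multiplication on spinors is absorbed into $D$.

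For part $(3)$, I would apply Kasparov descent to obtain $j_\Gamma([D]) \in KK(C_0(X) \rtimes \Gamma, C^*(\Gamma))$ and then take the Kasparov product with $[c] \in KK(\bb{C}, C_0(X) \rtimes \Gamma)$. The identification $[c] \otimes_{C_0(X) \rtimes \Gamma} j_\Gamma([D]) = \ind(D)$ is established by writing both sides as explicit unbounded Kasparov modules. On the right, the index module is built using the cutoff $c$ from part $(1)$; on the left, the descent construction produces essentially the same Hilbert $C^*(\Gamma)$-module with the same operator $D$, because the Mishchenko element $[c]$ is represented by precisely the same cutoff datum. The equality then follows from a direct comparison.

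The main obstacle will be part $(2)$: the verification of Kucerovsky's conditions (regularity and self-adjointness of the sum operator on the tensor product, together with the positivity/commutator bound) is delicate because $D$ has no compact resolvent on $X$ itself, only modulo the $\Gamma$-action, and the equivariant unbounded product requires careful domain management. Part $(3)$ is largely bookkeeping once the equivariant $KK$ machinery is set up, but tracing the identifications through descent and external product requires an explicit local-frame computation to be made completely rigorous.
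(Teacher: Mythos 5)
Your outline of parts $(1)$ and $(3)$ follows the standard Kasparov--descent framework and is in essence correct; the gap is in your part $(2)$. You represent $[\sigma_D^{Cl}] \in KK_\Gamma(C_0(X),Cl_\tau(X))$ by the module $Cl_\tau(X)$ itself with the zero operator, i.e.\ by the class of the (non-degenerate) inclusion $C_0(X)\hookrightarrow Cl_\tau(X)$. But this class carries no information about the coefficient bundle $E$ or the Spinor bundle $S$, so it cannot be the Clifford symbol class. With your choice, the tensor product $Cl_\tau(X)\otimes_{Cl_\tau(X)} L^2(S)$ does recover $L^2(S)$, but only because you have silently put $L^2(S)$ (and the operator $D$) into the ``Dirac element'' slot; that is, your $[d_X]$ is no longer the universal Dirac element but is essentially $[D]$ itself, and the ``factorization'' $[D] = [\sigma_D^{Cl}]\otimes[d_X]$ degenerates into $[D] = (\text{inclusion class})\otimes[D]$. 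This loses the whole point of the theorem, which is to separate the geometric/topological data (the symbol) from a universal analytic ingredient.

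The correct module for $[\sigma_D^{Cl}]$ is $C_0(X,E\otimes S)$, viewed as a Hilbert $Cl_\tau(X)$-module via the fiberwise identification of $S_x$ as a right $\Cl_+(T_x^*X)$-module (Lemma~\ref{lem spinor is hilb. cl. module} applied fiberwise), again with the zero operator. The Dirac element $[d_X]$ is the \emph{universal} pair $(L^2(X,\Cl_+(T^*X)),\partial)$ of Definition~\ref{dfn of Dirac element for X}, which depends only on the Riemannian structure, not on $E$, $S$, or $D$. The fiberwise isomorphism $S_x\otimes_{\Cl_+(T^*_xX)}\Cl_+(T^*_xX)\cong S_x$ then gives $C_0(X,E\otimes S)\otimes_{Cl_\tau(X)} L^2(X,\Cl_+(T^*X))\cong L^2(X,E\otimes S)$, and Kucerovsky's criterion (via the Leibniz rule and the connection compatibility assumption on $\nabla^S$) identifies the operator as $D$. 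In the paper this description of $[\sigma_D^{Cl}]$ is not postulated but derived: $[\sigma_D^{Cl}]:=[\sigma_D]\otimes_{C_0(T^*X)}[d_\xi]$ is computed by a harmonic-oscillator kernel argument to be $[(C_0(X,E\otimes S),0)]$. You should either reproduce that computation or at least start from the correct module.
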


The main result of the present paper is an infinite-dimensional version of $(2)$ above. In order to formulate it, we need a ``function algebra of $\ca{M}$''. The Higson-Kasparov-Trout algebra (HKT algebra for short) plays the role of such algebra \cite{HKT,Tro,DT}, which will be explained soon. 
The HKT algebra for $\ca{M}$ is denoted by $\SC(\ca{M})$ in the present paper. The main result, Definition-Theorem \ref{dfn-thm index element}, \ref{dfn-thm Dirac element}, \ref{dfn-thm symbol element}, and Theorem \ref{Main theorem}, is summarized as follows.

\begin{thm}
Under the assumption of Problem \ref{Main problem}, we can define three $LT$-equivariant Kasparov modules corresponding to the index element, the Clifford symbol element and the Dirac element. The Kasparov module corresponding to the index element is a Kasparov product of the others. Formally,
$$[\widetilde{\ca{D}}]=[\widetilde{\sigma_{\ca{D}}^{Cl}}]\otimes_{\SC(\ca{M})}[\widetilde{d_{\ca{M}}}].$$
\end{thm}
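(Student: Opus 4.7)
\medskip

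\noindent\textbf{Proof proposal.}
My plan is to reduce the identity to the Connes--Skandalis criterion for recognizing a Kasparov product, adapted to the $LT$-equivariant framework. Recall that in order to identify a Kasparov $(A,C)$-module $(E,\phi,F)$ with the product of $(E_1,\phi_1,F_1)\in KK_{LT}(A,B)$ and $(E_2,\phi_2,F_2)\in KK_{LT}(B,C)$ along an isomorphism $E\cong E_1\widehat{\otimes}_B E_2$ intertwining $\phi$ with $\phi_1\otimes 1$, one checks: (i) $F$ is an $F_2$-connection on $E_1$; and (ii) the positivity condition $\phi(a)[F_1\otimes 1,F]_\pm\phi(a)^*\geq 0$ modulo $\ca{K}(E)$ for $a$ in a dense subalgebra. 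The first step is therefore to unpack the Kasparov modules underlying $[\widetilde{\ca{D}}]$, $[\widetilde{\sigma_{\ca{D}}^{Cl}}]$, $[\widetilde{d_{\ca{M}}}]$ as constructed in the preceding sections, and to exhibit an explicit unitary isomorphism of the underlying $LT$-equivariant Hilbert modules
\[
E_{\widetilde{\ca{D}}}\;\cong\;E_{\widetilde{\sigma_{\ca{D}}^{Cl}}}\widehat{\otimes}_{\SC(\ca{M})}E_{\widetilde{d_{\ca{M}}}}
\]
that intertwines the left $\SC(\ca{M})$-actions. For the Kasparov setup built from $\ca{L}\otimes\ca{S}$, this should amount to the usual identification of sections of $\ca{L}\otimes\ca{S}$ with sections of $\ca{L}$ tensored over $\SC(\ca{M})$ with spinors, once one has reorganized the HKT-algebra picture of ``functions on $\ca{M}$''.

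\medskip

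\noindent Once the underlying modules are identified, I would set up the operator side. For the Dirac element $[\widetilde{d_{\ca{M}}}]$, the underlying operator is the bounded transform of the ``Dirac-type'' symbol on $\SC(\ca{M})$; for the symbol element $[\widetilde{\sigma_{\ca{D}}^{Cl}}]$, it is a Clifford multiplication operator that encodes the principal symbol of $\ca{D}$; and for the index element, it is (the bounded transform of) $\widetilde{\ca{D}}$ itself. In the finite-dimensional prototype the Connes--Skandalis conditions hold because, on the one hand, the principal symbol of $\ca{D}$ is Clifford multiplication by $d\xi$, so $\widetilde{\ca{D}}$ is tautologically a connection for the Dirac element on the symbol module; and on the other hand, the graded commutator $[F_1\otimes 1,F]$ is, modulo lower-order terms, the square of Clifford multiplication, which is positive. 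The plan is to replicate this computation in the HKT picture: I would work with the canonical dense subalgebra of $\SC(\ca{M})$ consisting of ``Schwartz-type'' generators (those for which the relevant Bott-Dirac operators have bounded commutators), and verify that the two conditions hold on this subalgebra modulo $\SC(\ca{M})$-compacts.

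\medskip

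\noindent The verification of the connection condition should reduce, after the unitary identification, to the statement that the principal symbol of $\ca{D}$ \emph{is} the Clifford multiplication appearing in $\widetilde{d_{\ca{M}}}$; this is built into the definitions of $[\widetilde{\sigma_{\ca{D}}^{Cl}}]$ and $[\widetilde{d_{\ca{M}}}]$ in Definition-Theorem \ref{dfn-thm Dirac element} and \ref{dfn-thm symbol element}, so what remains is checking that the chosen bounded transforms and gradings are compatible, including the $\tau$-cocycle twist on $\ca{L}$. The positivity condition then follows from computing the graded commutator locally: modulo $LT$-equivariantly compact operators over $\SC(\ca{M})$, it equals a Clifford-squared term that is manifestly non-negative. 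Throughout, $LT$-equivariance is preserved because all three operators and the identification of modules are, by construction, $LT$-equivariant.

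\medskip

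\noindent The main obstacle, as expected in any infinite-dimensional index theorem, will be the compactness conditions. In the Kasparov picture one needs compactness of commutators $[F,\phi(a)]$ and of $\phi(a)(F^2-1)$ for $a$ in a dense subalgebra, over a coefficient $C^*$-algebra that is itself very far from the scalars; and the corresponding ``remainders'' in verifying Connes--Skandalis must be compact over $\SC(\ca{M})$ rather than merely bounded. This is precisely where the HKT algebra machinery is essential: its local ``finite-dimensional approximation'' structure, together with the proper cocompactness of the $LT$-action on $\ca{M}$, should provide exactly the compactness needed to push the finite-dimensional argument through. I expect the bulk of the work to lie in making these approximations $LT$-equivariantly coherent, so that the positivity and connection estimates hold modulo $LT$-equivariant $\SC(\ca{M})$-compacts rather than only pointwise.
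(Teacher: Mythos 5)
Your proposal takes a genuinely different, and substantially heavier, route than the paper, and in doing so misses the structural simplification that makes the paper's proof nearly tautological.

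The paper first reduces the $\ca{M}$-case to the $\widetilde{M}\times U$-case, with the $\widetilde{M}$-factor handled by the classical (finite-dimensional) Kasparov index theorem and the exterior-product compatibility of Section 2.2. The real content is Theorem \ref{Main theorem} on the $U$-factor, and there the key observation is this: the Clifford symbol module is literally $(\SC(U),0)$ — the HKT algebra acting on itself with zero operator and only a twisted $U$-action — while the index and Dirac modules are the \emph{same} Hilbert $\scr{S}$-module $\scr{S}\otimes\ca{H}$ with the \emph{same} operator $\Dirac$, differing only in the group action. Consequently, after forgetting the group action, $[\widetilde{\sigma_{\Dirac}^{Cl}}]$ is the unit $[\id]$ and $[\widetilde{\Dirac}]=[\widetilde{d_U}]$, so the Kasparov product is tautological as a non-equivariant statement. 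The only non-trivial content is that the canonical isomorphism $\SC(U)\otimes_{\SC(U)}(\scr{S}\otimes\ca{H})\cong \scr{S}\otimes\ca{H}$ intertwines the twisted $U$-actions, and this is checked by finite-dimensional approximation, exactly because the two constructions share the same formulas on $U_N$.

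Your plan of verifying a Connes--Skandalis (or Kucerovsky) connection-plus-positivity criterion would require establishing compactness and positivity estimates over $\SC(\ca{M})$ that the paper never needs to confront: these are exactly the hard analytic problems in an infinite-dimensional setting, and the paper avoids them entirely by exploiting the degenerate form of $[\widetilde{\sigma_{\Dirac}^{Cl}}]$. If you pursued your route, you would also need to first notice (rather than assume) the unitary identification of modules, which is the content of the paper's observation, and at that point the Connes--Skandalis machinery would be unnecessary. So the proposal is not wrong in principle, but it is expensive overkill, and it overlooks the one idea — identical modules, identical operator, different group actions — that is the crux of the argument.
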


Moreover, we also study an ``assembly map'', or $(3)$ of Fact \ref{Intro Kas index thm}. Unfortunately, it is not of satisfactory form. The aim of Section $6$ is, not only to describe the following result, but also to leave some ideas and observations, which can be useful.

\begin{thm}[Theorem \ref{thm assembly map}]
The index element $[\widetilde{\ca{D}}]$ is ``assemblable''. The value of the ``total assembly map'' coincides with the analytic index computed in \cite{T1,T2,T3}.
\end{thm}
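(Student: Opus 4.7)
My strategy is to split the statement into two parts. First, I would construct a ``total assembly map'' extending the classical $[c]\otimes_{C_0(X)\rtimes\Gamma}j_\Gamma(-)$ recipe of Fact \ref{Intro Kas index thm}$(3)$ to the $LT$-equivariant HKT setting, and verify that the Kasparov module representing $[\widetilde{\ca{D}}]$ lies in its domain. Second, I would compute the image of $[\widetilde{\ca{D}}]$ explicitly and compare with the $KK(\bb{C},C^*(LT))$-class of the analytic index computed in \cite{T1,T2,T3}.

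For the first step, my plan is to adapt Kasparov's descent to a map $j_{LT}\colon KK_{LT}(\SC(\ca{M}),\bb{C})\to KK(\SC(\ca{M})\rtimes LT,C^*(LT))$, and then pair with a Mishchenko-type element $[\widetilde{c}]\in KK(\bb{C},\SC(\ca{M})\rtimes LT)$. The element $[\widetilde{c}]$ should come from the $LT$-equivariant data encoding the ``proper cocompactness'' of $\ca{M}$ studied in \cite{T3}: a cutoff-like section, together with the natural representation of $\SC(\ca{M})\rtimes LT$ on an $L^2$-completion, should give the required projection. The total assembly map is then the composition
\[
\mu^{\mathrm{total}}\colon KK_{LT}(\SC(\ca{M}),\bb{C})\xrightarrow{j_{LT}}KK(\SC(\ca{M})\rtimes LT,C^*(LT))\xrightarrow{[\widetilde{c}]\otimes-}KK(\bb{C},C^*(LT)).
\]
Assemblability of $[\widetilde{\ca{D}}]$ is then the verification that its underlying Kasparov module satisfies the continuity/equivariance/compact-commutator conditions needed for $j_{LT}$ and for taking the Kasparov product with $[\widetilde{c}]$.

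For the second step I would unwind the product $[\widetilde{c}]\otimes_{\SC(\ca{M})\rtimes LT}j_{LT}([\widetilde{\ca{D}}])$ using an explicit representative: the $\SC(\ca{M})\rtimes LT$-module underlying $j_{LT}([\widetilde{\ca{D}}])$ is a completion of $LT$-equivariant sections of $\ca{L}\otimes\ca{S}$, and cutting by $[\widetilde{c}]$ selects (morally) a fundamental domain for the $LT$-action, yielding a $C^*(LT)$-module together with an unbounded operator obtained from $\ca{D}$ twisted by the regular representation. This should agree, after identifying completions, with the Hilbert $C^*(LT)$-module and $C^*(LT)$-Fredholm operator that were constructed by hand in \cite{T1,T2,T3} to represent the analytic index; the comparison is essentially bookkeeping once the descent is written out.

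The main obstacle is that classical descent and the classical construction of the Mishchenko line bundle both use local compactness of the group and of the space, and in our situation neither $LT$ nor $\ca{M}$ is locally compact. The HKT algebra $\SC(\ca{M})$ substitutes for $C_0(\ca{M})$ but does not carry a genuine compactly supported cutoff, so both the construction of $[\widetilde{c}]$ and the formal descent step will require ad-hoc approximations, most likely exploiting the Bott-type infinite tensor product structure built into $\SC(\ca{M})$ and the $\tau$-twisting of $\ca{L}$. I expect this to be exactly the source of the author's own caveat that the assembly picture ``is not of satisfactory form'', and I anticipate the strongest statement provable along these lines is the pointwise identification at $[\widetilde{\ca{D}}]$ rather than functoriality of $\mu^{\mathrm{total}}$ on the whole group.
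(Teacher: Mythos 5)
Your proposal correctly identifies the obstruction (no local compactness, hence no off-the-shelf crossed product $\SC(\ca{M})\rtimes LT$, no descent $j_{LT}$, no genuine cutoff), but it then proposes to push through essentially the ``satisfactory form'' of the assembly map anyway, via unspecified ``ad-hoc approximations.'' That is precisely what the paper declares it cannot do: the direct construction of $\ud{\SC(U)\rtimes U}$, of a full descent $\ud{j_U^\tau}$ acting on all of $KK_U^\tau(\SC(U),\scr{S})$, and of the reformulated Mishchenko element $\ud{[\widetilde{c}]}$, are all left as explicit conjectures in the final subsection, not proved. So the core of your plan --- build $j_{LT}$, build $[\widetilde{c}]$, take their product --- is not available as a theorem-level step, and the later ``unwinding'' you describe has no rigorous object to unwind.

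What the paper actually does is different and more concrete. It does not attempt a global descent. Instead it defines, for each $N$, an ``$N$-assembly map'' by first \emph{forgetting} the $U$-equivariance down to the finite-dimensional subgroup $U_N\subseteq U$, where the classical Kasparov descent $j_{U_N}^\tau$ makes honest sense (so $\SC(U)\rtimes U_N$, $\scr{S}\rtimes_\tau U_N$, and a genuine cutoff on $U_N$ all exist). It then cuts down by the external product of the Bott element $[\beta_{U_N^\perp}]$ for the complement and the Mishchenko element $[\widetilde{c}_{U_N}]$, and pushes forward along the structure map $j_N\colon\bb{C}\rtimes_\tau U_N\to\ud{\bb{C}\rtimes_\tau U}$. ``Assemblable'' is \emph{defined} as stability of the result in $N$; this is a workaround for the nonexistence of a one-shot total map, not a formal consequence of one. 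The computation then uses the alternate representative $[{}_M\widetilde{\Dirac}]$ (which is actually $U_N$-equivariant for $N\le M$), decomposes it as an exterior product over $U_N$ and $U_N^\perp$, feeds the $U_N$-part into the descent/cutoff formulas already computed in \cite{T2}, observes that the $U_N^\perp$-part collapses to $1_\scr{S}$ via $[\beta_{U_N^\perp}]\otimes[\widetilde{d_U}|_{U_N^\perp}]=1$ (Dirac--dual-Dirac for the HKT algebra), and finally checks that $j_N$ sends $[(L^2(\bb{R}^N)\widehat\oplus 0,0)]$ to $[(\ud{L^2(\bb{R}^\infty)}\widehat\oplus 0,0)]$, which is $N$-independent and equals $\ud{\ind({}^R\Dirac)}$ from \cite{T2}. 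You would need to replace your direct-descent plan with this finite-dimensional-restriction-plus-stability mechanism, and supply the Dirac--dual-Dirac cancellation for the complementary part, to make the proof go through.
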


Let us explain several details. We must begin with the explanations of the difficulties of the global analysis of infinite-dimensional manifolds and several observation on these difficulties. After that, we will talk about previous researches and the outline of our constructions of this paper.

Recall the Kasparov index theorem, \cite{Kas84,Kas15} or Fact \ref{Intro Kas index thm}. There, we need the following ingredients: $C_c^\infty(X,S)$, $D$ and $C^*(\Gamma)$ for $(1)$, $C_0(X)$, $Cl_\tau(X)$, $L^2(X,S)$, $C_0(X,S)$, $\sigma_D^{Cl}$ and $[d_X]$ for $(2)$, and $C_0(X)\rtimes \Gamma$, $j_\Gamma$ and $[c]$ for $(3)$. See also Section 2.4 for details.
All of them cannot be defined for infinite-dimensional cases in classical ways. For example, our manifold is non-locally compact, and hence a compactly supported continuous function on $\ca{M}$ is automatically trivial. Therefore, the classical $C_0(\ca{M})$ is trivial. According to a well-known theorem by Gelfand (see \cite{Mur} for example), the category of locally compact Hausdorff spaces is contravariantly equivalent to that of commutative $C^*$-algebras. However, this is another reason why commutative $C^*$-algebras are useless for the study of non-locally compact spaces\footnote{Even if a commutative $C^*$-algebra can be defined from $\ca{M}$, it must be the $C_0$-algebra of another locally compact Hausdorff space. Such an algebra does not carry information on the original manifold $\ca{M}$}. For the same reason, we can not define ``$Cl_\tau(\ca{M})$'' or ``$C_0(\ca{M},\ca{L}\otimes \ca{S})$''. Moreover, an infinite-dimensional manifold does not have a good measure, unlike the Lebesgue measure on a Euclidean spaces. Therefore, we can not define ``$C^*(LT)$'', ``$L^2(\ca{M},\ca{L}\otimes \ca{S})$'', ``$C_0(\ca{M})\rtimes LT$'' or ``$j_{LT}$''.

However, there are many noncommutative $C^*$-algebras which are ``equivalent'' to the $C_0$-algebras in some sense. It can be possible to study an infinite-dimensional version of such noncommutative $C^*$-algebras. In fact, the HKT algebra is a $C^*$-algebra which plays a role of ``$C_0(\bb{R})\widehat{\otimes} C_0(\bb{R}^\infty,\Cl(\bb{R}^\infty))$''. Once we define a $C^*$-algebra which carries some topological information on $\ca{M}$, we may forget $\ca{M}$ itself. In order to study the index theory of $\ca{M}$, it is enough to study the $LT$-equivariant $KK$-theory of this algebra.

By this observation, our abstract and naive problem becomes slightly concrete: {\it Find a $C^*$-algebra which carries some information on $\ca{M}$, and consider some $KK$-elements of this algebra}.

Then, how can we define the Hilbert space which plays a role of ``$L^2$-space''? In other words, how will we define the $K$-homology class?
The answer seems to lie on the representation theory. Recall the Peter-Weyl theorem \cite{KO,Kna}: For a compact group $G$, there is an isomorphism $L^2(G)\cong \oplus_{\lambda\in \widehat{G}} V_\lambda\otimes V_\lambda^*$ as representation spaces of $G\times G$, where $\widehat{G}$ is the set of isomorphism classes of irreducible unitary representations of $G$, and $V_\lambda$ is the corresponding representation space. In order to define the left hand side, we need a Haar measure on $G$ , but, for the right hand side, we do not need it. Since our manifold is, up to some finite-dimensional stuffs, an infinite-dimensional Lie group, such observation can be useful.

In order to carry out such ideas, we need to recall several previous researches: The HKT algebras and the representation theory of loop groups. Then, we review some related researches on  Hamiltonian loop group spaces, in order to explain the geometrical background.

Let us begin with the HKT algebra, which is one of the main objects in the present paper.
Recall the Bott periodicity $K_0(Cl_\tau(\bb{R}^n))= K^{-n}(\bb{R}^n)\cong \bb{Z}$, and the Bott periodicity map $K_0(Cl_\tau(\bb{R}^n))\to K_0(Cl_\tau(\bb{R}^{n+1}))$. Seeing these formulas, everyone thinks that ``$K^{-\infty}(\bb{R}^\infty)$'' must be isomorphic to $\bb{Z}$. This naive idea was justified in \cite{HKT}. N. Higson, G. Kasparov and J. Trout rewrote the Bott periodicity map as the induced map of a $*$-homomorphism between the graded-suspended algebras $C_0(\bb{R})\widehat{\otimes} Cl_\tau(\bb{R}^n)\to C_0(\bb{R})\widehat{\otimes} Cl_\tau(\bb{R}^{n+1})$, and defined the HKT algebra $\SC(\bb{R}^\infty)$ by the $C^*$-algebra inductive limit of this sequence. The HKT algebra was used to solve the Baum-Connes conjecture for a-T-menable groups in \cite{HK}, and generalized to Hilbert bundles \cite{Tro}, and to Fredholm manifolds \cite{DT}. Recently, the HKT algebra appeared in the context of the gauge theory \cite{Kat}.
We will extensively study the HKT algebras in Section 4.

About the loop groups, we should explain something about the representations.
Loop groups have the rotation symmetry: $\theta_0.l(\theta)=l(\theta+\theta_0)$. We
would like to deal with representations of $LG$ satisfying the following conditions: they are infinite-dimensional; they reflect the rotation symmetry; and they satisfy a certain finiteness condition. Positive energy representations (PERs for short) are such ones \cite{PS}. It is known that PERs must be projective. Let $LG^\tau$ be an appropriate $U(1)$-central extension of $LG$. On an irreducible PER, the added center $U(1)\subseteq LG^\tau$ acts on the representation space with a fixed weight, which is called the level of the representation.
Surprisingly, there are only finitely many equivalence classes of irreducible PERs at fixed level. The Wess-Zumino-Witten Hilbert space (WZW Hilbert space) is defined by $\oplus_{\lambda\in \widehat{LG}_\tau}V_\lambda\otimes V_\lambda^*$, where $\widehat{LG}_\tau$ is the set of isomorphism classes of irreducible PERs of $LG$ at level $\tau$. By an imitation of the Peter-Weyl theorem \cite{KO,Kna}, the WZW Hilbert space is regarded as the ``$L^2$-space of $LG$''.
It is used in the context of the conformal field theory \cite{Gaw}.

D. Freed, M. Hopkins and C. Teleman defined a ``Dirac operator'' as an operator acting on the tensor product of the WZW Hilbert space and the ``Spinor space'' which is defined by the complex structure in \cite{FHTII}.\footnote{In fact, what they defined is an operator on $V\otimes S$, wehre $V$ is a PER and $S$ is the Spinor. However, that operator clearly modeled on the Dirac operator on the $L^2$-space. See Section 2 in \cite{FHTII} also.}
They used this operator in order to define an isomorphism (called the FHT isomorphism here) between the representation group of $LG$ at level $\tau$ and the $\tau$-twisted $G$-equivariant $K$-theory of $G$. What they tried to define was a map $R^\tau(LG)\to K_{LG}^{\tau+\dim(LG)}(L\fra{g}^*)$, although $K_{LG}^{\tau+\dim(LG)}(L\fra{g}^*)$ does not make sense. There is a ``local equivalence'' between $L\fra{g}^*//LG$ and $G//G$ as groupoids. They used this equivalence to define the FHT isomorphism. However, what they actually defined from a PER of $LG$ is an element of $K_G^{\tau +\dim(G)}(G)$.

This result is important for us at least for the following two reasons. Firstly, the construction of such Dirac operator is essential. The Dirac operator is an infinite sum, but this infinite sum converges for the algebraic reason. For details on this technique, see also \cite{T1,T3}. The most important technical issues on this paper is in some sense a quantitative version of theirs.
Secondly, the ``FHT isomorphism'' $R^\tau(LG)\to KK_{LG}^{\tau+\dim(LG)}(L\fra{g}^*)\cong K^{LG}_{\tau+0}(L\fra{g}^*)$ reminds us of the Baum-Connes assembly map $RK^\Gamma_0(\ud{\ca{E}}\Gamma)\to K_0(C^*_\red(\Gamma))$ \cite{Val,GHT}, where $\ud{\ca{E}}\Gamma$ is the universal example for proper actions of $\Gamma$. The ``isomorphism $KK_{LG}^{\tau+\dim(LG)}(L\fra{g}^*)\cong K^{LG}_{\tau+0}(L\fra{g}^*)$'' is given by the Poincar\'e duality.
We expect the following things: {\it The $\tau$-twisted $LG$-equivarint $K$-homology group $K_{\tau+0}^{LG}(L\fra{g}^*)$ makes sense in terms of noncommutative geometry; $L\fra{g}^*$ is one of the universal examples for the proper actions of $LG$ in the sense of \cite{BCH}; and the FHT isomorphism can be regarded as the inverse of the Baum-Connes assembly map.} This conjecture is a reformulation of the result of \cite{Loi}: In this paper, Y. Loizides proved that the inverse of the FHT isomorphism is given by a kind of assembly map, where the twisted $K$-theory is identified with the twisted $K$-homology.
We hope our project provides a good tool to describe his result in the purely infinite-dimensional framework, and in a direct way. We will not study further this topic.

Let us explain Hamiltonian loop group spaces from the viewpoint of the geometric quantization, before going to the explanations of the outline of the constructions of the present paper. A Hamiltonian $LG$-space is an infinite-dimensional symplectic manifold equipped with an $LG$-action and a proper moment map taking values in $L\fra{g}^*$. It was introduced in \cite{MW}. 
There is a one to one correspondence between (infinite-dimensional symplectic) Hamiltonian $LG$-spaces and (compact, possibly non-$Spin^c$) quasi-Hamiltonian $G$-spaces \cite{AMM}. E. Meinrenken studied the geometric quantization problem of Hamiltonian $LG$-spaces using this correspondence and the FHT isomorphism in \cite{Mei}. Y. Song studied the same problem in a direct way: He define a Hilbert space which can be regarded as an ``$L^2$-space'' of the original Hamiltonian $LG$-space. In fact, he defined it by the $L^2$-space of a Hilbert bundle over the corresponding quasi-Hamiltonian $G$-space. Then, the method of abelianization was introduced in \cite{LMS}. Our $\Phi^{-1}(\fra{t})$ is a special case of the abelianization. Using this method, the quantization problem of Hamiltonian $LG$-space was studied in \cite{LS}. 

Our problem was very inspired by these researches, but {\bf the method is quite different.} Our method is more noncommutative geometrical, and more direct than theirs. However, we have not studied the noncomutative group cases, and it seems to be difficult to generalize our work to such general cases without much efforts.

Let us outline the present paper. We can precisely set the problem here. A Hamiltonian $LT$-space with a proper moment map automatically satisfies the following conditions. One can find the proof of this fact in \cite{T3}.

\begin{dfn}
An infinite-dimensional manifold $\ca{M}$ is a ``proper $LT$-space'' if it has an $LT$-action and a proper, equivariant and smooth map $\Phi:\ca{M}\to L\fra{t}^*$. 
The $LT$-action on $L\fra{t}^*$ is not the coadjoint action but the gauge action $l.A=A-l^{-1}dl$ for $l\in LT$ and $A\in L\fra{t}^*$.
We also suppose that $\Phi^{-1}(\fra{t})$ is even-dimensional and $T\times \Pi_T$-equivariantly $Spin^c$, that is, $\ca{M}$ is ``even-dimensional and $Spin^c$''.
\end{dfn}
\begin{prob}
For a proper $LT$-space $\ca{M}$ with a $\tau$-twisted $LT$-equivariant line bundle $\ca{L}$, construct an $LT$-equivariant Spinor bundle $\ca{S}$, and study the $KK$-theoretical $LT$-equivariant index theory. More precisely, construct three Kasparov modules corresponding to the index element, the Clifford symbol element and the Dirac element, and prove the $KK$-theoretical index theorem equality. Then, study the assembly map.
\end{prob}
\begin{rmks}
$(1)$ The phrase ``infinite-dimensional and even-dimensional'' sounds too strange. However, everyone probably thinks that a complex manifold or a symplectic manifold must be ``even-dimensional'', even if the manifold is infinite-dimensional. In our case, $\ca{M}$ is in fact $\Phi^{-1}(\fra{t}) \times U$ as observed in \cite{T1,T3}, where $U$ is an infinite-dimensional complex vector space. In this sense, our manifold is ``even-dimensional''.

$(2)$ The $Spin^c$-condition for infinite-dimensional manifolds is not clear. We adopt an easy condition: Our manifold is of the form ``a finite-dimensional $Spin^c$-manifold $\times$ an infinite-dimensional almost complex manifold''.

\end{rmks}

The core idea of the present paper is to use the HKT algebra as the function algebra of $\ca{M}$. Unfortunately, the HKT algebra is the substitute of, not the $C_0$-algebra, but the $Cl_\tau$-algebra. However, for an even-dimensional $G$-equivariantly $Spin^c$-manifold $X$, $C_0(X)$ is $G$-equivariantly Morita-Rieffel equivalent to $Cl_\tau(X)$, as explained in Lemma \ref{C0 is KK-equiv to Cltau}. Using this equivalence and the graded suspension $C_0(\bb{R})\widehat{\otimes}-$, we will reformulate the Kasparov index theorem in Section 2.5. 
In this reformulated version, only $C_0(\bb{R})\widehat{\otimes} Cl_\tau(X)$ appears as function algebras of the manifold. This $C^*$-algebra is the model of the HKT 
algebra.

In Section 3, we will introduce the $LT$-equivariant $KK$-theory. We will define the Kasparov product in the $LT$-equivariant setting at the level of modules there, but we will not prove that the product is well-defined at the level of $KK$-theory. However, the following phrase still makes sense: ``{\it An $LT$-equivariant Kasparov module\footnote{not the $KK$-element!} is a Kasparov product of other two Kasparov modules}''. Then, we will set the problem precisely, and we will divide it into two parts just like \cite{T1,T2,T3}. Consequently, we will find that we may concentrate on one infinite-dimensional manifold $U$ with the standard $U$-action.

In Section 4, we will study the HKT algebra: its definition, the group action and a representation on a Hilbert $\scr{S}$-module.
The original HKT algebra inherits a locally compact group (affine) action on the underling space, as explained in \cite{HKT,HK}. We will prove that the algebra also inherits the $U$-action.

Thanks to this result, we can consider $U$-equivariant $KK$-theory of $\SC(U)$. We will also introduce a Hilbert $\scr{S}$-module on which $\SC(U)$ acts, using the WZW model. Although the both of the HKT algebra and the WZW Hilbert space are not new objects, the connection between them is probably a new result.

The main part of the present paper is Section 5. We will define three $U$-equivariant Kasparov modules there.
In order to define the index element $[\widetilde{\Dirac}]\in KK_{U}^\tau(\SC(U),\scr{S})$ and the Dirac element $[\widetilde{d_{U}}]\in KK_{U}(\SC(U),\scr{S})$, we need to define a Hilbert $\scr{S}$-module which is equipped with a left $\SC(U)$-action, and appropriate operators. In fact, we will use the same module and the same operator. The operator is based on the Dirac operator in \cite{FHTII} or \cite{Son}. Our operator is not actually equivariant, but ``almost equivariant'', which can be adopted as an operator defining a Kasparov module.
Once a pair of a Hilbert module and an operator which looks like an index element is defined, we only have to check several conditions to be an equivariant Kasparov module. These are purely functional analytical tasks.
The story is quite simple, but the calculation is very complicated. We must frequently verify some infinite sums converge, because the infinite sum defining the Dirac operator becomes an actual infinite sum, after multiplying an element of the HKT algebra with an element of an ``$L^2$-section'', unlike \cite{FHTII}. {\bf This is the biggest difference from the previous researches.}
We will carry out these programs in Section 5.1. 

For the Dirac element, we use the same operator and the same Hilbert module, but the different $U$-action. It will be an easy task to modify the group action, and it will be studied in Section 5.2. in this subsection, we will also explain why the model of our Dirac element is appropriate.

For the Clifford symbol element, we need a $\tau$-twisted $U$-equivariant $(\SC(U),\SC(U))$-bimodule. For this aim, we modify the $U$-action on $\SC(U)$, which is $\tau$-twisted.
Once these Kasparov modules are defined, it is not difficult to prove the main result: Theorem \ref{Main theorem}.

In the final section, we will deal with ``assembly maps'' for our case. In locally compact cases, assembly maps played an important role in \cite{Per} to study the cohomological formula of the Kasparov index theorem. The value of our assembly map coincides with the analytic index in \cite{T2,T3}. In this sense, our construction is ``correct''.
Our assembly map is well-defined at the level of $KK$-theory, but we will not be able to introduce the crossed product and the descent homomorphism. This is the reason why our assembly map is not very satisfying for us. In the final subsection, we will write some observations on the crossed products and descent homomorphisms.

Finally, we add some notational remarks.

\begin{rmks}
$(1)$ From now on, we use the graded language, without any special notice. We summarize it here, although we believe that they are standard.
\begin{itemize}
\item For a $\bb{Z}_2$-graded vector space (Hilbert space, algebra or Hilbert module) $A$ $A=A_0\widehat{\oplus} A_1$, the grading is denoted by $\partial$: for $a\in A_i$, $\partial a=i$. The grading homomorphism is always denoted by $\epsilon$ (or $\epsilon_A$): $\epsilon(a)=(-1)^{\partial a}a$ for $a\in A_0\cup A_1$.
\item If two algebras (vector spaces, Hilbert modules and so on) $A=A_0\widehat{\oplus} A_1$ and $B=B_0\widehat{\oplus}  B_1$ are given, $A\otimes B$ is graded by, as usual, $[(A_0\otimes B_0)\oplus (A_1\otimes B_1)] \widehat{\oplus}[(A_0\otimes B_1)\oplus (A_1\otimes B_0)]$. The product (or the ring-action on the module) is also twisted: $[a\otimes b]\cdot [a'\otimes b']=(-1)^{\partial b \partial a'}aa'\otimes bb'$, if $a'$ and $b$ are homogeneous.
Such tensor product is often written as $A\widehat{\otimes}B$, but we omit the ``hat''.
We sometimes remind the reader of this rule to emphasize it.
\item The commutator is always graded $[F_1,F_2]=F_1F_2-(-1)^{\partial F_1\partial F_2}F_2F_1$, where $F_1$ and $F_2$ can be just operators, not elements of some algebra. They can be unbounded operators. If the space on which an operator $F$ acts is $\bb{Z}_2$-graded, we say that $F$ is odd or even, if $F$ reverses or preserves the grading, respectively.
\end{itemize}

$(2)$ We sometimes use the $C^*$-algebra $C_0(\bb{R})$ equipped with a non-trivial $\bb{Z}_2$-grading defined by $\epsilon(f)(t):=f(-t)$. $\scr{S}$ denotes this graded algebra.

$(3)$ We frequently use some objects which are substitutes of something that we can not define in classical methods. Such substitute are denoted by the standard symbol with the underline.
For example, our WZW model for $LT$ is a substitute of ``$L^2(LT)$''. In such a case, $\ud{L^2(LT)}$ denotes the WZW model. Here is another example: For a Fredholm operator $T:H_1\to H_2$, $\ind(T)$ is a substitute of ``$\dim(H_1)-\dim(H_2)$'', and so we may write $\ind(T)$ as $\ud{\dim(H_1)-\dim(H_2)}$.

$(4)$ We use the following notations: For a Hilbert $B$-module $\ca{E}$, $\ca{L}_B(\ca{E})$ or simply $\ca{L}(\ca{E})$ is the $C^*$-algebra consisting of adjointable operators on $\ca{E}$, and $\ca{K}_B(\ca{E})$ or simply $\ca{K}(\ca{E})$ is the $C^*$-algebra consisting of $B$-compact operators, following \cite{JT}.

$(5)$ A group action is denoted by $g.v$, for a group element $g$ and an element $v$ of a set on which the group acts. A product in an algebra or an algebra action on a module is denoted by $a\cdot \phi$, for an element $a$ of an algebra and an element $\phi$ of a module or the algebra on which the algebra acts.

$(6)$ Throughout this paper, parentheses $\inpr{\bullet}{\bullet}{}$ means an inner product which is linear in the second variables and anti-linear in the forst one. An inner product takes values in some $C^*$-algebra (including $\bb{C}$). On the other hand, angle brackets $\innpro{\bullet}{\bullet}{}$ or $\Innpro{\bullet}{\bullet}{}$ is a bilinear form (or pairing) taking values in $\bb{R}$ or $\bb{C}$.

\end{rmks}

\section{The finite-dimensional case}

In this section, we review the Kasparov index theorem, in order to clarify notations. Then, we will reformulate it in an appropriate form for our problem. This section does not contain many new results: Proposition \ref{our condition is OK} and related things are probably new; Section 2.5 is maybe also new, but it is just an exercise for the equivariant $KK$-theory.

\subsection{Unbounded equivariant Kasparov modules}

The bounded picture is convenient for the general study of the $KK$-theory. This picture works very well, even for equivariant cases.
On the other hand, although natural $KK$-elements appearing in geometry are often unbounded, general theory for unbounded cycles are quite difficult because of functional analytical issues on Hilbert modules. 

Unbounded Kasparov modules was introduced in \cite{BJ}, and the criterion to be a Kasparov product was studied by \cite{Kuc} and \cite{Mes}. 
In this subsection, we prepare some issues on unbounded equivariant Kasparov modules.
First of all, let us recall several definitions. Let $A$ and $B$ be separable $C^*$-algebras.

\begin{dfn}
Let $E$ be a countably generated Hilbert $B$-module equipped with a $*$-homomorphism $A\to \ca{L}_B(E)$, and let $D$ be a (possibly) unbounded, densely defined, regular\footnote{A densely defined adjointable operator $D$ on $E$ is said to be regular if and only if $1+D^2$ has dense range. This condition is not automatic.}, odd and self-adjoint operator.
The pair $(E,D)$ is called an {\bf unbounded Kasparov $(A,B)$-module}, if the following conditions are satisfied:
\begin{itemize}
\item The set of $a\in A$ such that $[a,D]$ extends to a bounded operator, is dense.
\item $a(1+D^2)^{-1}$ belongs to $\ca{K}_B(E)$ for each $a\in A$.
\end{itemize}
\end{dfn}

The first condition means that ``$D$ is of first order'', and the second one means that ``$D$ is an elliptic operator''.

Under the following bounded transformation, an unbounded Kasparov module is transformed into a bounded one \cite{BJ,Bla}.

\begin{dfn}
Let $\fra{b}$ be a function on $\bb{R}$ defined by $\fra{b}(x):=\frac{x}{\sqrt{1+x^2}}$. For an unbounded Kasparov module $(E,D)$, the {\bf bounded transformation} of this module is defined by
$$\fra{b}(E,D):=\bra{E,\fra{b}(D)}=\bra{E,\frac{D}{\sqrt{1+D^2}}}.$$

\end{dfn}

Let us move to the equivariant $KK$-theory. For details, see \cite{Kas88,Bla}.
Let $G$ be a second countable, locally compact and Hausdorff group. Suppose that the $C^*$-algebras $A$ and $B$ are $G$-$C^*$-algebras, that is, $G$ acts there and the map $g\mapsto g(a)$ is continuous for any $a\in A$ or $B$.

\begin{dfn}\label{dfn of equivariant KK}
Let $E$ be a $\bb{Z}_2$-graded, countably generated, $G$-equivariant Hilbert $B$-module equipped with a $G$-equivariant $*$-homomorphism $A\to \ca{L}(E)$, and $F\in \ca{L}(E)$ be an odd and self-adjoint operator. The pair $(E,F)$ is called a {\bf bounded $G$-equivariant Kasparov $(A,B)$-module}, if the following conditions are satisfied:
\begin{itemize}
\item $[a,F]$, $a(1-F^2)$, $a(g(F)-F)$ are $B$-compact operators for any $a\in A$ and $g\in G$.
\item The map $G\ni g\mapsto g(aF)\in \ca{L}(E)$ is norm-continuous for any $a\in A$.
\end{itemize}
The set of homotopy classes of $G$-equivariant Kasparov $(A,B)$-modules turns out to be an abelian group with respect to the direct sum of modules. This group is denoted by $KK_G(A,B)$. For a $G$-equivariant Kasparov $(A,B)$-module $(E,F)$, the corresponding $KK$-element (or the homotopy class represented by $(E,F)$) is denoted by $[(E,F)]$.

\end{dfn}

The condition to be ``equivariant'' is not very simple, but the small failure to be ``actually equivariant'' is useful. See also Section 5.2. On the other hand, that failure makes it difficult to translate the definition to the unbounded picture.
The aim of this subsection is to give a sufficient condition so that an unbounded Kasparov module with a group action is transformed into a bounded equivariant Kasparov module by the bounded transformation.
One of the easiest sufficient condition is probably to assume the operator to be ``actually equivariant''. Obviously such unbounded ``actually equivariant'' Kasparov module is transformed into a bounded ``actually equivariant''  Kasparov module by the bounded transformation. It means that the assumption is too strong. The following is our current answer.

\begin{dfn}
An unbounded Kasparov $(A,B)$-module $(E,D)$ is an {\bf unbounded $G$-equivariant Kasparov $(A,B)$-module} if the following conditions are satisfied:
\begin{itemize}
\item $E$ is $G$-equivariant $(A,B)$-bimodule.
\item $G$ preserves $\dom(D)$.
\item $g(D)-D$ and $[D,D-g(D)]\cdot \frac{D}{1+\lambda+D^2}$ are bounded for all $g\in G$ and $\lambda\geq 0$.
\item The map $g\mapsto g(D)-D$ is norm-continuous.
\end{itemize}

\end{dfn}

\begin{rmk}
This assumption is very relaxed in comparison with being ``actually equivariant'', but still too strong. We hope that some better sufficient condition will be found.

However, our conditions have geometrical sense. Suppose that $A=C_0(M)$ for some smooth manifold $M$, $B=\bb{C}$, and $D$ is a first order differential operator on $M$. If $g(D)-D$ is bounded, the symbol must be invariant, and the potential is at most linear along the direction of the group action. A sufficient condition so that the map $g\mapsto g(D)-D$ is continuous, is the following: The potential function is uniformly continuous. The rest condition is less clear. However, if the manifold is compact, this condition is almost automatic, thanks to the elliptic regularity.

\end{rmk}

Let us prove that our unbounded equivariant Kasparov modules are transformed to bounded ones by the bounded transformation.

%




\begin{pro}\label{our condition is OK}
For an unbounded $G$-equivariant Kasparov $(A,B)$-module $(E,D)$, the bounded transformation
$\fra{b}(E,D)$ is a bounded $G$-equivariant Kasparov $(A,B)$-module.
\end{pro}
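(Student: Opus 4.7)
The non-equivariant conditions $[a,F]\in\ca{K}_B(E)$ and $a(1-F^2)=a(1+D^2)^{-1}\in\ca{K}_B(E)$, with $F:=\fra{b}(D)$, are the content of the Baaj--Julg theorem applied to the underlying unbounded Kasparov module $(E,D)$, so the real work is to verify that $a(g(F)-F)$ is $B$-compact for every $a\in A$ and $g\in G$, and that $g\mapsto g(aF)$ is norm-continuous for every $a\in A$. The plan is to derive both from the integral representation $\fra{b}(x)=\frac{2}{\pi}\int_0^\infty x(1+\lambda^2+x^2)^{-1}d\lambda$ combined with the second-resolvent identity, reducing each condition to a norm-convergent integral in $\lambda$ of pieces controlled by the three unbounded-equivariance hypotheses.

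Setting $A_g:=g(D)-D$, $R_\lambda:=(1+\lambda^2+D^2)^{-1}$, and $\tilde R_\lambda:=(1+\lambda^2+g(D)^2)^{-1}$, and using that $D$ and $g(D)$ are odd and self-adjoint so that $D^2-g(D)^2=-[D,A_g]-A_g^2$ (with graded commutator), the identity $\tilde R_\lambda-R_\lambda=R_\lambda(D^2-g(D)^2)\tilde R_\lambda$ yields
$$g(F)-F=\frac{2}{\pi}\int_0^\infty\Bigl(A_g\tilde R_\lambda-DR_\lambda[D,A_g]\tilde R_\lambda-DR_\lambda A_g^2\tilde R_\lambda\Bigr)d\lambda.$$
The three summands are norm-integrable in $\lambda$: the first by $\|A_g\|/(1+\lambda^2)$, and the third by $\|A_g\|^2/\bigl(2(1+\lambda^2)^{3/2}\bigr)$ using $\|DR_\lambda\|\leq 1/\bigl(2\sqrt{1+\lambda^2}\bigr)$. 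The middle term uses the third hypothesis: $\|[D,D-g(D)]D(1+\lambda+D^2)^{-1}\|$ is actually uniformly bounded in $\lambda\geq 0$ once it is bounded at $\lambda=0$, because $(1+D^2)(1+\lambda+D^2)^{-1}$ is a contraction; taking adjoints then bounds $DR_\lambda[D,A_g]$ uniformly, so after pairing with $\|\tilde R_\lambda\|\leq 1/(1+\lambda^2)$ the middle integrand is integrable as well.

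Having a norm-convergent representation of $g(F)-F$, compactness of $a(g(F)-F)$ follows from compactness of each integrand after left multiplication by $a$, with uniform-in-$\lambda$ bounds. The crucial equivariant input is $\tilde R_\lambda=g(R_\lambda)$, which gives $a\tilde R_\lambda=g\bigl(g^{-1}(a)R_\lambda\bigr)\in\ca{K}_B(E)$ because $g^{-1}(a)(1+D^2)^{-1}$ is $B$-compact and the $G$-action on $\ca{L}_B(E)$ preserves $\ca{K}_B(E)$. The first and third integrands then reduce to bounded operators times $a\tilde R_\lambda$ after commuting $a$ through bounded factors, for $a$ in the dense subalgebra where $[a,D]$ is bounded; density extends the conclusion to all of $A$. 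The main obstacle is the middle integrand $aDR_\lambda[D,A_g]\tilde R_\lambda$, since $aDR_\lambda$ is not itself compact; the plan is to expand $aDR_\lambda=DR_\lambda a+[a,DR_\lambda]$, rewrite $[a,DR_\lambda]$ in terms of $[a,D]$ via $[a,R_\lambda]=-R_\lambda\bigl([a,D]D+D[a,D]\bigr)R_\lambda$, and thereby produce a compact factor of the form $(\cdot)\tilde R_\lambda$ in each resulting summand. Norm-continuity of $g\mapsto g(aF)$ follows from the same integral estimates applied to $g(F)-g'(F)$, using the hypothesis that $g\mapsto A_g$ is norm-continuous.
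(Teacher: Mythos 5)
You follow the same high-level strategy as the paper --- the integral formula $\fra{b}(x)=\frac{1}{\pi}\int_0^\infty\frac{1}{\sqrt\lambda}\frac{x}{1+\lambda+x^2}\,d\lambda$ (your substitution $\lambda\mapsto\lambda^2$ is cosmetic) combined with a resolvent manipulation that isolates the three unbounded-equivariance hypotheses. But the algebraic grouping you choose creates a genuine gap that the paper's grouping is engineered to avoid. In the paper's decomposition, $a$ sits directly to the left of $(1+D^2+\lambda)^{-1}$ in \emph{every} summand; $a(1+D^2+\lambda)^{-1}$ is the $B$-compact factor (with norm $\leq\|a\|/(1+\lambda)$), and everything to its right extends to a bounded operator. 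In particular the expansion $D[g(D)-D]g(D)=-[g(D)-D]^2g(D)+[g(D),g(D)-D]g(D)-[g(D)-D]g(D)^2$ is chosen so that the middle piece, followed by $(1+g(D)^2+\lambda)^{-1}$, $g$-conjugates to \emph{exactly} $[D,D-g^{-1}(D)]\frac{D}{1+D^2+\lambda}$, the form the third hypothesis bounds. Your decomposition $g(D)\tilde R_\lambda-DR_\lambda=A_g\tilde R_\lambda-DR_\lambda[D,A_g]\tilde R_\lambda-DR_\lambda A_g^2\tilde R_\lambda$ instead puts $a$ to the left of a bounded operator ($A_g$, or $DR_\lambda[D,A_g]$), with the compactness-producing $\tilde R_\lambda$ at the far right. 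To extract compactness you propose to "commute $a$ through bounded factors," but the resulting commutator terms (for instance $[a,A_g]\tilde R_\lambda$) are bounded, not $B$-compact: $\tilde R_\lambda$ by itself is not in $\ca{K}_B(E)$, and $[a,A_g]$ is only a bounded adjointable operator, not an element of $A$, so $[a,A_g]\tilde R_\lambda$ cannot be matched against the compactness hypothesis $a'(1+D^2)^{-1}\in\ca{K}_B(E)$.

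Two further concrete problems. First, your claim that $aDR_\lambda$ is not $B$-compact is incorrect: $a(1+D^2)^{-1/2}\in\ca{K}_B(E)$ because $\bigl(a(1+D^2)^{-1/2}\bigr)\bigl(a(1+D^2)^{-1/2}\bigr)^*=a(1+D^2)^{-1}a^*\in\ca{K}_B(E)$, and $aDR_\lambda=a(1+D^2)^{-1/2}\cdot(1+D^2)^{1/2}DR_\lambda$ with bounded second factor. But conceding this does not rescue the middle integrand: $aDR_\lambda[D,A_g]\tilde R_\lambda$ is (compact)$\cdot\bigl([D,A_g]\tilde R_\lambda\bigr)$, and $[D,A_g]\tilde R_\lambda$ is \emph{not} bounded by the hypothesis. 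The hypothesis controls $[D,A_g]\frac{D}{1+\lambda+D^2}$; the extra $D$ cannot be dropped, since (heuristically) the hypothesis is vacuous on $\ker D$ where $[D,A_g](1+\lambda+D^2)^{-1}$ could still be unbounded. Second, the "plan" sketched for the middle term via expanding $[a,DR_\lambda]$ is not actually carried out, and it faces the same structural problem: the commutator manipulations keep producing operators of the form (bounded)$\cdot\tilde R_\lambda$ rather than (element of $A$)$\cdot\tilde R_\lambda$. Your uniform-in-$\lambda$ estimate for $\|DR_\lambda[D,A_g]\|$ via the contraction $(1+D^2)(1+\lambda+D^2)^{-1}$ is a correct and useful observation, but it yields integrability, not compactness, as you yourself acknowledge; and the route you outline to close that last gap does not work without a different algebraic identity --- essentially, the one the paper uses.
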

\begin{proof}
As explained in \cite{Bla}, $\fra{b}(E,D)$ is actually a Kasparov module. We need to check the equivariance condition and the $G$-continuity condition.

The $G$-continuity is obvious because $g(\fra{b}(D))=\fra{b}(g(D))$. For the equivariance, we use the usual technique: $\fra{b}$ can be rewritten as
$$\fra{b}(x)=\frac{x}{\sqrt{1+x^2}}=\frac{1}{\pi}\int_0^\infty \frac{1}{\sqrt{\lambda}}\frac{x}{1+x^2+\lambda}d\lambda.$$
Let us calculate $a(\fra{b}(D)-g(\fra{b}(D)))$ using this formula.

\begin{align*}
a\bra{\fra{b}(D)-g(\fra{b}(D))} &=a\bbbra{(1+D^2)^{-\frac{1}{2}}D-g(D)(1+g(D)^2)^{-\frac{1}{2}}} \\
&= a\cdot\bbbra{\frac{1}{\pi}\int_0^\infty \frac{1}{\sqrt{\lambda}}\cdot\frac{1}{1+D^2+\lambda}\cdot Dd\lambda- \frac{1}{\pi}\int_0^\infty \frac{1}{\sqrt{\lambda}}\cdot g(D)\cdot\frac{1}{1+g(D)^2+\lambda}d\lambda} \\
&= a\cdot \frac{1}{\pi}\int_0^\infty\frac{1}{\sqrt{\lambda}} \frac{1}{1+D^2+\lambda}
\Bigl[D(1+g(D)^2+\lambda)-(1+D^2+\lambda)g(D)\Bigr]\frac{1}{1+g(D)^2+\lambda}d\lambda \\
&=a\cdot \frac{1}{\pi}\int_0^\infty \frac{1}{\sqrt{\lambda}}\frac{1}{1+D^2+\lambda}
\Bigl[(1+\lambda)(D-g(D))+D[g(D)-D]g(D)\Bigr]\frac{1}{1+g(D)^2+\lambda}d\lambda \\
&=\frac{1}{\pi}
\int_0^\infty \frac{1}{\sqrt{\lambda}}a\cdot \frac{1}{1+D^2+\lambda}\cdot (D-g(D))\cdot\bra{\frac{1+\lambda}{1+g(D)^2+\lambda}}
d\lambda \\
&\ \ \ \ \ \  \ \ \ +\frac{1}{\pi}
\int_0^\infty \frac{1}{\sqrt{\lambda}}a\cdot\frac{1}{1+D^2+\lambda}\cdot
D[g(D)-D]g(D)\cdot\bra{ \frac{1}{1+g(D)^2+\lambda}}d\lambda.
\end{align*}

The first term is compact, because $D-g(D)$ is bounded, $\frac{1+\lambda}{1+g(D)^2+\lambda}$ is uniformly bounded in $\lambda$ and $a \cdot (1+D^2+\lambda)^{-1}$ is a compact operator whose norm is at most $\frac{\|a\|}{1+\lambda}$. For the second one, we notice that 
$$D[g(D)-D]g(D)=-[g(D)-D]^2g(D)+[g(D),g(D)-D]g(D)-[g(D)-D]g(D)^2.$$
Multiplying by $\frac{1}{1+g(D)^2+\lambda}$ from the right, we get bounded operators whose norms are uniformly bounded in $\lambda$.
More precisely, for the second term,
$$[g(D),g(D)-D]g(D)\frac{1}{1+g(D)^2+\lambda}=g\bra{[D,D-g^{-1}(D)]\frac{1}{1+D^2+\lambda}}$$
is uniformly bounded, by the assumption. The other terms are easier to handle.

\end{proof}

We have not mentioned the equivalence relations on unbounded Kasparov modules. In fact, even for non-equivariant $KK$-theory, we do not know what relation in the unbounded picture completely corresponds to the homotopy in the bounded picture. See \cite{DGM} for the current state. We only verify that homotopy equivalence is a sufficient condition. $G$, $A$ and $B$ are as usual.

\begin{dfn}
Two unbounded $G$-equivariant Kasparov $(A,B)$-modules $(E_0,D_0)$ and $(E_1,D_1)$ are {\bf homotopic} if there exists an unbounded $G$-equivariant Kasparov $(A,BI)$-module $(E',D')$ such that $\pi_{i*}(E',D')\cong (E_i,D_i)$ for $i=0,1$.
\end{dfn}

If two unbounded $G$-equivariant Kasparov $(A,B)$-modules are homotopic by $(E',D')$, their bounded transformations are homotopic, because the bounded transformation of $(E',D')$ gives a homotopy between these transformed Kasparov modules.

Let us introduce a criterion to be a Kasparov product in the unbounded picture. The following is immediately proved thanks to \cite{Kuc}.

\begin{pro}[\cite{Kuc}]\label{Kucs criterion}
Let $(E_1,D_1)$ be an unbounded $G$-equivariant Kasparov $(A,C)$-module, and $(E_2,D_2)$ be an unbounded $G$-equivariant Kasparov $(C,B)$-module. $x$ and $y$ are the corresponding $KK$-elements of $(E_1,D_1)$ and $(E_2,D_2)$, respectively.
A $G$-equivariant Kasparov $(A,B)$-module $(E_1\otimes E_2,D)$ is a representative of the Kasparov product of $x$ and $y$, if the following conditions are fulfilled:
\begin{itemize}
\item For all $v$ in some dense subset of $AE_1$, the (graded) commutator
$$\bbbra{
\begin{pmatrix} D & 0 \\ 0 & D_2 \end{pmatrix},
\begin{pmatrix} 0 & T_v \\ T_v^* & 0 \end{pmatrix}
}$$
is bounded on $\dom(D\oplus D_2)$, where $T_v(w):=v\otimes w:E_2\to E_1\otimes E_2$.
\item $\dom(D)$ is contained in $\dom(D_1\otimes \id)$.
\item The (graded) commutator $[D_1\otimes \id,D]$ is bounded below.
\end{itemize}

\end{pro}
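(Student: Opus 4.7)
The plan is to reduce to the bounded picture and invoke the non-equivariant Kucerovsky criterion. By Proposition \ref{our condition is OK}, the bounded transformations $\fra{b}(E_1,D_1)$, $\fra{b}(E_2,D_2)$ and $\fra{b}(E_1\otimes E_2,D)$ are bounded $G$-equivariant Kasparov modules representing $x$, $y$ and $[(E_1\otimes E_2,D)]$ respectively. This reduction is legitimate because the bounded transformation induces the same $KK$-class.

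Next, I would observe that the three listed hypotheses are precisely the hypotheses of Kucerovsky's theorem in \cite{Kuc}: the connection condition phrased in terms of $T_v$, the domain inclusion $\dom(D)\subseteq \dom(D_1\otimes \id)$, and the lower-boundedness of the graded commutator $[D_1\otimes \id,D]$. None of them involves the $G$-action, and they are required verbatim for the underlying unbounded operators. Therefore Kucerovsky's theorem directly implies that the image of $[\fra{b}(E_1\otimes E_2,D)]$ in the non-equivariant $KK(A,B)$ is the non-equivariant Kasparov product of the images of $x$ and $y$.

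For the equivariant upgrade, one observes that the Kasparov product in $KK_G(A,B)$ is characterized by the same operator-theoretic connection and positivity conditions as in the non-equivariant case, together with the extra requirement that the representing cycle be a $G$-equivariant Kasparov module in the sense of Definition \ref{dfn of equivariant KK}. By assumption $(E_1\otimes E_2,D)$ is an unbounded $G$-equivariant Kasparov module, and Proposition \ref{our condition is OK} ensures that $\fra{b}(E_1\otimes E_2,D)$ satisfies the $G$-equivariance and $G$-continuity conditions demanded there. Hence the non-equivariant identification lifts to an equality $[(E_1\otimes E_2,D)]=x\otimes_C y$ in $KK_G(A,B)$.

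The main technical point to watch is that all the operator-theoretic constructions appearing in Kucerovsky's proof -- resolvents $(1+\lambda+D^2)^{-1}$, the function $\fra{b}$, parametrices, and their compositions -- commute with the $G$-action in the relevant sense (they are built by continuous functional calculus from operators whose group translates differ by bounded, norm-continuously varying operators, by hypothesis). Consequently the intermediate operators produced in the course of the argument remain $G$-equivariant in the sense of Definition \ref{dfn of equivariant KK}, and nothing in Kucerovsky's argument needs modification beyond carrying the $G$-equivariance along.
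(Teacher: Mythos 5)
Your proposal follows the same route as the paper: apply Kucerovsky's criterion to obtain the non-equivariant product relation, and then observe that, since the Kasparov product in $KK_G$ is characterized by the non-equivariant product relation together with $G$-equivariance of the representing cycle, the assumed equivariance of $(E_1\otimes E_2,D)$ (via Proposition \ref{our condition is OK}) finishes the argument. Your closing paragraph about tracking $G$-equivariance through the internal steps of Kucerovsky's proof is superfluous --- the characterization just used makes no reference to equivariance of intermediate constructions --- but it does not damage the argument.
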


\begin{proof}
In the bounded picture of $KK_G$, the following holds: $z=x\otimes y$ in $KK_G$ if and only if $z=x\otimes y$ in $KK$ and $z$ is $G$-equivariant. The conditions in the statement guarantee that $(E_1\otimes E_2,D)$ gives a Kasparov product of $x$ and $y$, thanks to \cite{Kuc}. Since we suppose that $(E,D)$ itself is $G$-equivariant, the statement has been proved.

\end{proof}

Let us define the descent homomorphism in the unbounded picture. Note that this homomorphism has already been defined in the bounded picture in \cite{Kas88}. 
For simplicity, we assume the following, from now on. For details on the amenability, see \cite{Pie}.

\begin{asm}
$G$ is amenable and unimodular.
\end{asm}

Thus, we do not have to pay attention to the norm of crossed products or modular functions.

\begin{dfn-pro}
For an unbounded $G$-equivariant Kasparov $(A,B)$-module $(E,D)$, consider $C_c(G,E)$ and the operator $[\widetilde{D}(f)](g):=D[f(g)]$ for $f\in C_c(G,E)$. $C_c(G,E)$ is a pre-Hilbert $C_c(G,B)$-module by the following operations:
$$f*b(g):=\int_Gf(h)h.[b(h^{-1}g)]dh,\ \inpr{f_1}{f_2}{B\rtimes G}(g):= \int_Gh^{-1}\inpr{f_1(h)}{f_2(hg)}{B}dh$$
for $f,f_1,f_2\in C_c(G,E)$ and $b\in C_c(G,B)$.
This module admits a left $C_c(G,A)$-action defined by
$$a*f(g):=\int a(h)h.[f(h^{-1}g)]dh.$$

The completion of this module with respect to the above inner product is denoted by $E\rtimes G$. The pair $(E\rtimes G,\widetilde{D})$ is an unbounded Kasparov $(A\rtimes G,B\rtimes G)$-module, and denoted by $j^G(E,D)$. 

The bounded transformation of the value of the new descent homomorphism coincides with the value of the descent homomorphism of the bounded transformed module: $j_G(\fra{b}(E,D))=\fra{b}(j_G(E,D))$.

\end{dfn-pro}
\begin{proof}
Firstly, we note that $\fra{b}(\widetilde{D})=\widetilde{\fra{b}(D)}$. Therefore, the pair $(E\rtimes G,\fra{b}(\widetilde{D}))$ coincides with $j_G(E,\fra{b}(D))$.

In order to prove that $j_G(E,D)$ is a Kasparov module, we must check two conditions: $(1)$ $[a,\widetilde{D}]$ is bounded for dense $a\in A\rtimes G$; and $(2)$ $a(1+\widetilde{D}^2)$ is compact for all $a\in A\rtimes G$.

For $(1)$, let $\ca{A}:=\bbra{a\in A\mid [a,D]\text{ is bounded}}$, and let $a\in C_c(G,\ca{A})$. We may assume that $a(h)$ is homogeneous and its degree is independent of $h$.
Then, for $e\in C_c(G,\dom(D))$,
\begin{align*}
[a,\widetilde{D}]e(g) &= \int_Ga(h)h.[D(e(h^{-1}g))]dh -(-1)^{\partial a}\int_G D[a(h)h.e(h^{-1}g)]dh \\
&=\int_Ga(h)[\{h(D)-D+D\}h.(e(h^{-1}g)]dh -(-1)^{\partial a}\int_G D[a(h)h.e(h^{-1}g)]dh \\
&=\int_G\bbra{a(h)(h(D)-D)+[a(h),D]}[h.(e(h^{-1}g))]dh.
\end{align*}
The last form is obviously bounded in $e$.

For $(2)$, we note the following computation:
\begin{align*}
(1+\widetilde{D}^2)^{-1} &= 1-\fra{b}(\widetilde{D})^2 \\
&= 1-\widetilde{\fra{b}(D)}^2.
\end{align*}
Since $(E,\fra{b}(D))$ is a $G$-equivariant bounded Kasparov module, $a(1-\fra{b}(\widetilde{D})^2)$ is compact for all $a\in A\rtimes G$.

\end{proof}

\subsection{Exterior tensor product issues}

A large advantage of unbounded Kasparov modules is that the exterior tensor product can be explicitly described at the level of modules. This feature is inherited to the equivariant setting. 
Let $G_1$ and $G_2$ be locally compact, second countable and Hausdorff group, and let $G:=G_1\times G_2$.

\begin{lem}\label{division of the problem 1}
Let $A_1$ and $B_1$ be $G_1$-$C^*$-algebras, and let $A_2$ and $B_2$ be $G_2$-$C^*$-algebras. Then $A_1\otimes_\bb{C} A_2$ and $B_1\otimes_\bb{C} B_2$ are $G$-$C^*$-algebras.

Suppose that an unbounded $G_1$-equivariant Kasparov $(A_1,B_1)$-module $(E_1,D_1)$ and an unbounded $G_2$-equivariant Kasparov $(A_2,B_2)$-module $(E_2,D_2)$ are given. Then, 
$$(E,D):=(E_1\otimes E_2,D_1\otimes \id+\id\otimes D_2)$$
is an unbounded $G$-equivariant Kasparov $(A,B)$-module.
\end{lem}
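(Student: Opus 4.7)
The non-equivariant conclusion, that $(E,D)$ is an unbounded Kasparov $(A_1\otimes A_2,B_1\otimes B_2)$-module, is the classical external product construction of Baaj--Julg in \cite{BJ}, which I would simply invoke. The work is therefore to verify the four equivariance axioms for the diagonal $G=G_1\times G_2$ action. Three of them are essentially formal: the diagonal action manifestly makes $E=E_1\otimes E_2$ into a $G$-equivariant $(A_1\otimes A_2,B_1\otimes B_2)$-bimodule; $\dom(D_1)\otimes_{\alg}\dom(D_2)$ is a core for $D$ (the standard essential self-adjointness statement for sums of commuting self-adjoint regular operators on a tensor product of Hilbert modules) and is preserved by the diagonal action, hence so is $\dom(D)$; and
\[g(D)-D=(g_1(D_1)-D_1)\otimes\id+\id\otimes(g_2(D_2)-D_2)\]
shows simultaneously that $g(D)-D$ is bounded and that $g\mapsto g(D)-D$ is norm-continuous.

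The substantive step is the boundedness of $[D,D-g(D)]\cdot\frac{D}{1+\lambda+D^2}$. Set $A_i:=D_i-g_i(D_i)$. The key algebraic fact is that $D_1\otimes\id$ and $\id\otimes D_2$ graded-anticommute on the graded tensor product, whence the cross graded-commutators $[D_1\otimes\id,\id\otimes A_2]$ and $[\id\otimes D_2,A_1\otimes\id]$ both vanish and
\[[D,D-g(D)]=[D_1,A_1]\otimes\id+\id\otimes[D_2,A_2],\qquad D^2=D_1^2\otimes\id+\id\otimes D_2^2.\]
In particular $(1+\lambda+D^2)^{-1}$ is governed by the joint functional calculus of the commuting positive regular pair $(D_1^2\otimes\id,\id\otimes D_2^2)$. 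Expanding $[D,D-g(D)]\cdot D\cdot(1+\lambda+D^2)^{-1}$ into four summands, the two ``same-factor'' terms $[D_i,A_i]D_i\otimes\id\cdot(1+\lambda+D^2)^{-1}$ and $\id\otimes[D_i,A_i]D_i\cdot(1+\lambda+D^2)^{-1}$ are controlled by decomposing $E$ as a direct integral over the spectrum of the complementary $D_j^2$: in each fiber one gets $[D_i,A_i]D_i(1+\lambda+s+D_i^2)^{-1}$, which is bounded uniformly in $s\geq 0$ because $\|(1+D_i^2)(1+\mu+D_i^2)^{-1}\|\leq 1$ reduces the estimate to $\mu=0$, i.e.\ to the individual hypothesis on $(E_i,D_i)$.

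The genuine obstacle, which I expect to be the hardest step, is the two remaining ``cross'' summands $[D_1,A_1]\otimes D_2\cdot(1+\lambda+D^2)^{-1}$ and $D_1\otimes[D_2,A_2]\cdot(1+\lambda+D^2)^{-1}$, in which the unbounded commutator $[D_i,A_i]$ acts in one tensor factor while the unbounded Dirac-type operator $D_j$ acts in the other. Fiberwise over the spectrum of $D_j$ the problem reduces to a uniform bound of the form
\[\sup_{t\in\sigma(D_j)}|t|\cdot\bigl\|[D_i,A_i]\,(1+\lambda+t^2+D_i^2)^{-1}\bigr\|<\infty,\]
and the scalar inequality $|t|/(1+\lambda+t^2+x^2)\leq 1/(2\sqrt{1+\lambda+x^2})$, applied via the functional calculus of $D_i$, converts this into the boundedness of $[D_i,A_i](1+\lambda+D_i^2)^{-1/2}$. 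This last estimate does not follow purely formally from the one-factor hypothesis $\|[D_i,A_i]D_i(1+\mu+D_i^2)^{-1}\|\leq C$ alone, and I would combine it with the explicit algebraic form $[D_i,A_i]=D_iA_i+A_iD_i$ and the boundedness of $A_i$ (together with the essential self-adjointness of $D_i$) to establish it. Once all four summands are controlled, the commutator condition is verified and $(E,D)$ meets every axiom of an unbounded $G$-equivariant Kasparov $(A_1\otimes A_2,B_1\otimes B_2)$-module.
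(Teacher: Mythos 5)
Your structure matches the paper's proof closely: invoke Baaj--Julg for the non-equivariant statement, then verify the four equivariance axioms, with the identities
\[
[D,D-g(D)]=[D_1,A_1]\otimes\id+\id\otimes[D_2,A_2],\qquad
D^2=D_1^2\otimes\id+\id\otimes D_2^2
\]
and the resulting four-summand expansion of $[D,D-g(D)]\cdot D(1+\lambda+D^2)^{-1}$ appearing in both. Your handling of the two same-factor summands (reduce to the single-factor hypothesis via $\|(1+D_i^2)(1+\mu+D_i^2)^{-1}\|\leq 1$) is also the paper's argument. You are right that the two cross summands $[D_1,A_1]\otimes D_2\cdot(1+\lambda+D^2)^{-1}$ and $D_1\otimes[D_2,A_2]\cdot(1+\lambda+D^2)^{-1}$ are the genuine obstruction. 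The paper's own proof at that point simply rewrites each cross term in a factored form ending in $\dfrac{1+\id\otimes D_j^2}{1+\lambda+D^2}$ and asserts ``the last operator is bounded'' without further argument, so the difficulty you isolate is real and is not actually discharged in the paper either.

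However, the resolution you propose for the cross terms does not close the gap. Your reduction via $|t|/(1+\lambda+t^2+x^2)\leq 1/(2\sqrt{1+\lambda+x^2})$ correctly shows that it would suffice to have $[D_i,A_i](1+\lambda+D_i^2)^{-1/2}$ bounded, and you rightly note that this does \emph{not} follow formally from the one-factor hypothesis $\|[D_i,A_i]D_i(1+\mu+D_i^2)^{-1}\|<\infty$. But the claimed fix --- expand $[D_i,A_i]=D_iA_i+A_iD_i$ and use boundedness of $A_i$ --- does not establish it. The summand $A_iD_i(1+\lambda+D_i^2)^{-1/2}$ is indeed bounded, but the summand $D_iA_i(1+\lambda+D_i^2)^{-1/2}$ has the unbounded $D_i$ trapped on the wrong side of $A_i$: nothing in the axioms forces $A_i$ to preserve $\dom(D_i)$, nor makes $(1+\lambda+D_i^2)^{1/2}A_i(1+\lambda+D_i^2)^{-1/2}$ bounded, so you cannot move $D_i$ past $A_i$. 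Concretely, the one-factor hypothesis gives no information on $\ker D_i$, where $D_i(1+\mu+D_i^2)^{-1}$ vanishes but $(1+\lambda+D_i^2)^{-1/2}$ does not; so boundedness of $[D_i,A_i](1+\lambda+D_i^2)^{-1/2}$ requires an additional input that neither your argument nor the paper's supplies. This is precisely the step that needs to be either proved from a strengthened equivariance axiom or added as an explicit hypothesis.
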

\begin{proof}
We only have to check the equivariance conditions.

$E$ is obviously $G$-equivariant $(A,B)$-module by the action $(g_1,g_2).(e_1,e_2):=(g_1.e_1,g_2.e_2)$. Moreover, $G$ preserves $\dom(D)$.
Since $g_i(\id)=\id$,
$$(g_1,g_2)(D)-D=[g_1(D_1)-D_1]\otimes \id+\id\otimes [g_2(D_2)-D_2]$$
is a bounded operator. This description also implies that the map $(g_1,g_2)\mapsto [(g_1,g_2)(D)-D]$ is continuous, by the assumption that $(E_i,D_i)$ are equivariant Kasparov modules.
Moreover,
\begin{align*}
&[D,D-(g_1,g_2)(D)]\cdot\frac{D}{1+\lambda+D^2} \\
&\ \ \ =
\bra{[D_1,D_1-g_1(D_1)]\otimes \id+ \id \otimes [D_2,D_2-g_2(D_2)]}\cdot\frac{D_1\otimes \id+\id\otimes D_2}{1+\lambda+D_1^2\otimes \id +\id\otimes D_2^2} \\
&\ \ \ =[D_1,D_1-g_1(D_1)]\cdot
\frac{D_1}{1+D_1^2}\otimes \id\cdot 
\frac{1+D_1^2\otimes \id}{1+\lambda+D_1^2\otimes \id +\id\otimes D_2^2} \\
&\ \ \ \ \ +[D_1,D_1-g_1(D_1)]\otimes  \frac{D_2}{1+D_2^2}
\cdot\frac{1+\id \otimes D_2^2}{1+\lambda+D_1^2\otimes \id +\id\otimes D_2^2} \\
&\ \ \ \ \ \ \ + \id\otimes [D_2,D_2-g_2(D_2)]\cdot
\frac{D_2}{1+D_2^2}\cdot 
\frac{1+\id\otimes D_2^2}{1+\lambda+D_1^2\otimes \id +\id\otimes D_2^2} \\
&\ \ \ \ \ \ \ \ \ - \frac{D_1}{1+D_1^2}\otimes [D_2,D_2-g_2(D_2)]\cdot 
\frac{1+D_1^2\otimes \id}{1+\lambda+D_1^2\otimes \id +\id\otimes D_2^2} .
\end{align*}
The last operator is bounded whose norm is bounded.
\end{proof}

This operation is compatible with the Kasparov product in the following sense.
Let $A_i$, $B_i$ and $C_i$ be $G_i$-$C^*$-algebras for $i=1,2$.

\begin{lem}
Let $(E_i,D_i)$ be unbounded $G_i$-equivariant Kasparov $(A_i,C_i)$-modules, and $(E'_i,D'_i)$ be unbounded $G_i$-equivariant Kasparov $(C_i,B_i)$-modules for $i=1,2$. Suppose that $(E^i,D^i):=(E_i\otimes E_i',D^i)$ satisfy the criterion to be a Kasparov product of $(E_i,D_i)$ and $(E'_i,D'_i)$, respectively in $i$. Then, the exterior product of $(E^1,D^1)$ and $(E^2,D^2)$:
$$(E^1\otimes_{\bb{C}} E^2 ,D^1\otimes \id+\id\otimes D^2)$$
is a Kasparov product of the exterior products $(E_1,D_1)\otimes_\bb{C}(E_1',D_1')$ and $(E_2,D_2)\otimes_\bb{C}(E_2',D_2')$.
\end{lem}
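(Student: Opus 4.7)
The plan is to apply the unbounded Kucs criterion (Proposition \ref{Kucs criterion}) to the proposed exterior product, noting that its $G_1\times G_2$-equivariance as an unbounded Kasparov module is already furnished by Lemma \ref{division of the problem 1}. The remaining task is to verify the three Kucs conditions relative to the outer exterior tensor products $(E_1\otimes_\bb{C} E_2,\,D_1\otimes\id+\id\otimes D_2)$ and $(E_1'\otimes_\bb{C} E_2',\,D_1'\otimes\id+\id\otimes D_2')$.

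I would first identify the underlying module $(E_1\otimes E_1')\otimes(E_2\otimes E_2')$ with the balanced tensor product $(E_1\otimes E_2)\otimes_{C_1\otimes C_2}(E_1'\otimes E_2')$ via the canonical graded flip
\begin{equation*}
\sigma:(e_1\otimes e_2)\otimes(w_1\otimes w_2)\longmapsto(-1)^{\partial e_2\,\partial w_1}(e_1\otimes w_1)\otimes(e_2\otimes w_2).
\end{equation*}
Under $\sigma$, the outer operator $(D_1\otimes\id_{E_2}+\id_{E_1}\otimes D_2)\otimes\id$ is carried to
\begin{equation*}
\widetilde{\ca{D}}:=(D_1\otimes\id_{E_1'})\otimes\id+\id\otimes(D_2\otimes\id_{E_2'}).
\end{equation*}
Setting $D:=D^1\otimes\id+\id\otimes D^2$, the domain inclusion $\dom(D)\subset\dom(\widetilde{\ca{D}})$ (the second bullet of Proposition \ref{Kucs criterion}) is immediate from the factorwise inclusions $\dom(D^i)\subset\dom(D_i\otimes\id_{E_i'})$.

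For the first bullet it suffices to take $v=v_1\otimes v_2$ in a dense set of simple tensors; under $\sigma$ the creation operator factors as $T_v=T_{v_1}\otimes T_{v_2}$. Expanding $DT_v+T_v(D_1'\otimes\id+\id\otimes D_2')$ with the graded rule $(A\otimes B)(C\otimes D)=(-1)^{\partial B\,\partial C}(AC)\otimes(BD)$ and collecting signs yields
\begin{equation*}
[D^1T_{v_1}+T_{v_1}D_1']\otimes T_{v_2}\;\pm\;T_{v_1}\otimes[D^2T_{v_2}+T_{v_2}D_2'],
\end{equation*}
each summand being bounded by the factorwise Kucs condition for $(E^i,D^i)$. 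The third bullet is handled similarly: expanding the graded commutator $[\widetilde{\ca{D}},D]$ into four pieces, the diagonal ones $[D_1\otimes\id_{E_1'},D^1]\otimes\id$ and $\id\otimes[D_2\otimes\id_{E_2'},D^2]$ are bounded below by the factorwise hypotheses, whereas the cross pieces of the form $[(D_1\otimes\id_{E_1'})\otimes\id,\,\id\otimes D^2]$ are graded commutators of two odd operators acting on disjoint tensor legs and vanish identically by the sign rule $PQ+QP=0$.

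The main technical obstacle I anticipate is the sign bookkeeping in the first bullet: the flip $\sigma$ introduces graded twists that must match up exactly with the signs produced by graded products of the $T_{v_i}$'s and the factor operators, and one has to keep track of the parities of $v_1$ and $v_2$ in order to see that each of the two summands really is the factorwise $T_{v_i}$-commutator, up to an overall sign that is absorbed into the boundedness estimate. Beyond this bookkeeping, the argument is a clean reduction to the factorwise Kucs data, and the $G$-equivariance of the resulting product module comes for free from Lemma \ref{division of the problem 1}.
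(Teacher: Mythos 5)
The paper itself leaves the module-level verification to the reader (see the remark immediately following the lemma), so there is no official argument to compare against. Your route — identify $(E_1\otimes_{C_1}E_1')\otimes_\bb{C}(E_2\otimes_{C_2}E_2')$ with $(E_1\otimes_\bb{C}E_2)\otimes_{C_1\otimes C_2}(E_1'\otimes_\bb{C}E_2')$ via the graded flip, then check the three conditions of Proposition \ref{Kucs criterion} factorwise, with the cross terms of the positivity commutator vanishing by the graded sign rule — is correct and is exactly what the framework of Sections 2.1--2.2 was designed to support; your sign computation for the first bullet does reduce the commutator to $[D^1,T_{v_1}]\otimes T_{v_2}+(-1)^{\partial v_1}T_{v_1}\otimes[D^2,T_{v_2}]$ as claimed. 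The only place that deserves a little more than ``immediate'' is the domain inclusion: from $\dom(D^i)\subseteq\dom(D_i\otimes\id_{E_i'})$ one should first invoke the closed graph theorem to obtain a relative bound $\|(D_i\otimes\id_{E_i'})v\|\le c(\|v\|+\|D^iv\|)$, which then passes across the $\otimes_\bb{C}$-leg to give $\dom(D^i\otimes\id)\subseteq\dom((D_i\otimes\id_{E_i'})\otimes\id)$, and this is combined with $\dom(D)=\dom(D^1\otimes\id)\cap\dom(\id\otimes D^2)$ — itself a consequence of the fact that $D^1\otimes\id$ and $\id\otimes D^2$ anti-commute, so the graph norm of their sum controls those of the summands. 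With that filled in, the argument is complete.
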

\begin{rmk}
At the level of $KK$-theory, this equality is obvious from the associativity of the Kasparov product, and the commutativity of the exterior tensor products: $(x_1\otimes_{C_1} y_1)\otimes_{\bb{C}} (x_2\otimes_{C_2} y_2)=(x_1\otimes_{\bb{C}}x_2)\otimes_{C_1\otimes C_2}(y_1\otimes_{\bb{C}}y_2)$. However, for our purpose, we need to prove that at the module level. We leave the proof to the reader.
\end{rmk}



The descent homomorphism is also compatible with the formula about the exterior tensor product. It can be proved by the same way in Lemma 3.15 in \cite{T3}.

\begin{lem}
Let $(E_i,D_i)$ be unbounded $G_i$-equivariant Kasparov $(A_i,B_i)$-modules for $i=1,2$, respectively. Let $D$ be $D_1\otimes \id+\id\otimes D_2$.
Then,
$$j_{G_1\times G_2}(E_1\otimes E_2,D_1\otimes \id+\id\otimes D_2)=
j_{G_1}(E_1,D_1)\otimes j_{G_2}(E_2,D_2).$$
\end{lem}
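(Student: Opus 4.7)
The plan is to exhibit a concrete isomorphism between the two Hilbert modules and check that the operators correspond under it. On the left hand side, the underlying Hilbert $(B_1\otimes B_2)\rtimes(G_1\times G_2)$-module is the completion of $C_c(G_1\times G_2, E_1\otimes E_2)$ with the convolution structure using the diagonal action; on the right hand side, the underlying module is $(E_1\rtimes G_1)\otimes(E_2\rtimes G_2)$, obtained by completing $C_c(G_1,E_1)\otimes_{\mathrm{alg}} C_c(G_2,E_2)$. The natural candidate for the isomorphism $\Psi$ is the map determined on elementary tensors by
$$\Psi(f_1\otimes f_2)(g_1,g_2):=f_1(g_1)\otimes f_2(g_2),$$
which has dense image in $C_c(G_1\times G_2,E_1\otimes E_2)$.

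First I would verify that $\Psi$ intertwines the bimodule structures and inner products. For the inner product, one unfolds
$$\inpr{\Psi(f_1\otimes f_2)}{\Psi(f_1'\otimes f_2')}{(B_1\otimes B_2)\rtimes(G_1\times G_2)}(g_1,g_2)
=\int_{G_1\times G_2}(h_1,h_2)^{-1}\inpr{f_1(h_1)\otimes f_2(h_2)}{f_1'(h_1g_1)\otimes f_2'(h_2g_2)}{B_1\otimes B_2}\,dh_1dh_2,$$
and, using the assumed unimodularity, factors this as the product of the two individual crossed-product inner products; a similar Fubini-style calculation handles the convolution product and the $A$-left action. Consequently $\Psi$ extends to a unitary $(A_1\rtimes G_1)\otimes (A_2\rtimes G_2)$--$(B_1\rtimes G_1)\otimes(B_2\rtimes G_2)$ bimodule isomorphism.

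Next I would check that $\Psi$ intertwines the two operators. On the dense subspace generated by $f_1\otimes f_2$ with $f_i(g)\in\dom(D_i)$,
\begin{align*}
\Psi\bigl((\widetilde{D_1}\otimes\id+\id\otimes\widetilde{D_2})(f_1\otimes f_2)\bigr)(g_1,g_2)
&= D_1f_1(g_1)\otimes f_2(g_2)+f_1(g_1)\otimes D_2f_2(g_2)\\
&= (D_1\otimes\id+\id\otimes D_2)\bigl(\Psi(f_1\otimes f_2)(g_1,g_2)\bigr)\\
&= \widetilde{D}\bigl(\Psi(f_1\otimes f_2)\bigr)(g_1,g_2),
\end{align*}
so the operators agree on a core, hence (being self-adjoint and regular) are unitarily equivalent through $\Psi$.

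The main obstacle is not the formal computation above but the careful handling of the completions: one must show that $\Psi$ preserves the $C^*$-norms, that the domain of $\widetilde{D}$ defined on the LHS matches the image under $\Psi$ of the domain of $\widetilde{D_1}\otimes\id+\id\otimes\widetilde{D_2}$ on the RHS, and that the algebraic core is really a core for both operators. As the excerpt points out, this is exactly what is done in Lemma 3.15 of \cite{T3}, and the argument transports verbatim once one observes that the amenability and unimodularity hypotheses on $G_1,G_2$ imply the same for $G_1\times G_2$, so no reduced-vs-full or modular function subtleties arise.
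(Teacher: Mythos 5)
Your proposal follows the route the paper intends: the paper itself does not prove this lemma but simply points to Lemma~3.15 of \cite{T3}, and what you supply is exactly the content that citation elides. The natural isomorphism $\Psi$ on the dense algebraic tensor product $C_c(G_1,E_1)\otimes C_c(G_2,E_2)$, the Fubini-type factoring of the crossed-product inner product and convolution, and the verification that the two operators agree on a common core, are all the expected ingredients, and your observation that amenability and unimodularity pass to the product group closes the only loop the paper's standing assumption might otherwise leave open. One small caveat worth spelling out if you write this up: all the tensor products here are graded, so when you factor the $B_1\otimes B_2$-valued inner product you must track the Koszul signs coming from moving $f_2(h_2)$ past $f_1'(h_1 g_1)$; these signs match precisely those already built into the graded inner product on $(E_1\rtimes G_1)\otimes(E_2\rtimes G_2)$, so the argument closes, but it deserves an explicit sentence. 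With that addendum your argument is correct and in line with the paper.
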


These results will be used in Section 3 to simplify the problem.

\subsection{Clifford algebras and modules}

In this subsection, we describe some algebraic features of Clifford algebras and their modules.

The Clifford algebra is defined as follows: For a Euclidean space $V$, let $\Cl_-(V):=T(V)\otimes\bb{C}/\left< v^2+|v|^2\right>$, where $T(V)$ is the tensor algebra. We will also encounter $\Cl_+$, whose definition is $T(V)\otimes\bb{C}/\left< v^2-|v|^2\right>$.
In fact, these are isomorphic via the homomorphism generated by a non-canonical isomorphism $v\mapsto iv$ for $v\in V$, but it is better to distinguish them. This is because there are various different choices of isomorphisms: $i\id_{V_1}\oplus (-i\id_{V_1^\perp})$ generates an isomorphism between $\Cl_+$ and $\Cl_-$, where $V_1$ is a subspace of $V$.

These two algebras turn out to be $C^*$-algebras.

\begin{lem}
The metric of $V$ induces metrics on $\Cl_\pm(V)$. By the left multiplications $\Cl_\pm(V)\to\End(\Cl_\pm(V))$, and the $C^*$-algebra structures on $\End(\Cl_\pm(V))$, we find that $\Cl_\pm(V)$ are $C^*$-algebras. The involution on $\Cl_\pm(V)$ are given by $v^*=\pm v$ for $v\in V$, respectively.
\end{lem}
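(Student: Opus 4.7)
The plan is to build the $C^*$-structure by hand from a natural inner product on $\Cl_\pm(V)$, then realize $\Cl_\pm(V)$ as a concrete subalgebra of $\End(\Cl_\pm(V))$ via left multiplication.

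First, fix an orthonormal basis $e_1,\ldots,e_n$ of $V$ and consider the monomials $e_I:=e_{i_1}\cdots e_{i_k}$ indexed by subsets $I=\{i_1<\cdots<i_k\}\subseteq\{1,\ldots,n\}$; it is a standard fact that these $2^n$ elements form a basis of $\Cl_\pm(V)$. I declare this basis orthonormal, giving $\Cl_\pm(V)$ a Hilbert space structure and, via $\End(\Cl_\pm(V))$, a preliminary operator norm. I would then check that this inner product does not depend on the choice of orthonormal basis: any other orthonormal basis is obtained from $e_1,\ldots,e_n$ by an element of $O(V)$, which extends to an algebra automorphism of $\Cl_\pm(V)$, and one verifies directly on monomials that this automorphism is orthogonal with respect to the chosen inner product.

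Next I consider the left multiplication homomorphism $L\colon\Cl_\pm(V)\to\End(\Cl_\pm(V))$. This is injective because $L(a)(1)=a$, so $\Cl_\pm(V)$ inherits the operator norm and becomes a concrete subalgebra of the $C^*$-algebra $\End(\Cl_\pm(V))$. To conclude that $\Cl_\pm(V)$ is a $C^*$-algebra it then suffices to exhibit a $*$-involution on $\Cl_\pm(V)$ compatible with $L$, i.e.\ to show that $L(\Cl_\pm(V))$ is closed under taking Hilbert-space adjoints and that the candidate involution $v\mapsto\pm v$ on generators extends antimultiplicatively to the whole algebra.

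The key computation is therefore to verify, for each generator $v\in V$, the adjoint formula $L(v)^{*}=L(\pm v)$ on the orthonormal basis of monomials. By linearity it suffices to take $v=e_i$ and compute $\innpro{e_i\cdot e_I}{e_J}{}$ versus $\innpro{e_I}{(\pm e_i)\cdot e_J}{}$: in both cases $e_i\cdot e_I$ is, up to a sign coming from anticommutation and (in the case $i\in I$) from the relation $e_i^2=\mp 1$, another basis monomial, and a careful sign-count shows that the two pairings agree precisely when one takes $e_i^*=-e_i$ in $\Cl_-$ and $e_i^*=+e_i$ in $\Cl_+$. Once this is checked on generators, antimultiplicativity of the adjoint on $\End(\Cl_\pm(V))$ forces $(e_{i_1}\cdots e_{i_k})^{*}=(\pm 1)^k e_{i_k}\cdots e_{i_1}$, and these elements lie in $\Cl_\pm(V)$, so $L(\Cl_\pm(V))$ is a $*$-subalgebra of $\End(\Cl_\pm(V))$; being finite-dimensional it is automatically norm-closed, hence a $C^*$-subalgebra, and the induced involution on $\Cl_\pm(V)$ is the one claimed.

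The main obstacle is the bookkeeping in the sign-check $L(v)^{*}=L(\pm v)$: one must carefully track the permutation sign from sliding $e_i$ into its correct position in $e_I$, and separately handle the cases $i\in I$ (which uses $e_i^2=\mp 1$) and $i\notin I$. Beyond this explicit sign computation and the verification that the inner product is basis-independent, everything else is routine: injectivity of $L$, closure under adjoint, and the automatic $C^*$-identity inherited from $\End(\Cl_\pm(V))$.
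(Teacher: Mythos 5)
Your proposal is correct and follows essentially the same route as the paper: endow $\Cl_\pm(V)$ with the inner product making the monomials in an orthonormal basis of $V$ orthonormal, and verify the adjoint identity $L(v)^* = L(\pm v)$ for a unit vector $v$. The paper organizes the key adjointness computation by decomposing an arbitrary element as $v\mathbf{x}+\mathbf{y}$ (with $\mathbf{x},\mathbf{y}$ free of $v$) rather than by your explicit case analysis on whether $i\in I$, but this is only a notational difference in bookkeeping, not a different idea.
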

\begin{proof}
We deal with only $\Cl_-(V)$. Let $v\in V$ be a unit vector, and consider a vector $v{\bf x}+{\bf y}:=vv_1\cdots v_k+w_1\cdots w_l$, where $v\perp v_i$, $v\perp w_j$, $v_i\perp v_{i'}$ for $i\neq i'$, and $w_j\perp w_{j'}$ for $j\neq j'$. Similarly for $v{\bf x}'+{\bf y}'$. Then,
\begin{align*}
\inpr{v\cdot (v{\bf x}+{\bf y})}{v{\bf x}'+{\bf y}'}{} &= \inpr{-{\bf x}+v{\bf y}}{v{\bf x}'+{\bf y}'}{} \\
&= -\inpr{{\bf x}}{{\bf y}'}{}+\inpr{v{\bf y}}{v{\bf x}'}{} \\
&=-\inpr{v{\bf x}}{v{\bf y}'}{}+\inpr{{\bf y}}{{\bf x}'}{} \\
&=-\inpr{v{\bf x}+{\bf y}}{v\cdot (v{\bf x}'+{\bf y}')}{}.
\end{align*}
\end{proof}

Thanks to this result, we find that $C_0(X,\Cl_+(T^*X))$ is a $C^*$-algebra. It is denoted by $Cl_\tau(X)$.

In this paper, we will use $\Cl_-$ for the Clifford multiplication, and hence the Dirac operator is formally self-adjoint. Then, $\Cl_+$ appears when we describe the Clifford symbol class. This is because a left $\Cl_-$-module automatically admits a right Hilbert $\Cl_+$-module structure, as proved below. 

A $\bb{Z}_2$-graded Hermite vector space $S$ is called a {\bf Clifford module of $V$}, if it is equipped with an even $*$-homomorphism $c:\Cl_-(V)\to \End(S)$. In other words, it is a $\bb{Z}_2$-graded Hermite vector space equipped with a linear map $c:V\to \End(S)$ such that $c(v)$ is a skew-adjoint and odd operator for each $v\in V$, and satisfies that $c(v)c(w)+c(w)c(v)=-2\inpr{v}{w}{V}$. A Clifford module of $V$ is called a {\bf Spinor} if it is an irreducible representation of $\Cl_-(V)$.  It is known that an even-dimensional vector space $V$ has two Spinors up to equivalence. Fix one of them, and the fixed Spinor is denoted by $(S,c)$. It is known that the Clifford multiplication $c$ gives an isomorphism as $\bb{Z}_2$-graded $C^*$-algebras $\Cl_-(V)\cong \End(S)$.

We can define an irreducible representation of $\Cl_+(V)$ from $\Cl_-(V)\circlearrowright S$ as follows.

\begin{lem}
The dual space $S^*$ admits a structure of a $*$-representation space of $\Cl_+(V)$, defined by
$$c^*(v)\cdot f:=(-1)^{\partial f}f\circ c(v)$$
for $f\in S^*$ and $v\in V$.
\end{lem}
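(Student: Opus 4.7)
The plan is to verify three things in sequence: (i) that $v\mapsto c^*(v)$ satisfies the defining relation of $\Cl_+(V)$ so that it extends to an algebra homomorphism, (ii) that each $c^*(v)$ is odd so that the extension is a homomorphism of graded algebras, and (iii) that the involution $v^*=v$ on $\Cl_+(V)$ is intertwined with the Hilbert-space adjoint on $S^*$ (induced from $S$).

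For step (i), the core computation is to show $c^*(v)^2 f = |v|^2 f$. Applying the formula twice, and observing that $f\circ c(v)$ has degree $\partial f+1$ (because $c(v)$ is odd), I get
$$c^*(v)\bra{c^*(v)\cdot f} = (-1)^{\partial f+1}(-1)^{\partial f} f\circ c(v)\circ c(v) = -f\circ c(v)^2 = -f\circ(-|v|^2\id) = |v|^2 f,$$
which is precisely the $\Cl_+$-relation. By linearity in $v$ and the universal property of the Clifford algebra, this extends to an algebra map $c^*:\Cl_+(V)\to\End(S^*)$. Step (ii) is a one-line check: $c(v)$ is odd, so postcomposition with $c(v)$ shifts degree by $1$, hence $c^*(v)$ reverses the grading on $S^*$; consequently $c^*$ is an even map of graded algebras.

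For step (iii), I would use the convention that $S^*$ inherits its Hilbert structure from $S$ via the anti-linear Riesz isomorphism $S\to S^*$, $\xi\mapsto \langle\xi,\cdot\rangle$; equivalently, for any bounded operator $T\in\End(S)$ the adjoint on $S^*$ satisfies $(f\circ T)^*=T^*f^*$ under the usual identification. Using that $c(v)^*=-c(v)$ in $\Cl_-(V)$ together with the sign $(-1)^{\partial f}$ in the definition, the verification $c^*(v)^*=c^*(v)$ reduces to a short sign bookkeeping that compares the Koszul sign coming from the grading with the sign from $c(v)^*=-c(v)$.

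I don't expect any real obstacle here — the statement is essentially an algebraic exercise, and the two minus signs that need to cancel in step (iii) are exactly the reason for the prefactor $(-1)^{\partial f}$ in the definition, as well as for the switch from $\Cl_-$ to $\Cl_+$. The only subtle point is keeping the graded/ungraded sign conventions straight, so I would do the calculation on homogeneous $f$ and appeal to linearity.
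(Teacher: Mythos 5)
Your proposal is correct and follows essentially the same approach as the paper: verify the Clifford relation to extend $c^*$ to an algebra map from $\Cl_+(V)$ (you check the diagonal form $c^*(v)^2 = |v|^2$, which is equivalent by polarization to the anticommutator identity $c^*(v)c^*(w)+c^*(w)c^*(v)=2\inpr{v}{w}{V}$ that the paper verifies), and then check $[c^*(v)]^* = c^*(v)$ by tracking the cancellation between the Koszul sign $\epsilon_{S^*}$ built into the definition and the sign from $c(v)^*=-c(v)$. The only thing left informal in your step (iii) is the bookkeeping the paper writes out explicitly via $c^*(v) = {}^tc(v)\circ\epsilon_{S^*}$ together with the facts that transpose commutes with adjoint, $\epsilon_{S^*}^*=\epsilon_{S^*}$, and $\epsilon_{S^*}$ anticommutes with the odd operator ${}^tc(v)$.
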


\begin{rmk}
Let $\tau:\Cl_-(V)\to\Cl_+(V)$ be the linear extension of $\tau(v_1\cdots v_k):=(-1)^{\sum_{j=1}^{k-1}j}v_k\cdots v_1$ and $\tau(1)=1$. This is well-defined: 
$\tau(v^2+\|v\|^2)=-v^2+\|v\|^2=0$. Moreover, this is an anti-homomorphism $\tau(\alpha \beta)=(-1)^{\partial \alpha\cdot \partial \beta}\tau(\beta)\tau(\alpha)$. It can be proved as follows: Let $\alpha:=a_1\cdots a_k$, $\beta:=b_1\cdots b_l$, and then $\tau(\alpha\beta)=(-1)^{\sum_i^{k+l-1}i}b_l\cdots b_1a_k\cdots a_1=(-1)^{lk}\tau(\beta)\tau(\alpha)$.
Using $\tau$, we can rewrite the action as $c^*(\alpha)\cdot f=(-1)^{\partial f\cdot \partial \alpha}f\circ c(\tau^{-1}(\alpha))$ for $f\in S^*$ and $\alpha\in \Cl_+(V)$.

\end{rmk}

\begin{proof}
$c^*$ induces a map from the tensor algebra $T(V)\to \End(S^*)$. We check that $c^*(v)c^*(w)+c^*(w)c^*(v)=2\inpr{v}{w}{V}$. Let $s\in S$.
\begin{align*}
\bigl[\bigl(c^*(v)c^*(w)+c^*(w)c^*(v)\bigr)f\bigr](s) &= c^*(v)[c^*(w)f](s) + c^*(w)[c^*(v)f](s) \\
&= (-1)^{\partial f+1}\bbbra{c^*(w)f(c(v)s) + c^*(v)f(c(w)s)} \\
&= -f(c(w)c(v)s+c(v)c(w)s) \\
&= 2\inpr{v}{w}{V}f(s).
\end{align*}
Therefore the map $c^*$ from $\Cl_+(V)$ is defined. Let us verify that $c^*$ preserves the involution $*$. Note that $c^*(v)$ is given by ${}^tc(v)\circ \epsilon_{S^*}$.
$$[c^*(v)]^*=[{}^t\bbra{c(v)}\circ \epsilon_{S^*}]^*=\epsilon_{S^*}^*\circ ({}^tc(v))^*$$
$$=\epsilon_{S^*}\circ {}^t(c(v)^*)=-\epsilon_{S^*}\circ{}^tc(v)={}^tc(v)\circ\epsilon_{S^*}=c^*(v),$$
where we used the fact that $c(v)$ is an odd operator, and the transpose commutes with the adjoint.
\end{proof}

Thanks to this result, $c^*:\Cl_+(V)\to \End(S^*)$ is a $*$-isomorphism. Using this isomorphism, we define a Hilbert $\Cl_+(V)$-module structure on $S$.

\begin{lem}\label{lem spinor is hilb. cl. module}
A left $\Cl_-(V)$-module $S$ equipped with $c:V\to \End(S)$, admits a right Hilbert $\Cl_+(V)$-module structure given by the followings: By regarding $S$ as $S^{**}$, we define
\begin{itemize}
\item For $s\in S$ and $v\in V$, $s\cdot v:= s\circ c^*(v)$.
\item $\inpr{s_1}{s_2}{\Cl_+}:=s_1^*\otimes s_2 \in \End(S^*)\cong \Cl_+(V)\otimes\bb{C}$.
\end{itemize}
\end{lem}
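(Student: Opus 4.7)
The plan is to verify, piece by piece, that the data $(S, \cdot, \inpr{\cdot}{\cdot}{\Cl_+})$ satisfies the axioms of a right Hilbert $\Cl_+(V)$-module. Because $S$ is finite-dimensional, there are no convergence or adjointability issues, so the real content is threefold: (a) the map $s \mapsto s \circ c^*(v)$ extends from $V$ to a right action of all of $\Cl_+(V)$; (b) the expression $s_1^* \otimes s_2$ does land in $\Cl_+(V)$ under the stated identification; (c) the Hilbert $C^*$-module axioms (conjugate-symmetry, right $\Cl_+(V)$-linearity in the second argument, and positivity) hold.

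For (a), I would start from the linear map $V \ni v \mapsto (s \mapsto s \circ c^*(v)) \in \End(S)$, extend it freely to the tensor algebra $T(V)$, and then use the identity $c^*(v)c^*(w) + c^*(w)c^*(v) = 2\inpr{v}{w}{V}\id_{S^*}$ from the previous lemma to deduce $(s \cdot v)\cdot w + (s\cdot w)\cdot v = s \circ (c^*(v)c^*(w) + c^*(w)c^*(v)) = 2\inpr{v}{w}{V}\, s$. By the universal property of $\Cl_+(V) = T(V)\otimes \bb{C}/\left< v^2 - |v|^2\right>$, this descends to a genuine right $\Cl_+(V)$-module structure. For (b), the element $s_1^*\otimes s_2$ naturally sits in $S^*\otimes S^{**}$, and the canonical isomorphism $S^*\otimes S^{**} \cong \End(S^*)$ sends it to the rank-one operator $f \mapsto s_2(f)\, s_1^*$. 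The previous lemma already supplies the $*$-isomorphism $c^*: \Cl_+(V) \to \End(S^*)$, so the target of the inner product is identified with $\Cl_+(V)$ up to the multiplicity factor reflected in the ``$\otimes \bb{C}$''.

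For (c), I would check each axiom in the $C^*$-algebra $\End(S^*)$ and then transport via $c^*$. Conjugate-symmetry reduces to the identity $(s_1^* \otimes s_2)^* = s_2^* \otimes s_1$ for rank-one operators on $S^*$. Right-linearity reduces to the computation $s_1^* \otimes (s_2 \circ c^*(\alpha)) = (s_1^* \otimes s_2) \circ c^*(\alpha)$ for $\alpha \in \Cl_+(V)$, which is exactly how the identification $S^*\otimes S^{**} \cong \End(S^*)$ intertwines the right $\End(S^*)$-action with the right tensor-factor action. Positivity of $\inpr{s}{s}{\Cl_+} = s^*\otimes s$ is immediate: it is a positive rank-one operator on $S^*$, equal to $\|s\|_S^2$ times the orthogonal projection onto $\bb{C}\cdot s^*$.

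The main obstacle will be the bookkeeping: we must track at once the identification $S \cong S^{**}$, the identification $S^*\otimes S^{**} \cong \End(S^*)$, the $*$-isomorphism $\End(S^*) \cong \Cl_+(V)$, and the $\bb{Z}_2$-grading, along with the anti-homomorphism $\tau$ relating $\Cl_-$ and $\Cl_+$. In particular, the subtlest point is the compatibility in (c): we must ensure that the right $\Cl_+(V)$-action on $S$ (defined intrinsically via $c^*$) agrees, under the embedding $\inpr{s_1}{\cdot}{\Cl_+}$, with right multiplication of $\Cl_+(V)$ on $\End(S^*)$. Once the sign and grading conventions from the previous lemma (especially the formula $c^*(\alpha)\cdot f = (-1)^{\partial f\cdot \partial\alpha} f\circ c(\tau^{-1}(\alpha))$) are used consistently, this compatibility reduces to associativity of composition and the statement becomes routine.
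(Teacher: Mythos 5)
Your proposal is correct and, for the step the paper actually writes out (the compatibility $\inpr{s_1}{s_2\cdot v}{\Cl_+}=\inpr{s_1}{s_2}{\Cl_+}\cdot v$, which you call right-linearity), uses the same direct computation in $\End(S^*)$. You additionally fill in the routine verifications the paper dismisses as trivial — that the action descends from $T(V)$ to $\Cl_+(V)$ via $c^*(v)c^*(w)+c^*(w)c^*(v)=2\inpr{v}{w}{V}\id_{S^*}$, conjugate-symmetry, and positivity of $s^*\otimes s$ — which is a sound and complete way to present the lemma.
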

\begin{proof}
Non-trivial thing is the compatibility of these two operations: $\inpr{s_1}{s_2\cdot v}{\Cl_+}=\inpr{s_1}{s_2}{\Cl_+}\cdot v$. Let us check that. Let $f\in S^*$.
\begin{align*}
\inpr{s_1}{s_2\cdot v}{\Cl_+} (f) &= s_1^*\otimes (s_2\circ c^* (v)) (f)\\
&= s_1^*\otimes s_2(c^*(v)f) \\
&= \bra{\inpr{s_1}{s_2}{\Cl_+}\cdot v}( f).
\end{align*}
\end{proof}

This result is important to describe the Clifford symbol element.

\subsection{Kasparov index theorem}

In this subsection, we review the Kasparov index theorem \cite{Kas84,Kas15}. After defining the ingredients, we outline the proof, in the unbounded picture of equivariant Kasparov modules.

\begin{prob}\label{problem for Kasparov}
Study the index theory in the following situation: $X$ is an even-dimensional complete Riemannian $Spin^c$-manifold; $G$ is a locally compact, second countable and Hausdorff group;
$G$ acts on $X$ isometrically, properly and cocompactly; $(S,c)$ is a $G$-equivariant Spinor bundle of $X$; $E$ is a $\bb{Z}_2$-graded $G$-equivariant Hermite vector bundle on $X$; and $D$ is a $G$-equivariant Dirac operator acting on $C^\infty(X,E\otimes S)$.
\end{prob}

The following is the typical example: For a compact Riemannian $Spin^c$-manifold $M$, the universal cover $X:=\widetilde{M}$ and the fundamental group $G:=\pi_1(M)$ satisfy the above condition. The Spinor and the Dirac operator on $X$ can be defined by the pullback of ones on $M$.

The following is independent of $D$, $S$ and $E$. In this sense, they are universal.

\begin{dfn}\label{dfn of Dirac element for X}
$(1)$ For $\xi\in \Cl_+(V)$ and $e_k\in V$, let $\widehat{e_k}\xi:=(-1)^{\partial \xi}\xi\cdot e_k$. Note that $(\widehat{e_k})^2=-\|e_k\|^2\id$.
Define a differential operator $\partial$ on $L^2(X,\Cl_+(T^*X))$ by
$$\partial:=\sum_k \widehat{e_k}\frac{\partial}{\partial x_k}$$
at $x\in X$, where $\{x_1,x_2,\cdots,x_n\}$ is a coordinate around $x\in X$ such that $\bbra{\frac{\partial}{\partial x_1}(x),\frac{\partial}{\partial x_2}(x),\cdots, \frac{\partial}{\partial x_n}(x)}$ is an orthonomal basis, and $e_i:=dx_i$. This operator is independent of the choice of such coordinate. The pair $(L^2(X,\Cl_+(T^*X)),\partial)$ defines a $G$-equivariant $(Cl_\tau(X),\bb{C})$-module.
Let $[d_X]:=[(L^2(X,\Cl_+(T^*X)),\partial)]\in KK_G(Cl_\tau(X),\bb{C})$. It is called the {\bf Dirac element} (Definition 2.2 in \cite{Kas15}).

$(2)$ At each $x\in X$, let $H_x:=L^2(T^*_xX,\Cl_+(T^*_xX))$. Consider the pair $(H_x,\partial_x)$, for 
$$\partial_x:=i\sum_k e_k\cdot\frac{\partial}{\partial x_k},$$
where $\{e_k\}$ is an orthonormal basis of $T_x^*X$, and $e_k\cdot $ means the multiplication by $e_k$ from the left side.
Consider the continuous family
$$[d_\xi]:= \bbbra{\bra{C_0(X,\cup H_x),\{\partial_x\}_{x\in X}}}\in KK_G(C_0(T^*X),Cl_\tau(X)).$$
For $f\in Cl_\tau(X)$, its action on $H_x$ is given by the right multiplication of $f(x)\in \Cl_+(T^*_xX)$. The inner product is given by $\inpr{f_1\otimes v_1}{f_2\otimes v_2}{Cl_\tau(X)}:=\inpr{f_1}{f_2}{L^2}\cdot v_1^*v_2$. Note that $\inpr{f_1}{f_2}{L^2}$ is a scalar-valued continuous function.
This element is called the {\bf fiberwise Dirac element} (Definition 2.5 in \cite{Kas15}).

$(3)$ A cut-off function is a compactly supported continuous function $c:X\to \bb{R}_{\geq 0}$ satisfying that $\int_Gc(g^{-1}.x)dg=1$ for all $x\in X$. It defines a projection $[c](g,x):=\sqrt{c(x)c(g^{-1}.x)}$ in $C_0(X)\rtimes G$. The corresponding $KK$-element is also denoted by $[c]\in KK(\bb{C},C_0(X)\rtimes G)$. It is called the {\bf Mishchenko line bundle}.
\end{dfn}

Note that $[d_\xi]$ gives a $KK_G$-equivalence between $C_0(T^*X)$ and $Cl_\tau(X)$. 

From the given data $E$, $S$ and $D$, we can define several $KK_G$-elements.

\begin{dfn}
$(1)$ $[D]:=[(L^2(X,E\otimes S),D)]\in KK_G(C_0(X),\bb{C})$ is called the {\bf index element} (Lemma 3.7 in \cite{Kas15}).

$(2)$ Let $c_E$ be the Clifford multiplication on $E\otimes S$, and let $p:T^*X\to X$ be the canonical projection. Let 
$[\sigma_D]:=[(C_0(T^*X,p^*(E\otimes S)),ic_E)]\in KK_G(C_0(X),C_0(T^*X))$. It is called the {\bf symbol element}.

$(3)$ Let $[\sigma_D^{Cl}]:= [\sigma_D]\otimes_{C_0(T^*X)}[d_\xi]\in KK_G(C_0(X),Cl_\tau(X))$. It is called the {\bf Clifford symbol element}.
\end{dfn}

We would like to define the analytic index for this case. The definition is based on the following observation: On a separable Hilbert space $H$, consider the space of Fredholm operators on $H$ equipped with the norm topology, ${\rm Fred}(H)$. The set of connected components of ${\rm Fred}(H)$ is isomorphic to $\bb{Z}$, and the isomorphism is given by the Fredholm index. In this sense, {\it the Fredholm index of an operator is the homotopy class itself of the operator}. Therefore, the homotopy class of the following Hilbert module should be the analytic index, if it is a Kasparov module.


\begin{dfn}\label{dfn of the analytic index}
On $C_c(X,E\otimes S)$, define the right action of $C_c(G)$ by
$$s*b(x):=\int_Gg(s)(x)\cdot b(g^{-1})dg$$
for $b\in C_c(G)$ and $s\in C_c(X,E\otimes S)$, and the inner product by
$$\inpr{s_1}{s_2}{\bb{C}\rtimes G}(g):=\int_X\inpr{s_1(x)}{g(s_2)(x)}{(E\otimes S)_x}dx$$
for $s_1,s_2\in C_c(X,E\otimes S)$. We obtain a Hilbert $\bb{C}\rtimes G$-module by the completion of this pre-Hilbert module. The completion of $D$ is an unbounded, densely defined, regular, odd and self-adjoint operator on the obtained Hilbert $\bb{C}\rtimes G$-module. The pair of the Hilbert $\bb{C}\rtimes G$-module obtained by the  completion of $C_c(X,E\otimes S)$ and the operator $D$ is called the {\bf analytic index of $D$} and denoted by $\ind (D)$.

\end{dfn}

\begin{rmks}
$(1)$ One must notice that we used the {\bf compactly supported} sections to define the index. It means that, in order to generalize this procedure to infinite-dimensional manifolds, we need some another trick to find some ``nice'' subspace. In fact, we have used an algebraic technique in \cite{T2} to specify the dense subspace.

$(2)$ We used the fact that $D$ is actually $G$-equivariant to define the analytic index. For the general case ($D$ is not $G$-equivariant, but the symbol is still equivariant), we need an averaging procedure. For the details, see Proposition 5.1 in \cite{Kas15}.
\end{rmks}

We call the following the Kasparov index theorem \cite{Kas84,Kas15}.

\begin{thm}
$(1)$ $\ind(D)$ is an unbounded Kasparov $(\bb{C},\bb{C}\rtimes G)$-module.

$(2)$ The analytic index is completely determined by the $K$-homology element $[D]\in KK_G(C_0(X),\bb{C})$, by the formula $\ind(D)=[c]\otimes_{C_0(X)\rtimes G} j^G([D])$. 

$(3)$ On the other hand, $[D]$ is completely determined by the topological data $[\sigma_D^{Cl}]$, by the formula $[D]=[\sigma_D^{Cl}]\otimes [d_X]$.

Combining all of them, we find that $\ind(D)$ is completely determined by the homotopy class of the symbol, which is a generalization of the Atiyah-Singer index theorem.
\end{thm}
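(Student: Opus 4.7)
The plan is to prove the three parts separately, using the unbounded picture throughout and exploiting the machinery developed in Section~2.1: the descent homomorphism on unbounded modules and Kucera's criterion for Kasparov products.

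For part $(1)$, the only non-trivial point is that $(1+D^2)^{-1}$ is a $\bb{C}\rtimes G$-compact operator on $\ind(D)$; this is the full requirement since $\bb{C}$ is unital, and the other conditions are formal: $D$ is essentially self-adjoint on $C_c^\infty(X,E\otimes S)$ because $X$ is complete, its closure descends to a regular self-adjoint operator on the Hilbert $\bb{C}\rtimes G$-module $\ind(D)$, and $[a,D]=0$ for $a\in\bb{C}$. For compactness I would use local elliptic regularity: if $\phi\in C_c(X)$, then $\phi(1+D^2)^{-1}$ is compact on the ordinary $L^2(X,E\otimes S)$ by Rellich on $\supp(\phi)$. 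Picking the cut-off function $c$ and using $\int_G g.c\,dg=1$ together with properness, one writes $(1+\widetilde{D}^2)^{-1}$ as a $C_c(G)$-valued family of locally compact expressions whose $G$-integral lies in $\ca{K}_{\bb{C}\rtimes G}(\ind(D))$.

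For part $(2)$, I would represent $j^G([D])$ by the descent module $(L^2(X,E\otimes S)\rtimes G,\widetilde{D})$ over $(C_0(X)\rtimes G,\bb{C}\rtimes G)$. The Mishchenko projection $p_c(g,x)=\sqrt{c(x)c(g^{-1}.x)}$ acts from the left and cuts out a Hilbert $\bb{C}\rtimes G$-submodule on which $\widetilde{D}$ restricts. The key step is the construction of an explicit unitary $\Psi:\ind(D)\to p_c\cdot(L^2(X,E\otimes S)\rtimes G)$ sending $s\in C_c(X,E\otimes S)$ to a $C_c(G)$-valued function whose value at $g$ is $\sqrt{c(\cdot)}\cdot g(s)$ (up to the standard normalization), with inverse an averaging over $G$. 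One checks that $\Psi$ intertwines the right $\bb{C}\rtimes G$-actions and inner products using $\int_G g.c\,dg=1$, and that it conjugates $D$ to $\widetilde{D}$ restricted to the submodule because $D$ is $G$-equivariant; this identifies the two homotopy classes.

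For part $(3)$, I would first replace $[\sigma_D^{Cl}]$ by the intrinsic model $(C_0(X,E\otimes S),0)\in KK_G(C_0(X),Cl_\tau(X))$, where the right $Cl_\tau(X)$-action uses the Hilbert $\Cl_+$-module structure from Lemma~\ref{lem spinor is hilb. cl. module}; this is legitimate because $[d_\xi]$ is a $KK_G$-equivalence and the Kasparov product of $[\sigma_D]$ with $[d_\xi]$ computes to this module. The balanced tensor product is identified with $L^2(X,E\otimes S)$ via $s\otimes\xi\mapsto s\cdot\xi$, and I would then apply Kucera's criterion with $D_1=0$ and $D_2=\partial$: the domain condition $\dom(D)\subseteq\dom(0\otimes\id)$ and the positivity of $[D_1\otimes\id,D]=0$ are both immediate, so only the connection condition remains, namely that $[T_v,D\oplus\partial]$ is bounded for $v$ ranging over smooth compactly supported sections. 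In normal coordinates at $x$, $D$ acts like $\sum_k c_E(e_k)\partial_{x_k}$ plus bounded zeroth-order terms, while $\id\otimes\partial$ acts like $\sum_k\widehat{e_k}\partial_{x_k}$, and the compatibility $c_E(e_k)(s\cdot\xi)=s\cdot(\widehat{e_k}\xi)$ built into the $\Cl_+$-module structure forces the difference to be a bundle endomorphism, hence bounded on each $T_v$.

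The main obstacle is this last verification: the connection condition must be established globally, not just pointwise, and the zeroth-order curvature corrections coming from the Clifford connection on $E\otimes S$ must be absorbed. The standard remedy is a partition-of-unity argument: by cocompactness one may take finitely many normal-coordinate charts covering a fundamental domain, on each of which the local bound is uniform because $v$ has compact support, and the $G$-equivariance of the estimate follows from the equivariance of all ingredients. Applying Kucera's criterion then yields $[D]=[\sigma_D^{Cl}]\otimes_{Cl_\tau(X)}[d_X]$ in $KK_G$, completing the three parts.
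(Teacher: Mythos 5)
Your three-part architecture matches the paper's: identify the underlying Hilbert modules explicitly, then check Kucerovsky's criterion for each Kasparov product. Two places warrant attention.

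For part $(3)$, you pass from $[\sigma_D^{Cl}]$ to the intrinsic model $[(C_0(X,E\otimes S),0)]$ by saying ``this is legitimate because $[d_\xi]$ is a $KK_G$-equivalence and the Kasparov product of $[\sigma_D]$ with $[d_\xi]$ computes to this module.'' But that computation is precisely the content of the intermediate proposition in the paper: one builds the family module $C_0(X,\cup_x \ca{H}_x)$ with the fiberwise operator $\partial_x'$, checks it is a Kasparov product of $[\sigma_D]$ and $[d_\xi]$, and then computes $\ker(\partial_x')$ --- using the harmonic oscillator $\sum_k(-\partial_{\xi_k}^2+\xi_k^2)$ and the Clifford ``number operator'' $\sum_k c(e_k)\otimes c^*(e_k)$ acting on $\Cl_-(T_x^*X)\cong S_x\otimes S_x^*$ --- to see that the lowest-weight space is one-dimensional and naturally $E_x\otimes S_x$. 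Skipping this leaves the crucial identification unsupported; everything afterward (the tensor-product identification $L^2(X,E\otimes S)\cong C_0(X,E\otimes S)\otimes_{Cl_\tau(X)} L^2(X,\Cl_+(T^*X))$ and Kucerovsky's criterion with $D_1=0$) you handle as the paper does.

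For part $(2)$, your map $\Psi(s)(g)=\sqrt{c(\cdot)}\cdot g(s)$ does \emph{not} exactly conjugate $D$ to $\widetilde{D}$: one has $\widetilde{D}\Psi(s)(g)=D\bigl(\sqrt{c}\,g(s)\bigr)=\Psi(Ds)(g)+c(d\sqrt{c})\,g(s)$, i.e.\ an extra bounded Clifford term from the commutator $[D,\sqrt{c}]$ survives, so $\Psi$ is only an intertwiner modulo a bounded perturbation. This is harmless for the $KK$-class, but the cleaner route (and the one the paper takes) is to stop at the module identification and verify Kucerovsky's criterion directly: the first condition holds because the commutator with $T_v$ has no differentials by the Leibniz rule, and the other two are trivial with $D_1=0$.
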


\begin{rmk}
In this paper, the correspondence $[D]\mapsto [c]\otimes j_G([D])$ is called the assembly map, and denoted by $\mu_G$.

\end{rmk}

In order to prove this result in our language without additional efforts, it is convenient to suppose the following differential geometrical condition. See also \cite{Roe}, for example.


\begin{asm}
Let $\nabla^S$ be a metric connection on $S$ and let $V,W$ be smooth vector fields on $X$. We identify vector fields with $1$-forms by the metric. We impose that $\nabla^S$ satisfies the following condition\footnote{Briefly speaking, $c$ is ``flat''.}: For $s\in C^\infty(X,S)$, 
$$\nabla^S_V(c(W)s)=c(\nabla^{\rm LC}_V(W))(s)+c(W)(\nabla^S_V(s)),$$
where $\nabla^{\rm LC}$ is the Levi-Civita connection.

Take a metric connection on $E$, denoted by $\nabla^E$. Then, a metric connection on $E\otimes S$ is induced, denoted by $\nabla^{E\otimes S}$. We impose that our Dirac operator must be of the following form:
$$D=\sum_k \id_E\otimes c(e_k)\otimes \nabla^{E\otimes S}_{e_k}+h,$$
where $h$ is a {\bf bounded} section of $\End(E\otimes S)$, and commutes\footnote{in the graded sense} with the Clifford multiplication.
\end{asm}

Let us outline the proof. For the first part, $D$ has a well-defined index, we need to check the criterion so that the module defined in Definition \ref{dfn of the analytic index} is a Kasparov $(\bb{C},\bb{C}\rtimes G)$-module. For this purpose, we need to check that $D$ is regular and $(1+D^2)^{-1}$ is $\bb{C}\rtimes G$-compact. They are proved in Theorem 5.8 in \cite{Kas15}. 

For the equality $\ind(D)=[c]\otimes j^G([D])$, note that $[c]$ is given by $[([c]* \bbbra{C_0(X)\rtimes G},0)]$ as a Kasparov module. Let us recall the map $q: C_c(G,C_c(X,E\otimes S))\to C_c(X,E\otimes S)$ defined in Section 5 in \cite{Kas15}.
Any element $[c]*f\in [c]* \bbbra{C_0(X)\rtimes G}$ can be rewritten as $[c]*([c]*f)$, and hence we find that $([c]*f)\otimes F$ can be identified with $q(f*F)\in \overline{C_c(X,E\otimes S)}$. Therefore the isomorphism holds at the module level.
It suffices to check the criterion to be the Kasparov product. In fact, the second and the third conditions are obvious. For the first one, one can verify that the commutator contains no differentials, thanks to the Leibniz rule.

For the equality $[D]=[\sigma_D^{Cl}]\otimes [d_X]$, which is a main interest of the present paper, we use the following computation. This is a more explicit version of Proposition 3.10 in \cite{Kas15}

\begin{pro}
$[\sigma_D^{Cl}]=[(C_0(X,E\otimes S),0)]\in KK_G(C_0(X),Cl_\tau(X))$. The Hilbert $Cl_\tau(X)$-module structure is given by the family version of Lemma \ref{lem spinor is hilb. cl. module}.
\end{pro}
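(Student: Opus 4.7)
The plan is to compute the Kasparov product $[\sigma_D]\otimes_{C_0(T^*X)}[d_\xi]$ explicitly at the module level via Kucerovsky's criterion (Proposition \ref{Kucs criterion}), and then contract the resulting module to $(C_0(X,E\otimes S),0)$ by a fiberwise Bott periodicity homotopy, carried out uniformly in $x\in X$.

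First I would form the balanced tensor product $C_0(T^*X,p^*(E\otimes S))\otimes_{C_0(T^*X)} C_0(X,\cup_x H_x)$. Since $C_0(T^*X)$ acts on $H_x=L^2(T_x^*X,\Cl_+(T_x^*X))$ fiberwise through the pointwise $C_0(T_x^*X)$-action, the balanced tensor product identifies with $C_0(X,\cup_x L^2(T_x^*X,(E\otimes S)_x\otimes \Cl_+(T_x^*X)))$. The right $Cl_\tau(X)$-module structure is, fiber by fiber, the one coming from Lemma \ref{lem spinor is hilb. cl. module} applied to $S_x$ together with right $\Cl_+$-multiplication on the $\Cl_+(T_x^*X)$-factor. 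The candidate product operator is $F=ic_E\otimes 1+1\otimes \partial$ assembled pointwise in $x$; a direct graded-commutator computation shows that the Kucerovsky conditions hold (the connection term from $ic_E\otimes 1$ is bounded, $\mathrm{dom}(F)$ contains $\mathrm{dom}(\partial)$ fiberwise, and $[ic_E\otimes 1,F]=[ic_E,ic_E]\otimes 1\geq 0$ modulo bounded terms), so $(C_0(X,\cup_x L^2(\cdots)),F)$ represents $[\sigma_D^{Cl}]$.

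Next I would exhibit a $G$-equivariant homotopy collapsing this representative to $(C_0(X,E\otimes S),0)$. On each fiber $T_x^*X\cong\mathbb{R}^n$, the operator $F_x$ is precisely a Bott--Dirac operator of the form $ic(\xi)+i\sum_k e_k\partial/\partial x_k$ acting on $L^2(\mathbb{R}^n,(E\otimes S)_x\otimes \Cl_+(\mathbb{R}^n))$. Its square is a harmonic oscillator plus bounded Clifford terms, with a one-dimensional Gaussian kernel tensored with $(E\otimes S)_x$; restricting to this kernel and identifying it with $(E\otimes S)_x$ recovers exactly the Hilbert $\Cl_+(T_x^*X)$-module structure of Lemma \ref{lem spinor is hilb. cl. module}, because the $\Cl_-$-action on $S_x$ gets transported through $c^*$ to the $\Cl_+$-structure on the Gaussian ground state. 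The standard homotopy $F_{x,s}:=s\cdot ic_E+\partial$, $s\in[1,\infty]$, combined with rescaling in $\xi$, continuously deforms $F_x$ to the zero operator on the kernel, and the non-kernel part is absorbed by a degenerate module (compact resolvent away from the kernel gives a uniform spectral gap).

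The principal obstacle will be promoting the fiberwise contraction to a genuine $G$-equivariant operator homotopy of Kasparov modules over $C_0(X)$ and $Cl_\tau(X)$, rather than a pointwise statement. This requires uniform control in $x\in X$ of the $Cl_\tau(X)$-compactness of $(1+F_s^2)^{-1}$ and of the spectral gap above the zero-mode as $s\to\infty$; however, both are automatic because the harmonic-oscillator gap is scale-invariant, the $Cl_\tau$-action is fiberwise, and $G$-equivariance of the data ensures equivariance of the homotopy. At $s=\infty$, the module degenerates onto its kernel $C_0(X,E\otimes S)$ carrying the operator $0$, yielding the claimed equality in $KK_G(C_0(X),Cl_\tau(X))$.
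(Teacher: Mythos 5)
Your proof follows essentially the paper's argument: form the balanced tensor-product module $C_0(X,\cup_x (E\otimes S)_x\otimes L^2(T_x^*X)\otimes\Cl_+(T_x^*X))$, verify Kucerovsky's criterion for the fiberwise Bott--Dirac operator, compute its square as a harmonic oscillator plus the Clifford term $\sum_k c(e_k)\otimes c^*(e_k)$, and collapse onto the one-dimensional lowest-weight kernel, identified in each fiber with $(E\otimes S)_x$. The only presentational difference is that the paper pins down the kernel and its $\Cl_+$-module structure via the explicit identification $S_x\otimes S_x^*\cong\Cl_-(T_x^*X)$ and its lowest-weight vector $1$, whereas you gesture at a rescaling homotopy whose exact formula is left vague---but both rest on the same uniform-spectral-gap contraction onto the kernel module.
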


\begin{proof}
The module for the Kasparov product is given by the space of continuous sections of the Hilbert bundle
$$\bigcup_{x\in X} E_x\otimes S_x\otimes L^2(T^*_xX)\otimes \Cl_+(T^*_xX)\cong 
\bigcup_{x\in X} E_x\otimes S_x\otimes L^2(T^*_xX)\otimes S_x^*\otimes S_x.$$
Let $\ca{H}_x$ be the fiber $E_x\otimes S_x\otimes L^2(T^*_xX)\otimes S_x^*\otimes S_x$. Consider the operator $\partial'_x$ on $\ca{H}_x$ defined by
$$\partial'_x:= \id_E\otimes
ic(\xi)\otimes \id\otimes \id +i\sum\id\otimes \id\otimes \frac{\partial}{\partial \xi_k}\otimes e_k\cdot.$$
$\xi\in T_x^*X$.
One can prove that $(C_0(X,\cup_{x\in X} \ca{H}_x),\{\partial_x'\}_{x\in X})$ is a $G$-equivariant unbounded $(C_0(X),Cl_\tau(X))$-module. 
In fact, $\{\partial'_x\}_{x\in X}$ is a $Cl_\tau(X)$-module homomorphism\footnote{Recall that $\Cl_+(T_x^*X)$ acts on $\ca{H}_x$ by the right multiplication.}, commutes with the $C_0(X)$-action, and $\{(1+(\partial_x')^2)^{-1}\}_{x\in X}$ is a family of compact operators. That means the pair is a Kasparov module. Moreover, the operator is actually $G$-equivariant, and hence the pair gives an equivariant cycle.

One can check that the above class is a Kasparov product of $[\sigma_D]$ and $[d_\xi]$, by a simple computation using the Leibniz rule.

We would like to compute $\{\ker(\partial_x')\}_{x\in X}$. For this aim, we may compute the square of the operator. In the following, $c^*(e_k)$ denotes the Clifford multiplication from the left: $e_k\cdot$. This notation is compatible with the original $c$, because $\Cl_+$ is isomorphic to $S^*\otimes S$ as $(\Cl_+,\Cl_+)$-bimodules.

\begin{align*}
(\partial_x')^2
& = \id\otimes \id\otimes \sum_k\xi_k^2\otimes \id\otimes \id -
\sum_k\id\otimes  c(e_k)\otimes \xi_k\cdot\frac{\partial}{\partial \xi_k}\otimes  c^*(e_k)\otimes \id \\
&\ \ \ +\sum_k\id\otimes  c(e_k)\otimes \frac{\partial}{\partial \xi_k}\cdot \xi_k\otimes  c^*(e_k)\otimes \id +\id\otimes \id\otimes \sum_k\bra{-\frac{\partial^2}{\partial \xi_k^2}}\otimes \id\otimes \id\\
&= \id\otimes \id\otimes \sum_k\bra{-\frac{\partial^2}{\partial \xi_k^2}+\xi_k^2}\otimes \id\otimes \id+\sum_k\id\otimes c(e_k)\otimes \id\otimes  c^*(e_k)\otimes \id.
\end{align*}

Under the natural identification $S_x\otimes S_x^*\cong \Cl_-(T_x^*X)$, the element $c(e_k)\otimes c^*(e_k)$ defines a map $\Cl_-(T_x^*X)\ni X\mapsto e_k\cdot (-1)^{\partial X} X\cdot e_k$. In fact,
$$c(e_k)\otimes  c^*(e_k)(s\otimes f)=(-1)^{\partial s}\bbbra{
c(e_k)(s)\otimes c^*(e_k)f} = (-1)^{\partial s+\partial f}\bbbra{e_k\cdot s\otimes f\cdot e_k}.$$
Then, $1\in \Cl_-(T_x^*X)$ is the lowest weight vector of $\sum_kc(e_k)\otimes c^*(e_k)$, whose weight is $-n$. Such vector is unique up to scalar multiplication. On the other hand, $\sum_k\bra{-\frac{\partial^2}{\partial \xi_k^2}+\xi_k^2}$ has the one-dimensional lowest weight space whose weight is $n$. Therefore the family of the kernel of $\partial'_x$'s can be naturally identified with $E\otimes S$.
\end{proof}

\begin{pro}
$[D]=[\sigma_D^{Cl}]\otimes[d_X]$.
\end{pro}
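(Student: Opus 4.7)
The plan is to apply the Kucerovsky criterion (Proposition \ref{Kucs criterion}) to an explicit candidate for the Kasparov product. First I identify the interior tensor product module
$C_0(X, E \otimes S) \otimes_{Cl_\tau(X)} L^2(X, \Cl_+(T^*X)) \cong L^2(X, E \otimes S)$:
fiberwise, the right Hilbert $\Cl_+(T^*_xX)$-module structure on $E_x \otimes S_x$ from Lemma \ref{lem spinor is hilb. cl. module} gives $(E_x \otimes S_x) \otimes_{\Cl_+(T^*_xX)} \Cl_+(T^*_xX) \cong E_x \otimes S_x$, and the $L^2$-norm on $\Cl_+(T^*X)$ combines with the pointwise $\Cl_+$-valued inner product on $E \otimes S$ to recover the usual $L^2$-norm on sections of $E \otimes S$. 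The candidate operator on the tensor product is the ``bare'' Dirac operator $\tilde{D} := \sum_k \id_E \otimes c(e_k) \otimes \nabla^{E\otimes S}_{e_k}$; by the standing assumption $D = \tilde{D} + h$ with $h$ a bounded self-adjoint section commuting with the Clifford action, so the straight-line homotopy $D_t := \tilde{D} + t h$ gives an operator homotopy of $G$-equivariant Kasparov modules, showing $[D] = [\tilde{D}]$ in $KK_G(C_0(X), \bb{C})$.

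Next I verify the three conditions of Proposition \ref{Kucs criterion} for the pair $(L^2(X, E\otimes S), \tilde{D})$, viewed as a product of $[\sigma_D^{Cl}] = [(C_0(X, E\otimes S), 0)]$ and $[d_X]$. The domain and positivity conditions are immediate because the first factor carries the zero operator: $\dom(\tilde{D})$ is automatically contained in the full module, which is $\dom(0 \otimes \id)$, and the graded commutator $[0 \otimes \id, \tilde{D}]$ vanishes, hence is bounded below by $0$. For the commutator condition, take $v$ in the dense subspace $C_c^\infty(X, E\otimes S) \subset C_0(X) \cdot C_0(X, E\otimes S)$. Writing $T_v(\phi) = v \cdot \phi$ under the bimodule identification and applying the Leibniz rule together with the flatness of the Clifford connection, one obtains
\[
\tilde{D}(v \cdot \phi) = \Bigl(\sum_k c(e_k) \nabla^{E\otimes S}_{e_k} v\Bigr) \cdot \phi \,\pm\, T_v(\partial \phi),
\]
so the cross-commutator in the matrix of Proposition \ref{Kucs criterion} reduces to the bounded multiplication operator $T_w$ with $w := \sum_k c(e_k) \nabla^{E\otimes S}_{e_k} v$, which is a compactly supported smooth section of $E\otimes S$. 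Equivariance of $\tilde{D}$ is inherited from $G$-invariance of the metric and of $\nabla^{E\otimes S}$.

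The main obstacle is the sign bookkeeping in the Leibniz computation above: one must carefully track how the left Clifford multiplication $c(e_k)$ intertwines with the right $\Cl_+$-action $s \cdot \xi = s \circ c^*(\xi)$ of Lemma \ref{lem spinor is hilb. cl. module}, and match $\sum_k (c(e_k) v) \cdot \partial_{e_k}\phi$ with $\pm v \cdot (\partial \phi) = \pm \sum_k v \cdot \widehat{e_k}\partial_{e_k}\phi$ up to the correct grading sign coming from the definition $\widehat{e_k}\xi = (-1)^{\partial \xi}\xi \cdot e_k$. Once this identification is in place, Proposition \ref{Kucs criterion} yields $[\tilde{D}] = [\sigma_D^{Cl}] \otimes_{Cl_\tau(X)} [d_X]$, and combined with the homotopy $[D] = [\tilde{D}]$ we conclude $[D] = [\sigma_D^{Cl}] \otimes [d_X]$.
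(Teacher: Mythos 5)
Your argument matches the paper's: identify $L^2(X, E\otimes S)$ as the interior tensor product $C_0(X, E\otimes S)\otimes_{Cl_\tau(X)} L^2(X,\Cl_+(T^*X))$ and verify Kucerovsky's criterion (Proposition \ref{Kucs criterion}), with the domain and positivity conditions trivial because the first factor carries the zero operator. The preliminary operator homotopy removing the bounded potential $h$ is harmless but not needed---since $h$ is bounded and commutes with the Clifford action, the cross-commutator condition already holds for $D$ itself---and the sign bookkeeping you flag at the end is precisely the verification the paper leaves to the reader.
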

\begin{proof}
Since $S\otimes_{Cl}(S^*\otimes S)\cong S$, $L^2(X,E\otimes S)\cong C_0(X,E\otimes S)\otimes_{Cl_\tau(X)} L^2(X,\Cl_+(T^*X))$. Once we notice this isomorphism, we only have to check three conditions in Proposition \ref{Kucs criterion}. We leave it to the reader.
\end{proof}

Thanks to Section 2.2, we obtain a product formula, in our language. See Proposition 2.11 in \cite{T2} also.

\begin{pro}\label{product formula}
Suppose that $X_i,G_i,S_i,E_i$, and $D_i$ satisfy the conditions in Problem \ref{problem for Kasparov}, for $i=1,2$. Let $X:=X_1\times X_2$, $G:=G_1\times G_2$, $S:=S_1\boxtimes S_2$, $E:=E_1\boxtimes E_2$ and $D:=D_1\otimes \id+\id\otimes D_2$. Then, at the module level,
\begin{align*}
[D]&=[D_1]\otimes [D_2]; \\
[\sigma_D^{Cl}]&= [\sigma_{D_1}^{Cl}]\otimes [\sigma_{D_2}^{Cl}]; \\
[d_X]&= [d_{X_1}]\otimes [d_{X_2}] ; \\
[c_{X}]&= [c_{X_2}]\otimes [c_{X_2}].
\end{align*}
Moreover, $j_G([D])=j_{G_1}([D_1])\otimes j_{G_2}([D_2])$ and $\ind(D)=\ind(D_1)\otimes \ind(D_2)$.
\end{pro}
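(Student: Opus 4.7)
The plan is to verify each equality as an identification of (unbounded) Kasparov modules and then derive the descent and index formulas as formal consequences. The central tool is the exterior tensor product construction for unbounded equivariant Kasparov modules (Lemma \ref{division of the problem 1}) together with its compatibility with Kasparov products and with the descent homomorphism, all established in Section 2.2.

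For $[D]=[D_1]\otimes[D_2]$: under the canonical isomorphisms of Hilbert spaces
$$L^2(X_1\times X_2,\,E_1\boxtimes E_2\otimes S_1\boxtimes S_2)\;\cong\;L^2(X_1,E_1\otimes S_1)\otimes L^2(X_2,E_2\otimes S_2),$$
the assumption $D=D_1\otimes\id+\id\otimes D_2$ is exactly the form handled by Lemma \ref{division of the problem 1}, so the identification is immediate. For $[d_X]=[d_{X_1}]\otimes[d_{X_2}]$: the graded isomorphism $\Cl_+(T^*X)\cong \Cl_+(T^*X_1)\otimes\Cl_+(T^*X_2)$ decomposes $L^2(X,\Cl_+(T^*X))$ as an external tensor product, and the operator $\partial=\sum_k\widehat{e_k}\partial_k$ splits, since choosing coordinates on $X_1$ and $X_2$ gives coordinates on $X$ for which the sum decomposes into the $X_1$-directions and the $X_2$-directions. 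For $[\sigma_D^{Cl}]$: I would use the explicit description from the previous proposition, so that $[\sigma_D^{Cl}]$ is represented by $(C_0(X,E\otimes S),0)$, and similarly for the factors; the product of zero operators is zero, and the tensor factorization of $C_0(X,E\otimes S)$ as a $Cl_\tau(X_1)\otimes Cl_\tau(X_2)$-module is clear.

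For $[c_X]=[c_{X_1}]\otimes[c_{X_2}]$: I would simply choose a cut-off function on $X$ of product form $c_X(x_1,x_2):=c_{X_1}(x_1)c_{X_2}(x_2)$, verify it satisfies the cut-off condition because the Haar measure on $G$ factors as a product of Haar measures, and then observe that the corresponding projection $[c_X](g_1,g_2,x_1,x_2)$ factors as the product of the projections for $[c_{X_1}]$ and $[c_{X_2}]$ under the identification $C_0(X)\rtimes G\cong(C_0(X_1)\rtimes G_1)\otimes(C_0(X_2)\rtimes G_2)$.

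The descent formula $j_G([D])=j_{G_1}([D_1])\otimes j_{G_2}([D_2])$ is the content of the last lemma of Section 2.2 applied to the pair $(L^2(X,E\otimes S),D)$. Finally, for the index, I would combine all of the above: using the Kasparov index theorem $\ind(D)=[c_X]\otimes_{C_0(X)\rtimes G}j_G([D])$, and the analogous formulas for the factors, together with associativity and commutativity of the exterior Kasparov product,
$$\ind(D)=([c_{X_1}]\otimes[c_{X_2}])\otimes(j_{G_1}([D_1])\otimes j_{G_2}([D_2]))=\ind(D_1)\otimes\ind(D_2),$$
where the middle step uses the compatibility of the exterior product with interior Kasparov products noted after Lemma \ref{division of the problem 1}. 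The main (mild) obstacle is bookkeeping with $\bb{Z}_2$-gradings: all of the tensor products above are graded, and the identification of operators such as the Dirac operator and the operators $\widehat{e_k}$ on product bundles requires being careful with the Koszul sign convention, but once the decompositions are set up correctly the verifications reduce to applications of results already proved in Section 2.2.
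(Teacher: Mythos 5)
Your proposal is correct and uses exactly the machinery the paper sets up for this purpose: the paper gives no explicit proof of Proposition \ref{product formula}, but Section~2.2 (Lemma \ref{division of the problem 1}, the compatibility of exterior tensor products with interior Kasparov products, and the compatibility with the descent homomorphism) is built precisely so that each identity reduces to the obvious identification of Hilbert modules and operators, as you carry out. You also implicitly repair the typo in the statement, $[c_X]=[c_{X_1}]\otimes[c_{X_2}]$ rather than $[c_{X_2}]\otimes[c_{X_2}]$.

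One small refinement: for $\ind(D)=\ind(D_1)\otimes\ind(D_2)$ you route through the identity $\ind(D)=[c_X]\otimes j_G([D])$ and then reassociate; since Kasparov products are not unique at the module level, this argument is safest if you note that the identity $\ind(D)=[c]\otimes j^G([D])$ was itself established as an honest isomorphism of modules in Section~2.4. It is more direct (and avoids the issue entirely) to observe that under $C_c(X,E\otimes S)\cong C_c(X_1,E_1\otimes S_1)\otimes C_c(X_2,E_2\otimes S_2)$, $\bb{C}\rtimes(G_1\times G_2)\cong(\bb{C}\rtimes G_1)\otimes(\bb{C}\rtimes G_2)$, and $D=D_1\otimes\id+\id\otimes D_2$, the Hilbert $\bb{C}\rtimes G$-module defining $\ind(D)$ is literally the exterior tensor product of the modules defining $\ind(D_1)$ and $\ind(D_2)$. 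Either way your conclusion is correct.
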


Consequently, the four equalities $[D_i]=[\sigma_{D_i}^{Cl}]\otimes [d_{X_i}]$ and $\ind(D_i)=[c_{X_i}]\otimes j_{G_i}([D_i])$ for $i=1,2$, imply the index theorem on $X$: $[D]=[\sigma_{D}^{Cl}]\otimes [d_{X}]$ and $\ind(D)=[c_{X}]\otimes j_{G}([D])$. We ``use'' this property to define the objects on the total space in Section 3.

\subsection{A reformulation of the Kasparov index theorem}

As we have pointed out in Introduction, the $C_0$-algebra for infinite-dimensional space is trivial. However, another noncommutative algebra which is related with $C_0$, can be generalized to infinite-dimensional situation. In fact, the HKT algebra is  such an algebra.

In order to use the HKT algebra, we reformulate the Kasparov index theorem. We will avoid using the $C_0$-algebra. We recall the following $KK_G$-equivalence.

\begin{lem}\label{C0 is KK-equiv to Cltau}
For an even-dimensional and $ G $-equivariantly $Spin^c$-manifold $X$, fix a $G$-equivariant Spinor bundle $S$. By definition, $S_x$ is a left $\Cl_-(T^*_xX)$-module at each $x\in X$. At the same time, it is a right $\Cl_+(T^*_xX)$-module. Considering the construction Lemma \ref{lem spinor is hilb. cl. module} fiberwisely, we obtain a right Hilbert $Cl_\tau(X)$-module $C_0(X,S)$ equipped with a left $C_0(X)$-action. It determines a $KK_G$-element 
$$[S]:=[(C_0(X,S),0)]\in KK_G(C_0(X),Cl_\tau(X)).$$
Similarly, we can define $[S^*]=[(C_0(X,S^*),0)]\in KK_G(Cl_\tau(X),C_0(X))$.

Then, $[S]\otimes_{Cl_\tau(X)}[S^*]= 1_{C_0(X)}$ and $[S^*]\otimes_{C_0(X)}[S]= 1_{Cl_\tau(X)}$. Consequently, $C_0(X)$ is $KK_G$-equivalent to $Cl_\tau(X)$.
\end{lem}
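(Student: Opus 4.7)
The plan is to view this as the $C_0(X)$-globalisation of the classical Morita equivalence between $\bb{C}$ and $\Cl_+(V)$ for an even-dimensional Euclidean space $V$. At each point $x \in X$, the $Spin^c$ condition and even-dimensionality imply that $c^*$ induces a $*$-isomorphism $\Cl_+(T_x^*X) \cong \End(S_x^*)$, so $S_x^*$ is the unique irreducible left $\Cl_+(T_x^*X)$-module and the pair $(S_x, S_x^*)$ furnishes an imprimitivity bimodule between $\bb{C}$ and $\Cl_+(T_x^*X)$. Globalising this fiberwise picture continuously and $G$-equivariantly yields the desired $KK_G$-equivalence.

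First, I would check that $[S]$ and $[S^*]$ are genuine equivariant Kasparov modules in the sense of Definition \ref{dfn of equivariant KK}. With operator zero, every commutator $[a,F]$ and every $a(g(F)-F)$ vanishes and the norm-continuity condition is trivial, so the only nontrivial condition is that $a \in C_0(X)$ acts as a $Cl_\tau(X)$-compact operator on $C_0(X,S)$, and symmetrically for $[S^*]$. This follows fiberwise from $S_x$ being a finitely generated projective right $\Cl_+(T_x^*X)$-module --- a consequence of irreducibility and finite-dimensionality --- together with $a$ vanishing at infinity on $X$. The $G$-equivariance is inherited from that of $S$.

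Second, I would verify the two interior tensor product identities fiberwise and then globalise. For $[S]\otimes_{Cl_\tau(X)}[S^*]$, the pairing
\begin{equation*}
S_x \otimes_{\Cl_+(T_x^*X)} S_x^* \longrightarrow \bb{C}, \qquad s \otimes f \longmapsto f(s),
\end{equation*}
is a well-defined $(\bb{C},\bb{C})$-bimodule isomorphism, the required $\Cl_+$-balancing $f(s\cdot v) = (v\cdot f)(s)$ being immediate from $s\cdot v = s\circ c^*(v)$. For $[S^*]\otimes_{C_0(X)}[S]$, Lemma \ref{lem spinor is hilb. cl. module} already supplies the identification: the $\Cl_+(T_x^*X)$-valued inner product $\inpr{s_1}{s_2}{\Cl_+} = s_1^*\otimes s_2 \in S_x^*\otimes S_x$ exhibits an explicit map $S_x^*\otimes_\bb{C} S_x \to \Cl_+(T_x^*X)$, and its being a $(\Cl_+,\Cl_+)$-bimodule isomorphism follows because the left action $v\cdot f = c^*(v)f$ and the right action $s\cdot v = s\circ c^*(v)$ correspond respectively to left and right multiplication in $\End(S_x^*) \cong \Cl_+(T_x^*X)$. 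Continuity and $G$-equivariance in $x$ then produce
\begin{equation*}
C_0(X,S)\otimes_{Cl_\tau(X)} C_0(X,S^*) \cong C_0(X), \qquad C_0(X,S^*)\otimes_{C_0(X)} C_0(X,S) \cong Cl_\tau(X)
\end{equation*}
as $G$-equivariant $C^*$-Hilbert bimodules.

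Finally, I would invoke Proposition \ref{Kucs criterion}: with both factor operators zero, all three product conditions on the candidate operator $D=0$ hold trivially, so each Kasparov product is represented by the interior tensor product equipped with operator zero, and by the above identifications these represent $1_{C_0(X)}$ and $1_{Cl_\tau(X)}$. The main obstacle is really the second step, namely verifying that the fiberwise Morita isomorphisms assemble into $C^*$-bimodule isomorphisms respecting the inner products, gradings and bimodule actions uniformly in $x$; however, because Lemma \ref{lem spinor is hilb. cl. module} is itself formulated fiberwise and $S$ is a smooth $G$-equivariant bundle, continuity and equivariance are essentially automatic, and the technical burden reduces to checking that the bookkeeping matches.
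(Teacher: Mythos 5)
Your proof is correct. The paper does not actually supply a proof of this lemma---it is stated with the preface ``We recall the following $KK_G$-equivalence,'' i.e.\ it is taken as a known Morita-equivalence fact---and what you have written is precisely the standard argument one would give: the fiberwise imprimitivity bimodule picture (using that $S_x^*$ is the unique irreducible left $\Cl_+(T_x^*X)$-module in even dimension), the observation that with zero operators the Kasparov-module conditions reduce to $C_0(X)$ acting by $Cl_\tau(X)$-compacts (which follows from finite rank fiberwise plus vanishing at infinity), the two balanced tensor-product isomorphisms $S_x\otimes_{\Cl_+} S_x^*\cong\bb{C}$ and $S_x^*\otimes_\bb{C} S_x\cong\Cl_+(T_x^*X)$, and the triviality of the Kasparov-product criterion when all operators vanish. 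The only point worth tightening in a write-up is that the $\Cl_+$-balancing identity $f(s\cdot v)=(v\cdot f)(s)$ does need the identification $S\cong S^{**}$ used in Lemma \ref{lem spinor is hilb. cl. module} to be unwound carefully, but as you note the bookkeeping does close.
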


Recall one more operation. Suppose that $C$ is a $\bb{Z}_2$-graded separable $C^*$-algebra. For simplicity, we assume that $C$ is nuclear.

\begin{dfn}
The homomorphism $\tau_C:KK_ G (A,B)\to KK_ G (C\otimes A,C\otimes B)$ is given by
$$(E,D)\mapsto (C\otimes E,\id\otimes D).$$
\end{dfn}

In fact, $C$ is always $\scr{S}$ in this paper, where $\scr{S}$ is $C_0(X)$ equipped with the grading homomorphism $\epsilon$ given by $\epsilon(f)(x):=f(-x)$. Let $\SC(X)$ be the graded tensor product $\scr{S}\otimes Cl_\tau(X)$. The same symbol denotes the HKT algebra.

Let us reformulate the Kasparov index theorem by using these operations. Put
$$[\widetilde{D}]:=\tau_\scr{S}\bra{[S^*]\otimes [D]}\in KK_ G (\SC(X),\scr{S});$$
$$[\widetilde{\sigma_D^{Cl}}]:=\tau_\scr{S}\bra{[S^*]\otimes [\sigma_D^{Cl}]}\in KK_ G (\SC(X),\SC(X));$$
$$[\widetilde{d_X}]:=\tau_\scr{S}\bra{[d_X]}\in KK_ G (\SC(X),\scr{S});$$
$$[\widetilde{c}]:=\tau_\scr{S}\bra{[c]\otimes j^G ([S])}\in KK(\scr{S},\SC(X)\rtimes G).$$
Using these reformulated $KK$-elements, we can reformulate the index theorem.

\begin{pro}
The following two equalities hold:
$$\tau_\scr{S}(\ind(D))=[\widetilde{c}]\otimes_{\SC(X)\rtimes G }j^G ([\widetilde{D}]),$$
$$[\widetilde{D}]=[\widetilde{\sigma_D^{Cl}}]\otimes_{\SC(X)}[\widetilde{d_X}].$$
As a result, $\tau_\scr{S}(\ind(D))$ is determined by the topological data $[\widetilde{\sigma_D^{Cl}}]$.
\end{pro}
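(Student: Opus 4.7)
The plan is to reduce both equalities to the original Kasparov index theorem by pure $KK$-theoretic manipulation, using three basic inputs: (i) $\tau_\scr{S}$ is a ring homomorphism with respect to the Kasparov product, i.e.\ $\tau_\scr{S}(x\otimes y)=\tau_\scr{S}(x)\otimes \tau_\scr{S}(y)$, (ii) the descent $j^G$ commutes with $\tau_\scr{S}$ (since $\scr{S}$ carries the trivial $G$-action, $\scr{S}\rtimes G\cong \scr{S}\otimes C^*(G)$), and (iii) the $KK_G$-equivalence of Lemma \ref{C0 is KK-equiv to Cltau}: $[S]\otimes_{Cl_\tau(X)}[S^*]=1_{C_0(X)}$ and $[S^*]\otimes_{C_0(X)}[S]=1_{Cl_\tau(X)}$. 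None of the analytic content of the Kasparov theorem has to be reopened; everything should follow by inserting and cancelling identities.

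For the second equality, I would start from $[D]=[\sigma_D^{Cl}]\otimes_{Cl_\tau(X)}[d_X]$, compose with $[S^*]$ from the left over $C_0(X)$, and apply $\tau_\scr{S}$:
\begin{align*}
[\widetilde{D}]&=\tau_\scr{S}\bra{[S^*]\otimes_{C_0(X)}[D]}\\
&=\tau_\scr{S}\bra{[S^*]\otimes_{C_0(X)}[\sigma_D^{Cl}]\otimes_{Cl_\tau(X)}[d_X]}\\
&=\tau_\scr{S}\bra{[S^*]\otimes_{C_0(X)}[\sigma_D^{Cl}]}\otimes_{\SC(X)}\tau_\scr{S}\bra{[d_X]}\\
&=[\widetilde{\sigma_D^{Cl}}]\otimes_{\SC(X)}[\widetilde{d_X}].
\end{align*}
The use of (i) in the third line needs the intermediate algebra $Cl_\tau(X)$ to become $\scr{S}\otimes Cl_\tau(X)=\SC(X)$, which is precisely the definition of $\tau_\scr{S}$ on Kasparov products.

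For the first equality, starting from $\ind(D)=[c]\otimes_{C_0(X)\rtimes G} j^G([D])$, the trick is to insert the identity $1_{C_0(X)\rtimes G}=j^G([S]\otimes_{Cl_\tau(X)}[S^*])=j^G([S])\otimes_{Cl_\tau(X)\rtimes G} j^G([S^*])$ between $[c]$ and $j^G([D])$, then bracket the first two factors and the last two:
\begin{align*}
\tau_\scr{S}(\ind(D))
&=\tau_\scr{S}\bra{[c]\otimes j^G([S])\otimes j^G([S^*])\otimes j^G([D])}\\
&=\tau_\scr{S}\bra{[c]\otimes j^G([S])}\otimes_{\SC(X)\rtimes G}\tau_\scr{S}\bra{j^G\bra{[S^*]\otimes [D]}}\\
&=[\widetilde{c}]\otimes_{\SC(X)\rtimes G} j^G\bra{\tau_\scr{S}\bra{[S^*]\otimes[D]}}\\
&=[\widetilde{c}]\otimes_{\SC(X)\rtimes G} j^G([\widetilde{D}]),
\end{align*}
where in the penultimate step I use (ii), the compatibility $\tau_\scr{S}\circ j^G=j^G\circ \tau_\scr{S}$.

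The only real bookkeeping obstacle will be to verify that the intermediate algebras match up correctly so that the Kasparov products actually compose in the indicated orders, and to justify the two compatibility statements (i) and (ii) in the equivariant unbounded setting adopted here; both are standard but should be cited or invoked as facts on the $KK_G$-level. Once those are in hand, the proposition is a formal consequence of the original Kasparov theorem together with Lemma \ref{C0 is KK-equiv to Cltau}.
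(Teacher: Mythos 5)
Your proposal is correct and takes essentially the same approach as the paper: both reduce to the original Kasparov index theorem together with the $KK_G$-equivalence of Lemma \ref{C0 is KK-equiv to Cltau}, using multiplicativity of $\tau_\scr{S}$ over Kasparov products, multiplicativity of $j^G$, and the compatibility $\tau_\scr{S}\circ j^G=j^G\circ\tau_\scr{S}$ (the paper simply runs each chain of equalities in the opposite direction, while you also state the three needed compatibility facts explicitly up front, which is a cleaner exposition of the same argument).
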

\begin{proof}
The proof is done by several formal computations in $KK_G$-theory.
\begin{align*}
[\widetilde{\sigma_D^{Cl}}]\otimes_{\SC(X)}[\widetilde{d_X}]
&=\tau_\scr{S}\bra{[S^*]\otimes_{C_0(X)} [\sigma_D^{Cl}]}\otimes_{\SC(X)} \tau_\scr{S}\bra{[d_X]} \\
&=\tau_\scr{S}\bra{[S^*]\otimes_{C_0(X)} [\sigma_D^{Cl}]\otimes_{Cl_\tau(X)} [d_X]} \\
&=\tau_\scr{S}\bra{[S^*]\otimes_{C_0(X)} [D]} \\
&= [\widetilde{D}].
\end{align*}
\begin{align*}
[\widetilde{c}]\otimes_{\SC(X)\rtimes G }j^G ([\widetilde{D}]) &=
\tau_\scr{S}\bra{[c]\otimes_{C_0(X)\rtimes  G } j^G ([S])}\otimes_{\SC(X)\rtimes G } j^G  \bra{\tau_\scr{S}\bra{[S^*]\otimes_{C_0(X)} [D]}} \\
&= \tau_\scr{S}\bra{[c]\otimes_{C_0(X)\rtimes  G } j^G ([S])\otimes_{Cl_\tau(X)\rtimes G}j^G  \bra{[S^*]\otimes_{C_0(X)} [D]}} \\
&=\tau_\scr{S}\bra{[c]\otimes_{C_0(X)\rtimes  G } j^G \bbra{[S]\otimes_{Cl_\tau(X)}[S^*]\otimes_{C_0(X)} [D]}} \\
&= \tau_\scr{S}\bra{[c]\otimes_{C_0(X)\rtimes  G } j^G \bra{[D]}} \\
&= \tau_\scr{S}(\ind(D)).
\end{align*}
\end{proof}

We can easily describe the modified Kasparov modules, by using $S^*\otimes S\cong \End(S^*)\cong \Cl_+$.

\begin{lem}\label{modified Kas modules}
The algebra bundle $\Cl_+(T^*X)$ acts on itself from the both sides. By this action,
$$[S^*]\otimes [D]=[(L^2(X,E\otimes \Cl_+(T^*X)),\id_{S^*}\otimes D)]\in KK_G(Cl_\tau(X),\bb{C});$$
$$[S^*]\otimes [\sigma_D^{Cl}]=[(C_0(X,E\otimes \Cl_+(T^*X)),0)]\in KK_G(Cl_\tau(X),Cl_\tau(X)).$$

\end{lem}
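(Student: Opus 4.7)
The plan is to unwind both Kasparov products at the level of representative modules, using the fiberwise bimodule isomorphism $S_x^*\otimes S_x\cong \End_\bb{C}(S_x^*)\cong \Cl_+(T_x^*X)$ of $(\Cl_+,\Cl_+)$-bimodules. Since $[S^*]$ is represented by a Kasparov module with zero operator, both products admit very simple candidates: for the second identity the operator is zero, and for the first identity it is $\id_{S^*}\otimes D$.

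For the second identity I would handle the module first. By the preceding proposition, $[\sigma_D^{Cl}]$ is represented by $(C_0(X,E\otimes S),0)$, and $[S^*]$ by $(C_0(X,S^*),0)$, so the internal tensor product is
\[
C_0(X,S^*)\otimes_{C_0(X)}C_0(X,E\otimes S)\cong C_0(X,E\otimes S^*\otimes S),
\]
which the fiberwise isomorphism identifies with $C_0(X,E\otimes\Cl_+(T^*X))$. One then checks that under this identification the left $Cl_\tau(X)$-action (coming from the left $\Cl_+$-module structure on $S^*$) and the right $Cl_\tau(X)$-action (coming from Lemma \ref{lem spinor is hilb. cl. module} on $S$) correspond to the natural two-sided multiplication on $\Cl_+(T^*X)$. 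The sign $(-1)^{\partial s}$ in the right action of Lemma \ref{lem spinor is hilb. cl. module} cancels correctly against the Koszul sign rule of the graded tensor product, which is the only point requiring care. Since both operators vanish, there is nothing more to verify.

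For the first identity I would apply Proposition \ref{Kucs criterion} to the representatives $(C_0(X,S^*),0)$ for $[S^*]$ and $(L^2(X,E\otimes S),D)$ for $[D]$, with candidate product $(L^2(X,E\otimes\Cl_+(T^*X)),\id_{S^*}\otimes D)$, the module being the $L^2$-version of the identification above. All three hypotheses of Kuc's criterion are essentially free because the operator on the first factor is zero: the commutator condition reduces to the boundedness of $[\id_{S^*}\otimes D,T_v]$ on a dense subspace, which holds for $v\in C_c^\infty(X,S^*)$ by the Leibniz rule applied to the prescribed form $D=\sum_k\id_E\otimes c(e_k)\nabla^{E\otimes S}_{e_k}+h$; the domain inclusion $\dom(\id_{S^*}\otimes D)\subseteq \dom(0\otimes\id)$ is automatic; and the positivity $[0\otimes\id,\id_{S^*}\otimes D]=0\geq 0$ is trivial. $G$-equivariance is inherited from that of $D$.

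The proof is thus essentially bookkeeping, and the main (minor) obstacle is verifying that $\id_{S^*}\otimes D$ is a well-defined, regular, self-adjoint operator on $L^2(X,E\otimes\Cl_+(T^*X))$ that commutes with the right $Cl_\tau(X)$-action; this follows from the fact that $D$ acts only on the $E\otimes S$ tensor factor and does not touch the right $\Cl_+$-multiplication, once the grading conventions above are pinned down.
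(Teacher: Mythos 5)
The paper states this lemma without proof, describing it as something one ``can easily describe,'' so there is no authorial argument to compare against; your proposal supplies exactly the routine bookkeeping the paper has in mind, and it is correct. The module identification $C_0(X,S^*)\otimes_{C_0(X)}C_0(X,E\otimes S)\cong C_0(X,E\otimes \Cl_+(T^*X))$ with the two-sided $\Cl_+$-multiplication, together with Kucerovsky's criterion applied with $D_1=0$ so that the domain and positivity conditions are automatic, is the natural route. One small imprecision: the sign $(-1)^{\partial f}$ occurs in the definition of $c^*(v)\cdot f$ on $S^*$, not in the right action $s\cdot v=s\circ c^*(v)$ of Lemma \ref{lem spinor is hilb. cl. module}, so the sign bookkeeping should be located there. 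Also worth making explicit is that ``$\id_{S^*}\otimes D$'' is really the Dirac operator on $E\otimes\Cl_+(T^*X)$ with Clifford multiplication $\id_{S^*}\otimes c(\cdot)$ and the tensor-product connection (including $\nabla^{S^*}$), since $D$ contains derivatives that necessarily hit the $S^*$-factor via Leibniz; it is precisely this that makes the commutator $[\,\id_{S^*}\otimes D,\,T_v\,]$ nonzero but bounded, as you observe.
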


Then, the reformulated $KK$-elements $[\widetilde{D}]$ and $[\widetilde{\sigma_D^{Cl}}]$ can be easily described. Our construction in Section 5 is modeled on these formulas.

\subsection{Twisted equivariant matters}

In our problem \ref{Main problem}, a $U(1)$-central extension appears. In the finite-dimensional setting, it is possible to regard something $\tau$-twisted $G$-equivariant, as something $G^\tau$-equivariant satisfying a certain condition. On the other hand, in the infinite-dimensional setting, the situation is completely different: We have defined a substitute of the $\tau$-twisted group $C^*$-algebra of $LT$, but it seems to be too difficult to construct a substitute of untwisted one which should be isomorphic to ``$C_0(\widehat{LT})$'' because of the Pontryagin duality. Therefore, we must study $\tau$-twisted equivariant theory without making reference to $G^\tau$-equivariant one.

Let us recall the twisted equivariant $KK$-theory for special cases. For details, consult with \cite{T2}.
Let 
$$1\to U(1) \xrightarrow{i} G^\tau\xrightarrow{p} G\to 1$$
be a $U(1)$-central extension of $G$. The homomorphisms are supposed to be smooth. Let $A$ and $B$ be $G$-$C^*$-algebras. Through $p$, we can regard $A$ and $B$ as $G^\tau$-algebras.

\begin{dfn}
A $\tau$-twisted $G$-action on a vector space, a Hilbert module or a vector bundle, is a $G^\tau$-action $\rho$ satisfying that $\rho(i(z))=z\id$ for any $z\in U(1)$.

\end{dfn}

\begin{dfn}
Let $k\in \bb{Z}$.
A {\bf $k\tau$-twisted $G$-equivariant Kasparov $(A,B)$-module} is a $G^\tau$-equivariant Kasparov module $(E,F)$ such that $i(z)\in i(U(1))\subseteq G^\tau$ acts on $E$ as $z^k\id_E$. The set of homotopy classes of such Kasparov modules is an abelian group, and denoted by $KK_G^{k\tau}(A,B)$. The case $k=1$ is standard: We always assume that $k=1$ by replacing $\tau$ with its tensor power $k\tau$. This is the reason why we put $KK_G^{\tau}(A,B):=KK_G^{1\tau}(A,B)$.

For any $G^\tau$-equivariant Kasparov module $(E,F)$, we can consider the averaging procedure with respect to the $U(1)$-action; we may assume that $F$ is actually $U(1)$-equivariant. Decompose $E$ with respect to the weight of the $U(1)$-action as $E=\prod_n E_n$, and then $F$ preserves this decomposition. Put $F=\prod_n F_n$.
Consequently, $KK_{G^\tau}(A,B)$ is decomposed as $\bigoplus KK_G^{k\tau}(A,B)$.

One can define the unbounded picture version of it in the obvious way. We will use this picture in the following.
\end{dfn}

Needless to say, $KK_{G^\tau}$ has the Kasparov product: $KK_{G^\tau}(A,C) \times KK_{G^\tau}(C,B)\to KK_{G^\tau}(A,B)$. Then, how is the restriction to $KK_G^{k\tau}$'s? The answer is the following. See \cite{T2} also.

\begin{lem}
The Kasparov product of $x\in KK_G^{k\tau}(A,C)$ and $y\in KK_G^{l\tau}(C,B)$ takes value in $KK_G^{(k+l)\tau}(A,B)$. In particular, $KK^\tau_G(A,C)\otimes KK_G(C,B)\to KK^\tau_G(A,B)$ is defined.
\end{lem}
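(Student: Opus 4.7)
The plan is to reduce the statement to the standard Kasparov product in $KK_{G^\tau}$ by tracking the weight of the central $U(1)$, relying on the fact that the coefficient algebras are all untwistedly acted on.

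First, I would choose representatives $(E_1,F_1)$ of $x$ and $(E_2,F_2)$ of $y$ as $G^\tau$-equivariant Kasparov modules, so that the central $U(1)\subseteq G^\tau$ acts on $E_1$ as scalar multiplication by $z^k$ and on $E_2$ by $z^l$. Since $A$, $B$, $C$ are viewed as $G^\tau$-algebras via $p:G^\tau\to G$, the central $U(1)$ acts trivially on each of them. In particular, right multiplication by $C$ on $E_1$ and left multiplication by $C$ on $E_2$ are $U(1)$-equivariant, so the interior tensor product $E:=E_1\otimes_C E_2$ is a well-defined $G^\tau$-equivariant Hilbert $B$-module. On an elementary tensor the diagonal action reads
\[
z.(e_1\otimes e_2)=(z.e_1)\otimes (z.e_2)=z^k e_1\otimes z^l e_2=z^{k+l}(e_1\otimes e_2),
\]
so $U(1)$ acts on $E$ by the scalar $z^{k+l}$.

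Second, I would invoke the Kasparov product in $KK_{G^\tau}(A,B)$: the Connes--Skandalis theorem in the bounded picture (or the unbounded criterion of Proposition \ref{Kucs criterion}) produces an odd self-adjoint operator $F$ on $E$ such that $(E,F)$ is a $G^\tau$-equivariant Kasparov module representing $x\otimes_C y$. If $F$ is not already $U(1)$-equivariant, the averaging procedure recalled earlier in this subsection lets me replace it by an equivalent $U(1)$-equivariant operator without changing the underlying module. Since the module has $U(1)$-weight $k+l$, the class $[(E,F)]$ lies in the summand $KK_G^{(k+l)\tau}(A,B)$ of the decomposition $KK_{G^\tau}(A,B)=\bigoplus_n KK_G^{n\tau}(A,B)$. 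The second assertion is then the special case $k=1$, $l=0$.

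The main obstacle is not really an obstacle: there is no genuine analytic difficulty once one grants the existence of the Kasparov product in $KK_{G^\tau}$. The only substantive point is the interaction between the balanced tensor product and the central $U(1)$-action, and this interaction is forced by the centrality of $U(1)$ in $G^\tau$, which makes its action on $C$ trivial and thereby guarantees the diagonal weight formula $k+l$. All subtleties inherited from our relaxed equivariance conditions (boundedness of $g(D)-D$, norm continuity of $g\mapsto g(D)-D$, etc.) are already absorbed into the hypothesis that $x,y$ are represented by such modules, and they persist verbatim under the tensor product construction of Lemma \ref{division of the problem 1}.
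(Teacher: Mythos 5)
Your argument is correct and is essentially the intended one; the paper itself does not spell out a proof (it defers to \cite{T2}), but the content is exactly what you write: the balanced tensor product $E_1\otimes_C E_2$ inherits the diagonal $G^\tau$-action because $U(1)$ acts trivially on $C$ (its action factoring through $p$), the central $U(1)$ therefore acts on elementary tensors by $z^{k+l}$, and an averaging of the product operator over the compact $U(1)$ places the class in the weight-$(k+l)$ summand of $KK_{G^\tau}(A,B)=\bigoplus_n KK_G^{n\tau}(A,B)$. The only stray note is the closing reference to Lemma \ref{division of the problem 1}: that lemma concerns \emph{exterior} tensor products over a product group $G_1\times G_2$, whereas the Kasparov product here is an \emph{interior} tensor product over $C$ within the single group $G^\tau$, so it is not the right citation; the relevant fact is simply that Kasparov's theory for the locally compact group $G^\tau$ (Connes--Skandalis in the bounded picture, or Proposition \ref{Kucs criterion} in the unbounded one) furnishes the product in $KK_{G^\tau}$, after which the weight bookkeeping you did finishes the proof.
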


We can define the partial descent homomorphism as follows. For this aim, we need to introduce the twisted crossed products of algebras and modules.

\begin{dfn}[Definition 2.13 in \cite{T2}]
For a $G$-$C^*$-algbera $A$, let $A\rtimes_{k\tau}G$ be the completion of the subset $\bbra{a\in C_c(G^\tau, A)\mid a(zg)=z^ka(g)}$ in the $C^*$-algebra $A\rtimes G^\tau$. It is a direct summand as the $C^*$-algebras. Similarly, for a $\tau$-twisted $G$-equivariant $B$-Hilbert module $E$, let $E\rtimes_{k\tau}G$ be the completion of the subset $\bbra{e\in C_c(G^\tau, E)\mid e(zg)=z^ke(g)}$ in $E\rtimes G^\tau$ with respect to the $B\rtimes G^\tau$-valued inner product.
\end{dfn}

Let $(E,D)$ be a $k\tau$-twisted $G$-equivariant Kasparov $(A,B)$-module.
As proved in Lemma 2.16 in \cite{T2}, 
\begin{itemize}
\item $(A\rtimes_{m\tau}G)\cdot (E\rtimes_{n\tau}G)=0$ unless $m=n-k$.
\item $(E\rtimes_{n\tau}G)\cdot (B\rtimes_{m\tau}G)=0$ unless $m=n$.
\item For $e_1,e_2\in E\rtimes_{n\tau}G\subseteq E\rtimes G^\tau$, $\inpr{e_1}{e_2}{B\rtimes G^\tau}\in B\rtimes_{n\tau} G$.
\item $E\rtimes_{n\tau}G$ is orthogonal to $E\rtimes_{n'\tau}G$ if $n\neq n'$.
\end{itemize}

Therefore, the pair $(E\rtimes_{n\tau}G,\widetilde{D}|_{E\rtimes_{n\tau}G})$ turns out to be a Kasparov $(A\rtimes_{(n-k)\tau}G, B\rtimes_{n\tau}G)$-module. The following is actually defined at the level of modules.

\begin{dfn}
The {\bf partial descent homomorphism} is the correspondence
$$j_G^{n\tau}:KK_G^{k\tau}(A,B)\ni [(E,D)]\mapsto [(E\rtimes_{n\tau}G,\widetilde{D}|_{E\rtimes_{n\tau}G})]\in KK(A\rtimes_{(n-k)\tau}G, B\rtimes_{n\tau}G).$$
$j_G^\tau$ denotes $j_G^{1\tau}$.
\end{dfn}

The name ``partial descent homomorphism'' comes from the fact that $j_{G^\tau}=\sum_n j_G^{n\tau}$.

Let us describe the partial assembly map. Recall that $X$ is a complete $Spin^c$-manifold, and $G$ acts on $X$ isometrically, properly and cocompactly.
For this time, we assume that $E\otimes S$ is a $\tau$-twisted $G$-equivariant Clifford module bundle, and $D$ is a $G^\tau$-equivariant Dirac operator on $E\otimes S$. Then, $(L^2(X,E\otimes S),D)$ is a $\tau$-twisted $G$-equivariant $(C_0(X),\bb{C})$-module, and the analytic index is an element of $KK(\bb{C},\bb{C}\rtimes_\tau G)$. 

The Mishchenko line bundle $[c]$ is an element of $KK(\bb{C},C_0(X)\rtimes G)$, but it can be regarded as an element of $KK(\bb{C},C_0(X)\rtimes G^\tau)$ at the same time, by the projection onto the direct summand $C_0(X)\rtimes G^\tau \to C_0(X)\rtimes G$.
 Then we can define the Kasparov product of $[c]$ and $j_G^\tau([D])\in KK(C_0(X)\rtimes G,\bb{C}\rtimes_\tau G)$. Let us define the partial assembly map: $\mu_G^\tau([D]):=[c]\otimes_{C_0(X)\rtimes G}j_G^\tau([D]):KK_G^\tau(C_0(X),\bb{C})\to KK(\bb{C},\bb{C}\rtimes_\tau G)$. In our situation, this partial assembly map is the same with the assembly map for the whole group $G^\tau$.

\begin{pro}\label{partial is the whole}
The following diagram commutes:
$$\begin{xymatrix}{
 KK_G^{\tau}(C_0(X),\bb{C}) \ar@/_70pt/[dd]_{\mu_G^\tau}
\ar[d]^{j_G^{\tau}} \ar@{^{(}-_>}[r]&  KK_{G^\tau}(C_0(X),\bb{C}) \ar[d]_{j_{G^\tau}}  \ar@/^70pt/[dd]^{\mu_{G^\tau}}
\\
 KK( C_0(X)\rtimes G,\bb{C}\rtimes_{\tau} G) 
\ar[d]^{[c]\otimes_{C_0(X)\rtimes G}-} &  KK(C_0(X)\rtimes G^\tau,\bb{C}\rtimes G^\tau) \ar[d]_{[c]\otimes_{C_0(X)\rtimes G^\tau }-}\\
 KK(\bb{C},\bb{C}\rtimes_{\tau}G) \ar@{^{(}-_>}[r] &  KK(\bb{C},\bb{C}\rtimes G^\tau).
}\end{xymatrix}$$

\end{pro}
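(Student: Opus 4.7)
The plan is to verify the commutativity of each of the two inner squares at the level of Kasparov modules, by exploiting the direct sum decomposition $A \rtimes G^\tau = \bigoplus_n A \rtimes_{n\tau} G$ that holds for any $G$-algebra $A$, and tracking which weight summand each ingredient lives on. Throughout, $\iota$ denotes the evident inclusion of a direct summand into the full direct sum in the various $KK$-groups.

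For the top square, I start with a representative $(E,F)$ of $x \in KK_G^\tau(C_0(X), \bb{C})$ and apply the full descent $j_{G^\tau}$. The resulting Hilbert module $E \rtimes G^\tau$ decomposes by the $U(1)$-weight as $\bigoplus_n E \rtimes_{n\tau} G$, and the four bullet-points cited from Lemma 2.16 of \cite{T2} force the $n$-th summand to be a Kasparov $(C_0(X)\rtimes_{(n-1)\tau}G,\,\bb{C}\rtimes_{n\tau}G)$-module. Read as an element of $KK(C_0(X)\rtimes G^\tau, \bb{C}\rtimes G^\tau)$, its left action factors through the $(n-1)$-summand of $C_0(X)\rtimes G^\tau$ and its inner product takes values in the $n$-summand of $\bb{C}\rtimes G^\tau$. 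On the other hand, $\iota(j_G^\tau(x))$ is exactly the $n=1$ piece of this decomposition. Hence $j_{G^\tau}(x)$ and $\iota(j_G^\tau(x))$ differ only by the Kasparov classes carried on the summands with $n \neq 1$.

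For the bottom square, the Mishchenko class $[c]$, viewed in $KK(\bb{C}, C_0(X)\rtimes G^\tau)$ through $\iota$, is supported on the $n=0$ summand $C_0(X)\rtimes_{0\tau}G = C_0(X)\rtimes G$. When computing $[c] \otimes_{C_0(X)\rtimes G^\tau} j_{G^\tau}(x)$, the balancing relation matches the right action of $[c]$ (on the $n=0$ summand) against the left action of the $n$-th piece (on the $(n-1)$-summand); these summands are orthogonal in $C_0(X)\rtimes G^\tau$ unless $n=1$, so only the $n=1$ piece survives. The surviving term is precisely $[c] \otimes_{C_0(X)\rtimes G^\tau} \iota(j_G^\tau(x))$, which, because both factors live in the single $n=0$ summand of the common algebra, equals $\iota\bigl([c] \otimes_{C_0(X)\rtimes G} j_G^\tau(x)\bigr) = \iota(\mu_G^\tau(x))$. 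A diagram chase then gives $\mu_{G^\tau}(x) = \iota(\mu_G^\tau(x))$, which is the desired commutativity.

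The main delicate step is verifying the orthogonality argument at the level of Hilbert modules rather than merely $KK$-classes. Concretely, I would fix an approximate unit $\{e_\alpha\}$ of $C_0(X)\rtimes G^\tau$ that respects the direct sum decomposition, write $e_\alpha = \sum_n e_\alpha^{(n)}$, and for $w$ on the $[c]$ side and $v$ in the $n$-th piece of $j_{G^\tau}(x)$ compute $w \otimes v = \lim_\alpha (w \cdot e_\alpha^{(0)}) \otimes v = \lim_\alpha w \otimes (e_\alpha^{(0)} \cdot v)$, which vanishes for $n \neq 1$ by the bullet-point $(A\rtimes_{m\tau}G) \cdot (E\rtimes_{n\tau}G) = 0$ unless $m = n-1$. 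Once this book-keeping is settled, the remaining manipulations are purely formal consequences of the decomposition $\rtimes G^\tau = \bigoplus_n \rtimes_{n\tau}G$ and the basic compatibilities of the partial crossed product construction.
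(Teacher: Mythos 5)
Your proof is correct and spells out the argument the paper leaves implicit: the proposition is stated without a proof, immediately following the four bullet-pointed orthogonality relations from Lemma 2.16 of \cite{T2}, which are precisely the facts your computation uses to show that cap-producting the full descent $j_{G^\tau}(\iota(x))=\bigoplus_n j_G^{n\tau}(x)$ with $[c]$ (supported on the $n=0$ summand $C_0(X)\rtimes_{0\tau}G = C_0(X)\rtimes G$) kills everything except the $n=1$ piece $\iota(j_G^\tau(x))$, since the left $C_0(X)\rtimes_{0\tau}G$-action on $E\rtimes_{n\tau}G$ vanishes unless $n-1=0$. One small presentational caveat: the diagram has no horizontal arrows in the middle row, so there is no literal ``top square'' or ``bottom square'' to check --- only the outer rectangle $\mu_{G^\tau}\circ\iota = \iota\circ\mu_G^\tau$ carries nontrivial content --- but your orthogonality computation establishes exactly that.
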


\begin{rmk}
When the central extension $G^\tau\to G$ is topologically trivial, we can define a $U(1)$-valued $2$-cocycle $\tau$ on $G$ by fixing a diffeomorphism $G^\tau\cong G\times U(1)$, that is, $\tau:G\times G\to U(1)$ satisfying that $(g,z_1)\cdot (g_2,z_2)=(g_1g_2,z_1z_2\tau(g_1,g_2))$. 
For a vector space, a vector bundle or a Hilbert module $E$, that $E$ is $\tau$-twisted $G$-equivariant, is equivalent to that a map $\rho:G\to {\rm Aut}(E)$ satisfying that $\rho(g_1)\rho(g_2)=\rho(g_1g_2)\tau(g_1,g_2)$, is given. Using this description, we can construct the $\tau$-twisted $G$-equivariant theory, without making reference to the extended group $G^\tau$. In fact, this new description is more convenient for some cases, and we will mainly use it, although the old description (with the extended group) is useful to compare with the untwisted cases, just like Proposition \ref{partial is the whole}.

\end{rmk}

\section{$LT$-equivariant $KK$-theory and the problem}

In this short section, we will introduce the $LT$-equivariant $KK$-theory and its twisted version. The general study on this topic is far from being enough. See Caution \ref{caution}. Then, we will set the problem and simplify it by dividing the manifold into two parts, just like \cite{T1,T2,T3}.

\subsection{$LT$-equivariant $KK$-theory}

Our loop group $LT=C^\infty(S^1,T)$ is equipped with the $C^\infty$-topology. In fact, this topology is regarded as the inverse limit of the ``$L^2_k$-topology'' in this paper.\footnote{$T$ is not a vector space, and ``$L^2_k$'' does not make sense. However, up to some finite-dimensional issue, $LT$ can be identified with $C^\infty(S^1,\fra{t})$ where we can define the concept of $L^2_k$.} In order to prove the continuity of a map from $LT$, we always check the continuity with respect to the ``$L^2_k$-topology'' for some appropriate $k$.

\begin{dfn}
Let $A$ and $B$ be $LT$-$C^*$-algebras. Kasparov $(A,B)$-modules and $LT$-equivariant $KK$-groups are defined, just like Definition \ref{dfn of equivariant KK}. The $LT$-equivariant $KK$-group is denoted by $KK_{LT}(A,B)$. The homotopy class of a Kasparov module $(E,F)$ is denoted by $[(E,F)]$, as usual.
\end{dfn}

Just like usual cases, $KK_{LT}(A,B)$ is a homotopy invariant of $A$ and $B$, which is covariant and contravariant in $B$ and $A$ respectively.

\begin{dfn}
Let $(E_1,F_1)$ and $(E_2,F_2)$ be an $LT$-equivariant Kasparov $(A,C)$-module and an $LT$-equivariant Kasparov $(C,B)$-module, respectively. An $LT$-equivariant Kasparov $(A,B)$-module $(E_1\otimes E_2,F)$ is a Kasparov product of $(E_1,F_1)$ and $(E_2,F_2)$, if and only if $(E_1\otimes E_2,F)$ is a Kasparov product of $(E_1,F_1)$ and $(E_2,F_2)$ after forgetting the $LT$-action.
\end{dfn}

\begin{cau}\label{caution}
We have defined the concept of Kasparov products at the level of Kasparov modules. However, we have {\bf not} proved the homotopy invariance of the product. Moreover, we have {\bf not} proved the existence nor the uniqueness of the Kasparov product. These are because we have not yet proved the Kasparov technical theorem for $LT$-equivariant cases. At least, the elegant proof in \cite{Kas88} is not valid for our cases.

Thus, we can say ``a Kasparov {\it module} $z$ is a Kasparov product of Kasparov {\it modules} $x$ and $y$'', but we {\bf cannot say} ``the Kasparov product of {\it $KK_{LT}$-elements} $[x]$ and $[y]$, is $[z]$''. In this sense, $[z]=[x]\otimes [y]$ is just a formal expression. It is perhaps better to write it as $z\in \{x\otimes y\}$ or something.
Such a problem will be interesting, but we do not study further general theory on $KK_{LT}$.
\end{cau}

We can also formulate unbounded Kasparov modules and the bounded transformations for $LT$-equivariant cases, in the obvious way. The criterion \ref{Kucs criterion} is still valid.

We can of course study the twisted version of the $LT$-equivariant $KK$-theory. We do not repeat the details.

\begin{dfn}
We can define bounded $\tau$-twisted $LT$-equivariant Kasparov modules, $\tau$-twisted $LT$-equivariant $KK$-theory, unbounded $\tau$-twisted $LT$-equivariant  Kasparov modules, the bounded transformation, Kasparov products, and the criterion to be a Kasparov product in the unbounded picture.
\end{dfn}

\subsection{The problem}

Let us set the problem precisely here. Firstly, we recall the gauge action of $LT$ on $L\fra{t}^*:=\Omega^1(S^1,\fra{t})$. This action is defined by $l.A=A-l^{-1}dl$ for $l\in LT$ and $A\in L\fra{t}^*$, where we identify $\Omega^1(S^1,\fra{t})$ with the set of connections on the trivial bundle $S^1\times T\to S^1$. Our manifold differs from $L\fra{t}^*$ by some ``compact-part''.

\begin{dfn}
An infinite-dimensional manifold $\ca{M}$ is a ``proper $LT$-space'' if it has an $LT$-action and a proper, equivariant and smooth map $\Phi:\ca{M}\to L\fra{t}^*$. 
\end{dfn}

$LT$ has specific subgroups: $T$ is the set of constant loops, $\Pi_T$ is the set of geodesic loops starting from the origin. The ``rest part'' defined below, is denoted by $U$. Each element $g$ of the identity component of $LT$, has a lift $f\in C^\infty(S^1,\fra{t})$ such that $g(\theta)=\exp(f(\theta))$. $U$ is the set of such $g=\exp(f)$ satisfying that $\int_{S^1}f(\theta)d\theta=0$. Then, the following canonical decomposition is defined:
$$LT=(T\times \Pi_T)\times U.$$
In this decomposition, the component $T$ is regarded as the set of averages of contractible loops. 

By this decomposition, the group action $LT\circlearrowright L\fra{t}^*$ is simplified: $T$ acts there trivially, and $U$ acts there freely. The constant $1$-forms $\fra{t}\subseteq L\fra{t}^*$ is a global slice of the $U$-action. $\Pi_T$ preserves $\fra{t}$, and acts there by translations.
Therefore, $L\fra{t}^*$ is isomorphic to $\fra{t}\times U$ as $(T\times \Pi_T)\times U$-spaces. Moreover, $\Phi^{-1}(\fra{t})=:\widetilde{M}$ is a global slice in $\ca{M}$ with respect to the $U$-action; $\ca{M}$ is, as $(T\times \Pi_T)\times U$-spaces, isomorphic to $\widetilde{M}\times U$.\footnote{In \cite{T1,T3}, we set $M:=\ca{M}/\Omega T$, where $\Omega T$ is the set of loops starting from the origin. $\widetilde{M}$ is a $\Pi_T$-covering space of $M$.}

We impose the following assumption on the $\widetilde{M}$-part in order to study the index theory.

\begin{asm}\label{Spinc condition}
$\Phi^{-1}(\fra{t})$ is even-dimensional and $T\times \Pi_T$-equivariantly $Spin^c$, that is, $\ca{M}$ is ``even-dimensional and $Spin^c$''.
\end{asm}

Our problem is the following.

\begin{prob}
For a proper $LT$-space $\ca{M}$ satisfying Assumption \ref{Spinc condition} and equipped with a $\tau$-twisted $LT$-equivariant line bundle $\ca{L}$, construct an $LT$-equivariant Spinor bundle $\ca{S}$, and study the $KK$-theoretical $LT$-equivariant index theory. More precisely, construct three Kasparov modules corresponding to the index element, the Clifford symbol element and the Dirac element. Then, study the assembly map.
\end{prob}

Just like \cite{T1,T2,T3}, we would like to divide the problem into two parts. For this aim, we recall Section 2.2.
Just like Lemma \ref{division of the problem 1}, we can prove the following.

\begin{lem}
Let $A_1$ and $B_1$ be $T\times \Pi_T$-$C^*$-algebras, and let $A_2$ and $B_2$ be $U$-$C^*$-algebras. Then, $A:=A_1\otimes A_2$ and $B:=B_1\otimes B_2$ are $LT$-$C^*$-algebras.

Suppose that an unbounded $T\times \Pi_T$-equivariant Kasparov $(A_1,B_1)$-module $(E_1,D_1)$ and unbounded an $U$-equivariant Kasparov $(A_2,B_2)$-module $(E_2,D_2)$ are given. Then, 
$$(E,D):=(E_1\otimes E_2,D_1\otimes \id+\id\otimes D_2)$$
is an $LT$-equivariant Kasparov $(A,B)$-module.
\end{lem}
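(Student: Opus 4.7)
The plan is to reduce this to Lemma \ref{division of the problem 1} via the canonical decomposition $LT = (T \times \Pi_T) \times U$, which is a homeomorphism (in fact an isomorphism of topological groups) when $LT$ carries the $C^\infty$-topology, because that topology agrees with the product topology under this splitting. Under the identification every $g \in LT$ is a pair $(g_1, g_2)$ with $g_1 \in T \times \Pi_T$ and $g_2 \in U$, and the tensor-product actions on $A = A_1 \otimes A_2$, $B = B_1 \otimes B_2$, $E = E_1 \otimes E_2$ are defined in the obvious factorwise way. Verifying that $A$ and $B$ are $LT$-$C^*$-algebras then amounts to checking joint continuity of the orbit maps, which follows from separate continuity in $g_1$ and $g_2$ together with the product-topology identification.

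With the set-up in place, each module-level condition splits along the two factors. $LT$ preserves $\dom(D)$ because each factor preserves its respective $\dom(D_i)$, and the identity
\[
(g_1, g_2)(D) - D = \bra{g_1(D_1) - D_1} \otimes \id + \id \otimes \bra{g_2(D_2) - D_2}
\]
exhibits $(g_1, g_2)(D) - D$ as a sum of bounded operators, yielding both boundedness and norm-continuity of $g \mapsto g(D) - D$ from the corresponding properties of $(E_1, D_1)$ and $(E_2, D_2)$. The remaining condition, uniform boundedness in $\lambda \geq 0$ of $[D, D - g(D)] \cdot D(1 + \lambda + D^2)^{-1}$, is proved by the same expansion used at the end of Lemma \ref{division of the problem 1}: one splits the graded commutator into four tensor-factorised terms and pairs each with $D(1+\lambda+D^2)^{-1}$, after which the scalar-bounded resolvent quotients $\frac{1 + D_1^2 \otimes \id}{1 + \lambda + D_1^2 \otimes \id + \id \otimes D_2^2}$ and its symmetric variant absorb the $\lambda$-dependence.

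I do not expect a genuine obstacle. The one subtlety to acknowledge is that Lemma \ref{division of the problem 1} was stated for locally compact second countable Hausdorff groups, whereas $U$ is infinite-dimensional and therefore not locally compact; however, inspection of that proof shows that local compactness was never actually invoked — only the factorwise equivariance data, which are exactly the hypotheses we are given — so the argument transcribes verbatim. The only genuine input is that the definition of ``$LT$-equivariant unbounded Kasparov module'' adopted in Section 3.1 is the literal formal analog of the one in Section 2.1, so the four axioms to check are precisely those dismantled by the tensor-product identity above.
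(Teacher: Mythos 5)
Your proposal is correct and matches the paper's intent: the paper itself dispatches this lemma with the single remark ``Just like Lemma \ref{division of the problem 1}, we can prove the following,'' relying on the decomposition $LT=(T\times\Pi_T)\times U$ and the factorwise argument exactly as you describe. Your added observation that the proof of Lemma \ref{division of the problem 1} never actually invokes local compactness of the group is precisely the point that makes the transfer to the non--locally compact factor $U$ legitimate, and is worth making explicit.
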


The twisted version of this lemma is not obvious, but it is still valid, thanks to Proposition 2.27 in \cite{FHTII} (or Proposition 2.8 in \cite{T3}). In this proposition, an admissible $U(1)$-central extension of $LT$ is proved to be split into two parts: $LT^\tau= (T\times \Pi_T)^\tau\boxtimes_{U(1)}U^\tau$. We use the same symbol for the restrictions of $\tau$.


\begin{lem}
Under the isomorphism $\ca{M}\cong \widetilde{M}\times U$,
$\ca{L}=\ca{L}|_{\widetilde{M}}\boxtimes\ca{L}|_{U}$ including the group action. The line bundle $\ca{L}|_{U}$ is given by the standard construction $U^\tau\times_{U(1)} \bb{C}$.
\end{lem}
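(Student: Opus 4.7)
\bigskip

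\noindent\textbf{Proof proposal.}
The idea is to transport fibers of $\ca{L}$ from the slice $\widetilde{M}$ to the rest of $\ca{M}$ using the $U^\tau$-action, and to use the free $U$-action (with global slice $\widetilde{M}$) to turn this into a genuine bundle isomorphism. Recall that the isomorphism $\ca{M}\cong \widetilde{M}\times U$ comes from the free $U$-action: every $m\in\ca{M}$ is uniquely $u.\widetilde{m}$ for $u\in U$ and $\widetilde{m}\in\widetilde{M}$. Under this identification, the $U$-action is $u'.(\widetilde{m},u)=(\widetilde{m},u'u)$ and, since $T\times\Pi_T$ commutes with $U$ in $LT$, the $(T\times\Pi_T)$-action is $g.(\widetilde{m},u)=(g.\widetilde{m},u)$.

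My construction is the map
\[
\Psi\colon \ca{L}|_{\widetilde M}\boxtimes(U^\tau\times_{U(1)}\bb{C})\longrightarrow \ca{L},\qquad \Psi_{(\widetilde m,u)}\bigl(\ell\otimes[\tilde u,w]\bigr):=w\cdot \tilde u.\ell,
\]
where $\tilde u\in U^\tau$ is any lift of $u$ and $\tilde u.\ell\in\ca{L}_{u.\widetilde m}$ denotes the $U^\tau$-action on $\ca{L}$. Well-definedness with respect to the relation $[\tilde uz,w]=[\tilde u,zw]$ for $z\in U(1)$ is immediate, because $U(1)\subseteq U^\tau$ acts on $\ca{L}$ by the scalar of weight one. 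Smoothness is clear from smoothness of the $LT^\tau$-action, and $\Psi_{(\widetilde m,u)}$ is a nonzero map between one-dimensional spaces, hence an isomorphism on each fiber.

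It remains to check $LT^\tau$-equivariance. The $U^\tau$-equivariance is a direct computation:
\[
\Psi\bigl(\tilde u'.(\ell\otimes[\tilde u,w])\bigr)=\Psi\bigl(\ell\otimes[\tilde u'\tilde u,w]\bigr)=w\cdot (\tilde u'\tilde u).\ell=\tilde u'.\Psi(\ell\otimes[\tilde u,w]).
\]
The step I expect to be the main obstacle is $(T\times\Pi_T)^\tau$-equivariance, because it requires controlling how the two central extensions interact inside $LT^\tau$. Here I would use the hypothesis $LT^\tau=(T\times\Pi_T)^\tau\boxtimes_{U(1)} U^\tau$: the commutator pairing of the central extension lands in $U(1)$, and this splitting forces it to vanish on $(T\times\Pi_T)\times U$, so any lifts $\tilde g\in(T\times\Pi_T)^\tau$ and $\tilde u\in U^\tau$ commute in $LT^\tau$. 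Granting this,
\[
\Psi\bigl(\tilde g.(\ell\otimes[\tilde u,w])\bigr)=\Psi\bigl(\tilde g.\ell\otimes[\tilde u,w]\bigr)=w\cdot\tilde u.(\tilde g.\ell)=w\cdot\tilde g.(\tilde u.\ell)=\tilde g.\Psi(\ell\otimes[\tilde u,w]).
\]
Finally I would verify the weight-one condition: the central $U(1)$, viewed either through $(T\times\Pi_T)^\tau$ or through $U^\tau$, acts as the scalar of weight one on the corresponding factor and trivially on the other; the $\boxtimes_{U(1)}$ identification guarantees that these two descriptions agree, matching the weight-one action on $\ca{L}$. This completes the identification $\ca{L}\cong\ca{L}|_{\widetilde M}\boxtimes\ca{L}|_U$ as $\tau$-twisted $LT$-equivariant line bundles, and the formula $\ca{L}|_U=U^\tau\times_{U(1)}\bb{C}$ is built into the construction.
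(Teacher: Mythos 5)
Your argument is correct. The paper states this lemma without proof, citing only the splitting $LT^\tau=(T\times\Pi_T)^\tau\boxtimes_{U(1)}U^\tau$ from \cite{FHTII}; your construction fills in exactly the details the author leaves implicit, and the one nontrivial ingredient you isolate — that lifts $\tilde g\in(T\times\Pi_T)^\tau$ and $\tilde u\in U^\tau$ commute in $LT^\tau$ — is precisely what that $\boxtimes_{U(1)}$ decomposition guarantees.
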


Thanks to these results, we may construct everything on $\widetilde{M}$ and $U$, separately. Since $\widetilde{M}$ is finite-dimensional, we can construct anything without serious problems. For the $U$-part, we set the following (simplified) problem. We will concentrate on it from now on.

\begin{prob}
For a positive definite central extension $\tau$ of $U$, construct three unbounded Kasparov modules which are representatives of the index element $[\widetilde{\Dirac}] \in KK_U^\tau(\SC(U),\scr{S})$, the Clifford symbol element $[\widetilde{\sigma_{\Dirac}^{Cl}}]\in KK_U^\tau(\SC(U),\SC(U))$ and the Dirac element $[\widetilde{d_U}]\in KK_U(\SC(U),\scr{S})$, respectively. Then, prove that the Kasparov module representing $[\widetilde{\Dirac}]$ is a Kasparov product of the one representing $[\widetilde{\sigma_{\Dirac}^{Cl}}]$ and the one representing $[\widetilde{d_U}]$.
Formally, prove the equality
$$[\widetilde{\Dirac}]=[\widetilde{\sigma_{\Dirac}^{Cl}}]\otimes_{\SC(U)} [\widetilde{d_U}].$$
Then, study the assembly map.
\end{prob}

If we can solve this problem, we can define the Kasparov modules on the whole manifold $\ca{M}$, ``using'' Section 2.2. Put $\SC(\ca{M}):= \SC(U)\otimes Cl_\tau(\widetilde{M})$, and let $\cancel{D}$ be the $(T\times \Pi_T)^\tau$-equivariant Dirac operator on $\widetilde{M}$.
We define them as follows. The following tensor product is indeed at the level of modules.
\begin{align*}
[\widetilde{\ca{D}}] & :=[\widetilde{\cancel{D}}]\otimes [\widetilde{\Dirac}];\\
[\widetilde{\sigma_{\ca{D}}^{Cl}}] & :=[\widetilde{\sigma_{\cancel{D}}^{Cl}}]\otimes [\widetilde{\sigma_{\Dirac}^{Cl}}];\\
[\widetilde{d_{\ca{M}}}]&:= [d_{\widetilde{M}}]\otimes [\widetilde{d_{U}}].
\end{align*}
Then, Section 2.2 imply the equality $[\widetilde{\ca{D}}]= [\widetilde{\sigma_{\ca{D}}^{Cl}}]\otimes [\widetilde{d_{\ca{M}}}]$.


As a final remark of this section, we recall some examples of proper $LT$-spaces. See \cite{T3}.

\begin{exs}
$(1)$ A Hamiltonian $LT$-space is automatically a proper $LT$-space.

$(2)$ In particular, a gauge orbit $U\times \Pi_T\subseteq L\fra{t}^*$ is a proper $LT$-space.

$(3)$ In general, for an even-dimensional $Spin^c$-manifold $M$ equipped with a $T$-action, suppose that it has a $T$-equivariant Spinor bundle, and a $T$-equivariant map $\phi:M\to T$, where $T$ acts on itself by the conjugate action, namely trivially. Then, the pullback construction $M\times_{T} L\fra{t}^*$ with respect to the holonomy map $L\fra{t}^*\to T$ gives a proper $LT$-space. The condition looks slightly complicated, but a symplectic manifold with an $T$-action always satisfies these condition, after perturbing the symplectic form. The map taking values in $T$ is the circle valued-moment map studied in \cite{McD}.
\end{exs}

\section{The HKT algebra}

In this section, we study the HKT algebra: its definition, the symmetry, and the representation. The new point is the connection with the the representation of the infinite-dimensional group $U$.

\subsection{Properties of $U$}

We begin with studies of $U$. 
$U$ is identified with $\bbra{f:S^1\to \fra{t}\mid \int_{S^1}f(\theta)d\theta=0}$.  Fix an inner product of $\fra{t}$, denoted by $\inpr{\bullet}{\bullet}{\fra{t}}$,
and then we can take finite-dimensional approximations of $U$:
$$\Lie(U_N):={\rm Span}\bbra{\frac{\cos\theta}{\sqrt{\pi}}{\bf 1},\frac{\sin\theta}{\sqrt{\pi}}{\bf 1},\cosi{2}{\bf 1},\sine{2}{\bf 1},\cdots,\cosi{N}{\bf 1},\sine{N}{\bf 1}},$$
where ${\bf 1}$ is a unit vector of $\fra{t}$. From now on, we fix ${\bf 1}$, and we omit it.
$\Lie(U)$ has a symplectic form $\omega(f_1,f_2):=\int \inpr{f_1}{\frac{df_2}{d\theta}}{\fra{t}}d\theta$. The above is a symplectic bases: $\omega(\cosi{k},\cosi{l})=\omega(\sine{k},\sine{l})=0$ and $\omega(\cosi{k},\sine{l})=\delta_{k,l}$.

The group $U$ is, as a Lie group, diffeomorphic to ``$\bb{R}^\infty$''.\footnote{Topology of ``$\bb{R}^\infty$'' is highly non-trivial.} The coordinate is often written as, when $U$ is regarded as a {\it space},
$$\exp\bra{\sum \bra{x_k\cosi{k}+y_k\sine{k}}}\mapsto (x_1,y_1,x_2,y_2,\cdots)$$ 
and the coordinate is often written as, when $U$ is regarded as a {\it group},
$$\exp\bra{\sum \bra{g_k\cosi{k}+h_k\sine{k}}}\mapsto (g_1,h_1,g_2,h_2,\cdots).$$

The completion of $U$ with respect to the norm on the Lie algebra
$$\|(g_1,h_1,g_2,h_2,\cdots)\|_{L^2_l}^2:=\sum_k(1+k^{2l})(g_k^2+h_k^2)$$
is denoted by $U_{L^2_l}$. An element of this group is called an ``$L^2_l$-loop''.
In order to define the HKT algebra, {\bf we use the $L^2_{1/2}$-metric}. With respect to this norm, $\omega(\bullet,\bullet)$ is a continuous bilinear form.

\begin{dfn}\label{central extension of LT}
The central extension $U^\tau$ is given by $U\times U(1)$ equipped with the multiplication
$$(\exp(f_1),z_1)\cdot (\exp(f_2),z_2):=(\exp(f_1+f_2),z_1z_2e^{\frac{i}{2}\omega(f_1,f_2)}).$$

$\Lie(U^\tau)\cong \Lie(U)\oplus \fra{u}(1)$.
Let $K$ be the generator of $\fra{u}(1)$. For $\exp(f)\in U$, $X\in \Lie(U)$ and $z\in U(1)$, the adjoint representation is given by
$$\Ad_{\exp(f)}(X) = X+\omega(f,X)K,$$
$$\Ad_z(X)=X,$$
and hence the Lie algebra structure is given by, for $f_1,f_2\in \Lie(U)$,
$$[f_1,f_2]=\omega(f_1,f_2)K, $$
$$[K,f]=0.$$
\end{dfn}

$LT$ has an $S^1$-symmetry $\theta.f(s):=f(s+\theta)$. When we deal with this action, we write $S^1$ as $\bb{T}_\rot$. A positive energy representation is a representation of $LT$ which reflects the $\bb{T}_\rot$-symmetry, and which satisfies a certain finiteness condition. We define it for $U$ in our language. Consult \cite{T3} (or of course \cite{PS,FHTII}) for the $T\times \Pi_T$-part.

\begin{dfn}
Let $V$ be a separable Hilbert space, and let $U(V)$ be the unitary group of $V$, which is topologized by the compact open topology.
A continuous map $\rho:U\to U(V)$ is a positive energy representation (PER for short) at level $\tau$ of $U$ if it satisfies the following condition:
\begin{itemize}
\item $\rho(\exp(f_1))\circ \rho(\exp(f_2))=\rho(\exp(f_1+f_2))e^{\frac{i}{2}\omega(f_1,f_2)}$. In other words, $\rho$ is a homomorphism from the extended group $U^\tau$ satisfying that $\rho(i(z))=z\id_V$.
\item It extends to $\rho:U\rtimes \bb{T}_\rot\to U(V)$.
\item When we decompose $V$ by the weight of $\bb{T}_\rot$-action by $\rho|_{\bb{T}_\rot}$ as $V=\oplus_nV_n$, each $V_n$ is finite-dimensional and $V_n=0$ for any sufficiently small $n$.
\end{itemize}
\end{dfn}

For the sake of computations, it is more convenient to study the infinitesimal version $d\rho:\Lie(U)\to \End(V)$, which satisfies the commutation relation $[d\rho(f_1),d\rho(f_2)]=i\omega(f_1,f_2)\id$.
Let $d$ be the infinitesimal generator of $\bb{T}_\rot$. Then, the Lie bracket $\Lie(U^\tau\rtimes \bb{T}_\rot)$ is given by $[d,f]=f'$ and $[d,K]=0$.

We can construct an irreducible PER as follows. It is known that there is only one PER up to isomorphism.

\begin{dfn}
By an $L^2$-function on $\bb{R}$ defined by $\frac{1}{\pi^{1/4}}e^{-\frac{x^2}{2}}$ which is a unit vector, define an isometric embedding 
$$I_N:L^2(\bb{R}^N)\ni f\mapsto f\otimes \frac{1}{\pi^{1/4}}e^{-\frac{x^2}{2}}\in L^2(\bb{R}^{N+1}),$$
and define $\ud{L^2(\bb{R}^\infty)}$ by the Hilbert space inductive limit $\varinjlim L^2(\bb{R}^N)$. The ``infinite tensor product'' $\frac{1}{\pi^{1/4}}e^{-\frac{x_1^2}{2}}\otimes \frac{1}{\pi^{1/4}}e^{-\frac{x_2^2}{2}}\otimes \cdots$ defines a unit vector denoted by ${\bf 1}_b$, where ``$b$'' comes from ``boson''.

The ``function'' $\frac{1}{\pi^{1/4}}e^{-\frac{x_N^2}{2}}\otimes \frac{1}{\pi^{1/4}}e^{-\frac{x_{N+1}^2}{2}}\otimes \cdots\otimes \frac{1}{\pi^{1/4}}e^{-\frac{x_{M}^2}{2}}$ is denoted by $({\bf 1}_b)_N^M$, where $1\leq N\leq M\leq \infty$. Note that $M=\infty$ is allowed. In this notation, ${\bf 1}_b=({\bf 1}_b)_1^\infty$.

Let $L^2(\bb{R}^N)_\fin$ be the subspace which is algebraically spanned by functions of the form ``polynomial $\times$ $e^{-\frac{\|x\|^2}{2}}$''. $\ud{L^2(\bb{R}^\infty)_\fin}$ denotes the algebraic inductive limit $\varinjlim^\alg L^2(\bb{R}^N)_\fin$. Needless to say, ${\bf 1}_b$ is an element of $\ud{L^2(\bb{R}^\infty)_\fin}$.
\end{dfn}

In a diagram, our Hilbert spaces are organized in the following commutative diagram
$$\xymatrix{
L^2(\bb{R}) \ar[r]^{I_1} \ar[rrd]_{J_1} & L^2(\bb{R}^2) \ar[r]^{I_2} \ar[rd]^{J_2} & \cdots \ar[r]^{I_{N-1}}& L^2(\bb{R}^N) \ar[r]^{I_N} \ar[ld]_{J_N} & L^2(\bb{R}^{N+1}) \ar[r]^{I_{N+1}} \ar[lld]^{J_{N+1}}  & \cdots \\
& & \ud{L^2(\bb{R}^\infty)} & & }
$$
Every Hilbert space $L^2(\bb{R}^N)$ has a dense subspace $L^2(\bb{R}^N)_\fin$. $I_N$'s and $J_N$'s preserve these dense subspaces.

\begin{dfn}
On $\ud{L^2(\bb{R}^\infty)}_\fin$, let
$$d\rho\bra{\cosi{k}}:=\frac{\partial}{\partial x_k};\ d\rho\bra{\sine{k}}:=ix_k\times.$$
Strictly speaking, these operations are defined as follows: 
For $f\in \ud{L^2(\bb{R}^\infty)_\fin}$, choosing $\widetilde{f}\in L^2(\bb{R}^N)$ such that $J_N(\widetilde{f})=f$ and $N\geq k$, we define 
$$d\rho\bra{\cosi{k}}(f):= J_N\bra{\frac{\partial \widetilde{f}}{\partial x_k}},$$
and similarly for $d\rho\bra{\sine{k}}(f)$.

\end{dfn}

These operators are skew-adjoint operators, and hence they generate a unitary operators. Moreover, they satisfy the positive energy condition. In order to prove that, we introduce the complex basis:
$$z_k:=\frac{1}{\sqrt{2}}\bra{\cosi{k}+i\sine{k}};$$
$$\overline{z_k}:=\frac{1}{\sqrt{2}}\bra{\cosi{k}-i\sine{k}}.$$
These are unit vectors. The infinitesimal generator of the rotation action is given by
$$d\rho(d):=-i\sum_kkd\rho(z_k)d\rho(\overline{z_k}).$$
This is well-defined and satisfies $[d\rho(d),f]=d\rho\bra{\frac{df}{d\theta}}$, as computed in \cite{T1}. Moreover, each eigenvalue of $d\rho(d)/i$ is a non-negative integer, and its multiplicity is the number of partitions. In fact, $\ud{L^2(\bb{R}^\infty)_\fin}$ is the set of finite linear combinations of eigenvectors of $d\rho(d)$.

We can define the dual representation in the usual way. To clarify the notation, we describe it.

\begin{dfn}
Let $\ud{L^2(\bb{R}^\infty)^*}$ be the dual space of $\ud{L^2(\bb{R}^\infty)}$, which is anti-linearly isomorphic to $\ud{L^2(\bb{R}^\infty)}$.
For $g\in U$, let $\rho^*(g)\in U(\ud{L^2(\bb{R}^\infty)^*})$ be the operator defined by $[{}^t\rho(g^{-1})]$. Then, $\rho^*(\exp(f_1))\circ \rho^*(\exp(f_2))=\rho^*(\exp(f_1+f_2))e^{-\frac{i}{2}\omega(f_1,f_2)}$, that is, $\rho^*$ is a ``$-\tau$-twisted'' representation. For $\theta\in \bb{T}_\rot$, let $\rho^*(\theta):={}^t\rho(-\theta)$. This operation extends $\rho$ to a homomorphism from $U^\tau\rtimes \bb{T}_\rot$\footnote{The action of $i(z)\in i(U(1))\subseteq U^\tau$ is given by $z^{-1}\id$.}. It satisfies the ``negative energy condition''. The set of finite linear combinations of eigenvectors of $d\rho^*(d)$, is denoted by $\ud{L^2(\bb{R}^\infty)^*_\fin}$.
Just like $\ud{L^2(\bb{R}^\infty)}$, we use the symbol ${\bf 1}_b^*$ and $({\bf 1}_b^*)_N^M$.
\end{dfn}

As proved in Theorem 3.15 of \cite{T1}, $L^2(\bb{R}^N)\otimes L^2(\bb{R}^N)^*$ is isomorphic to $L^2(U_N)$ as twisted representation spaces, by the Peter-Weyl type homomorphism\footnote{It is more natural to regard $L^2(U_N)$ as the Hilbert space consisting of $L^2$-sections of a line bundle $\ca{L}\to U_N$ defined by $U_N^\tau\otimes_{U(1)}\bb{C}$. In this picture, several properties can be easily proved: two actions $L$ and $R$ clearly commute, thanks to the associativity of the group operation.
However, in the present paper, it is more convenient to forget the line bundle.}.
We only define the regular representations and describe the Peter-Weyl type homomorphism. For the details, see also section 3 in \cite{T1}. In the following, we identify $\Lie(U_N)$ with $U_N$ by the exponential map to simplify the notation. For $g=\exp(f)\in U$, $\omega(g,\bullet)$ means that $\omega(f,\bullet)$.

\begin{dfn}
For $f\in L^2(U_N)$ and $g\in U_N$, define
$$[L(g)(f)](x):=f(x-g)e^{\frac{i}{2}\omega(g,x)};$$
$$[R(g)(f)](x):=f(x+g)e^{\frac{i}{2}\omega(g,x)}.$$
\end{dfn}

Then, $L$ and $R$ are twisted representations at opposite level:
$$L(g_1)\circ L(g_2)=L(g_1+g_2)e^{\frac{i}{2}\omega(g_1,g_2)};$$
$$R(g_1)\circ R(g_2)=R(g_1+g_2)e^{-\frac{i}{2}\omega(g_1,g_2)}.$$

Let $e_k$'s and $f_k$'s be the unit vectors of $\Lie(U_N)$ corresponding to $\cosi{k}$'s and $\sine{k}$'s, respectively. Let $(x_1,y_1,\cdots,x_N,y_N)$ be the coordinate described above. Then, the infinitesimal action is computed as follows:
$$dL(e_k)=-\frac{\partial}{\partial x_k}+\frac{i}{2}y_k;\ \ dL(f_k)=-\frac{\partial}{\partial y_k}-\frac{i}{2}x_k;$$
$$dR(e_k)=\frac{\partial}{\partial x_k}+\frac{i}{2}y_k;\ \ dR(f_k)=\frac{\partial}{\partial y_k}-\frac{i}{2}x_k.$$

\begin{pro}
The Peter-Weyl type homomorphism $\Psi:L^2(\bb{R}^N)\otimes L^2(\bb{R}^N)^*\to L^2(U_N)$ defined by
$$\Psi(v\otimes f)(x):=\bra{\frac{1}{\sqrt{2\pi}}}^N\innpro{\rho(-x)(v)}{f}{}$$
is an isometric isomorphism as representation spaces of $U_N^\tau\times U_N^{-\tau}$.
\end{pro}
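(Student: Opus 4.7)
The plan is to verify three properties of $\Psi$: (i) equivariance under the two commuting actions, (ii) $\Psi$ is an isometry on an explicit dense domain, and (iii) the image is dense in $L^2(U_N)$. Combining (ii) and (iii) yields the isometric isomorphism, and (i) upgrades it to an equivalence of representations. Step~(i) is cocycle bookkeeping, step~(ii) is the main analytic content, and step~(iii) reduces to the completeness of a Hermite-function matrix-coefficient system.

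For equivariance, repeated use of the relation $\rho(g_1)\rho(g_2) = \rho(g_1+g_2)\, e^{\frac{i}{2}\omega(g_1,g_2)}$ gives
\begin{align*}
\Psi(\rho(g) v \otimes f)(x) &= \tfrac{1}{(2\pi)^{N/2}} \innpro{\rho(-(x-g)) v}{f}{}\, e^{-\frac{i}{2}\omega(x,g)} \\
&= \Psi(v\otimes f)(x-g)\, e^{\frac{i}{2}\omega(g,x)} = [L(g)\Psi(v\otimes f)](x).
\end{align*}
For the dual factor, the identity $\innpro{w}{\rho^*(h) f}{} = \innpro{\rho(-h) w}{f}{}$ read off from $\rho^*(h) = {}^t\rho(h^{-1})$ allows an analogous manipulation to yield $\Psi(v \otimes \rho^*(h) f) = R(h)\Psi(v\otimes f)$. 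Hence $\Psi$ intertwines the two actions at the opposite levels $\tau$ and $-\tau$.

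The isometry will be proved by a direct integral computation on the Schwartz-class dense subspace. Writing $g \in U_N$ in symplectic coordinates $(\alpha, \beta) \in \bb{R}^N \times \bb{R}^N$ so that $g = \sum_k (\alpha_k e_k + \beta_k f_k)$, a Baker--Campbell--Hausdorff computation in the two-step nilpotent algebra generated by the Schr\"odinger operators $d\rho(e_k) = \partial_{t_k}$ and $d\rho(f_k) = i t_k$ yields $[\rho(-g) v](t) = e^{i(\alpha\cdot\beta)/2}\, e^{-i\beta\cdot t}\, v(t-\alpha)$. The matrix coefficient then factors as a Fourier transform: $\innpro{\rho(-g) v}{f}{} = e^{i(\alpha\cdot\beta)/2}\, \widehat{F_\alpha}(\beta)$ with $F_\alpha(t) := v(t-\alpha) f(t)$. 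Integrating $|\Psi(v\otimes f)(g)|^2$ first in $\beta$ via Plancherel on $\bb{R}^N$ (which contributes the factor $(2\pi)^N$) and then in $\alpha$ by Fubini with the substitution $s = t - \alpha$ produces $\|v\|_{L^2}^2 \, \|f\|_{L^2}^2$, precisely absorbing the prefactor $(2\pi)^{-N}$. This is the step I expect to be the most delicate, chiefly because of the phase and normalization accounting.

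For density of the image, take the orthonormal Hermite bases $\{h_\alpha\}$ of $L^2(\bb{R}^N)$ and $\{h_\beta^*\}$ of its dual. By step~(ii) the images $\Psi(h_\alpha \otimes h_\beta^*)$ form an orthonormal system in $L^2(U_N)$; completeness is then the classical statement that the matrix coefficients of the Schr\"odinger representation against Hermite vectors span $L^2(\bb{R}^{2N})$. Equivalently, this is the unitarity of the Weyl/Wigner quantization, identifying the Hilbert--Schmidt operators $HS(L^2(\bb{R}^N)) \cong L^2(\bb{R}^N) \otimes L^2(\bb{R}^N)^*$ with $L^2(U_N)$. Invoking this classical identification closes the argument.
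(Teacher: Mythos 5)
Your argument is correct, and it is worth noting that the paper does not include a proof of this proposition at all: it cites Theorem~3.15 of \cite{T1}, so there is no in-text proof to compare against. What you give is a complete, self-contained proof via the standard route: (i) cocycle bookkeeping for the $L$-- and $R$--intertwining, (ii) a Plancherel/Fubini computation exhibiting the isometry, and (iii) density via the Stone--von Neumann/Weyl--Wigner identification $HS(L^2(\bb{R}^N))\cong L^2(\bb{R}^{2N})$. I verified the two delicate points: the BCH phase $[\rho(-g)v](t)=e^{i\alpha\cdot\beta/2}e^{-i\beta\cdot t}v(t-\alpha)$ (using $d\rho(e_k)=\partial_{t_k}$, $d\rho(f_k)=it_k$, $[d\rho(f_1),d\rho(f_2)]=i\omega(f_1,f_2)$, and $e^{A+B}=e^Ae^Be^{-[A,B]/2}$), and the $(2\pi)^{\pm N}$ bookkeeping, which indeed cancels the $(2\pi)^{-N/2}$ prefactor squared. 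One stylistic remark: in step (ii) you write $F_\alpha(t)=v(t-\alpha)f(t)$ as if $f\in L^2(\bb{R}^N)^*$ were a scalar function. What you really mean is the Riesz representative of $f$ under the (anti-linear, isometric) identification $L^2(\bb{R}^N)^*\cong L^2(\bb{R}^N)$; since this identification is isometric it does not affect the norm count, but making the transition explicit (e.g.\ writing $\innpro{w}{f}{}=\inpr{u}{w}{}$ for $f=\iota(u)$) would remove the notational ambiguity between the bilinear pairing $\innpro{\cdot}{\cdot}{}$ used in the proposition and the sesquilinear inner product $\inpr{\cdot}{\cdot}{}$ used in the Plancherel theorem.
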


Seeing this result, we define the ``$L^2$-space of $U$'' as follows. See also Section 3.2 of \cite{T1}.

\begin{dfn}
$\ud{L^2(U)}$ is defined by $\ud{L^2(\bb{R}^\infty)}\otimes \ud{L^2(\bb{R}^\infty)^*}$. The representations $\rho\otimes \id$ and $\id\otimes \rho^*$ are also denoted by $L$ and $R$, respectively.

Let $\ud{L^2(U)_\fin}$ be the algebraic tensor product $\ud{L^2(\bb{R}^\infty)_\fin}\otimes^\alg \ud{L^2(\bb{R}^\infty)_\fin^*}$. Each element of this dense subspace is regarded as ``polynomial $\times$ $\bra{\frac{1}{\sqrt{2\pi}}}^\infty e^{-\frac{|g|^2}{4}}$''. We introduce notations
$$\vac:= \Psi({\bf 1}_b\otimes {\bf 1}_b^*);\ 
\vac_N^M:= \Psi(({\bf 1}_b)_N^M\otimes ({\bf 1}_b^*)_N^M).
$$
\end{dfn}

\subsection{The definition of the Higson-Kasparov-Trout algebra $\SC(U)$}

We recall the definition of the HKT algebra which can be regarded as the ``suspension of the Clifford algebra-valued function algebra of $U$''.
In fact, we will slightly modify the description so that it fits with our representation introduced in the next subsection. This is because our ``smooth functions'' must be ``approximately constant''. We will prove that the modified one is isomorphic to the original one as $C^*$-algebras thorough a ``diffeomorphism on $U$''.

We concentrate on $U$ which has a special property: $\bb{T}_\rot$ acts there linearly, and each weight space is real two-dimensional. These properties allow us to define a natural filtration of finite-dimensional subspaces thanks to the Fourier series theory
$$U_1\subseteq U_2\subseteq \cdots \subseteq U_N\subseteq \cdots \subseteq U.$$
We use only this filtration to construct the HKT algebra. This is the first (but small) different point from the original one.


We also modify the Clifford element. This is the main different point from the original one. Recall the coordinate on $U$: $\exp\bra{\sum_k\bra{x_k\cosi{k}+y_k\sine{k}}}\mapsto (x_1,y_1,x_2,y_2,\cdots)$, and recall that $e_k$ and $f_k$ denote the cotangent vectors $dx_k$ and $dy_k$, respectively.


\begin{dfn}\label{Clifford element in SC}
Let $l$ be a positive real number greater than $\frac{3}{2}$.
As a Clifford algebra-valued function on $U_M$, we introduce a Clifford element 
$$C_N^M:=\sum_{k=N}^Mk^{-l} (x_ke_k+y_kf_k)\in C^\infty(U_M,\Cl_+(T^*U_M))$$
for $N\leq M< \infty$. 

By an abuse of notation, $C_N^\infty$ (or simply $C_N$) means the {\bf formal} infinite sum
$\sum_{i=N}^\infty k^{-l} (x_ke_k+y_kf_k)$. We will prove that this formal infinite sum makes sense as an operator on $\ud{L^2(U)}$.
\end{dfn}

\begin{rmks}
$(1)$ Except for the statement of Lemma \ref{Ours = original one} and its proof, we fix $l$, and we omit $l$ in the symbol of the Clifford element. 

$(2)$ The original Clifford element is $\sum_{i=N}^M(x_ke_k+y_kf_k)\in C^\infty(U_M,\Cl_+(T^*U_M))$. If we consider (formally) the case when $N=1$ and $M=\infty$, this original Clifford element is invariant under all the isometric linear action. On the other hand, our Clifford element is much less symmetric.
\end{rmks}

Let us recall that a self-adjoint unbounded operator can be substituted for a function on the real line. For example, the heat kernel $e^{-tD^2}$ can be defined for a Dirac operator $D$. If the operator has compact resolvent and the function vanishes at infinity, the obtained operator is compact. Just like this, we can get an element of a $C^*$-algebra from an ``unbounded multiplier with compact resolvent'' and a function on the real line vanishing at infinity.

\begin{dfn-pro}[See \cite{Wor} or Appendix A.3. of \cite{HKT}]
Let $A$ be a $\bb{Z}_2$-graded $C^*$-algebra. An $A$-linear operator $D$ from a dense, $\bb{Z}_2$-graded, right $A$-submodule $\ca{A}$, into $A$ is an (odd) {\bf unbounded self-adjoint multiplier} if the following conditions are fulfilled:
\begin{itemize}
\item $(Dx)^*y=x^*(Dy)$ for all $x,y\in \ca{A}$.
\item The operator $D\pm i\id$ are isomorphisms from $\ca{A}$ to $A$.
\item $D$ reverses the grading.
\end{itemize}

For such $D$, there is an operator calculus homomorphism taking values in the multiplier algebra $M(A)$ of $A$,
$$\scr{S}\ni f\mapsto f(D)\in M(A)$$
mapping $(x\pm i)^{-1}$ to $(D\pm i\id)^{-1}$. $\scr{S}$ is graded by $[\epsilon(f)](t)=f(-t)$. With respect to this grading, the above homomorphism preserves the grading.

If $(D\pm i\id)^{-1}$ belongs to $A$ itself, this homomorphism takes value in $A$. Such $D$ is said to be with {\bf compact resolvent}, which is because the set of $A$-compact operators on the right $A$-module is $A$ itself, while the set of adjointable operators on $A$ is $M(A)$.
\end{dfn-pro}

On $\scr{S}$, the operator $X$ defined by $[Xf](x):=xf(x)$ is an unbounded self-adjoint multiplier with compact resolvent. This is a typical example of unbounded multipliers with compact resolvent.

Moreover, the Clifford element $C_{N,M}$ is also an unbounded self-adjoint multiplier on the $C^*$-algebra $\scr{C}(U_M\ominus U_{N-1}):= Cl_\tau(U_M\ominus U_{N-1})$. It can be proved as follows. Firstly, let $N$ be $1$. Then, $C_{1,M}$ is an unbounded multiplier with compact resolvent. This is because $(C_1^M+i)(C_1^M-i)=1+\sum k^{-2l}(x_k^2+y_k^2)$ is strictly positive. The general case is completely parallel to this  one.
Consequently, $X\otimes \id+\id\otimes C_{N}^M$ is an unbounded self-adjoint multiplier with compact resolvent of $\scr{S}\otimes\scr{C}(U_M\ominus U_{N-1})$.
Let $\SC(U_N)$ be the graded tensor product $\scr{S}\otimes \scr{C}(U_N)$

We are in the position to talk about the homomorphism connecting $\SC(U_N)$ and $\SC(U_M)$. Notice that $\SC(U_M)\cong \SC(U_M\ominus U_{N})\otimes \scr{C}(U_{N})$.

\begin{dfn}\label{dfn of beta}
A $*$-homomorphism $\beta_{N,M}:\SC(U_N)\to \SC(U_M)$ for $N\leq M$ is defined by
$$\beta_{N,M}(f\otimes h):=f(X\otimes \id+\id\otimes C_{N+1}^M)\otimes h.$$
\end{dfn}

Thanks to the equality $C_{N+1}^M+C_{M+1}^L=C_{N+1}^L$ and Proposition 3.2 in \cite{HKT}, we find that
$$\beta_{M,L}\circ \beta_{N,M}=\beta_{N,L}.$$
It enable us to take an inductive limit of the sequence
$$\SC(U_1)\xrightarrow{\beta_{1,2}} \SC(U_2)\xrightarrow{\beta_{2,3}}\cdots.$$

\begin{dfn}
By the $C^*$-algebra inductive limit, we define
$$\SC(U):=\varinjlim \SC(U_N).$$
We call this algebra the {\bf HKT algebra}.
\end{dfn}

We define a subalgebra of the HKT algebra consisting of ``rapidly decreasing functions''. It is convenient to adopt narrower one than the natural one. Let $f_e(x):=e^{-x^2}$ and $f_o(x):=xe^{-x^2}$. They generate $\scr{S}$, thanks to the Stone-Weierstrass theorem. ``$e$'' and ``$o$'' come from even and odd, respectively.

\begin{dfn}
Let $\scr{S}_\fin\subseteq \scr{S}$ be the $*$-subalgebra algebraically generated by $f_e$ and $f_o$. 
Let $\scr{C}(U_N)_\fin$ be the dense subalgebra consisting of Schwartz functions. 
Let $\SC(U_N)_\fin$ be the algebraic tensor product of $\scr{S}_\fin$ and $\scr{C}(U_N)_\fin$.
The Bott map $\beta_{N,M}$ maps $\SC(U_N)_\fin$ into $\SC(U_M)_\fin$, and hence we can define a dense subalgebra by the algebraic inductive limit
$$\SC(U)_\fin:=\varinjlim^\alg \SC(U_N)_\fin.$$
The $*$-homomorphism defining the inductive limit $\SC(U_N)\to \SC(U)$ is denoted by $\beta_N$. For the special one $\beta_0:\scr{S}\to \SC(U)$, we use $\beta$.
\end{dfn}

Our HKT algebra is isomorphic to the original one as $C^*$-algebras, as follows. However, our description is much more convenient for (possibly only) our purpose.

\begin{lem}\label{Ours = original one}
Let ${}^l\beta_{N,M}$ be the $*$-homomorphism in Definition \ref{dfn of beta} induced by the Clifford element ${}^lC_N^M:=\sum_{i=N}^Mk^{-l} (x_ke_k+y_kf_k)$. Fix a trivialization $T^*U_M\cong U_M\times \Lie(U_M)^*$, and we regard $C_N$ as, not a section of the Clifford algebra baundle, but a Clifford algebra-valued function.
If we define a $*$-isomorphism ${}^l\Phi:\scr{C}(U_N)\to \scr{C}(U_N)$ by
$${}^l\Phi_N(h)(x_1,y_1,x_2,y_2,\cdots,x_N,y_N):=h(x_1,y_1,2^{-l}x_2,2^{-l}y_2,\cdots,N^{-l}x_N,N^{-l}y_N),$$
the following diagram commutes:
$$\begin{CD}
\SC(U_N) @>\id\otimes{}^l\Phi_N>> \SC(U_N) \\
@V{}^0\beta_{N,M}VV @VV{}^l\beta_{N,M}V \\
\SC(U_M) @>\id\otimes{}^l\Phi_M>> \SC(U_M) 
\end{CD}
$$

Consequently, our HKT algebra is $*$-isomorphic to the original one.
\end{lem}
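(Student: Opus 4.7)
The strategy is to evaluate both routes around the square on elementary tensors $f \otimes h \in \scr{S} \otimes \scr{C}(U_N)$ and reduce commutativity to a direct change-of-variables computation for the Clifford element. First, I identify ${}^l\Phi_N$ with pullback along the linear diffeomorphism $\psi_N : U_N \to U_N$ that rescales the $k$-th pair of coordinates $(x_k, y_k)$ by $k^{-l}$. Because we have fixed a trivialisation of $T^*U_M$, $\psi_N$ acts on the base only and leaves the Clifford fibre alone, so ${}^l\Phi_N$ really is a $*$-automorphism of $\scr{C}(U_N)$. Moreover $\psi_M$ respects the orthogonal splitting $U_M = U_N \oplus (U_M \ominus U_N)$ and restricts to $\psi_N$ on the first factor and to the analogous rescaling $\psi^{M\ominus N}$ on the second; under $\scr{C}(U_M) \cong \scr{C}(U_M \ominus U_N) \otimes \scr{C}(U_N)$ this reads $\id_\scr{S} \otimes {}^l\Phi_M = (\id_\scr{S} \otimes {}^l\Phi^{M\ominus N}) \otimes {}^l\Phi_N$.

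With this factorisation in hand, the clockwise composition sends $f \otimes h$ to $f(X \otimes \id + \id \otimes {}^l C_{N+1}^M) \otimes {}^l\Phi_N(h)$, while the counter-clockwise composition sends it to $\bigl[(\id_\scr{S} \otimes {}^l\Phi^{M\ominus N})(f(X \otimes \id + \id \otimes {}^0 C_{N+1}^M))\bigr] \otimes {}^l\Phi_N(h)$. Substituting $x_k \mapsto k^{-l} x_k$, $y_k \mapsto k^{-l} y_k$ in the Clifford-valued function $\sum_{k=N+1}^M (x_k e_k + y_k f_k)$ immediately gives $\sum_{k=N+1}^M k^{-l}(x_k e_k + y_k f_k)$, so that ${}^l\Phi^{M\ominus N}({}^0 C_{N+1}^M) = {}^l C_{N+1}^M$ as unbounded self-adjoint multipliers of $\scr{C}(U_M \ominus U_N)$. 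Since the Woronowicz functional calculus is natural with respect to $*$-isomorphisms of the host algebra (a fact the paper already recalls from the appendix of \cite{HKT}), the two expressions agree, and the square commutes.

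Commutativity for every pair $N \le M$ amounts to saying that $\{\id_\scr{S} \otimes {}^l\Phi_N\}_N$ is an isomorphism of inductive systems from $(\SC(U_N), {}^0\beta_{N,M})$ to $(\SC(U_N), {}^l\beta_{N,M})$, and therefore descends to a $*$-isomorphism of the $C^*$-algebraic inductive limits, identifying the modified HKT algebra with the original one. There is no substantive obstacle; the only point that must be invoked rather than computed is the naturality $\Phi(f(D)) = f(\Phi(D))$ of the unbounded functional calculus under $*$-isomorphisms, which is immediate from the resolvent characterisation $f \mapsto f(D)$ determined by $(x \pm i)^{-1} \mapsto (D \pm i)^{-1}$.
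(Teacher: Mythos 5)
Your proof is correct, but it takes a genuinely different and more conceptual route than the paper. The paper reduces to $N=0$ and then checks commutativity by explicit hand computation on the two generators $f_e(x)=e^{-x^2}$ and $f_o(x)=xe^{-x^2}$ of $\scr{S}$, evaluating each leg of the diagram directly and observing they give the same closed-form expression. You instead isolate the real mechanism: ${}^l\Phi_N$ is the map that pulls back the scalar part of a Clifford-valued function along the linear rescaling $\psi_N$ while leaving the Clifford fibre fixed (exactly what the remark after the lemma emphasizes, since $\psi_N$ is not an isometry and the "natural" pullback of the Clifford bundle would \emph{not} be a $*$-homomorphism); this map visibly carries the unbounded multiplier ${}^0C_{N+1}^M$ to ${}^lC_{N+1}^M$; and the functional calculus $f\mapsto f(D)$ is determined by $(x\pm i)^{-1}\mapsto(D\pm i)^{-1}$, hence intertwines $*$-isomorphisms, so $\bigl(\id\otimes{}^l\Phi^{M\ominus N}\bigr)\bigl(f(X\otimes\id+\id\otimes{}^0C_{N+1}^M)\bigr)=f(X\otimes\id+\id\otimes{}^lC_{N+1}^M)$ for every $f\in\scr{S}$, not just for generators. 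You also make explicit the factorisation $\id_{\scr{S}}\otimes{}^l\Phi_M=(\id_{\scr{S}}\otimes{}^l\Phi^{M\ominus N})\otimes{}^l\Phi_N$ under $\scr{C}(U_M)\cong\scr{C}(U_M\ominus U_N)\otimes\scr{C}(U_N)$, which the paper leaves implicit in the phrase "the rest part is easy." What you gain is a proof that works uniformly for all $N$ and all $f\in\scr{S}$ in one stroke, with the single nontrivial input being naturality of the $\scr{S}$-functional calculus, whereas the paper's computation has the minor advantage of producing the concrete formulae $f_e(X)\otimes e^{-\sum_k k^{-2l}(x_k^2+y_k^2)}$ etc.\ that are also useful elsewhere in the paper. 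Both arguments are valid; there is no gap in yours.
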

\begin{rmk}
Let ${}^l\phi$ be the diffeomorphism given by $(x_1,y_1,\cdots,x_N,y_N)\mapsto (x_1,y_1,\cdots,N^{-l}x_N,N^{-l}y_N)$. Note that ${}^l\Phi$ is {\bf NOT} the natural pullback $({}^l\phi)^*$, but just ${}^l\Phi(f\otimes c)=[({}^l\phi)^*(f)]\otimes c$, where $f$ is a {\it scalar}-valued function and $c\in \Cl_+(U_N)$. In this sense, our isomorphism is not very natural.

\end{rmk}
\begin{proof}
Once we prove the statement when $N=0$, the rest part is easy. All the homomorphisms are $*$-homomorphisms, and hence it suffices to check the commutativity for generators: $f_e$ and $f_o$. 
Let us compute the compositions at $f_e$ and $f_o$. See Proposition 3.2 in \cite{HKT} also. For $f_e$,
\begin{align*}
&\bigl[(\id\otimes{}^l\Phi_M)\circ {}^0\beta_{0,M}(f_e)\bigr] (x_1,y_1,x_2,y_2,\cdots,x_M,y_M) \\
&= (\id\otimes{}^l\Phi_M)[f_e(X)\otimes f_e({}^lC_1^M)] (x_1,y_1,x_2,y_2,\cdots,x_M,y_M)\\
&=f_e(X)\otimes e^{-\sum_{k=1}^Mk^{-2l}(x_k^2+y_k^2)}.
\end{align*}
Since ${}^l\Phi_0$ is the identity,
\begin{align*}
& {}^l\beta_{0,M}\circ(\id\otimes{}^l\Phi_0)(f_e)(x_1,y_1,x_2,y_2,\cdots,x_M,y_M) \\
&=f_e\bra{X\otimes \id+ \id\otimes \sum_{k=1}^Mk^{-l} (x_ke_k+y_kf_k)}\\
&=f_e(X)\otimes e^{-\sum_{i=1}^Mk^{-2l} (x_k^2+y_k^2)}.
\end{align*}

For $f_o$,
\begin{align*}
&(\id\otimes{}^l\Phi_M)\circ {}^0\beta_{0,M}(f_o)(x_1,y_1,x_2,y_2,\cdots,x_M,y_M) \\
&=Xf_e(X)\otimes f_e(x_1,y_1,2^{-l}x_2,2^{-l}y_2,\cdots,M^{-l}x_M,M^{-l}y_M)
+f_e(X)\otimes Yf_e(Y)|_{Y={}^lC_{1,M}} \\
&=Xf_e(X)\otimes e^{-\sum_{k=1}^Mk^{-2l}(x_k^2+y_k^2)}
+f_e(X)\otimes \sum_{k=1}^Mk^{-l} (x_ke_k+y_kf_k)e^{-\sum_{b=1}^Mb^{-2l}(x_b^2+y_b^2)}.
\end{align*}

\begin{align*}
& {}^l\beta_{0,M}\circ(\id\otimes{}^l\Phi_0)(f_o)(x_1,y_1,x_2,y_2,\cdots,x_M,y_M) \\
&=f_o(X\otimes \id+\id\otimes {}^lC_1^M)\\
&=Xf_e(X)\otimes e^{-\sum_{k=1}^Mk^{-2l}(x_k^2+y_k^2)}
+f_e(X)\otimes \sum_{k=1}^Mk^{-l} (x_ke_k+y_kf_k)e^{-\sum_{b=1}^Mb^{-2l}(x_b^2+y_b^2)}.
\end{align*}

\end{proof}

\begin{rmk}
This isomorphism suggests that our ``function on $U$'' is ``continuous'' even in a weaker topology.
\end{rmk}




\subsection{A group action and a representation}

In this subsection, we will see that $U$ acts on $\SC(U)$, and $\SC(U)$ has a representation on a Hilbert $\scr{S}$-module. It looks like
\begin{center}
the canonical $\scr{S}$-module $\scr{S}$ $\otimes$ ``representation of $\scr{C}(U)$'',
\end{center}
although the phrase ``representation of $\scr{C}(U)$'' does not make sense.

These two ingredients the ``group action'' and the ``representation'' are compatible. We will put off the proof until the next (main) section, because we will deal with two different $U$-actions on the representation space.

\begin{thm}\label{the U action}
$U$ acts on $\SC(U)$ continuously by $*$-automorphisms.

\end{thm}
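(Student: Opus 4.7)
The strategy is to define the action first on the finite-dimensional approximations $\SC(U_N)$, verify compatibility with the Bott maps so as to descend to an action of $U_\infty := \bigcup_N U_N$ on $\SC(U)$, and then extend by density and continuity to the full group $U$.

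\textbf{Finite-dimensional action and Bott-compatibility.} For each $N$, let $U_N$ act on $\SC(U_N) = \scr{S} \otimes \scr{C}(U_N)$ by translating the second factor and fixing the first: $g \cdot (f \otimes h) := f \otimes h(\,\cdot\, - g)$. Since $U_N$ is a finite-dimensional vector group acting on $C_0(U_N, \Cl_+(T^*U_N))$, this is a strongly continuous action by $*$-automorphisms. For $N \leq M$ and $g \in U_N$, the extended translation on $\SC(U_M)$ commutes with $\beta_{N,M}$, because $\beta_{N,M}(f \otimes h) = f(X \otimes \id + \id \otimes C_{N+1}^M) \otimes h$ and the multiplier $C_{N+1}^M$ involves only coordinates of $U_M \ominus U_N$, which $g \in U_N$ fixes. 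Hence each $U_N$ acts compatibly on $\SC(U) = \varinjlim_M \SC(U_M)$, and these actions for $N = 1, 2, \dots$ are consistent, assembling into a strongly continuous action of $U_\infty$ on $\SC(U)$ by $*$-automorphisms.

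\textbf{Extension to $U$ and continuity estimate.} The subgroup $U_\infty$ is dense in $U$ with respect to the $L^2_{1/2}$ topology (which is coarser than the $C^\infty$ topology). Since $U$ is abelian and the constructed action is by isometries, to extend it suffices to prove continuity at the identity: for each $a \in \SC(U)_\fin$ and each sequence $h^{(M)} \in U_\infty$ with $h^{(M)} \to 0$ in $L^2_{1/2}$, one has $h^{(M)} \cdot a \to a$ in $\SC(U)$. By density of $\SC(U)_\fin$, it is enough to check this on the generators $\beta(f_e)$ and $\beta(f_o)$. Using Lemma~\ref{Ours = original one}, for $g, g' \in U_M$ one has
\[
g \cdot \beta_{0,M}(f_e) - g' \cdot \beta_{0,M}(f_e) = f_e(X) \otimes \bigl( e^{-u_1(x)} - e^{-u_0(x)} \bigr),
\]
where $u_t(x) := \sum_{k=1}^M k^{-2l}\bigl((x_k - (1-t)g'_k - t g_k)^2 + (y_k - (1-t)h'_k - t h_k)^2\bigr)$. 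Cauchy--Schwarz gives $|\partial_t u_t(x)| \leq 2\sqrt{u_t(x)} \cdot \bigl(\sum_k k^{-2l}((g_k-g'_k)^2 + (h_k-h'_k)^2)\bigr)^{1/2}$, which combined with $\sup_{s \geq 0}\sqrt{s}\, e^{-s} < \infty$ and $\sup_{k\geq 1} k^{-2l}/(1+k) < \infty$ (valid since $l > 0$, in particular for $l > 3/2$) yields
\[
\bigl\| g \cdot \beta_{0,M}(f_e) - g' \cdot \beta_{0,M}(f_e) \bigr\|_{\SC(U_M)} \leq C\,\|g - g'\|_{L^2_{1/2}}
\]
uniformly in $M$. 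The analogous estimate for $f_o$ follows by absorbing the additional polynomial factor into the Gaussian decay. The uniform bound implies that $g^{(M)} \cdot a$ is Cauchy in $\SC(U)$ whenever $g^{(M)}$ is $L^2_{1/2}$-Cauchy, so $g \cdot a := \lim_M g^{(M)} \cdot a$ defines the extension.

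\textbf{Main obstacle.} The crux is producing the above norm bound \emph{uniformly in the dimension $M$}, so that passage to the $C^*$-algebra inductive limit preserves continuity. The decay $k^{-l}$ built into the Clifford element $C_N$ (Definition~\ref{Clifford element in SC}) is exactly what converts $L^2_{1/2}$-smallness of $g - g'$ into a summable weighted $\ell^2$-quantity against which the Gaussian damping closes the estimate. Once strong continuity on $U_\infty$ is established on generators, the group-homomorphism property, the extension to $\SC(U)$-automorphisms, and continuity in the $C^\infty$ topology on $U$ all follow by standard density and approximation arguments.
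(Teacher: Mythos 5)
Your proof follows essentially the same strategy as the paper's: define the translation action of $U_\fin$ on the finite-dimensional approximations, check Bott-compatibility, reduce strong continuity to generators in $\scr{S}_\fin$, and establish a uniform-in-$M$ estimate via the mean-value theorem and Cauchy--Schwarz, exploiting the weight $k^{-l}$ in the Clifford element. Your $f_e$-estimate is sound and in fact slightly cleaner than the paper's, giving a genuine Lipschitz modulus $\|g.\beta(f_e)-g'.\beta(f_e)\|\le C\,\|g-g'\|_{L^2_{1/2}}$.

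There are two points where the argument as written has a gap. First, the reduction to ``the generators $\beta(f_e)$ and $\beta(f_o)$'' is too narrow: the $C^*$-subalgebra generated by $\beta(\scr{S})$ inside $\SC(U)$ is only an isomorphic copy of $\scr{S}$, not all of $\SC(U)$. You must also verify continuity on elements of the form $\beta_{N}(f_e\otimes a)$ and $\beta_{N}(f_o\otimes a)$ for $a\in\scr{C}(U_N)_\fin$; this is straightforward because translation splits across the decomposition $\SC(U)\cong\SC(U_N^\perp)\otimes\scr{C}(U_N)$ and the $U_N$-factor is a finite-dimensional $C_0$-algebra, but it must be said. The paper's proof explicitly performs this reduction (writing the approximant $a_0$ as a finite sum $\sum f_i\otimes a_i$ and factoring monomials).

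Second, the treatment of $f_o$ cannot be dismissed as ``absorbing the polynomial factor into the Gaussian decay.'' Here the relevant quantity is the Clifford-valued difference
$$g.\bigl[C_1^M e^{-(C_1^M)^2}\bigr]-g'.\bigl[C_1^M e^{-(C_1^M)^2}\bigr],$$
and the translated Clifford elements $C_1^M(x-g)$, $C_1^M(x-g')$ do \emph{not} commute in $\Cl_+(T^*U_M)$. One needs to use the $C^*$-identity for a real linear combination $\sum a_k e_k$ (whose norm is $\sqrt{\sum a_k^2}$) to reduce the pointwise operator norm to a scalar quantity, and then estimate that scalar quantity with care about the cross-term between the shifting polynomial prefactor and the shifting Gaussian. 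This is precisely where the paper's proof expends its most delicate effort, splitting the difference into a prefactor term and a Gaussian term and bounding the quantity $r' e^{-(r')^2}\bigl(e^{(2r'+\delta)\delta}-1\bigr)$. Without that estimate, the $f_o$-case is not actually established.
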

\begin{proof}
In this proof, to be precise, $\sigma$ denotes the group action of $U$ on itself: $\sigma_g(x):=x+g$, where we identify $U$ with $\Lie(U)$, and we regard $U$ as a vector space. 

Let $U_\fin$ be the algebraic inductive limit of $U_N$: $U_\fin:=\cup_N U_N$. It acts on $\SC(U)_\fin$ in the natural way: Let $g\in U_N$ and $f\otimes h\in\SC(U_M)_\fin$. When $N\leq M$, $U_N\subseteq U_M$ and hence $U_N$ acts on $U_M$. The action is defined by $g.(f\otimes h):=f\otimes \sigma_{g^{-1}}^*h$. When $M<N$, replace $f\otimes h$ with $\beta_{M,N}(f\otimes h)$. 
This action is well-defined: $\beta_{M,L}[g.(f\otimes h)]=g.[\beta_{M,L}(f\otimes h)]$ for $L>M\geq N$. The map induced by $g$ clearly gives an isometric $*$-automorphism. 

We must check that the action extends to a $U$-action on $\SC(U)$. For this aim, we need some quantitative arguments.

Firstly, we check that the operator on $\SC(U)_\fin$ induced by $g\in U_\fin$ is continuous with respect to the norm of $\SC(U)$. Indeed, for $a\in \SC(U_M)$ and $g\in U_N$ (suppose that $M\geq N$),
\begin{align*}
\|\beta_{M}(g.a)\|_{\SC(U)}&=
\lim_{L\to \infty} \|\beta_{M,L}(g.a)\|_{\SC(U_L)}=\lim_{L\to \infty} \|g.[\beta_{M,L}(a)]\|_{\SC(U_L)} \\
&=\lim_{L\to \infty} \|\beta_{M,L}(a)\|_{\SC(U_L)}=
\|\beta_{M}(a)\|_{\SC(U)},
\end{align*}
where we used the fact that $U_N$ acts on $\SC(U_L)$ as isometric $*$-automorphisms for any $L\geq N$. Consequently, $U_\fin$ acts on $\SC(U)$ by $*$-automorphisms. The precise definition is as follows: For $a\in \SC(U)$ and $g\in U_\fin$, choose an approximating sequence $\{a_n\}\subseteq \SC(U)_\fin$, and define $g.a$ by $\lim_n g.a_n$. Since $g\in U_\fin$ gives an isometry, this limit exists and independent of the choice of $\{a_n\}$.


Secondly, we need to extend the $U_\fin$-action to a $U$-action.
For this purpose, it is natural to define the action by $g.a:=\lim_{n\to \infty} (g_n).a$. In order to justify this definition, we need to check the following: For any fixed $a\in \SC(U)$ and for any two sufficiently close elements $g,g'\in U_\fin$, $g.a-g'.a$ is small. In order to describe everything precisely, take a positive number $\varepsilon$, and choose $a_0\in \SC(U_N)$ so that $\|a-a_0\|<\varepsilon$. Then
$$\|g.a-g'.a\|\leq \|g.(a-a_0)\|+\|g.a_0-g'.a_0\|+\|g'.(a-a_0)\|$$
The first and the third terms are less than $\varepsilon$, because $g$ and $g'$ give isometries. 
For the second term, suppose that $g$ and $g'$ belong to $U_L$. 
We may assume that $a_0$ is of the form $\sum_{\text{finite sum}} f_i\otimes a_i$ for some $a_i$'s which are compactly supported and smooth, and $f_i$'s which are monomials of $f_e$ and $f_o$.
In a similar way to prove the Leibniz rule, we may deal with only $f_e\otimes a$ and $f_o\otimes a$. 
In fact, we may assume that $f_i\otimes a_i$ is of the form $(f_e)^n(f_o)^m(a')^{n+m-1}a''$ fow some suitable $n$, $m$, $a'$ and $a''$. 
It can be written as $\pm (f_e\cdot a')\cdot \cdots (f_o\cdot a')\cdot (f_o\cdot a'')$.\footnote{$\scr{S}\otimes \scr{C}(U_N)$ is the graded tensor product!} Then, if $g.(f_e\cdot a')-g'(f_e\cdot a')$'s are small enough, $\|g(f_i\otimes a_i)-g'(f_i\otimes a_i)\|<\varepsilon$.
Therefore we must prove the both of
\begin{center}
$g.\beta_{N,M}(f_e\otimes a)-g'.\beta_{N,M}(f_e\otimes a)\in \SC(U_M)$ and 
\end{center}
\begin{center}
$g.\beta_{N,M}(f_o\otimes a)-g'.\beta_{N,M}(f_o\otimes a)\in \SC(U_M)$
\end{center}
are arbitrary small uniformly in $M$, if $g$ is sufficiently close to $g'$. In fact, we need to estimate the limit of the norm of the above as $M$ tends to infinity, and so we may assume that $M$ is large enough so that $g,g'\in U_M$. Let us recall, from \cite{HKT}, that
$$\beta_{N,M}(f_e\otimes a)= f_e\otimes e^{-(C_{N+1}^M)^2}\otimes a\text{ and}$$
$$\beta_{N,M}(f_o\otimes a)= f_o\otimes e^{-(C_{N+1}^M)^2}\otimes a+
f_e\otimes C_N^Me^{-(C_N^M)^2}\otimes a.$$
Let us estimate the norm of the following functions of the variable $x\in U_M$:
$$ \bbbra{e^{-(C_{N+1}^M)^2}\otimes a}(x-g) -\bbbra{e^{-(C_{N+1}^M)^2}\otimes a}(x-g'),$$
$$ \bbbra{C_{N+1}^Me^{-(C_{N+1}^M)^2}\otimes a}(x-g) -\bbbra{C_{N+1}^Me^{-(C_{N+1}^M)^2}\otimes a}(x-g').$$
For $x=(x_1,y_1,x_2,y_2,\cdots,x_M,y_M)$, let 
$x|_N$ denote $(x_1,y_1,x_2,y_2,\cdots,x_N,y_N)\in U_N$. We use the coordinate $x=(x_1,y_1,x_2,y_2,\cdots,x_M,y_M)$, $g=(g_1,h_1,g_2,h_2,\cdots,g_M,h_M)$ and $g'=(g'_1,h'_1,g'_2,h'_2,\cdots,g'_M,h'_M)$.

\begin{align*}
&\left|\bbbra{e^{-(C_{N+1}^M)^2}\otimes a}(x-g) -\bbbra{e^{-(C_{N+1}^M)^2}\otimes a}(x-g')\right| \\
&=\left|
\exp\bra{-\sum_{N+1}^Mk^{-2l}\bbbra{(x_k-g_k)^2+(y_k-h_k)^2}}a(x-g)-\exp\bra{-\sum_{N+1}^Mk^{-2l}\bbbra{(x_k-g'_k)^2+(y_k-h'_k)^2}}\cdot a(x-g')\right|\\
&\leq \left| \exp\bra{-\sum_{N+1}^Mk^{-2l}\bbbra{(x_k-g_k)^2+(y_k-h_k)^2}}\right|\cdot \left|a\bra{x-g|_N}-a\bra{x-g'|_N}\right| \\
&\ \ \ +\left|
\exp\bra{-\sum_{N+1}^Mk^{-2l}\bbbra{(x_k-g_k)^2+(y_k-h_k)^2}}-\exp\bra{-\sum_{N+1}^Mk^{-2l}\bbbra{(x_k-g'_k)^2+(y_k-h'_k)^2}}\right||a(x-g')|\\
&\leq \sup_z |{\rm grad}(a)(z)|\cdot\|g-g'|_N\|_{l^2}\\
&\ \ \ 
+\left|
\exp\bra{-\sum_{N+1}^Mk^{-2l}\bbbra{(x_k-g_k)^2+(y_k-h_k)^2}}-\exp\bra{-\sum_{N+1}^Mk^{-2l}\bbbra{(x_k-g'_k)^2+(y_k-h'_k)^2}}\right||a(x-g')|.
\end{align*}

We need to estimate $|\exp(\bullet)-\exp(\bullet')|$. For this purpose, let ${\bf x}_2:=((N+1)^{-l}x_{N+1},(N+1)^{-l}y_{N+1},\cdots,M^{-l}x_M,M^{-l}y_M)$ and similarly for $g$ and $g'$. By abuse of notation, let $f_e(x_{N+1},y_{N+1},\cdots,x_M,y_M)$ be $e^{-\sum_{k=N+1}^M(x_k^2+y_k^2)}$ until the end of this proof. Then, $f_e(C_{N+1}^M)(x_{N+1},y_{N+1},\cdots,x_M,y_M)=f_e({\bf x}_2)$.

Let us continue the estimate. Thanks to the mean-value theorem (note that $f_e$ is real-valued),
\begin{align*}
|f_e({\bf x}_2-{\bf g}_2)-f_e({\bf x}_2-{\bf g}'_2)| &\leq \sup_{w_2}|{\rm grad}(f_e)(w_2)|\|({\bf x}_2-{\bf g}_2)-({\bf x}_2-{\bf g}'_2)\|,
\end{align*}
where $w_2=(u_{N+1},v_{N+1},u_{N+2},v_{N+2},\cdots,u_{M},v_{M})$ runs over $U_M\ominus U_N$. Since ${\rm grad}(f_e)$ is bounded, we obtain the desired result which is independent of $M$.

For the $f_o$-part, we need more subtle argument. In the estimate for $f_e$, we divide the problem into two parts: the $U_N$-part and the $U_M\ominus U_N$-part. We use the same technique, and we may assume $N=0$. What we would like to compute is the norm of
$$\sum k^{-l}(x_k-g_k)e^{-\sum_{k=1}^Nb^{-2l}\bbbra{(x_b-g_b)^2+(y_b-h_b)^2}}\otimes e_k-\sum k^{-l}(x_k-g_k')e^{-\sum_{k=1}^Nb^{-2l}\bbbra{(x_b-g_b')^2+(y_b-h_b')^2}}\otimes e_k$$
$$
+\sum k^{-l}(y_k-h_k)e^{-\sum_{k=1}^Nb^{-2l}\bbbra{(x_b-g_b)^2+(y_b-h_b)^2}}\otimes f_k-\sum k^{-l}(y_k-h_k')e^{-\sum_{k=1}^Nb^{-2l}\bbbra{(x_b-g_b')^2+(y_b-h_b')^2}}\otimes f_k.$$
For simplicity, we deal with only the former half. Put 
\begin{center}
$r^2:=\sum_{k=1}^Nb^{-2l}\bbbra{(x_b-g_b)^2+(y_b-h_b)^2}$ and $(r')^2:=\sum_{k=1}^Nb^{-2l}\bbbra{(x_b-g'_b)^2+(y_b-h'_b)^2}$. 
\end{center}
Then, the square of the pointwise norm is given as follows:
\begin{align*}
\left\|\sum k^{-l}(x_k-g_k)e^{-r^2}\otimes e_k-\sum k^{-l}(x_k-g_k)e^{-(r')^2}\otimes e_k\right\|^2 &= \sum k^{-2l}\left\|(x_k-g_k)e^{-r^2}-(x_k-g'_k)e^{-(r')^2}\right\|^2
\end{align*}
thanks to the $C^*$-condition and the formula in the Clifford algebra. 
What we need to prove is the following: The sup norm of the function $x\mapsto \left\|\sum k^{-l}(x_k-g_k)e^{-r^2}\otimes e_k-\sum k^{-l}(x_k-g_k)e^{-(r')^2}\otimes e_k\right\|^2$ is arbitrary small if $g'$ is sufficiently close to $g$.

\begin{align*}
& \sum k^{-2l}\|(x_k-g_k)e^{-r^2}-(x_k-g'_k)e^{-(r')^2}\|^2 \\
&= \sum k^{-2l}\|(x_k-g_k)e^{-r^2}-(x_k-g'_k)e^{-r^2}+(x_k-g'_k)e^{-r^2}-(x_k-g'_k)e^{-(r')^2}\|^2 \\ 
&\leq 2\sum k^{-2l}|g_k'-g_k|^2e^{-2r^2} + 2\sum k^{-2l}|x_k-g'_k|^2\cdot |e^{-r^2}-e^{-(r')^2}|^2 \\
&\leq 2\|g-g'\|_{L^2_{-l}}e^{-2r^2} + 2(r')^2\cdot |e^{-r^2}-e^{-(r')^2}|^2. 
\end{align*}

The first term is, clearly, arbitrary small if $g$ and $g'$ are sufficiently close. For the second term, we need more computations. Let $|\bullet|$ be the ``$L^2_{-l}$-norm''.
Note that $|r-r'|\leq |g-g'|$. Put $\delta:= |g-g'|$, and note that $ r+r'\leq 2r'+\delta $. Then, 
\begin{align*}
(r')^2\cdot |e^{-r^2}-e^{-(r')^2}|^2 &=(r')^2e^{-2(r')^2}|e^{(r')^2-r^2}-1|^2 \\
&\leq \begin{cases} (r')^2e^{-2(r')^2}(e^{(2r'+\delta)\delta}-1)^2 & r'\geq r \\
(r')^2e^{-2(r')^2}(1-e^{-(2r'+\delta)\delta})^2 & r'\leq r. \end{cases}
\end{align*}
The both are arbitrary small if $\delta$ is sufficiently small. We compute only the square root of the former one. Notice the following:
\begin{align*}
r'e^{-(r')^2}(e^{(2r'+\delta)\delta}-1) &=(r'-\delta)e^{-(r'-\delta)^2}\cdot e^{2\delta^2}-r'e^{-(r')^2}+\delta e^{-(r'-\delta)^2}\cdot e^{2\delta^2}
.\end{align*}
Since the function $t\mapsto te^{-t^2}$ is uniformly continuous and bounded, the last quantity can be arbitrary small if $\delta$ is sufficiently small.

\end{proof}

\begin{rmks}
$(1)$ The above action is, in fact, continuous with respect to the $L^2_{-l}$-metric. This fact might suggest that our HKT algebra is in some sense ``$\SC(U_{L^2_{-l}})$'' or something.

$(2)$ We will frequently encounter similar arguments in the present paper: Firstly, the corresponding statement on finite-dimensional objects is obvious. Secondly, to prove the statement for infinite-dimensional objects, we discuss some quantitative issues.
\end{rmks}

We would like to define a representation of $\SC(U)$ on a Hilbert $\scr{S}$-module. Seeing Lemma \ref{modified Kas modules} and Definition \ref{dfn of Dirac element for X}, we find that the ``natural'' representation of ``$\scr{C}(U)$'' should be on ``$L^2(U,\Cl_+(T^*U))$''.

Let us begin with the construction of the Spinor $S$.
Recall the complex basis of $\Lie(U)^*\otimes\bb{C}$: $z_k:=\frac{1}{\sqrt{2}}(e_k+if_k)$ and $\overline{z_k}:=\frac{1}{\sqrt{2}}(e_k-if_k)$.

\begin{dfn}[\cite{FHTII}]
Let $S_\fin:=\wedge^{\alg}\oplus^\alg_k\bb{C}\overline{z_k}$ be the algebraic exterior algebra of the anti-holomorphic part of $U_\fin\otimes \bb{C}$. It is naturally graded by the structure of the exterior algebra.
We introduce the Clifford multiplication by
$$\begin{cases}
\gamma(z_k):=-\sqrt{2}\overline{z_k}\rfloor \\
\gamma(\overline{z_k}):=\sqrt{2}\overline{z_k}\wedge.
\end{cases}$$
Let $S$ be the completion of $S_\fin$ with respect to the inner product induced by that of $\Lie(U)^*\otimes \bb{C}$.

$S_\fin$ has ``$1$'' as a unit vector. In order to distinguish from ${\bf 1}_b\in \ud{L^2(\bb{R}^\infty)}$, we write this specific element of $S_\fin$ as ${\bf 1}_f$, where ``$f$'' comes from ``fermion''. 
\end{dfn}

We have explained the dual Spinor is automatically defied by $S$. However, we will use a dense subspace of it in order to define a ``differential'' operator, and hence we should concretely describe it.

\begin{dfn}
Let $S_\fin^*:=\wedge^{\alg}\oplus^\alg_k\bb{C}z_k$, and let 
$$\begin{cases}
\gamma^*(z_k):=-\sqrt{2}z_k\wedge\circ \epsilon_{S^*} \\
\gamma^*(\overline{z_k}):=\sqrt{2}z_k\rfloor\circ \epsilon_{S^*}.
\end{cases}$$
As usual, $\epsilon_{S^*}$ is the grading homomorphism $\epsilon_{S^*}(z_{i_1}\wedge z_{i_2}\wedge \cdots \wedge z_{i_n})=(-1)^nz_{i_1}\wedge z_{i_2}\wedge \cdots \wedge z_{i_n}$.
The specific vector $1$ is also denoted by ${\bf 1}_f^*$.

The bilinear pairing between $S_\fin$ and $S^*_\fin$ is given by 
$$\innpro{z_{i_1}\wedge z_{i_2}\wedge \cdots \wedge z_{i_n}}{\overline{z_{j_1}}\wedge \overline{z_{j_2}}\wedge \cdots \wedge \overline{z_{j_n}}}{}=\delta_{i_1,j_1}\delta_{i_2,j_2}\cdots \delta_{i_n,j_n}$$ 
for $i_1<i_2<\cdots <i_n$ and $j_1<j_2<\cdots < j_n$, and 
$$\innpro{z_{i_1}\wedge z_{i_2}\wedge \cdots \wedge z_{i_n}}{\overline{z_{j_1}}\wedge \overline{z_{j_2}}\wedge \cdots \wedge \overline{z_{j_m}}}{}=0$$
for $n\neq m$.
\end{dfn}

We will introduce the ``mixed'' actions of the Clifford algebras on $S^*\otimes S$.

\begin{dfn}\label{mixed Clifford action}
On $S^*_\fin\otimes^\alg S_\fin$, define two Clifford multiplications for $X\in \Lie(U)$ by
$$c(X):=\frac{1}{\sqrt{2}}\bra{\id\otimes\gamma(X)-i\gamma^*(X)\otimes \id};$$
$$c^*(X):=\frac{1}{\sqrt{2}}\bra{\gamma^*(X)\otimes \id-i\id\otimes\gamma(X)}.$$
\end{dfn}

Note that $c$ is a homomorphism from $\Cl_-(\Lie(U))$, while $c^*$ is a homomorphism from $\Cl_+(\Lie(U))$. One can easily check that $c$ anti-commutes with $c^*$. The Clifford multiplication $c$ will appear as the ``symbol'' of our Dirac operator, and $c^*$ will appear as the ``potential''.

We will sometimes encounter Spinors of subspaces. $S_{V}$ and $S^*_V$ denotes the Spinor of a subspace $V\subseteq U$ corresponding to $S$ and $S^*$, respectively. In the following, as a subspace, $U_N$, $U_M\ominus U_N$, $U_N^\perp =U\ominus U_N$ will appear.

In order to define the representation, we need to introduce a Hilbert space which plays a role of ``$L^2(U,\Cl_+(T^*U))$''. Recall that $\Cl_+\cong \End(S^*)$ in finite-dimensional setting.

\begin{dfn}
Let $\ca{H}$ be the Hilbert space tensor product
$$\ud{L^2(U)}\otimes S^*\otimes S.$$
We sometimes use its dense subspace
$$\ca{H}_\fin:=\ud{L^2(U)_\fin}\otimes^\alg S^*_\fin\otimes^\alg S_\fin.$$

The specific vector $\vac\otimes {\bf 1}_f^*\otimes {\bf 1}_f$ is denoted by $\Vac$. 
\end{dfn}

The actual representation space is a Hilbert $\scr{S}$-module $\scr{S}\otimes \ca{H}$. We also deal with a dense subspace $\scr{S}_\fin\otimes^\alg \ca{H}_\fin$.


We would like to define a $*$-homomorphism $\Phi:\SC(U)\to \ca{L}_{\scr{S}}(\scr{S}\otimes \ca{H})$. 
For this purpose, it is essential to define an operator $\Phi(C_N)$'s corresponding to the Clifford element $C_N$'s. 
If such an operator is defined appropriately, we can define the representation. Let us briefly explain it here before the definition of $\Phi(C_N)$'s. Firstly, $\scr{C}(U_N)$ acts on $L^2(U_N)\otimes S^*_{U_N}\otimes S_{U_N}$: For $f\in C_0(U_N)$ and $v\in \Cl_+(U_N)$, $[f\otimes v]\cdot (\phi\otimes s\otimes s'):= (f\cdot \phi)\otimes c^*(v)[s\otimes s']$.
This action is denoted by $a\cdot v$ for $a\in \scr{C}(U_N)$ and $v\in L^2(U_N)\otimes S^*_{U_N}\otimes S_{U_N}$.
Then, $\Phi_N:\SC(U_N)\to \ca{L}_{\scr{S}}(\scr{S}\otimes \ca{H})$ is defined by $\Phi_N(f\otimes h):= f(X\otimes \id+\id\otimes \Phi(C_{N+1}))\otimes h$ through $\scr{S}\otimes \ud{L^2(U)}\otimes S^*\otimes S\cong \bbbra{\scr{S}\otimes \ud{L^2(U_N^\perp)}\otimes S^*_{U_N^\perp}\otimes S_{U_N^\perp}}\bigotimes \bbbra{L^2(U_N)\otimes S^*_{U_N}\otimes S_{U_N}}$.
After the following proposition, we will check that $\Phi_N$'s are compatible with $\beta_{\bullet,\bullet}$'s, and hence $\Phi:\SC(U)\to \ca{L}_\scr{S}(\scr{S}\otimes \ca{H})$ is well-defined.

In order to define the operator $\Phi(C_N)$, notice that the multiplication operator of the function $x_k$ is defined on $L^2(U_M)_\fin$, for any $M\geq k$. This operator commutes with $\beta_{M,L}$, and hence acts on $\ud{L^2(U)}$. In the representation theoretical language, it is defined by $i[dL(f_k)+dR(f_k)]$. Moreover, $e_k$ acts on $S^*\otimes S$ by $\frac{1}{\sqrt{2}}\bbbra{c^*(z_k)+c^*(\overline{z_k})}
=\frac{1}{2}[\gamma^*(z_k)\otimes \id-i\id\otimes \gamma(z_k)+\gamma^*(\overline{z_k})\otimes \id-i\id\otimes \gamma(\overline{z_k})]$. Consequently, each summand $x_ke_k+y_kf_k$ is defined. The non-trivial thing is whether the infinite sum converges or not in an appropriate sense. Note that each term $x_k\otimes e_k+y_k\otimes f_k$ is an unbounded operator.

\begin{pro}
The infinite sum of operators $C_N:=\sum_{k=N}^\infty k^{-l}(x_k\otimes c^*(e_k)+y_k\otimes c^*(f_k))$ defines an essentially self-adjoint operator from $\ud{L^2(U)_\fin}\otimes_\alg S_\fin\otimes_\alg S_\fin$ to $\ud{L^2(U)}\otimes S\otimes S$.
\end{pro}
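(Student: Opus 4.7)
The plan has three steps. First, I will show $C_N\xi\in\ca{H}:=\ud{L^2(U)}\otimes S^*\otimes S$ for every $\xi\in\ca{H}_\fin$. Second, symmetry on $\ca{H}_\fin$ follows immediately since each summand is the product of the formally self-adjoint multiplication by $x_k$ or $y_k$ with a self-adjoint element of $\Cl_+$. Third, I will deduce essential self-adjointness by exploiting the identity
\[
C_N^2=\sum_{k\geq N}k^{-2l}(x_k^2+y_k^2)\cdot\id_{S^*\otimes S},
\]
which collapses out of the Clifford anticommutation relations $c^*(e_k)c^*(e_j)+c^*(e_j)c^*(e_k)=2\delta_{kj}$, $c^*(f_k)c^*(f_j)+c^*(f_j)c^*(f_k)=2\delta_{kj}$, $c^*(e_k)c^*(f_j)+c^*(f_j)c^*(e_k)=0$ and the commutativity of the multiplications.

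For step (i) the essential input is an orthogonality argument. Given $\xi\in\ca{H}_\fin$, there is $K$ so that the boson factor of $\xi$ is a polynomial in $(x_k,y_k)_{k\leq K}$ times the Gaussian tail $\vac_{K+1}^\infty$, and the Clifford factor $t\in S^*_\fin\otimes S_\fin$ uses only wedge indices $j\leq K$. For $k>K$, the explicit formulas for $\gamma^*$ and $\gamma$ show that $c^*(e_k)t$ and $c^*(f_k)t$ each introduce a genuinely new wedge factor ($z_k$ or $\overline{z_k}$), so $c^*(\bullet_k)t$ and $c^*(\bullet_{k'})t$ are orthogonal in $S^*\otimes S$ when $k\neq k'$ are both $>K$; the mixed $(e_k,f_k)$ cross term at the same $k$ vanishes by the Gaussian integral $\int x_ky_k\,e^{-(x_k^2+y_k^2)/2}\,dx_k dy_k=0$. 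Thus the vectors $v_k:=k^{-l}(x_k\otimes c^*(e_k)+y_k\otimes c^*(f_k))\xi$ with $k>K$ are mutually orthogonal in $\ca{H}$, and a Gaussian moment computation yields $\|v_k\|^2\leq C_\xi k^{-2l}$. Since $l>3/2$ the series $\sum k^{-2l}$ is summable with wide margin, so the tail converges in norm; the finite head $\sum_{k=N}^K v_k$ is trivial.

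For step (iii), I will apply Nelson's analytic vector theorem. Because the $k$-th Gaussian slot is independent in the vacuum state, the moment generating function $E[e^{tg}]=\prod_{k\geq N}(1-2tk^{-2l})^{-1}$ of $g:=\sum_k k^{-2l}(x_k^2+y_k^2)$ converges for $t>0$ sufficiently small; consequently $\|C_N^{2m}\xi\|=\|g^m\xi\|\leq D_\xi\sqrt{(2m)!}\,t^{-m}$ for $\xi\in\ca{H}_\fin$. Together with symmetry, the bound $\|C_N^n\xi\|^2\leq\|\xi\|\cdot\|C_N^{2n}\xi\|$ then produces $\sum_n\|C_N^n\xi\|s^n/n!<\infty$ for all $s>0$, and Nelson's theorem applies. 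An alternative route, should the domain subtleties in iterating $C_N$ require more care, is to realize $\overline{C_N}$ as multiplication by the $\End(S^*\otimes S)$-valued measurable function $x\mapsto\sum_k k^{-l}(x_kc^*(e_k)+y_kc^*(f_k))$ in a Gaussian realization of $\ud{L^2(U)}$, valid on the full-measure set $\{g<\infty\}$ by Kolmogorov's three-series theorem.

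The main technical obstacle is step (i): each summand $x_k\otimes c^*(e_k)$ is unbounded, so any naive bound using an operator norm of $x_k$ is useless. What rescues the argument is the hidden orthogonality of the $v_k$'s, produced jointly by the Gaussian tail of $\vac$ and the creation of new wedge factors, which upgrades the borderline estimate $\sum k^{-l}$ to the comfortably summable $\sum k^{-2l}$; this is precisely why the assumption $l>3/2$ is a safe (and natural) choice.
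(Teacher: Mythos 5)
Your proof is correct, and the self-adjointness step takes a genuinely different route from the paper's. The convergence part of your argument (step (i)) is in substance the same as the paper's step $(1)$: both reduce to the computation $\|C_N\Vac\|^2=\sum_{k\geq N}2k^{-2l}$, with the orthogonality of the summands supplied by the vanishing odd moments of the Gaussian vacuum; the paper only does this for $\Vac$ and factors off the finite-coordinate part, while you phrase the orthogonality for a general $\xi\in\ca{H}_\fin$. (Incidentally, the orthogonality here comes entirely from the boson side — $\int x_k\,e^{-(x_k^2+y_k^2)/2}=0$ and $\int x_k y_k\,e^{-(x_k^2+y_k^2)/2}=0$ — so the ``new wedge factor'' observation on the Clifford side, while true, is not the operative fact.) The genuine divergence is in the essential self-adjointness step. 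The paper shows directly that $\overline{C_N}\pm i$ has dense range: on the finite truncation one can invert because $(C_L^M+i)(C_L^M-i)=1+\sum_k k^{-2l}(x_k^2+y_k^2)$ is strictly positive, and the tail error is controlled by choosing $M$ with $\|C_{M+1}\Vac_{M+1}^\infty\|$ small. You instead upgrade the same algebraic fact to the identity $C_N^2=g\otimes\id$ with $g=\sum_{k\geq N}k^{-2l}(x_k^2+y_k^2)$ (which is correct: the cross terms cancel by the Clifford anticommutation relations exactly as you describe) and then invoke Nelson's analytic vector theorem via the Gaussian moment generating function $\prod_k(1-2tk^{-2l})^{-1}$. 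Both arguments hinge on the scalarity of $C_N^2$; the paper converts it into a resolvent construction, you convert it into an analytic-vector estimate. Your route is tidier in that it outsources the deficiency analysis to a standard theorem; the paper's is more self-contained and more robustly avoids domain issues. The caveat you flag is real: $\ca{H}_\fin$ is not preserved by $C_N$, so $C_N^n\xi$ is not directly defined, and the bare Nelson argument has a gap. Your fall-back — realizing $\overline{C_N}$ as multiplication by the measurable $\End(S^*\otimes S)$-valued function (finite a.e.\ because $\|\sum k^{-l}(x_kc^*(e_k)+y_kc^*(f_k))\|_{\ca{B}(S^*\otimes S)}=g^{1/2}<\infty$ a.e.\ under the product Gaussian) and reading off $\overline{C_N}^{2m}\xi=g^m\xi$ there — is the right way to close it, and is arguably the cleanest formulation of the whole proof. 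Note finally that, as the paper also remarks in passing, this proposition only needs $l>\frac{1}{2}$; the hypothesis $l>\frac{3}{2}$ is used elsewhere.
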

\begin{proof}
Compare with \cite{T2} for the technique of the proof.

Since each summand $x_k\otimes c^*(e_k)+y_k\otimes c^*(f_k)$ is symmetric operator, so is the infinite sum, provided the infinite sum converges. Thus it is closable. To be essentially self-adjoint, we need to check that $\Phi(C_N)\pm i\id$ has dense range. Therefore we need two steps; $(1)$ The infinite sum $\Phi(C_N)$ converges in the strong sense; $(2)$ $\Phi(C_N)\pm i\id$ has dense range.

$(1)$ Fix an element $\phi\otimes s\otimes s'\in \ud{L^2(U)_\fin}\otimes_\alg S^*_\fin\otimes_\alg S_\fin$. We study the infinite sum $\sum_{k=N}^\infty k^{-l}(x_k\otimes c^*(e_k)+y_k\otimes c^*(f_k))(\phi\otimes s\otimes s')$.  By an abuse of notation, $\Phi(C_N)(\phi\otimes s)$ denotes this infinite sum. 
By definition of $\ud{L^2(U)_\fin}\otimes_\alg S_\fin^* \otimes_\alg S_\fin$, the vector $\phi\otimes s\otimes s'$ is a finite linear combination of $(\widetilde{\phi}\otimes\vac_M^\infty)\otimes (z_{j_1}\wedge z_{j_2}\wedge\cdots \wedge z_{j_m})\otimes (\overline{z_{i_1}}\wedge \overline{z_{i_2}}\wedge\cdots \wedge\overline{z_{i_n}})$'s for some $\widetilde{\phi}\in L^2(U_{M-1})_\fin$'s. We may choose $M$ so as to be greater than $i_n$ and $j_m$. We may write the vector as $[\widetilde{\phi}\otimes (z_{j_1}\wedge z_{j_2}\wedge\cdots \wedge z_{j_m})\otimes(\overline{z_{i_1}}\wedge \overline{z_{i_2}}\wedge\cdots \wedge\overline{z_{i_n}})]\otimes \Vac_M^\infty$. Since $\Phi(C_N^{M-1})$ is clearly a finite sum (it is unbounded, but it is well-defined on rapidly decreasing functions), and since $C_N=C_N^{M-1}+C_M$, it suffices to check that $\Phi(C_M)(\Vac_M^\infty)$ converges. We may assume that $M=1$.

On each piece $U_{N+1}\ominus U_N\cong \bb{R}^2$, the vacuum vector is $\frac{1}{\sqrt{2\pi}}e^{-\frac{1}{4}(x^2+y^2)}$. With the Gaussian integral formulas, we find that $\|\frac{1}{\sqrt{2\pi}}xe^{-\frac{1}{4}(x^2+y^2)}\|_{L^2}=1$. Thus
$$\left\|\sum_{k=1}^L\bra{k^{-l}x_k\otimes c^*(e_k)\Vac+k^{-l}y_k\otimes c^*(f_k)\Vac}\right\|^2=
\sum_{k=1}^L2k^{-2l}.$$
The limit as $L\to \infty$ exists, because $l$ is greater than $\frac{3}{2}$ (it is enough to suppose that $l>\frac{1}{2}$).

$(2)$ To see that $\Phi(C_L)\pm i\id$ has dense range, fix an element $\bra{\widetilde{\phi}\otimes s\otimes s'}\otimes \Vac_N^\infty\in \ud{L^2(U)_\fin}\otimes^\alg S^*_\fin\otimes^\alg S_\fin$. We need to find $v_\pm\otimes \Vac_M^\infty\in \ud{L^2(U)_\fin}\otimes^\alg S^*_\fin\otimes^\alg S_\fin$ such that $(\Phi(C_L)\pm i\id)(v_\pm\otimes \Vac_M^\infty)$ is close enough to $\bra{\widetilde{\phi}\otimes s\otimes s'}\otimes \Vac_N^\infty$. Take any real number $\varepsilon>0$. Since the infinite sum $\sum_k2k^{-2l}$ converges, we can choose $M$ satisfying that $\|\Phi(C_{M+1})(\Vac_{M+1}^\infty)\|^2=\sum_{k\geq M+1}2k^{-2l}< \varepsilon^2$. We may assume that $M$ is greater than $N$ and $L$.
Since $(\Phi(C_L^M)+i\id)(\Phi(C_L^M)-i\id)=(\Phi(C_L^M)-i\id)(\Phi(C_L^M)+i\id)=\sum_kk^{-2l}(x_k^2+y_k^2)+1$, we can find $v'_\pm\in L^2(U_M)\otimes S^*_{U_M}$ such that $(\Phi(C_L^M)\pm i\id)v'_\pm\otimes \Vac_{M+1}^\infty=\widetilde{\phi}\otimes s\otimes s'$. 
Since $L^2_\fin$ is dense in $L^2$, and since $\Phi(C_L)\pm i$ does not reduce the norm, we can find $v_\pm\in L^2(U_M)_\fin\otimes S_{U_M}^*\otimes S_{U_M}$ such that $\|(\Phi(C_L^M\pm i)v_\pm-(\Phi(C_L^M)\pm i)v'_\pm\|< \varepsilon.$
Therefore, $\|(\Phi(C_L)\pm i)v_{\pm}-\widetilde{\phi}\otimes s\otimes s'\otimes \Vac_L^\infty\|$ is less than $2\varepsilon$.

\end{proof}

This lemma enables us to define a $*$-homomorphism $\SC(U_N)\to \ca{L}_{\scr{S}}(\scr{S}\otimes\ca{H})$ as follows.

\begin{lem}
The following diagram commutes.
$$\xymatrix{
\SC(U_N) \ar[rr]^{\beta_{N,M}} \ar[rd]_{\Phi_N} && \SC(U_M) \ar[dl]^{\Phi_M} \\
& \ca{L}_{\scr{S}}(\scr{S}\otimes\ca{H}) &
}$$
\end{lem}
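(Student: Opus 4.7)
The plan is to verify $\Phi_M\circ \beta_{N,M}=\Phi_N$ by reducing to a check on $*$-algebraic generators. Since each of $\beta_{N,M}$, $\Phi_N$, $\Phi_M$ is a $*$-homomorphism, and since the $*$-algebra $\scr{S}$ is generated by $f_e(x):=e^{-x^2}$ and $f_o(x):=xe^{-x^2}$, it suffices to verify the equality on elements of the form $f_e\otimes h$ and $f_o\otimes h$ with $h\in \scr{C}(U_N)$; continuity of the $*$-homomorphisms together with density of $\SC(U_N)_\fin$ in $\SC(U_N)$ then extends the identity to the full algebra.

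The decisive computational fact is that the Clifford relations $e_ke_l+e_le_k=2\delta_{kl}$, $f_kf_l+f_lf_k=2\delta_{kl}$, $e_kf_l+f_le_k=0$ collapse every Clifford square to a scalar multiplication operator. Concretely $(C_{N+1}^M)^2=\sum_{k=N+1}^Mk^{-2l}(x_k^2+y_k^2)$, and by the proposition established just before the lemma, $\Phi(C_{N+1})^2=\sum_{k=N+1}^\infty k^{-2l}(x_k^2+y_k^2)$ is a well-defined (essentially self-adjoint) multiplication operator. Combined with the graded identity $(X\otimes \id+\id\otimes D)^2=X^2\otimes \id+\id\otimes D^2$, valid whenever $X$ and $D$ are odd self-adjoints acting on separate graded tensor factors (the cross terms anti-commute and cancel), this yields the explicit formulas
\[
f_e(X+D)=f_e(X)\otimes e^{-D^2},\qquad f_o(X+D)=f_o(X)\otimes e^{-D^2}+f_e(X)\otimes De^{-D^2}.
\]

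The heart of the argument is the decomposition of $X\otimes \id+\id\otimes \Phi(C_{N+1})$ under the isomorphism of Hilbert $\scr{S}$-modules
\[
\scr{S}\otimes\ca{H}\cong\bbbra{\scr{S}\otimes \ud{L^2(U_M^\perp)}\otimes S_{U_M^\perp}^*\otimes S_{U_M^\perp}}\otimes\bbbra{L^2(U_M\ominus U_N)\otimes S_{U_M\ominus U_N}^*\otimes S_{U_M\ominus U_N}}\otimes\bbbra{L^2(U_N)\otimes S_{U_N}^*\otimes S_{U_N}},
\]
namely $X\otimes\id+\id\otimes\Phi(C_{N+1})=\bbbra{X\otimes \id+\id\otimes\Phi(C_{M+1})}+\Phi(C_{N+1}^M)$, with the two summands acting on disjoint tensor factors. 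Applying this to $f_e\otimes h$: one computes $\beta_{N,M}(f_e\otimes h)=f_e(X)\otimes e^{-(C_{N+1}^M)^2}\otimes h$, and applying $\Phi_M$ introduces the further Gaussian factor $e^{-\Phi(C_{M+1})^2}=e^{-\sum_{k\ge M+1}k^{-2l}(x_k^2+y_k^2)}$ on the $U_M^\perp$ tensor factor; the two scalar Gaussians recombine into $e^{-\sum_{k\ge N+1}k^{-2l}(x_k^2+y_k^2)}$, which is precisely $\Phi_N(f_e\otimes h)$. The case $f_o\otimes h$ proceeds in parallel via the Leibniz-type formula above, using the analogous splitting $\Phi(C_{N+1})e^{-\Phi(C_{N+1})^2}=\Phi(C_{N+1}^M)e^{-\Phi(C_{N+1})^2}+\Phi(C_{M+1})e^{-\Phi(C_{N+1})^2}$ so that the $f_o(X)$-part and the $f_e(X)$-part match term by term between the two sides.

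The only genuine obstacle is the careful tracking of graded signs when odd Clifford pieces are interchanged across the three tensor factors of $\scr{S}\otimes\ca{H}$. However, because every exponential $e^{-(\cdot)^2}$ that appears is scalar-valued and hence even, each such interchange is sign-free, and the proof reduces to the routine algebraic book-keeping sketched above, with convergence of the infinite sums already guaranteed by the essential self-adjointness established in the preceding proposition.
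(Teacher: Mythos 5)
Your proof is correct and takes essentially the same route as the paper's: reduce to the generators $f_e\otimes h$ and $f_o\otimes h$, use $\Phi(C_{N+1}^\infty)=C_{N+1}^M+\Phi(C_{M+1}^\infty)$ together with the Clifford relations to collapse the square to a scalar Gaussian, and apply the $\scr{S}$-functional calculus formulas for $f_e$ and $f_o$ on a sum of anticommuting unbounded multipliers. The paper states the key intermediate steps more tersely (citing Proposition 3.2 of [HKT] for the explicit form of $\beta_{N,M}$) whereas you spell out the graded-tensor identity and sign bookkeeping, but the logical content is the same.
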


\begin{proof}
We may check the statement only for generators: $f_e\otimes a$ and $f_o\otimes a$. Recall that $\beta_{N,M}(f_e\otimes a)= f_e\otimes f_e(C_{N+1}^M)\otimes a$.
\begin{align*}
\Phi_M\circ \beta_{N,M}(f_e\otimes a) &= f_e(X\otimes \id+\id\otimes \Phi(C_{M+1}^\infty))\otimes e^{-(C_{N+1}^M)^2}\otimes a \\
&=f_e(X)\otimes e^{-\Phi(C_{M+1}^\infty)^2}\otimes e^{-(C_{N+1}^M)^2}\otimes a.
\end{align*}
On the other hand, $\Phi_N(f_e\otimes a)$ is given by $f_e(X\otimes \id+\id\otimes \Phi(C_{N+1}^\infty))\otimes a=f_e(X)\otimes e^{-\Phi(C_N^\infty)^2}\otimes a$.
Since $\Phi(C_{N+1}^\infty)=C_{N+1}^M +\Phi(C_{M+1}^\infty)$, $\Phi(C_{N+1}^\infty)^2=[C_{N+1}^M]^2 +\Phi(C_{M+1}^\infty)^2$, and $[C_{N+1}^M]^2$ commutes with $\Phi(C_{M+1}^\infty)^2$,
$$f_e(X)\otimes e^{-\Phi(C_N^\infty)^2}\otimes a=f_e(X)\otimes e^{-\Phi(C_{M+1}^\infty)^2}\otimes e^{-(C_{N+1}^M)^2}\otimes a.$$


For $f_o\otimes a$, use the parallel technique.

\end{proof}

Consequently, we obtain the following $*$-homomorphism.

\begin{dfn-thm}
There uniquely exists a $*$-homomorphism $\Phi:\SC(U)\to \ca{L}_{\scr{S}}(\scr{S}\otimes\ca{H})$ such that the following diagram commutes for all $N$:
$$\xymatrix{
\SC(U_N) \ar[rr]^{\beta_{N}} \ar[rd]_{\Phi_N} && \SC(U) \ar[dl]^{\Phi} \\
& \ca{L}_{\scr{S}}(\scr{S}\otimes\ca{H}) &
}$$

\end{dfn-thm}

From now on, we understand $\SC(U)$ acting on $\scr{S}\otimes \ca{H}$ always by $\Phi$, and we write$\Phi(a)(\phi)$ as $a\cdot \phi$ for $a\in \SC(U)$ and $\phi\in \scr{S}\otimes \ca{H}$.


\section{Three Kasparov modules and the index theorem}

This section is the main part of the present paper. We will introduce three unbounded $U$-equivariant Kasparov modules. One of them is untwisted, and the others are $\tau$-twisted. These modules correspond to the index element, the Dirac element and the Clifford symbol element. Then, we will prove the index theorem equality.

\subsection{Index element $[\widetilde{\Dirac}]$}

Recall that $\ud{L^2(U)}$ is defined by using the $\tau$-twisted representation theory of $U$. Thus $U$ naturally acts on $\scr{S}\otimes \ca{H}$ by
$$g.[f\otimes v\otimes v'\otimes s'\otimes s]:=f\otimes (g.v)\otimes v'\otimes s'\otimes s$$
with cocycle $\tau$. This action is modeled on the left regular representation.
Since $U$ preserves the inner product of $\ud{L^2(\bb{R}^\infty)}$ and does not touch the $\scr{S}$-part, this action preserves the $\scr{S}$-valued inner product, and commutes with the right $\scr{S}$-action.

We firstly prove that the left $\SC(U)$-module $\scr{S}\otimes \ca{H}$ is $\tau$-twisted $U$-equivariant.

\begin{lem}
For $g\in U$, $a\in \SC(U)$ and $\phi\in \scr{S}\otimes \ca{H}$,
$$g.(a\cdot \phi)=(g.a)\cdot (g.\phi).$$
\end{lem}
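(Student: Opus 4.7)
The claim is equivalent to the intertwining identity $\Phi(g.a)=L(g)\Phi(a)L(g)^{-1}$: since $L$ is a $\tau$-twisted (hence projective) representation of $U$ but conjugation ignores scalars, this identity is well-defined and, applied to $\phi\in\scr{S}\otimes\ca{H}$, reads $L(g)(a\cdot\phi)=(g.a)\cdot L(g)\phi$, which is exactly the stated equivariance. The $U$-action on $\SC(U)$ is continuous (Theorem \ref{the U action}), $L$ is strongly continuous, and $\Phi$ is a $*$-homomorphism, so both sides are jointly continuous in $(g,a)$. I therefore may assume $g\in U_\fin$ and $a\in\SC(U)_\fin$; after enlarging $N$ so that $g\in U_N$ and $a\in\SC(U_N)_\fin$, I further reduce to generators $a=\beta_N(f\otimes h)$ with $f\in\{f_e,f_o\}$ and $h\in\scr{C}(U_N)_\fin$.

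The core step is that $L(g)$-conjugation implements translation on the building blocks of $\Phi(a)$. Under the Peter--Weyl identification $\ud{L^2(U_N)}\cong L^2(\bb{R}^N)\otimes L^2(\bb{R}^N)^*$, the coordinate multiplications are $x_k=i[dL(f_k)+dR(f_k)]$ and $y_k=-i[dL(e_k)+dR(e_k)]$, and the formula $L(g)\,dL(X)\,L(g)^{-1}=dL(\Ad_g X)$ together with $\Ad_{\exp(f)}X=X+\omega(f,X)K$ from Definition \ref{central extension of LT} (and $dL(K)=i\id$) yields
$$L(g)\,x_k\,L(g)^{-1}=x_k-g_k,\qquad L(g)\,y_k\,L(g)^{-1}=y_k-h_k$$
for $g=\exp(\sum g_l\cosi{l}+h_l\sine{l})$. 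For $g\in U_N$ these shifts vanish when $k>N$, so $L(g)$ commutes termwise with the sum defining $\Phi(C_{N+1})$, and because $L(g)$ preserves the core $\ud{L^2(U)_\fin}\otimes^\alg S^*_\fin\otimes^\alg S_\fin$, this lifts to $L(g)\Phi(C_{N+1})L(g)^{-1}=\Phi(C_{N+1})$. The same shift identities together with functional calculus give $L(g)\,h\,L(g)^{-1}=g.h$ for $h\in\scr{C}(U_N)_\fin$. Since $L(g)$ commutes with the $\scr{S}$-factor and hence with $X$, combining these via $\Phi_N(f\otimes h)=f(X\otimes\id+\id\otimes\Phi(C_{N+1}))\otimes h$ yields $L(g)\Phi(a)L(g)^{-1}=\Phi(f\otimes g.h)=\Phi(g.a)$.

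The main obstacle I expect is the functional-calculus passage for the unbounded operator $\Phi(C_{N+1})$: while the termwise $L(g)$-invariance for $g\in U_N$ is algebraically immediate, verifying that $L(g)$ preserves the dense core on which $\Phi(C_{N+1})$ is essentially self-adjoint, and that the strong-operator convergence of the defining sum is compatible with conjugation, is the only subtle point. This is of the same flavor as the quantitative estimates in the proof of Theorem \ref{the U action}, and the same techniques should apply.
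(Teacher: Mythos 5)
Your approach is sound in broad outline and takes a genuinely different route from the paper's, so let me compare the two and also flag one claim that does not hold as stated.

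The paper's proof never attempts the operator-level identity $L(g)\Phi(C_{N+1})L(g)^{-1}=\Phi(C_{N+1})$. Instead it fixes a \emph{vector} with vacuum tail, $\phi' = f_2\otimes\Vac_{N+1}^\infty\otimes\widetilde{\phi}\otimes s$, together with $a' = f_1(X\otimes\id+\id\otimes C_{N+1}^\infty)\otimes\widetilde{a}$ and $g\in U_N$. For such data the action of $\Phi(C_{N+1}^\infty)$ and the action of $L(g)$ factor through disjoint tensor slots, the tail contributes a fixed unitary coefficient, and what remains is a pointwise identity of Clifford-algebra-valued functions on the finite-dimensional $U_N$. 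The density step is then carried out by three explicit $\varepsilon$-estimates against the fixed $\phi$. Your proposal instead reduces the lemma to a clean intertwining statement for the \emph{unbounded multiplier} $\Phi(C_{N+1})$, obtained via the adjoint formulas $L(g)\,x_k\,L(g)^{-1}=x_k-g_k$, $L(g)\,y_k\,L(g)^{-1}=y_k-h_k$. This is more conceptual: once the operator identity holds, the passage to all of $\SC(U)$ is just the $*$-homomorphism property of $\Phi$ and $g\mapsto g.(\cdot)$, and you avoid the case-by-case norm estimates. The trade-off is that you must genuinely control an unbounded operator.

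That is where a concrete gap appears. You write that ``because $L(g)$ preserves the core $\ud{L^2(U)_\fin}\otimes^\alg S^*_\fin\otimes^\alg S_\fin$, this lifts to $L(g)\Phi(C_{N+1})L(g)^{-1}=\Phi(C_{N+1})$.'' That hypothesis is false: $L(g)$ acts on the $U_N$-modes by translation and by multiplication by phases $e^{ih_kx_k}$, and neither preserves the span of Hermite functions (translation sends $H_n(x)e^{-x^2/2}$ to $H_n(x-a)e^{ax-a^2/2}e^{-x^2/2}$, which is not a polynomial times $e^{-x^2/2}$). The paper explicitly warns that $U$ does not preserve $\ca{H}_\fin$ when proving Lemma \ref{U preserves dom of D}. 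Moreover, the repair you need is not really of the same flavor as the $\varepsilon$-estimates in Theorem \ref{the U action}; what does the job is a resolvent/functional-calculus argument: for $g\in U_N$ each finite partial sum $\Phi(C_{N+1}^M)$ commutes with $L(g)$ on a common core, hence $L(g)$ commutes with the \emph{bounded} operators $(\Phi(C_{N+1}^M)\pm i)^{-1}$; since $\Phi(C_{N+1}^M)\to\Phi(C_{N+1})$ on a common core of essential self-adjointness, the resolvents converge strongly (Trotter--Kato), and taking strong limits shows $L(g)$ commutes with $(\Phi(C_{N+1})\pm i)^{-1}$, hence with the functional calculus $f_e(\Phi(C_{N+1}))$, $f_o(\Phi(C_{N+1}))$ that actually enter $\Phi_N$. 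With that replacement your argument closes; without it, the key claim is not established.

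One further small imprecision: ``both sides are jointly continuous in $(g,a)$'' should be read, for each \emph{fixed} $\phi$, as joint sequential continuity into $\scr{S}\otimes\ca{H}$ (using that $\|a-a'\|\,\|\phi\|$ controls one increment and strong continuity of $L$ the other). As stated without fixing $\phi$, norm continuity in $g$ fails; the density argument only uses the fixed-$\phi$ version, which is what you actually need.
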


\begin{proof}
This equality seems to be obvious at first sight, because of the following (illegal!\footnote{The value at $x$ of $g.(a\cdot \phi)$, $g.(a\cdot \phi)(x)$, does not make sense.}) computation: $g.(a\cdot \phi)(x)= (a\cdot \phi)(g^{-1}.x)= a(g^{-1}.x)\cdot \phi (g^{-1}.x)=(g.a)\cdot (g.\phi)(x)$. However, we cannot use the language of functions without any tricks, and our problem is less clear than the first impression. 
Briefly speaking, we prove this equality by the continuity of the action, and the finite-dimensional approximation.

We need to check that
$$\|g.(a\cdot\phi)-(g.a)\cdot(g.\phi)\|< \varepsilon$$
for any $\varepsilon>0$.
If $g$, $a$ and $\phi$ come from finite-dimensional objects, the inequality is clear. For example, if $g\in U_N$, 
$a=f_1(X\otimes \id+\id\otimes C_{N+1}^\infty)\otimes \widetilde{a}$ and $\phi =f_2\otimes \Vac_{N+1}^\infty\otimes  \widetilde{\phi}\otimes s$, 
$$\left\|g.(a\cdot\phi)-(g.a)\cdot(g.\phi)\right\|=\left\|f_1(X\otimes\id+\id\otimes C_{N+1}^\infty)\cdot(f_2\otimes \Vac_{N+1}^\infty)\otimes\bbbra{ g.(\widetilde{a}\cdot \widetilde{\phi})-(g.\widetilde{a})\cdot (g.\widetilde{\phi})}
\otimes s\right\|,$$
which is clearly zero, because $g.(\widetilde{a}\cdot \widetilde{\phi})-(g.\widetilde{a})\cdot (g.\widetilde{\phi})$ is computed in $\scr{C}(U_N)$ as a function.

Let us go back to the general case.
It suffices to replace general $g$, $a$ and $\phi$ with such (better) ones.
Recall the following things: The $U$-action on $\ca{H}$ is unitary; The $U$-action on $\SC(U)$ is isometry; and $\Phi(a)$ is a bounded operator whose norm is bounded by $\|a\|$. Then,
$$\|g.(a\cdot\phi)-g.(a'\cdot\phi')\|=\|(a\cdot\phi)-(a'\cdot\phi')\|\leq \|a-a'\|\|\phi\|+\|a\|\|\phi-\phi'\|,$$
$$\|(g.a)\cdot(g.\phi)-(g.a')\cdot(g.\phi')\| \leq \|g.a-g.a'\|\|\phi\|+\|g.a'\|\|g.\phi-g.\phi'\|=\|a-a'\|\|\phi\|+\|a'\|\|\phi-\phi'\|
.$$
Thus we can find $a'\in \SC(U_N)_\fin$ and $\phi'=\widetilde{\phi'}\otimes s\otimes\Vac_N^\infty\in \bbbra{ L^2(U_N)\otimes^\alg S^*_{U_N}\otimes S_{U_N}} \otimes^\alg [\ud{L^2(U\ominus U_N)_\fin}\otimes^\alg (S^*_{U\ominus U_N})_\fin\otimes^\alg (S_{U_N})_\fin]$, such that the both of $\|g.(a\cdot\phi)-g.(a'\cdot\phi')\|$ and $\|(g.a)\cdot(g.\phi)-(g.a')\cdot(g.\phi')\|$ are smaller than $\varepsilon/4$, independently from $g$. Moreover, we can choose $g'\in U_M$ such that the both of $\|g.(a'\cdot\phi')-g'.(a'\cdot\phi')\|$ and $\|(g.a')\cdot(g.\phi')-(g'.a')\cdot(g'.\phi')\|$ are smaller than $\varepsilon/4$ once we fix $a'$ and $\phi'$, since the action is strongly continuous.\footnote{Note that we cannot choose the neighborhood of $g$ uniformly in $a$ or $\phi$. This is because the group action is not uniformly continuous, but strongly continuous.} Thanks to the homomorphism $\beta_{N,M}$ or the inclusion $U_M\hookrightarrow U_N$, we may assume that $M=N$. Now, the statement is clear by the finite-dimensional case.

\end{proof}

Thanks to this lemma, we obtain a $\tau$-twisted $U$-equivariant Hilbert $\scr{S}$-module $\scr{S}\otimes \ca{H}$ equipped with a left $\SC(U)$-module structure.
We must define an appropriate operator in order to define a Kasparov module.
We introduce two operators; one of them is $U$-equivariant with respect to the action $R$, and the other is $U$-equivariant with respect to the action $L$. The both look natural, but the operator we should deal with is the ``average'' of them. It is not actually equivariant, but almost equivariant such that the operator defines an equivariant Kasparov module. In fact, our operator is essentially the perturbed Bott-Dirac operator in \cite{HK}.

\begin{dfn}
On $\ud{L^2(\bb{R}^\infty)_\fin}\otimes^\alg \ud{L^2(\bb{R}^\infty)_\fin^*}\otimes^\alg S^*_\fin \otimes^\alg \otimes S_\fin$, let

$${}^L\Dirac:=\sum_k \sqrt{k}\bra{d\rho(z_k)\otimes \id\otimes \gamma^*(\overline{z_k})\otimes \id+d\rho(\overline{z_k})\otimes \id\otimes \gamma^*(z_k)\otimes \id},$$
$${}^R\Dirac:=\sum_k \sqrt{k}\bra{\id\otimes d\rho^*(z_k)\otimes \id\otimes \gamma(\overline{z_k})+\id\otimes d\rho(\overline{z_k})\otimes \id\otimes \gamma(z_k)}.$$

As an unbounded operator on $L^2(U_M\ominus U_{N-1})\otimes S_{U_M\ominus U_{N-1}}^*\otimes S_{U_M\ominus U_{N-1}}$, define 
$${}^L\Dirac_N^M:=\sum_{k=N}^M \sqrt{k}\bra{d\rho(z_k)\otimes \id\otimes \gamma^*(\overline{z_k})\otimes \id+d\rho(\overline{z_k})\otimes \id\otimes \gamma^*(z_k)\otimes \id}$$
for $1\leq N\leq M\leq \infty$, and similarly for ${}^R\Dirac$. Note that $M=\infty$ is allowed. They are defined on, at least, $L^2(U_M\ominus U_{N-1})_\fin \otimes^\alg S_{U_M\ominus U_{N-1},\fin}^*\otimes^\alg S_{U_M\ominus U_{N-1},\fin}$
\end{dfn}

These are well-defined as proved in \cite{FHTII}. For the details, see also Proposition 5.30 in \cite{T3}. Clearly ${}^L\Dirac$ is $U$-equivariant with respect to the action $R$, and ${}^R\Dirac$ is $U$-equivariant with respect to the action $L$. The operator we will study is the following. Note that ${}^L\Dirac$ is skew-symmetric, while ${}^R\Dirac$ is symmetric.

\begin{dfn}
On $\ud{L^2(\bb{R}^\infty)_\fin}\otimes^\alg \ud{L^2(\bb{R}^\infty)_\fin^*}\otimes^\alg S^*_\fin \otimes^\alg \otimes S_\fin$, define an operator $\Dirac$ by
$$\Dirac:=\frac{1}{\sqrt{2}}{}^R\Dirac +\frac{i}{\sqrt{2}}{}^L\Dirac.$$

Put $\Dirac_N^M:=\frac{1}{\sqrt{2}}{}^R\Dirac_N^M +\frac{i}{\sqrt{2}}{}^L\Dirac_N^M$.
\end{dfn}

\begin{rmk}
In the previous papers \cite{T1,T2,T3}, we essentially studied only ${}^R\Dirac$, although an operator which looks like ${}^L\Dirac$ appeared in some related $KK$-elements. Thus we wanted to prove that the pair $[(\scr{S}\otimes \ca{H},{}^R\Dirac)]$ is a $\tau$-twisted $U$-equivariant Kasparov $(\SC(U),\scr{S})$-module. However, it turns out to be false: the operator is not with locally compact resolvent.
That is essentially because each element of our $\SC(U)$ is too ``smooth'' to work as a ``compactly supported function''.

In order to overcome this problem, we modify the operator. The new one is, from the beginning, with compact resolvent. The cost we must pay is giving up the actual equivariance. It made us study the new equivariance condition on equivariant Kasparov modules in Section 2.1.
\end{rmk}

\begin{rmk}\label{our Dirac is Bott-Dirac}
Let us formally rewrite our Dirac operator using the function language, seeing Definition \ref{mixed Clifford action}.
Recall $dL(e_k)=-\partial_{x_k}+\frac{i}{2}y_k$ and so on. Firstly, note that
$$dL(z_k)\otimes \gamma^*(\overline{z_k})\otimes \id+dL(\overline{z_k})\otimes \gamma^*(z_k)\otimes \id
= dL(e_k)\otimes \gamma^*(e_k)\otimes \id+dL(f_k)\otimes \gamma^*(f_k)\otimes \id,$$
and similarly for ${}^R\Dirac$. Then, the $k$-th term of $\Dirac$ is given by
\begin{align*}
&\frac{i}{\sqrt{2}}\bra{dL(e_k)\otimes \gamma^*(e_k)\otimes \id+dL(f_k)\otimes \gamma^*(f_k)\otimes \id}+\frac{1}{\sqrt{2}}\bra{dR(e_k)\otimes \id\otimes \gamma(e_k)+dR(f_k)\otimes \id\otimes \gamma(f_k)} \\
&\ \ \ =\frac{i}{\sqrt{2}}\bra{\bra{-\partial_{x_k}+\frac{i}{2}y_k}\otimes \gamma^*(e_k)\otimes \id+\bra{-\partial_{y_k}-\frac{i}{2}x_k}\otimes \gamma^*(f_k)\otimes \id}\\
&\ \ \ \ \ \ +\frac{1}{\sqrt{2}}\bra{\bra{\partial_{x_k}+\frac{i}{2}y_k}\otimes \id\otimes \gamma(e_k)+\bra{\partial_{y_k}-\frac{i}{2}x_k}\otimes \id\otimes \gamma(f_k)}\\
&\ \ \ =\partial_{x_k}\otimes c(e_k)+\partial_{y_k}\otimes c(e_k)+\frac{x_k}{2}\otimes c^*(f_k)+\frac{y_k}{2}\otimes c^*(-e_k).
\end{align*}
By introducing an operator $J$ defined by $e\mapsto f$ and $f\mapsto -e$, we can formally rewrite the Dirac operator;
$$\Dirac=\sum\sqrt{k}\bbbra{
\partial_{x_k}\otimes c(e_k)+\partial_{y_k}\otimes c(e_k)+\frac{x_k}{2}\otimes c^*(J(e_k))+\frac{y_k}{2}\otimes c^*(J(e_k))}.$$
If we identify $c$ as the left multiplication of the Clifford algebra on itself, and $c^*\circ J$ as the right multiplication, $\Dirac$ looks like the Bott-Dirac operator.

Notice that this formula is illegal: We exchanged the order of operators infinitely many times; Even on finite energy vectors, the infinite sum does not converges.
\end{rmk}

\begin{rmk}
In order to reduce symbols, $\Dirac$ sometimes stands for the operator $\id\otimes \Dirac$ on $\scr{S}\otimes \ca{H}$.

\end{rmk}

We would like to prove that the pair $(\scr{S}\otimes \ca{H},\Dirac)$ is a $\tau$-twisted $U$-equivariant Kasparov module. For this purpose, we check the followings: $\Dirac$ is self-adjoint and regular; $\SC(U)_\fin$ preserves $\dom(\Dirac)$; $\SC(U)_\fin$-action commutes with $\Dirac$ modulo bounded operators; $(1+\Dirac^2)$ is compact; $U$ preserves $\dom(\Dirac)$;  and $\Dirac$ satisfies the conditions to be an equivariant Kasparov module.

\begin{lem}
$\Dirac$ is self-adjoint and regular on $\scr{S}\otimes \ca{H}$.
\end{lem}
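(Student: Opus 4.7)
The plan is first to establish essential self-adjointness of $\Dirac$ on the Hilbert space $\ca{H}$ with core $\ca{H}_\fin$, and then deduce regularity and self-adjointness on the Hilbert $\scr{S}$-module $\scr{S}\otimes\ca{H}$ from the tensor product structure, since $\Dirac$ acts trivially on the $\scr{S}$-factor.

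Symmetry on $\ca{H}_\fin$ is immediate: one checks $d\rho(z_k)^* = -d\rho(\overline{z_k})$ and $d\rho^*(z_k)^* = -d\rho^*(\overline{z_k})$ (so $d\rho, d\rho^*$ are skew-symmetric), while $\gamma^*(z_k)^* = \gamma^*(\overline{z_k})$ and $\gamma(z_k)^* = \gamma(\overline{z_k})$. Term by term, this makes ${}^R\Dirac$ symmetric and $i\,{}^L\Dirac$ symmetric, hence $\Dirac = \frac{1}{\sqrt{2}}{}^R\Dirac + \frac{i}{\sqrt{2}}{}^L\Dirac$ is symmetric.

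For essential self-adjointness, I would use the rotation decomposition of $\ca{H}$. The $\bb{T}_\rot$-actions on $\ud{L^2(\bb{R}^\infty)}$, $\ud{L^2(\bb{R}^\infty)^*}$, $S^*$ and $S$ all have discrete spectra with finite-dimensional weight spaces (each counted by partitions, as recalled earlier in the paper), and the same holds for their tensor product $\ca{H}$. Writing $\ca{H} = \widehat{\bigoplus}_{n} \ca{H}_n$ for this decomposition, one has $\ca{H}_\fin = \bigoplus^{\alg}\ca{H}_n$. A bookkeeping of weights shows that $\Dirac$ is rotation-invariant: in each summand such as $\sqrt{k}\,d\rho(z_k)\otimes\gamma^*(\overline{z_k})$, the factor $d\rho(z_k)$ carries rotation weight $+k$ and $\gamma^*(\overline{z_k}) = \sqrt{2}\,z_k\rfloor\circ\epsilon_{S^*}$ carries weight $-k$, and the totals cancel; similarly for the other three types of terms. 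Hence $\Dirac$ preserves each $\ca{H}_n$, and its restriction $\Dirac_n := \Dirac|_{\ca{H}_n}$ is a symmetric operator on a finite-dimensional Hilbert space, hence self-adjoint. An operator which is an orthogonal direct sum of self-adjoint operators on its algebraic sum is essentially self-adjoint; therefore $\Dirac$ is essentially self-adjoint on $\ca{H}_\fin$.

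For the Hilbert $\scr{S}$-module statement, I would invoke the fact that $\Dirac$ on $\scr{S}\otimes \ca{H}$ is of the form $\id_\scr{S}\otimes \overline{\Dirac}$, where $\overline{\Dirac}$ is the Hilbert space closure just constructed. Density of $\ran(\overline{\Dirac}\pm i)$ in $\ca{H}$ implies density of $\scr{S}_\fin\otimes^\alg \ran(\overline{\Dirac}\pm i|_{\ca{H}_\fin})$ in the Hilbert $\scr{S}$-module $\scr{S}\otimes\ca{H}$, which is the standard criterion for $\id_\scr{S}\otimes \overline{\Dirac}$ to be a regular self-adjoint operator. The main obstacle will be the careful weight bookkeeping needed to verify the rotation-invariance of $\Dirac$ and, relatedly, the observation that the formal infinite sum defining $\Dirac$ reduces to a finite sum on each weight space $\ca{H}_n$ (so that no convergence issues intrude at the essentially self-adjoint step, in contrast to the earlier proof that $\Dirac$ is well-defined on $\ca{H}_\fin$).
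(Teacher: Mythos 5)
The symmetry check and the reduction from $\scr{S}\otimes\ca{H}$ to the Hilbert space $\ca{H}$ via the $\id_\scr{S}\otimes(\,\cdot\,)$ structure are both fine, and the rotation invariance of $\Dirac$ is correct. But the essential self-adjointness step rests on a false claim: the weight spaces $\ca{H}_n$ of the $\bb{T}_\rot$-action on $\ca{H}=\ud{L^2(\bb{R}^\infty)}\otimes\ud{L^2(\bb{R}^\infty)^*}\otimes S^*\otimes S$ are \emph{not} finite-dimensional. The factor $\ud{L^2(\bb{R}^\infty)}$ is a positive energy representation (weights $\geq 0$), while its dual $\ud{L^2(\bb{R}^\infty)^*}$ has $\rho^*(\theta)={}^t\rho(-\theta)$ and hence negative energy (weights $\leq 0$); the tensor product of a positive-energy and a negative-energy representation has each weight space of the form $\bigoplus_{p-q=n}V_p\otimes V_{-q}^*$, an infinite direct sum of nonzero finite-dimensional pieces. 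In particular $\ca{H}_0$ already contains $v_p\otimes v_p^*\otimes{\bf 1}_f^*\otimes{\bf 1}_f$ for every $p$, so it is infinite-dimensional. Consequently $\Dirac_n:=\Dirac|_{\ca{H}_n}$ is still an unbounded symmetric operator on an infinite-dimensional space, and the direct-sum argument gives you nothing: you are back to proving essential self-adjointness on each $\ca{H}_n$, which is the original problem.

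The paper circumvents exactly this obstruction by \emph{not} using the rotation-weight decomposition of $\ca{H}$ itself. Instead it regroups the factors as $\ca{H}\cong[\ud{L^2(\bb{R}^\infty)}\otimes S^*]\otimes[\ud{L^2(\bb{R}^\infty)^*}\otimes S]$, with ${}^L\Dirac$ and ${}^R\Dirac$ each acting on only one factor. Each of these \emph{does} have finite-dimensional weight spaces (both constituents are positive energy in the first factor, both negative in the second), so $({}^L\Dirac)^2$ and $({}^R\Dirac)^2$ have discrete spectrum with one definite sign each. Since ${}^L\Dirac$ and ${}^R\Dirac$ anti-commute, $\Dirac^2=\tfrac{1}{2}\bigl[-({}^L\Dirac)^2\otimes\id+\id\otimes({}^R\Dirac)^2\bigr]$, and on a joint eigenbasis $(\Dirac^2+1)$ is invertible with explicit inverse, giving dense range of $\Dirac\pm i$. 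If you want to keep a decomposition-based argument, you should use this bipartite $\otimes$-decomposition and the anti-commutation identity rather than the diagonal $\bb{T}_\rot$-grading of $\ca{H}$.
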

\begin{proof}
It is easy to see that $\Dirac$ is symmetric.
It suffices to check that $\Dirac\pm i:\ca{H}\circlearrowright$ has dense range. In fact, $\Dirac\pm i$ give unbounded bijections on $\ca{H}_\fin$.
We concentrate on $\Dirac+i$ in order to simplify the notation. 
By exchanging the order, see $\ca{H}$ as $[\ud{L^2(\bb{R}^\infty)}\otimes S^*]\otimes[\ud{L^2(\bb{R}^\infty)^*}\otimes S]$. 
Then, $\Dirac$ can be written as $\frac{i}{\sqrt{2}}{}^L\Dirac\otimes \id+ \frac{1}{\sqrt{2}}\id\otimes {}^R\Dirac$.
Since ${}^L\Dirac$ and ${}^R\Dirac$ are anti-commutative (by definition of the graded tensor product), $\Dirac^2=\frac{1}{2}\bbbra{-\bra{{}^L\Dirac}^2\otimes \id+\id\otimes \bra{{}^R\Dirac}^2}$. Consider $(\Dirac+i)(\Dirac-i)=\Dirac^2+1$. Recall that $\bra{{}^L\Dirac}^2$ and $\bra{{}^R\Dirac}^2$ have discrete spectrum.\footnote{See the proof of Theorem 5.33 in \cite{T3} for the detail of the computation.} Take eigenvectors $v\in \ud{L^2(\bb{R}^\infty)}\otimes S^*$ and $v'\in \ud{L^2(\bb{R}^\infty)^*}\otimes S$ such that $\bra{{}^L\Dirac}^2(v)=\lambda v$ and $\bra{{}^R\Dirac}^2(v')=\lambda' v'$. Note that $\lambda\leq 0$ and $\lambda'\geq 0$.
Then,
$$[\Dirac^2+1](v\otimes v')=\bra{\frac{-\lambda+\lambda'}{2}+1}v\otimes v'.$$
Therefore, $(\Dirac+i)\bbbra{(\Dirac-i)\bra{\frac{2}{2-\lambda+\lambda'}(v\otimes v')}}=v\otimes v'$.
\end{proof}

It seems to be easy to prove that $[a,\Dirac]$ is bounded. This is because $\Dirac$ is a combination of first order derivatives and multiplication operators. The multiplication operator defined by ``smooth'' $a$ seems to commute with the multiplication with scalar-valued functions, and seems to commute, modulo bounded, with differential operator of first order. In fact, the formal computation of the commutator is not too difficult. We need only one trick to prove the convergence.
However, in order to {\bf define} the commutator, we must prove that $\Dirac\circ a$ is defined on some dense subspace, which is essential for this issue. This is because $\SC(U)_\fin$ does not preserve $\ca{H}_\fin$. Therefore we actually need to deal with, not $\ca{H}_\fin$, but $\dom(\Dirac)$. This is one of the most complicated and the most creative point of the present paper. Before that, we prepare a lemma to represent $e^{-C^2}\Vac$ explicitly 

\begin{lem}
$f(C_1^N)$ strongly converges to $f(C)$ on $\scr{S}_\fin\otimes^\alg\ca{H}_\fin$ as $N\to \infty$, for $f=f_e$ or $f_o$.
\end{lem}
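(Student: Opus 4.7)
The plan is to exploit the algebraic fact that $\Phi(C_1^N)^2$ is a purely scalar multiplication operator on $\ud{L^2(U)}$, which trivialises the functional calculus. Using $c^*(e_k)^2=c^*(f_k)^2=1$ together with the anticommutation $c^*(e_k)c^*(f_k)+c^*(f_k)c^*(e_k)=0$ and the analogous cross-index relations for $k\neq j$, every mixed term in the expansion of the square cancels, leaving
\[
\Phi(C_1^N)^2 \;=\; \bra{\sum_{k=1}^N k^{-2l}(x_k^2+y_k^2)}\otimes \id_{S^*\otimes S}.
\]
Writing $r_N^2$ for this scalar and $r^2 := \sum_{k=1}^\infty k^{-2l}(x_k^2+y_k^2)$ for the positive (possibly unbounded) multiplication operator on $\ud{L^2(U)}$ corresponding to $C$, the functional calculus is then explicit: $f_e(\Phi(C_1^N))=\exp(-r_N^2)$ and $f_o(\Phi(C_1^N))=\Phi(C_1^N)\exp(-r_N^2)$, with the same formulas for $C$.

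First I would reduce to vectors supported on a finite-dimensional factor. Any $\phi\in\scr{S}_\fin\otimes^\alg\ca{H}_\fin$ is a finite linear combination of pure tensors of the form $\phi_0\otimes(\widetilde{\phi}\otimes\Vac_M^\infty)\otimes s$, where $\phi_0\in\scr{S}_\fin$, $\widetilde{\phi}\in L^2(U_M)_\fin$, and $s$ uses only Clifford indices $\leq M$. For $N\geq M$, both $r^2-r_N^2$ and $\Phi(C)-\Phi(C_1^N)$ act trivially on the $U_M$-factor and on $s$, so the problem localises to the behaviour of the tail operators on the vacuum $\Vac_M^\infty$. For $f_e$, I would use the factorisation
\[
f_e(\Phi(C))-f_e(\Phi(C_1^N)) \;=\; \exp(-r_N^2)\bbbra{\exp(-r_{N+1,\infty}^2)-1},
\]
together with the operator bound $\|\exp(-r_N^2)\|_{\op}\leq 1$, so that it suffices to show $\|[\exp(-r_{N+1,\infty}^2)-1]\Vac_M^\infty\|\to 0$. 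The explicit Gaussian integrals on each tail factor give
\[
\Bigl\|e^{-k^{-2l}(x^2+y^2)}\tfrac{1}{\sqrt{2\pi}}e^{-(x^2+y^2)/4}\Bigr\|^2=\tfrac{1}{1+4k^{-2l}}, \qquad
\Bigl\langle e^{-k^{-2l}(x^2+y^2)}\tfrac{1}{\sqrt{2\pi}}e^{-(x^2+y^2)/4},\,\tfrac{1}{\sqrt{2\pi}}e^{-(x^2+y^2)/4}\Bigr\rangle=\tfrac{1}{1+2k^{-2l}},
\]
whence $\|[\exp(-r_{N+1,\infty}^2)-1]\Vac_M^\infty\|^2=\prod_{k>N}(1+4k^{-2l})^{-1}-2\prod_{k>N}(1+2k^{-2l})^{-1}+1\to 0$ as $N\to\infty$, since $\sum k^{-2l}<\infty$ (recall $l>3/2$).

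For $f_o$ I would decompose
\[
f_o(\Phi(C))-f_o(\Phi(C_1^N)) \;=\; \bra{\Phi(C)-\Phi(C_1^N)}f_e(\Phi(C)) \;+\; \Phi(C_1^N)\bra{f_e(\Phi(C))-f_e(\Phi(C_1^N))}.
\]
The second summand is handled by the $f_e$-estimate combined with the uniform bound $\|\Phi(C_1^N)\exp(-r_N^2)\|_{\op}=\|f_o\|_\infty$. For the first summand, $\Phi(C)-\Phi(C_1^N)=\sum_{k>N}k^{-l}(x_k c^*(e_k)+y_k c^*(f_k))$ has pairwise orthogonal Clifford components, so its squared norm applied to $f_e(\Phi(C))\phi$ splits into a sum of independent Gaussian integrals, each of order $O(k^{-2l})$ uniformly in $k$, giving a total of $O(\sum_{k>N}k^{-2l})\to 0$. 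The main technical care required is the orthogonality bookkeeping in the Clifford algebra needed to collapse $\Phi(C_1^N)^2$ to a scalar, and the verification that the infinite products $\prod_{k>N}(1+\alpha k^{-2l})^{-1}$ converge to $1$; once these are in hand, the lemma reduces to a dominated-convergence argument on explicit Gaussian tail factors.
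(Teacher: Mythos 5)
Your proof is correct, and it takes a genuinely different route from the paper's. The paper introduces the family of projections $\bb{P}_M$ built from indicator functions of balls of radius $\sqrt{k}$, shows $\bb{P}_M \to \id$ strongly, and then reduces the convergence of $f(C_1^N)$ to $f(C)$ to the operator-norm estimate $\|C_{N+1}^\infty\bb{P}_{N+1}\|_{\op} \leq \sum_{k>N}k^{1/2-l}$ combined with functional calculus of the bounded cut-off $C_{N+1}^\infty\bb{P}_{N+1}$. You instead exploit the purely algebraic collapse $\Phi(C_1^N)^2 = \sum_{k\leq N}k^{-2l}(x_k^2+y_k^2)\otimes\id$, which makes $f_e(\Phi(C_1^N))$ and $f_e(\Phi(C))$ explicit multiplication operators, and you verify strong convergence by direct Gaussian integration against the vacuum tail; the $f_o$ case is then bootstrapped from $f_e$ through the decomposition $f_o(\Phi(C))-f_o(\Phi(C_1^N)) = \Phi(C_{N+1}^\infty)\,e^{-r^2} + f_o(\Phi(C_1^N))\bigl(e^{-r_{N+1,\infty}^2}-1\bigr)$, the first term estimated by the scalar identity $\|\Phi(C_{N+1}^\infty)e^{-r^2}\phi\|^2 = \langle r_{N+1,\infty}^2e^{-2r^2}\phi,\phi\rangle$ and the second by the uniform bound $\|f_o(\Phi(C_1^N))\|_{\op}\leq\|f_o\|_\infty$. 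A pleasant byproduct of your route is that it only uses $\sum_k k^{-2l}<\infty$, i.e.\ $l>\frac{1}{2}$, whereas the paper's estimate needs $\sum_k k^{1/2-l}<\infty$, i.e.\ $l>\frac{3}{2}$; the hypothesis $l>\frac{3}{2}$ is of course available to both, but your argument is the sharper one for this particular lemma. The paper's projections $\bb{P}_M$ are admittedly reusable machinery, but for this lemma in isolation your direct Gaussian computation is cleaner. One small technical point you take for granted (as does the paper, implicitly) is the functional-calculus identification $f_e(\Phi(C)) = e^{-r^2}$ on the closure of the densely defined operator, i.e.\ that $\overline{\Phi(C)}^{\,2} = r^2$ as self-adjoint operators; this is standard for anticommuting families whose squares add up to a multiplication operator, but it is worth stating.
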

\begin{proof}
This proof is inspired by the argument in Appendix A.4 in \cite{HKT}.

Define an orthogonal projection $P_k$ on $L^2(U_k\ominus U_{k-1})$ by the multiplication operator of the function given by
$$\chi_k(x,y):=\begin{cases} 1 & (x^2+y^2\leq k) \\ 0 & (x^2+y^2> k). \end{cases}$$
We would like to define an infinite tensor product 
$\bb{P}_M:=$``$\id\otimes \cdots\otimes \id\otimes P_M\otimes P_{M+1}\otimes \cdots$''.
For this aim, we define 
$$\bb{P}_M^N:=\id\otimes \cdots\otimes \id\otimes P_M\otimes P_{M+1}\otimes \cdots \otimes P_N\otimes \id\otimes \cdots,$$
and we will check that $\bb{P}_M^N$ strongly converges to some projection as $N\to \infty$. Then, we will define $\bb{P}_M$ by the the strong limit ``$\lim_{N\to \infty}\bb{P}_M^N$''.

Take $\phi\otimes s'\otimes s\in \ud{L^2(U)_\fin}\otimes^\alg S^*_\fin\otimes^\alg S_\fin$. We may assume that $\phi\in L^2(U_L)_\fin$, $s'\in S^*_{U_L}$ and $s\in S_{U_L}$. In order to prove that $\bb{P}_M^N[\phi\otimes s'\otimes s]$ converges as $N\to \infty$, we may assume that $N>L$. 
In order to simplify the notation, we assume that $M<L$. It is always possible, because of the embeddings $L^2(U_\bullet)\hookrightarrow L^2(U_{\bullet+1})$.
Then,
$$\bb{P}_M^N[\phi\otimes s'\otimes s]=\bb{P}_M^L[\phi\otimes s'\otimes s]\otimes \bb{P}_{L+1}^N\Vac_{L+1}^\infty.$$
Seeing this formula, we find that it suffices to prove that $\bb{P}_{L+1}^N[\Vac_{L+1}^\infty]$ converges as $N\to \infty$. For this aim, we may 
ignore the $U_L$-part. Let us regard $\bb{P}_{M}^N\Vac-\bb{P}_{M}^{N'}\Vac$ for $N<N'$, as
$$\bra{\bb{P}_{M}^N\Vac-\bb{P}_{M}^{N+1}\Vac}+\bra{\bb{P}_{M}^{N+1}\Vac-\bb{P}_{M}^{N+2}\Vac}+\cdots +\bra{\bb{P}_{M}^{N'-1}\Vac-\bb{P}_{M}^{N'}\Vac}.$$
Let us estimate $\|\bb{P}_{M}^{N+k}\Vac-\bb{P}_{M}^{N+k+1}\Vac\|$'s.
\begin{align*}
\|\bb{P}_{M}^{N+k}\Vac-\bb{P}_{M}^{N+k+1}\Vac\| &=
\|\bb{P}_{M}^{N+k}\Vac_{1}^{N+k}\| \cdot \|\Vac_{N+k}^{N+k}-P_{N+k}\Vac_{N+k}^{N+k}\| \cdot \|\Vac_{N+k+1}^\infty\| \\
&=\|\bb{P}_{M}^{N+k}\Vac_{1}^{N+k}\| \cdot 
\sqrt{\frac{1}{2\pi}\int_{r\geq \sqrt{N+k}}e^{-\frac{1}{2}r^2}dx_{N+k}dy_{N+k}} \\
&=\|\bb{P}_{M}^{N+k}\Vac_{1}^{N+k}\| \cdot e^{-\frac{N+k}{4}}.
\end{align*}
The coefficient $\|\bb{P}_{M}^{N+k}\Vac_{1}^{N+k}\|$ cannot be greater than $1$, because $\bb{P}$ is just an orthogonal projection. Therefore the sum we are estimating cannot exceed $\sum_{k=0}^{N'-N-1}e^{-\frac{N+k}{4}}\leq e^{-N/4}/(1-e^{-1/4})$. Thus the sequence $\bb{P}_M^N[\phi\otimes s'\otimes s]$ is a Cauchy sequence, and hence the limit $\bb{P}_M[\phi\otimes s'\otimes s]:=\lim_{N\to\infty}\bb{P}_M^N[\phi\otimes s'\otimes s]$ exists.

As the next step, we check that $\bb{P}_M$ strongly converges to $\id$ as $M\to \infty$ on $\ud{L^2(U)_\fin}\otimes^\alg S_\fin^*\otimes^\alg S_\fin$. In order to prove this, we may check that $\|\vac-\bb{P}_M\vac\|\to 0$ as $M\to \infty$. Note that
$$\|\vac-\bb{P}_M\vac\|\leq \|\vac-\bb{P}_M^N\vac\|+\|\bb{P}_M^N\vac-\bb{P}_M\vac\|.$$
For any $\varepsilon>0$, there exists $M$ satisfying the following: $\|\vac-\bb{P}_M^N\vac\|<\varepsilon/2$ for {\bf any} $N\geq M$, thanks to the estimate in the previous paragraph.
After that, we can find $N$ satisfying that $\|\bb{P}_M^N\vac-\bb{P}_M\vac\|<\varepsilon/2$. Consequently, $\bb{P}_M\to \id$ strongly, as $M\to \infty$.

Let us prove that $e^{-(C_1^N)^2}\to e^{-C^2}$ as $N\to \infty$. As usual, we may prove it for $\vac$.
\begin{align*}
&\|e^{-C^2}\vac - e^{-(C_1^N)^2}\vac\| \\
&\ \ \ \leq \|e^{-C^2}\bra{\vac-\bb{P}_M\vac}\| +
\|e^{-C^2}\bb{P}_M\vac-e^{-(C_1^N)^2}\bb{P}_M\vac\|+\|e^{-(C_1^N)^2}\bra{\bb{P}_M\vac-\vac}\| .
\end{align*}
One can choose $M$ such that the first and the third terms are small, because $\bb{P}_M\vac\to \vac$ as $M\to \infty$, and $e^{-C^2}$ and $e^{-(C_1^N)^2}$ are bounded operators whose norm is at most $1$. We would like to find $N$ such that the second term is small, and so we may assume that $N>M$. The second term can be rewritten as
$$\|e^{-(C_1^N)^2}\bb{P}_M^{N}\vac\|\cdot\|e^{-(C_{N+1}^\infty)^2}\bb{P}_{N+1}\vac_{N+1}^\infty-\bb{P}_{N+1}\vac_{N+1}^\infty\|.$$
$\|e^{-(C_1^N)^2}\bb{P}_M^{N}\vac\|$ is at most $1$. 
For $\|e^{-(C_{N+1}^\infty)^2}\bb{P}_{N+1}\vac_{N+1}^\infty-\bb{P}_{N+1}\vac_{N+1}^\infty\|$, we firstly recall that $C_{N+1}^\infty$ commutes with $\bb{P}_{N+1}$. Let $h$ be a function on $\bb{R}$ defined by $h(X):=e^{-X^2}-1$. Then,
$$e^{-(C_{N+1}^\infty)^2}\bb{P}_{N+1}-\bb{P}_{N+1}=h(C_{N+1}^\infty\bb{P}_{N+1})\bb{P}_{N+1}.$$
We prove that $C_{N+1}^\infty\bb{P}_{N+1}$ is a bounded operator whose operator norm is arbitrary small if $N$ is large enough.
On the $k$-th component, the operator norm of the multiplication operator by the function
$$(x_k\otimes e_k+y_k\otimes f_k)\chi_k$$
is precisely $\sqrt{k}$. Therefore, 
$$\|C_{N+1}^\infty\bb{P}_{N+1}\|\leq \sum_{k>N}k^{\frac{1}{2}-l}.$$
It means that $\|C_{N+1}^\infty\bb{P}_{N+1}\|$ can be arbitrary small when $N$ is large enough, since $l$ is greater than $\frac{3}{2}$. By the property of the operator calculus, $\|h(C_{N+1}^\infty\bb{P}_{N+1})\|\leq \sup_{|X|\leq \|C_{N+1}^\infty\bb{P}_{N+1}\|}h(X)$. Since $h$ is continuous and $h(0)=0$, we obtain the necessary estimate.

For $f_o$, we follow the same story. We would like to prove that $C_1^Ne^{-(C_1^N)^2}\to Ce^{-C^2}$ as $N\to \infty$ strongly. As usual, we may prove it for $\vac$. Firstly, for sufficiently large $M$,
\begin{align*}
&\|Ce^{-C^2}\vac - C_1^Ne^{-(C_1^N)^2}\vac\| \\
&\ \ \ \leq \|Ce^{-C^2}\bra{\vac-\bb{P}_M\vac}\| +
\|Ce^{-C^2}\bb{P}_M\vac-C_1^Ne^{-(C_1^N)^2}\bb{P}_M\vac\|+\|C_1^Ne^{-(C_1^N)^2}\bra{\bb{P}_M\vac-\vac}\| .
\end{align*}
It suffices to estimate the second term.
\begin{align*}
&\|Ce^{-C^2}\bb{P}_M\vac-C_1^Ne^{-(C_1^N)^2}\bb{P}_M\vac\| \\
&=
\left\|C_{N+1}^\infty e^{-C^2}\bb{P}_M\vac+
C_1^Ne^{-(C_1^N)^2}\bb{P}_M^N\vac\otimes\bbbra{e^{-(C_{N+1}^\infty)^2}\bb{P}_{N+1}\vac_{N+1}^\infty-\bb{P}_{N+1}\vac_{N+1}^\infty} \right\|\\
&\leq \|C_{N+1}^\infty e^{-C^2}\bb{P}_M\vac\|+
\|C_1^Ne^{-(C_1^N)^2}\bb{P}_M^N\vac\|\cdot \|e^{-(C_{N+1}^\infty)^2}\bb{P}_{N+1}\vac_{N+1}^\infty-\bb{P}_{N+1}\vac_{N+1}^\infty \|.
\end{align*}
We check that the both terms are small. 

For the second one, $\|C_1^Ne^{-(C_1^N)^2}\bb{P}_M^N\vac\|$ is bounded in $N$, because $C_1^Ne^{-(C_1^N)^2}$'s are given by the operator calculus of the same function $f_o$, and so they are bounded operators with the same operator norm. Moreover, $\|e^{-(C_{N+1}^\infty)^2}\bb{P}_{N+1}\vac_{N+1}^\infty-\bb{P}_{N+1}\vac_{N+1}^\infty \|$ is small if $N$ is large enough thanks to the argument for $f_e$. 

For $\|C_{N+1}^\infty e^{-C^2}\bb{P}_M\vac\|$, we rewrite it as $\|e^{-(C_1^N)^2}\bb{P}_M\vac\|\cdot \|C_{N+1}^\infty e^{-(C_{N+1}^\infty)^2}\bb{P}_M\vac\|$. The coefficient $\|e^{-(C_1^N)^2}\bb{P}_M^N\vac\|$ is bounded. Moreover, $\|C_{N+1}^\infty e^{-(C_{N+1}^\infty)^2}\bb{P}_{N+1}\vac\|$ can be rewritten as
$$f_o(C_{N+1}^\infty\bb{P}_{N+1})\vac.$$
We have proved the following thing: for any $\varepsilon>0$, if $N$ is large enough, $\|C_{N+1}^\infty\bb{P}_{N+1}\|_\op\leq \varepsilon$. Since $f_o(0)=0$ and $f_o$ is continuous, $f_o(C_{N+1}^\infty\bb{P}_{N+1})\vac$ is arbitrary small if $N$ is large enough.

\end{proof}

\begin{pro}
$\SC(U)_\fin$ give bounded operators on $\dom(\Dirac)$ with respect to the graph norm.

\end{pro}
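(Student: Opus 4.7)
The plan is to verify two properties for $a \in \SC(U)_\fin$: first, $a \cdot \dom(\Dirac) \subseteq \dom(\Dirac)$, and second, a graph-norm bound $\|\Dirac(a\cdot\phi)\| \leq C_a(\|\phi\| + \|\Dirac\phi\|)$. Since the set of such $a$ is a $*$-subalgebra of $\ca{L}_\scr{S}(\scr{S}\otimes\ca{H})$, it is enough to treat the algebraic generators $\beta_N(f_e\otimes h)$ and $\beta_N(f_o\otimes h)$ with $h\in\scr{C}(U_N)_\fin$.

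For such a generator the mixed Clifford cross-terms in $(C_{N+1}^\infty)^2$ cancel pairwise, so $(C_{N+1}^\infty)^2 = \bigl(\sum_{k>N}k^{-2l}(x_k^2+y_k^2)\bigr)\id$ acts as a scalar. Consequently $a$ reduces to a multiplication-type operator on $\scr{S}\otimes\ca{H}$: with $g_N(x,y) := e^{-\sum_{k>N}k^{-2l}(x_k^2+y_k^2)}$,
$$a = f_e(X)\otimes g_N\otimes h \quad\text{or}\quad a = f_o(X)\otimes g_N\otimes h + f_e(X)\otimes C_{N+1}^\infty g_N\otimes h.$$
Using the expression for $\Dirac$ from Remark \ref{our Dirac is Bott-Dirac}, only the first-order-derivative parts of $\Dirac$ contribute to the graded commutator $[\Dirac, a]$; the linear multiplication parts commute with the multiplication operator $a$. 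Derivatives in indices $k \leq N$ hit $h$ and give a finite sum of bounded operators. Derivatives in indices $k > N$ hit $g_N$ and produce a series whose $k$-th term has operator norm at most $\sqrt{k}\cdot 2k^{-2l}\cdot \sup_{x_k}|x_k|e^{-k^{-2l}x_k^2}\cdot \|h\|_\infty = O(k^{1/2-l})$, the sup being attained at $|x_k| = k^l/\sqrt{2}$. Since $l > 3/2$ the series converges absolutely, so $[\Dirac, a]$ extends to a bounded operator.

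The main obstacle is the domain claim, because $\SC(U)_\fin$ does not preserve the algebraic core $\ca{H}_\fin$: multiplication by $g_N$ rescales the vacuum Gaussian in each direction $k > N$. My strategy is truncation. Replacing $C_{N+1}^\infty$ by $C_{N+1}^M$ produces approximants $a_M \in \beta_M(\SC(U_M)_\fin)$ that act nontrivially only on the finite-dimensional slice $U_M$, so the domain issue is localised and can be handled by writing $a_M \cdot \phi$ as an explicit Schwartz-type vector on $U_M$ tensored with the vacuum tail, on which $\Dirac$ acts as a finite sum with a convergent $L^2$ tail. By the preceding lemma, $a_M \cdot \phi \to a \cdot \phi$ in norm for $\phi \in \ca{H}_\fin$; moreover the commutator bound above is uniform in $M$, with $[\Dirac, a] - [\Dirac, a_M]$ equal to the tail $\sum_{k > M}$ of the series, which is $O(M^{3/2 - l})$. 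Consequently
$$\Dirac(a_M\cdot\phi) = [\Dirac, a_M]\phi + (-1)^{\partial a} a_M \cdot \Dirac\phi$$
is Cauchy in norm, forcing $a\cdot\phi \in \dom(\Dirac)$ with $\Dirac(a\cdot\phi) = [\Dirac, a]\phi + (-1)^{\partial a}a\cdot\Dirac\phi$. Since $\ca{H}_\fin$ is a core for $\Dirac$, this identity and the bound $\|\Dirac(a\cdot\phi)\| \leq \|[\Dirac, a]\|\,\|\phi\| + \|a\|\,\|\Dirac\phi\|$ extend to all of $\dom(\Dirac)$ by continuity. The delicate point throughout is the uniform-in-$M$ control of the tail, which is made possible precisely by the standing assumption $l > 3/2$; the rest is bookkeeping with the graded Clifford and tensor structure.
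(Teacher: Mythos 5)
Your argument follows the paper's strategy quite closely: reduce to the generators $f_e\otimes h$ and $f_o\otimes h$ with $h\in\scr{C}(U_N)_\fin$, replace $C_{N+1}^\infty$ by the truncations $C_{N+1}^M$ so that $a_M\cdot\phi$ lies in $\dom(\Dirac)$ for $\phi\in\ca{H}_\fin$, show $\{\Dirac(a_M\cdot\phi)\}_M$ is Cauchy via a commutator estimate whose tail is summable because $l>3/2$, conclude $a\cdot\phi\in\dom(\Dirac)$ by closedness of $\Dirac$, and extend from the core. The operator-norm estimate $O(k^{1/2-l})$ for the $k$-th term and the tail bound $O(M^{3/2-l})$ both match the paper.

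However, your commutator formula is incomplete. You assert that ``only the first-order-derivative parts of $\Dirac$ contribute to $[\Dirac,a]$; the linear multiplication parts commute with the multiplication operator $a$.'' This fails for indices $k\leq N$. Write $h=A\otimes w$ with $A$ a scalar Schwartz function and $w\in\Cl_+(\Lie(U_N))$. Since $h$ acts through the Clifford multiplication $c^*(w)$ and the potential term of $\Dirac$ acts through $c^*(Je_k)$, and $Je_k\in\Lie(U_N)$ for $k\leq N$, the graded commutator $[c^*(Je_k),c^*(w)]$ is nonzero in general (for instance $[c^*(f_k),c^*(f_k)]=2c^*(f_k)^2=2$, both factors being odd). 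You are right that it vanishes for $k>N$, where $Je_k\perp\Lie(U_N)$, so your tail estimate survives; but the head of $[\Dirac,a]$ must include the extra finite sum
$$\sum_{k=1}^N\sqrt{k}\left(\frac{x_k}{2}A\,[c^*(Je_k),c^*(w)]+\frac{y_k}{2}A\,[c^*(Jf_k),c^*(w)]\right)\otimes e^{-(C_{N+1}^M)^2},$$
which is the paper's $I_3^M$. It is a finite sum of bounded operators, so the final boundedness conclusion is unaffected, but as written your explicit formula for $[\Dirac,a]$ omits this piece and should be corrected before the proof is complete.
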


\begin{proof}

It is enough to prove the statement for the generators $f_e\otimes \widetilde{a}$ and $f_o\otimes \widetilde{a}$. Let us consider
\begin{center}
$[f_e\otimes \widetilde{a}]\cdot f\otimes\phi\otimes s\otimes s'$ and $[f_o\otimes \widetilde{a}]\cdot f\otimes\phi\otimes s\otimes s'$ 
\end{center}
for given $ f\otimes\phi\otimes s\otimes s'\in \scr{S}_\fin\otimes^\alg
\ud{L^2(U)_\fin} \otimes^\alg S_\fin^*\otimes^\alg\otimes S_\fin$.
Suppose that $\widetilde{a}\in \scr{C}(U_N)_\fin$ and $\phi\otimes s\otimes s'\in L^2(U_N)_\fin\otimes^\alg S_{U_N}^*\otimes^\alg S_{U_N}$.


Let us recall that 
$$[f_e\otimes \widetilde{a}]\cdot  f\otimes\phi\otimes s\otimes s'=\pm f_e(X)f\otimes [f_e(C_{N+1}^\infty)\otimes \widetilde{a}]\cdot [\phi\otimes s\otimes s'].$$
Thus we may consider only $[f_e(C_{N+1}^\infty)\otimes \widetilde{a}]\cdot [\phi\otimes s\otimes s']$.
Thanks to the previous lemma, $[f_e(C_{N+1}^\infty)\otimes \widetilde{a}]\cdot [\phi\otimes s\otimes s']=\lim_{M\to\infty}[f_e(C_{N+1}^M)\otimes \widetilde{a}]\cdot [\phi\otimes s\otimes s']$. Since $[f_e(C_{N+1}^M)\otimes \widetilde{a}]\cdot [\phi\otimes s\otimes s']\in \dom(\Dirac)$, for our purpose, it suffices to check that $\Dirac\bra{[f_e(C_{N+1}^M)\otimes \widetilde{a}]\cdot [\phi\otimes s\otimes s']}$ converges. Let us verify that. Note that $\Dirac_{M+1}^\infty\bra{[f_e(C_{N+1}^M)\otimes \widetilde{a}]\cdot [\phi\otimes s\otimes s']}=0$.
\begin{align*}
&\Dirac\bra{[f_e(C_{N+1}^M)\otimes \widetilde{a}]\cdot [\phi\otimes s\otimes s']}\\
&= \Dirac_1^M\bra{[f_e(C_{N+1}^M)\otimes \widetilde{a}]\cdot [\phi\otimes s\otimes s']}\\
&= [\Dirac_1^M\ ,\ f_e(C_{N+1}^M)\otimes \widetilde{a}]\cdot [\phi\otimes s\otimes s'] + (-1)^{\partial \widetilde{a}}[f_e(C_{N+1}^M)\otimes \widetilde{a}]\cdot\bra{\Dirac_1^M
[\phi\otimes s\otimes s']} \\
&= [\Dirac_1^M\ ,\ f_e(C_{N+1}^M)\otimes \widetilde{a}]\cdot [\phi\otimes s\otimes s'] + (-1)^{\partial \widetilde{a}}[f_e(C_{N+1}^M)\otimes \widetilde{a}]\cdot\bra{\Dirac_1^N
[\phi\otimes s\otimes s']}.
\end{align*}
Let us compute the commutator $[\Dirac_1^M\ ,\ f_e(C_{N+1}^M)\otimes \widetilde{a}]$. We may use the function language: 
$$\Dirac_1^M=\sum_{k=1}^M\sqrt{k}\bra{\partial_{x_k}\otimes c(e_k)+\partial_{y_k}\otimes c(f_k)+\frac{x_k}{2}c^*(Je_k)+\frac{y_k}{2}c^*(Jf_k)}.$$
We may assume that $\widetilde{a}$ is of the form $A\otimes w$ for $A\in C^\infty(U_N)$ and $w\in \Cl_+(\Lie(U_N))$ with $\|w\|=1$.
\begin{align*}
&[\Dirac_1^M\ ,\ f_e(C_{N+1}^M)\otimes \widetilde{a}] \\
&= \sum_{k=1}^M\sqrt{k}\bbbra{\partial_{x_k}\otimes c(e_k)+\partial_{y_k}\otimes c(f_k)+\frac{x_k}{2}\otimes c^*(Je_k)+\frac{y_k}{2}\otimes c^*(Jf_k)\ ,\ e^{-\sum_{b=N+1}^Mb^{-2l}(x_b^2+y_b^2)}\otimes \bbra{A\otimes c^*(w)}} \\
&=\sum_{k=1}^M\sqrt{k}\bra{\bbbra{\partial_{x_k},e^{-\sum_{b=N+1}^Mb^{-2l}(x_b^2+y_b^2)}\otimes A}\otimes c(e_k)c^*(w)+\bbbra{\partial_{y_k},e^{-\sum_{b=N+1}^Mb^{-2l}(x_b^2+y_b^2)}\otimes A}\otimes c(f_k)c^*(w)} \\
&\ \ \ +\sum_{k=1}^M\sqrt{k}\bra{\frac{x_k}{2}e^{-\sum_{b=N+1}^Mb^{-2l}(x_b^2+y_b^2)}\otimes A[c^*(Je_k),c^*(w)]+
\frac{y_k}{2}e^{-\sum_{b=N+1}^Mb^{-2l}(x_b^2+y_b^2)}\otimes A[c^*(Jf_k),c^*(w)]
}\\ 
&=\sum_{k=1}^N\sqrt{k}\bra{e^{-\sum_{b=N+1}^Mb^{-2l}(x_b^2+y_b^2)}\otimes\frac{\partial A}{\partial x_k}\otimes c(e_k)c^*(w)
+e^{-\sum_{b=N+1}^Mb^{-2l}(x_b^2+y_b^2)}\otimes\frac{\partial A}{\partial y_k}\otimes c(f_k)c^*(w)} \\
&\ \ \ +\sum_{k=N+1}^M\sqrt{k}\bra{-2k^{-2l}x_ke^{-\sum_{b=N+1}^Mb^{-2l}(x_b^2+y_b^2)}\otimes A\otimes c(e_k)c^*(w)-2k^{-2l}y_ke^{-\sum_{b=N+1}^Mb^{-2l}(x_b^2+y_b^2)}\otimes A\otimes c(f_k)c^*(w)}\\
&\ \ \ \ \ \ +\sum_{k=1}^N\sqrt{k}\bra{e^{-\sum_{b=N+1}^Mb^{-2l}(x_b^2+y_b^2)}\otimes \frac{x_k}{2}A[c^*(Je_k),c^*(w)]+
e^{-\sum_{b=N+1}^Mb^{-2l}(x_b^2+y_b^2)}\otimes \frac{y_k}{2}A[c^*(Jf_k),c^*(w)]
} \\
&=: I_1^M+I_2^M+I_3^M.
\end{align*}
Note that $I_1^M$ can be written as $e^{-(C_{N+1}^M)^2}\otimes(\cdots)$, where $(\cdots)$ is a bounded operator preserving $\dom(\Dirac_1^N)$ which is independent of $M$. For $I_2^M$, note that the operator norm of the operator $-2k^{-2l}x_ke^{-\sum_{b=N+1}^Mb^{-2l}(x_b^2+y_b^2)}\otimes A\otimes c(e_k)c^*(w)$ is at most $2k^{-l}\cdot \sqrt{2}e^{-2}\cdot \|A\|_{L^\infty}$, which one can check by considering the maximum value of the function $k^{-2l}x_ke^{-\sum_{b=N+1}^Mb^{-2l}(x_b^2+y_b^2)}$.
Thus the norm of $I_2^M$ is at most 
$$4\sqrt{2}e^{-2}\cdot \|A\|_{L^\infty}\sum_{k=N+1}^Mk^{\frac{1}{2}-l}.$$
Note that $I_3^M$ is not $\sum^M(\cdots)$ but $\sum^N(\cdots)$. This is because $w\in \Cl_+(\Lie(U_N))$. Since $e_k$ and $f_k$ are orthogonal to $\Lie(U_N)$ if $k>N$, $c^*(Je_k)$ and $c^*(Jf_k)$ anti-commute with $c^*(w)$. Consequently, $I_3^M$ is of the form $e^{-(C_{N+1}^M)^2}\otimes(\cdots)$, where $(\cdots)$ is a bounded operator preserving $\dom(\Dirac_1^N)$ which is independent of $M$.

Let us prove that the sequence $\{\Dirac[e^{-(C_{N+1}^M)^2}\otimes \widetilde{a}\cdot \phi\otimes s\otimes s']\}_M$ converges. 
\begin{align*}
&\Dirac\bra{[f_e(C_{N+1}^M)\otimes \widetilde{a}]\cdot [\phi\otimes s\otimes s']}
-\Dirac\bra{[f_e(C_{N+1}^{M'})\otimes \widetilde{a}]\cdot [\phi\otimes s\otimes s']}
\\
&\ \ \ =[\Dirac_1^M\ ,\ f_e(C_{N+1}^M)\otimes \widetilde{a}]\cdot [\phi\otimes s\otimes s'] - [\Dirac_1^{M'}\ ,\ f_e(C_{N+1}^{M'})\otimes \widetilde{a}]\cdot [\phi\otimes s\otimes s']\\
&\ \ \ \ \ \ +
(-1)^{\partial \widetilde{a}}[f_e(C_{N+1}^M)\otimes \widetilde{a}]\cdot\bra{\Dirac_1^N
[\phi\otimes s\otimes s']}-(-1)^{\partial \widetilde{a}}[f_e(C_{N+1}^{M'})\otimes \widetilde{a}]\cdot\bra{\Dirac_1^N
[\phi\otimes s\otimes s']} \\
&\ \ \ =(I_1^M-I_1^{M'})[\phi\otimes s\otimes s'] +(I_2^M-I_2^{M'})[\phi\otimes s\otimes s'] +(I_3^M-I_3^{M'})[\phi\otimes s\otimes s'] \\
&\ \ \ \ \ \ +(-1)^{\partial \widetilde{a}}\bbbra{\bbra{f_e(C_{N+1}^M)-f_e(C_{N+1}^{M'})}\otimes \widetilde{a}}\Dirac_1^N[\phi\otimes s\otimes s'].
\end{align*}

The first, the third and the fourth terms are of the form $(e^{-(C_{N+1}^M)^2}-e^{-(C_{N+1}^{M'})^2})(\cdots)$. Thus the sequence converges.

If $M'>M$, $I_2^M-I_2^{M'}$ can be rewritten as
$$I_2^M-I_2^{M'}=I_2^M-e^{-(C_{M+1}^{M'})^2}I_2^M+e^{-(C_{M+1}^{M'})^2}I_2^M-I_2^{M'}.$$
$e^{-(C_{M+1}^{M'})^2}I_2^M-I_2^{M'}$ is given by
$$\sum_{k=M+1}^{M'}\sqrt{k}\bra{e^{-\sum_{b=N+1}^{M'}b^{-2l}(x_b^2+y_b^2)}\otimes \frac{x_k}{2}A[c^*(Je_k),c^*(w)]+
e^{-\sum_{b=N+1}^{M'}b^{-2l}(x_b^2+y_b^2)}\otimes \frac{y_k}{2}A[c^*(Jf_k),c^*(w)]
}. $$
This is arbitrary small, if $M$ and $M'$ are large enough. Moreover, $I_2^M-e^{-(C_{M+1}^{M'})^2}I_2^M=[1-e^{-(C_{M+1}^{M'})^2}]I_2^M$ converges strongly to zero, which is because $1-e^{-(C_{M+1}^{M'})^2}\to 0$ as $M,M'\to \infty$.
More precisely, the norm of $I_2^M-e^{-(C_{M+1}^{M'})^2}I_2^M$ does not exceed
$$4\sqrt{2}e^{-2}\cdot \|A\|_{L^\infty}\sum_{k=N+1}^Mk^{\frac{1}{2}-l}+4\sqrt{2}e^{-2}\cdot \|A\|_{L^\infty}\sum_{k=N+1}^{M'}k^{\frac{1}{2}-l}.$$
It is small if $M$ and $M'$ are large enough. 

Combining all of these estimates, we find that 
$$\|\Dirac[f_e(C)\otimes \widetilde{a}\cdot \phi\otimes s\otimes s']\|\leq \|[f_e(C)\otimes \widetilde{a}]\cdot \Dirac(\phi\otimes s\otimes s')\|+c\|
\phi\otimes s\otimes s'\|$$
for some positive number $c$. Therefore $f_e(C)\otimes \widetilde{a}$ gives a bounded operator on $\dom(\Dirac)$.

For the $f_o$-case, we may study $C_{N+1}^\infty f_e(C_{N+1}^\infty)\otimes\widetilde{a}$. The difference from the $f_e$-case lies only on the explicit computation of the commutator, and we leave it to the reader.
\end{proof}

We essentially computed the following, in the previous lemma.

\begin{pro}
For any element $a\in\SC(U)_\fin$, $[a,\Dirac]$ is bounded on $\scr{S}_\fin\otimes^\alg \ca{H}_\fin$. As a result, this operator extends to $\scr{S}\otimes \ca{H}$.
\end{pro}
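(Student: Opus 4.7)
The plan is to extract the commutator operator directly from the computation already carried out in the proof of the preceding proposition. There, for a generator $a = f_e \otimes \widetilde{a}$ (or $f_o \otimes \widetilde{a}$) with $\widetilde{a} \in \scr{C}(U_N)_\fin$, we established the identity
\begin{equation*}
\Dirac\bigl([f_e(C_{N+1}^M)\otimes \widetilde{a}]\cdot\phi\bigr)
= (I_1^M + I_2^M + I_3^M)\phi + (-1)^{\partial \widetilde{a}}\,[f_e(C_{N+1}^M)\otimes \widetilde{a}]\cdot\Dirac_1^N\phi
\end{equation*}
on $\scr{S}_\fin \otimes^\alg \ca{H}_\fin$, where $I_1^M, I_2^M, I_3^M$ are the three pieces of the commutator $[\Dirac_1^M, f_e(C_{N+1}^M)\otimes\widetilde{a}]$. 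Taking $M \to \infty$ and using the strong convergence $f_e(C_{N+1}^M) \to f_e(C_{N+1}^\infty)$ proved earlier, the right-hand side converges to
\begin{equation*}
\bigl[\lim_{M\to\infty}(I_1^M + I_2^M + I_3^M)\bigr]\phi + (-1)^{\partial \widetilde{a}}\,a\cdot \Dirac\phi.
\end{equation*}
Rearranging gives the formula $[a,\Dirac]\phi = \lim_M(I_1^M + I_2^M + I_3^M)\phi$ on the dense subspace, so the only remaining task is to see that this limit defines a bounded operator.

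For this, I will revisit each piece separately. The term $I_1^M$ is a finite sum indexed by $k \leq N$ of expressions of the form $e^{-(C_{N+1}^M)^2} \otimes (\partial_{x_k} A,\,\partial_{y_k} A)$ composed with bounded Clifford operators; since $e^{-(C_{N+1}^M)^2}$ has operator norm at most one and converges strongly to $e^{-(C_{N+1}^\infty)^2}$, the limit is bounded with norm controlled by $\|\operatorname{grad}A\|_{L^\infty}$. The term $I_3^M$ is likewise a finite sum over $k \leq N$ — here we use that for $k > N$ the Clifford vectors $Je_k,\,Jf_k$ are orthogonal to $\Lie(U_N)$, so the graded commutators $[c^*(Je_k),c^*(w)]$ vanish — and its limit is bounded by a constant times $\|A\|_{L^\infty}$. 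The genuinely infinite-dimensional piece is $I_2^M$, where a uniform estimate was already recorded: each summand $-2k^{-2l}x_k\,e^{-(C_{N+1}^M)^2}$ has operator norm bounded by $2k^{-l}\cdot\sqrt{2}e^{-2}\|A\|_{L^\infty}$, so
\begin{equation*}
\|I_2^M\| \leq 4\sqrt{2}\,e^{-2}\|A\|_{L^\infty}\sum_{k=N+1}^{M} k^{\frac{1}{2}-l},
\end{equation*}
which is uniformly bounded in $M$ because $l > 3/2$, and the same telescoping trick from the previous proof ($I_2^M - I_2^{M'} = I_2^M - e^{-(C_{M+1}^{M'})^2}I_2^M + e^{-(C_{M+1}^{M'})^2}I_2^M - I_2^{M'}$) shows $I_2^M$ is strongly Cauchy.

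Combining the three limits, $[a,\Dirac]$ is defined and uniformly bounded on $\scr{S}_\fin \otimes^\alg \ca{H}_\fin$ with norm controlled by $\|A\|_{L^\infty}$ and $\|\operatorname{grad}A\|_{L^\infty}$. The $f_o$ generator is handled identically, replacing $f_e(C_{N+1}^M)$ by $C_{N+1}^M f_e(C_{N+1}^M)$ and invoking the corresponding strong-convergence statement from the previous lemma. Since both types of generators produce bounded operators, the graded $*$-algebra $\SC(U)_\fin$ does as well, and the bounded operator extends continuously to all of $\scr{S} \otimes \ca{H}$. The only subtlety — and what I expect to be the main obstacle in writing this carefully — is keeping track of the strong-convergence arguments needed to interchange the commutator with the $M \to \infty$ limit, but this is entirely parallel to the estimates performed in the preceding proposition.
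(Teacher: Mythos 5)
Your proposal is correct and follows essentially the same route as the paper: reduce to the generators $f_e\otimes\widetilde{a}$ and $f_o\otimes\widetilde{a}$, invoke the identity $\Dirac(a_M\cdot\phi)=(I_1^M+I_2^M+I_3^M)\phi+(-1)^{\partial\widetilde{a}}a_M\cdot\Dirac\phi$ already established in the preceding proposition, and use the closedness of $\Dirac$ together with the uniform bounds and strong Cauchy estimates on $I_1^M,I_2^M,I_3^M$ to pass to the $M\to\infty$ limit. The paper's proof is terser (it simply cites the estimates just proved), whereas you spell them out again, but the decomposition, the telescoping trick for $I_2^M$, and the final bound in terms of $\|A\|_{L^\infty}$ and $\|\operatorname{grad}A\|_{L^\infty}$ all match.
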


\begin{proof}
Just like the beginning of the above proof, we may assume that $a\in \SC(U)_\fin$ is of the form $f\otimes \widetilde{a}$ for $f=f_e$ or $f_o$ and $\widetilde{a}\in \SC(U_N)_\fin$, and we may study $[\Dirac,e^{-(C_{N+1}^\infty)^2}\otimes \widetilde{a}]$ and $[\Dirac,C_{N+1}^\infty e^{-(C_{N+1}^\infty)^2}\otimes \widetilde{a}]$.

Since $\Dirac$ is a closed operator, for $\phi\otimes s\otimes s'\in \ca{H}_\fin \otimes^\alg S_\fin^*\otimes^\alg\otimes S_\fin$,
\begin{align*}
 [\Dirac,e^{-(C_{N+1}^\infty)^2}\otimes \widetilde{a}](\phi\otimes s\otimes s') & =\lim_{M\to\infty} [\Dirac,e^{-(C_{N+1}^M)^2}\otimes \widetilde{a}](\phi\otimes s\otimes s') \\
&=\lim_{M\to\infty} [\Dirac_1^M,e^{-(C_{N+1}^M)^2}\otimes \widetilde{a}](\phi\otimes s\otimes s') \\
&=\lim_{M\to\infty}I_1^M+\lim_{M\to\infty}I_2^M+\lim_{M\to\infty}I_3^M.
\end{align*}
As proved above, all of $\lim_{M\to\infty}I_i^M$'s (for $i=1,2,3$) are bounded by $c\cdot \|\phi\otimes s\otimes s'\|$ for some positive constant $c$.

For $f_o$, use the same argument.

\end{proof}

\begin{pro}
As an operator on $\ca{H}$,
$(1+\Dirac^2)^{-1}$ is a compact operator.

\end{pro}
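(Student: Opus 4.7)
The plan is to diagonalise $\Dirac^2$ via the tensor factorisation already exploited in the self-adjointness argument, and then to reduce everything to known spectral data for the two ``chiral'' pieces ${}^L\Dirac$ and ${}^R\Dirac$. Concretely, I would re-identify $\ca{H}$ with the tensor product $\ca{H}_L \otimes \ca{H}_R$, where $\ca{H}_L := \ud{L^2(\bb{R}^\infty)}\otimes S^*$ carries ${}^L\Dirac$ and $\ca{H}_R := \ud{L^2(\bb{R}^\infty)^*}\otimes S$ carries ${}^R\Dirac$. As was already noted when proving self-adjointness,
\[
\Dirac^2 \;=\; A\otimes \id \;+\; \id\otimes B, \qquad A := -\tfrac12 \bra{{}^L\Dirac}^2,\quad B := \tfrac12 \bra{{}^R\Dirac}^2,
\]
with $A$ and $B$ non-negative self-adjoint operators (since ${}^L\Dirac$ is skew-symmetric and ${}^R\Dirac$ is symmetric) acting on the two tensor factors and therefore commuting.

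Next I would invoke the spectral structure of the two chiral pieces. Each of $A$ and $B$ is a Bott--Dirac-type square built from the PER $\rho$ (respectively $\rho^*$) combined with the Spinor, and the essential computation is the one carried out in \cite{T1,T3,FHTII}: both operators have purely discrete non-negative spectrum, every eigenspace is finite-dimensional, and the eigenvalues tend to $\infty$. The finite-dimensionality of eigenspaces is the standard positive-energy statement (the $n$-th rotation-energy level of $\ud{L^2(\bb{R}^\infty)}$ has dimension equal to the number of partitions of $n$), and tensoring with the Spinor preserves it since the Spinor decomposes into finite-dimensional energy levels.

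Finally I would combine the two pieces. Choosing orthonormal eigenbases $\{e_i\}$ of $\ca{H}_L$ and $\{f_j\}$ of $\ca{H}_R$ with $A e_i=\lambda_i e_i$, $Bf_j=\mu_j f_j$, the vectors $\{e_i\otimes f_j\}$ diagonalise $\Dirac^2$ with eigenvalues $\lambda_i+\mu_j$, hence diagonalise $(1+\Dirac^2)^{-1}$ with eigenvalues $(1+\lambda_i+\mu_j)^{-1}$. For every $M>0$ only finitely many pairs satisfy $\lambda_i+\mu_j\le M$, because finitely many $\lambda_i$ are at most $M$ and, for each such $\lambda_i$, only finitely many $\mu_j$ lie below $M-\lambda_i$. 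Consequently the eigenvalues of $(1+\Dirac^2)^{-1}$ accumulate only at $0$ with finite multiplicities, so the operator is compact on $\ca{H}$. The only real obstacle is the spectral claim for $A$ and $B$ in step two, but this is precisely the positive-energy spectral statement already invoked when checking self-adjointness, so no additional analysis is needed.
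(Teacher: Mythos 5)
Your proof is correct and is essentially the same as the paper's: both use the graded tensor factorization $\Dirac^2 = -\tfrac12({}^L\Dirac)^2 \otimes \id + \id \otimes \tfrac12({}^R\Dirac)^2$ and the discrete spectrum with finite multiplicities of the chiral squares. You merely spell out more explicitly the combinatorial step that the sum $\lambda_i+\mu_j$ has finitely many values below any bound, which the paper leaves implicit in its one-line conclusion.
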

\begin{proof}
Since ${}^L\Dirac$ and ${}^R\Dirac$ are anti-commutative, 
$$\Dirac^2=-\frac{1}{2}({}^L\Dirac)^2+\frac{1}{2}({}^R\Dirac)^2.$$
These two operators have the same spectrum, which consists of non-negative integers with finite multiplicity.
Therefore $\Dirac^2+1$ has discrete and positive spectrum. Its inverse is a compact operator.
\end{proof}

This fact tells us the following thing: On $\scr{S}\otimes \ca{H}$, the operator $[f\otimes a]\cdot (1+\id\otimes \Dirac^2)^{-1}$ is an $\scr{S}$-compact operator. Thus we have proved that the pair $(\scr{S}\otimes\ca{H},\Dirac)$ gives a Kasparov $(\SC(U),\scr{S})$-module. 

Let us move to the equivariance issues.
The following lemma looks obvious, at first sight. However, there are some difficulties: $U$ does not preserve $\ca{H}_\fin$; and $\Dirac$ is an infinite sum. 

\begin{lem}\label{U preserves dom of D}
$U$ preserves the domain of $\Dirac$.
\end{lem}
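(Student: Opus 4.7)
The strategy is to establish the commutator identity
\[
L(g)\,\Dirac\,L(g)^{-1} \;=\; \Dirac - \tfrac{1}{\sqrt{2}}\gamma^*(\eta_f), \qquad g=\exp(f),
\]
on a suitable dense core, where $\eta_f$ is an explicit vector in $\Lie(U)$, and then extend to $\dom(\Dirac)$ by closure. The key observation is that $L(g)=\rho(g)\otimes \id$ touches only the $\ud{L^2(\bb{R}^\infty)}$-factor, so it commutes with ${}^R\Dirac$ and with every Clifford operator $\gamma^*,\gamma$; the only nontrivial piece is $[L(g),{}^L\Dirac]$.

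For the commutator, the central extension of Definition \ref{central extension of LT} yields $L(g)\,d\rho(X)\,L(g)^{-1} = d\rho(X) + i\omega(f,X)\id$. Applying this termwise,
\[
[L(g),{}^L\Dirac]\;=\;i\,\gamma^*(\eta_f)\,L(g),\qquad \eta_f\;:=\;\sum_k\sqrt{k}\bra{\omega(f,z_k)\overline{z_k} + \omega(f,\overline{z_k})z_k}.
\]
With $f=\sum_k(a_k\cosi{k}+b_k\sine{k})$ a short calculation using $\omega(\cosi{j},\sine{k})=\delta_{jk}$ gives $\eta_f=\sum_k\sqrt{k}(-b_ke_k+a_kf_k)$, whose squared norm $\sum_k k(a_k^2+b_k^2)$ is finite for every $f\in\Lie(U)$ (indeed bounded by $\|f\|_{L^2_{1/2}}^2$). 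Hence $\gamma^*(\eta_f)$ is a bounded Clifford multiplication with $\|\gamma^*(\eta_f)\|=|\eta_f|$, and substituting into $\Dirac=\tfrac{1}{\sqrt{2}}{}^R\Dirac+\tfrac{i}{\sqrt{2}}{}^L\Dirac$ produces the displayed identity.

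I would next verify $L(g)\phi\in\dom(\Dirac)$ for $g\in U_\fin$ and $\phi$ in the finite-support subspace $L^2(U_M)_\fin\otimes S^*_{U_M,\fin}\otimes S_{U_M,\fin}\otimes\Vac_{M+1}^\infty$, with $M$ chosen large enough that $g\in U_M$. Splitting $\Dirac=\Dirac_1^M+\Dirac_{M+1}^\infty$, the tail commutes with $L(g)$ and annihilates $L(g)\phi$ on the vacuum factor, while the finite-dimensional $\Dirac_1^M$ acts on the Schwartz-type subspace which $L(g)$ (translation composed with a Gaussian phase on $L^2(U_M)$) clearly preserves. So $L(g)\phi\in\dom(\Dirac)$ and the commutator identity holds literally on such $\phi$, giving $\Dirac L(g)\phi = L(g)\Dirac\phi + \tfrac{1}{\sqrt{2}}\gamma^*(\eta_f)L(g)\phi$.

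Finally I would pass to general $\phi\in\dom(\Dirac)$ and $g\in U$ by a two-stage closure. Approximate $\phi$ in graph norm by $\phi_m$ in the finite-support subspace above, and approximate $f$ by $f_n\in\Lie(U_\fin)$ in $L^2_{1/2}$ (e.g., truncating Fourier modes), so that $L(g_n)\to L(g)$ strongly on finite-energy vectors and $\gamma^*(\eta_{f_n})\to\gamma^*(\eta_f)$ in operator norm. Fixing $m$ and letting $n\to\infty$ in the identity $\Dirac L(g_n)\phi_m = L(g_n)\Dirac\phi_m+\tfrac{1}{\sqrt{2}}\gamma^*(\eta_{f_n})L(g_n)\phi_m$ from the previous step, closedness of $\Dirac$ yields $L(g)\phi_m\in\dom(\Dirac)$ together with the corresponding identity; letting $m\to\infty$, closedness again yields $L(g)\phi\in\dom(\Dirac)$. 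The main obstacle is precisely this bookkeeping: the natural core $\ca{H}_\fin$ is not $L(g)$-stable once $g\notin U_\fin$, so no single invariant dense core is available; the uniform bound $\|\gamma^*(\eta_{f_n})\|\leq \|f_n\|_{L^2_{1/2}}$ is what makes the two-stage closure close properly.
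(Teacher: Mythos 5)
Your proof is correct and takes essentially the same approach as the paper's own: isolate ${}^L\Dirac$ (since $L(g)$ commutes with ${}^R\Dirac$ and the Clifford factors), compute the perturbation $L(g){}^L\Dirac L(g)^{-1}-{}^L\Dirac = i\gamma^*(\eta_f)$ from $\Ad_g$ on the central extension, bound it by $\|f\|_{L^2_{1/2}}$, and upgrade from $U_\fin$ and a dense core to all of $U$ and $\dom(\Dirac)$ by a two-stage closedness argument. The only differences are cosmetic: you run the closures in the opposite order ($g$-approximation inside, $\phi$-approximation outside), and you keep the $\sqrt{k}$-weight in $\eta_f$ explicit where the paper's displayed formula $g^{-1}({}^L\Dirac)={}^L\Dirac-i\gamma^*(Jf)$ quietly elides it (the weighted version is what actually comes out of the $\sqrt{k}$-weighted sum defining ${}^L\Dirac$).
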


\begin{proof}
Recall that $\ud{L^2(U)}\otimes S^*\otimes S\cong \ud{L^2(\bb{R}^\infty)}\otimes S^*\otimes \ud{L^2(\bb{R}^\infty)^*}\otimes S$. Since ${}^R\Dirac$ is actually $U$-equivariant, we may ignore this part. Moreover, $\Dirac$ is trivial on the $\scr{S}$-part. As a result, we may concentrate on the operator ${}^L\Dirac$ acting on $\ud{L^2(\bb{R}^\infty)}\otimes S^*$.

We divide the problem into two parts: $(1)$ $g\in U_\fin$ preserves $\dom({}^L\Dirac)$ and $(2)$ the general case. The former is much easier than the latter, but it contains the key computation, which has essentially been mentioned in \cite{FHTII}.

$(1)$ What we need to check is the following: For any $v\in \dom({}^L\Dirac)$ whose approximating sequence is $\{v_n\}\subseteq \ud{L^2(\bb{R}^\infty)_\fin}\otimes^\alg S^*_\fin$, the sequence $\{g(v_n)\}$ converges in the graph norm of ${}^L\Dirac$. Since the $U_\fin$-action on $\ud{L^2(\bb{R}^\infty)}$ is unitary, the sequence converges in the norm of $\ud{L^2(\bb{R}^\infty)}\otimes S^*$. 
Thus, it suffices to check that $\{{}^L\Dirac(gv_n)\}$ gives a Cauchy sequence. 

Let us notice an elementary equality:
$$
{}^L\Dirac(gv_n) =g.\bbbra{g^{-1}\circ{}^L\Dirac\circ g(v_n)}=\bra{g\circ[g^{-1}({}^L\Dirac)]}(v).
$$
In the strong sense on $\ud{L^2(\bb{R}^\infty)_\fin}\otimes^\alg S^*_\fin$, the operator $g^{-1}({}^L\Dirac)$ can be written as
$$
g^{-1}\circ{}^L\Dirac\circ g=\sum_k\bbbra{ \rho(g^{-1})d\rho(z_k)\rho(g)\otimes\gamma^*(\overline{z_k})+\rho(g^{-1})d\rho(\overline{z_k})\rho(g)\otimes\gamma^*(z_k)}.
$$
This equality is proved as follows: Since $g\in U_\fin$, it preserves $\ker(d\rho(z_k)\otimes \gamma^*(\overline{z_k})+d\rho(\overline{z_k})\otimes \gamma^*(z_k))$ for sufficiently large $k$. Hence the infinite sum above is, vectorwisely, a finite sum. Let $g=\exp(f)$.
Then, as explained in Definition \ref{central extension of LT},
$$\rho(g^{-1})d\rho(z_k)\rho(g) =d\rho(\Ad_{g^{-1}}(z_k))=d\rho(z_k+\omega(-f,z_k)K).$$
Since $K$ gives $i\id$ on the representation space,
$g^{-1}\circ{}^L\Dirac\circ g={}^L\Dirac -i\gamma^*(Jf)$. 
This formula can be proved by considering the Fourier expansion $f=\sum_k(f_kz_k+\overline{f_k}\overline{z_k})$ and $\omega(z_k,\overline{z_k})=-i$.
Seeing this formula, we estimate $\|{}^L\Dirac(gv_n)-{}^L\Dirac(gv_m)\|$. 
\begin{align*}
\|{}^L\Dirac(gv_n)-{}^L\Dirac(gv_m)\| &=
\|g^{-1}({}^L\Dirac)(v_n)-g^{-1}({}^L\Dirac)(v_m)\| \\
&=\|{}^L\Dirac(v_n-v_m)-i\gamma^*(Jf)(v_n-v_m)\| \\
&\leq \|{}^L\Dirac(v_n-v_m)\|+\|\gamma^*(Jf)(v_n-v_m)\|.
\end{align*}
The first term is small, since $\{v_n\}$ converges in the graph norm of ${}^L\Dirac$. The second one is also small, because $\gamma^*(Jf)$ gives a bounded operator.


We need to add a comment before going to $(2)$. The above estimate enables us to generalize the formula
$g^{-1}({}^L\Dirac)={}^L\Dirac -i\gamma^*(Jf)$
on the $\ud{L^2(\bb{R}^\infty)_\fin}\otimes^\alg S^*_\fin$, to the same formula on $\dom({}^L\Dirac)$. Let $v\in \dom({}^L\Dirac)$, and $g\in U_\fin$. Suppose that $\{v_n\}\subseteq \ud{L^2(\bb{R}^\infty)}\otimes S_\fin^*$ converges to $v$ in the graph norm of ${}^L\Dirac$. Then $\{g.v_n\}$ converges to $gv$ in the graph norm of ${}^L\Dirac$.
\begin{align*}
g^{-1}\circ{}^L\Dirac\circ g(v) &= g^{-1}.\lim_k{}^L\Dirac (g.v_k) = \lim_kg^{-1}\circ{}^L\Dirac\circ g(v_k)  \\
&= \lim_k [{}^L\Dirac -i\gamma^*(Jf)](v_k) ={}^L\Dirac(v) -i\gamma^*(Jf)(v).
\end{align*}

$(2)$ Let us consider the general case. Assume that $g\in U$. Let $\{g_n\}\subseteq U_\fin$ be an approximating sequence of $g$. 
Take any $v\in \dom({}^L\Dirac)$.
Then, $\{g_nv\}$ converges to $g.v$.
We need to prove that $\{{}^L\Dirac(g_nv)\}$ is a Cauchy sequence. Put $g_n=\exp(f_n)$ and $g=\exp(f)$.
\begin{align*}
{}^L\Dirac(g_nv)-{}^L\Dirac(g_mv) &=
g_n.g_n^{-1}({}^L\Dirac)(v)-g_m.g_m^{-1}({}^L\Dirac)(v) \\
&= g_n.\bbbra{{}^L\Dirac-i\gamma^*(Jf_n)}(v)-g_m.\bbbra{{}^L\Dirac-i\gamma^*(Jf_m)}(v) \\
&= \Bigl[g_n.{}^L\Dirac(v)-g_m.{}^L\Dirac(v)\Bigr] - \Bigl[g_n.\bbbra{i\gamma^*(Jf_n-Jf_m)}(v)\Bigr] - \Bigl[ g_n.i\gamma^*(Jf_m)(v)- g_m.i\gamma^*(Jf_m)(v)\Bigr].
\end{align*}

Let us prove that all of the three terms are arbitrary small, if $n$ and $m$ are large enough. The first one is small, because the representation is continuous. The norm of the second term is given by $\|\gamma^*(Jf_n-Jf_m)(v)\|$. Since $[\gamma^*(X)]^*\gamma^*(X)=\|X\|^2\id$ by definition of the Clifford multiplication, and $\{g_n\}$ converges, the norm of the second term is small.

For the third one, we regard $g_n$ and $g_m$ as operators, and we write that as $(g_n-g_m).i\gamma^*(Jf_m)(v)$. We compute the square of the norm, noticing that $g_n-g_m$ commutes with $\gamma^*(Jf_m)$.
\begin{align*}
\|(g_n-g_m).i\gamma^*(Jf_m)(v)\|^2 &= \inpr{(g_n-g_m).i\gamma^*(Jf_m)(v)}{(g_n-g_m).i\gamma^*(Jf_m)(v)}{} \\
&=\inpr{(g_n-g_m).[\gamma^*(Jf_m)]^*\gamma^*(Jf_m)(v)}{(g_n-g_m).v}{} \\
&= \|Jf_m\|^2_{L^2}\inpr{(g_n-g_m).v}{(g_n-g_m).v}{}.
\end{align*}
Since $\|Jf_m\|^2_{L^2}$ converges to $\|Jf\|^2_{L^2}$, and since $\inpr{(g_n-g_m).v}{(g_n-g_m).v}{}$ converges to $0$, the quantity we are estimating is small.


\end{proof}

The following is almost a corollary of the above proof.

\begin{pro}\label{equivariance condition on D}
For any $g\in U$, $g(\Dirac)-\Dirac$ is bounded, 
$[\Dirac,\Dirac-g(\Dirac)]\frac{\Dirac}{1+\Dirac^2+\lambda}$ is uniformly bounded in $\lambda$,
and the map $g\mapsto g(\Dirac)-\Dirac$ is continuous on $U$.

\end{pro}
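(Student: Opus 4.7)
The plan is to extract the explicit form of $g(\Dirac)-\Dirac$ from the identity
$$g^{-1}\circ{}^L\Dirac\circ g \;=\; {}^L\Dirac - i\gamma^*(Jf)\qquad (g=\exp(f)\in U)$$
which was already established on $\dom({}^L\Dirac)$ in the proof of Lemma \ref{U preserves dom of D}. Since ${}^R\Dirac$ is literally $U$-equivariant (its tensor support is disjoint from that on which $U$ acts), substituting $g \mapsto g^{-1}$ in the above and using $\Dirac = \tfrac{1}{\sqrt 2}{}^R\Dirac + \tfrac{i}{\sqrt 2}{}^L\Dirac$ will yield
$$g(\Dirac) - \Dirac \;=\; -\tfrac{1}{\sqrt 2}\gamma^*(Jf).$$
Because $\gamma^*$ is an isometric embedding of $\Lie(U)$ into bounded operators on $S^*$, this is bounded of operator norm $\tfrac{1}{\sqrt 2}\|f\|_{L^2}$, settling the first assertion. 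Continuity of $g\mapsto g(\Dirac)-\Dirac$ in operator norm then follows from the fact that the $L^2_{1/2}$-topology on $U$ dominates the $L^2$-topology on $\Lie(U)$.

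For the commutator, the graded commutator becomes $[\Dirac,\Dirac-g(\Dirac)] = \tfrac{1}{\sqrt 2}\{\Dirac,\gamma^*(Jf)\}$. The anti-commutator $\{{}^R\Dirac,\gamma^*(Jf)\}$ vanishes because both are odd and act on disjoint tensor factors of $\ca{H}$ (positions $\ud{L^2(\bb{R}^\infty)^*}\otimes S$ versus $S^*$), so only the ${}^L\Dirac$-piece survives. The Clifford relations then collapse the $\gamma^*$-factors to scalars, yielding an operator acting on the $\ud{L^2(\bb{R}^\infty)}$-factor only:
$$E \;:=\; [\Dirac,\Dirac-g(\Dirac)] \;=\; i\sum_k \sqrt k\bra{\alpha_k\,d\rho(z_k) + \beta_k\,d\rho(\overline{z_k})},$$
where $\alpha_k,\beta_k$ are scalars linear in $f$ coming from the bilinear pairings of $Jf$ with $\overline{z_k},z_k$; they are essentially the Fourier coefficients of $Jf$, so $\sum k(|\alpha_k|^2+|\beta_k|^2)\leq \|f\|_{L^2_{1/2}}^2$.

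Writing $w := \tfrac{\Dirac}{1+\Dirac^2+\lambda}v$, we have $\|w\|\leq \tfrac{1}{2\sqrt{1+\lambda}}\|v\|$ and $\|\Dirac w\|\leq \|v\|$, so the task reduces to the $\Dirac$-relative bound
$$\|Ew\| \;\leq\; C\,\|f\|_{L^2_{1/2}}\bra{\|w\| + \|\Dirac w\|},\qquad w\in\dom(\Dirac),$$
from which the uniform bound in $\lambda$ follows. The anti-commutativity of ${}^L\Dirac$ and ${}^R\Dirac$ gives $\Dirac^2 = \tfrac{1}{2}\bra{|{}^L\Dirac|^2 + ({}^R\Dirac)^2}$, hence $\|{}^L\Dirac w\|\leq \sqrt 2\|\Dirac w\|$, and it is enough to dominate $E$ by ${}^L\Dirac$ modulo a bounded part. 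Splitting $E$ into its creation and annihilation pieces and applying Cauchy--Schwarz against the $\sqrt k$-weights, the annihilation piece is bounded by $\|\beta\|_{\ell^2}\cdot\sqrt{\inpr{w}{N_H w}{}}$ with $N_H := -\sum_k k\,d\rho(z_k)d\rho(\overline{z_k})$ the bosonic energy operator, while the creation piece picks up an additional $\bigl(\sum k|\alpha_k|^2\bigr)^{1/2}\|w\| \leq \|f\|_{L^2_{1/2}}\|w\|$ term from the canonical commutation relation $[d\rho(\overline{z_k}),d\rho(z_k)]=\id$.

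The main obstacle I expect is the final step: dominating the bosonic energy $\inpr{w}{N_H w}{}$ by a multiple of $\|{}^L\Dirac w\|^2$. This is not automatic, since $|{}^L\Dirac|^2$ normal-ordered produces a diagonal $2\sum_k k(n_k^{\mathrm{bos}}+1-n_k^{\mathrm{ferm}})$ together with off-diagonal cross-terms mixing different modes; but the Clifford anti-commutation $\{\gamma^*(\overline{z_k}),\gamma^*(\overline{z_l})\}=0$ for $k\neq l$ should make the symmetrized cross-terms contribute non-negatively, yielding the required operator inequality $N_H \leq C|{}^L\Dirac|^2 + C'\id$.
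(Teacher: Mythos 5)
Your route is, up to phrasing, the one the paper follows: you extract $g(\Dirac)-\Dirac=-\tfrac{1}{\sqrt 2}\gamma^*(Jf)$, you observe that the graded commutator with $\Dirac$ only sees the ${}^L\Dirac$-part (the ${}^R$-part anti-commutes with $\gamma^*(Jf)$ on disjoint tensor legs), and the Clifford contraction leaves a purely bosonic operator $\sqrt 2\,i\,d\rho(|d|^{1/2}Jf)\otimes\id$; the remaining task is a $\Dirac$-relative bound for this bosonic operator composed with the resolvent. That is exactly what the paper does.

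The one place you flagged as a possible gap is, in fact, not a gap, but your reasoning there is off. Squaring ${}^L\Dirac=\sum_k\sqrt k\bra{d\rho(z_k)\otimes\gamma^*(\overline{z_k})+d\rho(\overline{z_k})\otimes\gamma^*(z_k)}$ gives
$$|{}^L\Dirac|^2 \;=\; 2\sum_k k\bra{N_k^{\mathrm{bos}}+N_k^{\mathrm{ferm}}},$$
with \emph{no} vacuum shift and \emph{no} off-diagonal cross-terms. The cross-terms for $k\neq l$ do not merely contribute non-negatively after symmetrization --- they cancel identically, because $d\rho(z_k)d\rho(\overline{z_l})$ is symmetric under $k\leftrightarrow l$ while $\gamma^*(\overline{z_k})\gamma^*(z_l)$ is antisymmetric (a standard supersymmetric cancellation). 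Your recalled diagonal $2\sum_k k(n_k^{\mathrm{bos}}+1-n_k^{\mathrm{ferm}})$ is the spectrum for the \emph{opposite} chirality pairing $a^\dagger\psi^\dagger+a\psi$, not the pairing used here; note $\Dirac\Vac=0$, so the constant $+1$ cannot be present. Because both number operators enter with a $+$ sign, $N_H\leq\tfrac{1}{2}|{}^L\Dirac|^2\leq\Dirac^2$ holds outright, and your estimate for $\|Ew\|$ closes. Also, $[d\rho(\overline{z_k}),d\rho(z_k)]=-\id$, not $+\id$, though this does not affect the argument. The paper itself bypasses computing $|{}^L\Dirac|^2$ globally: it bounds each mode via $\inpr{d\rho(z_k)v}{d\rho(z_k)v}{}\leq\tfrac{2}{k}\bra{\|v\|^2+\|\Dirac v\|^2}$ and then Cauchy--Schwarz splits $\tfrac{1}{\sqrt k}=\tfrac{1}{k}\cdot\sqrt k$ against the $\sqrt k$-weights of $|d|^{1/2}Jf$, so the controlling quantity is $\sum k\bra{|a_k|^2+|b_k|^2}$ for the Fourier data of $|d|^{1/2}Jf$; this is the same content as your bound, packaged mode-by-mode rather than through $N_H$.
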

\begin{proof}

Let $g=\exp(f)\in U$.
We firstly extend the formula $g({}^L\Dirac)-{}^L\Dirac=i\gamma^*(Jf)$ for $U_\fin$ to the general case $g\in U$. Take $\{g_n\}\subseteq U_\fin$, such that $g_n=\exp(f_n)$ and $g_n\to g$. 
For $v\in \dom(\Dirac)$, 
$$g({}^L\Dirac)(v)= g.\lim_n\bbbra{{}^L\Dirac(g_n^{-1}.v)} = g.\lim_ng_n^{-1}.\bbbra{g_n\circ{}^L\Dirac\circ g_n^{-1}(v)} $$
$$=g.\lim_ng_n^{-1}.{}^L\Dirac(v)
+g.\lim_ng_n^{-1}.i\gamma^*(Jf_n)(v).$$ 
The first term converges to ${}^L\Dirac(v)$, clearly. The second one converges to $i\gamma^*(Jf)(v)$,  as follows:
$$
\|g_n^{-1}i\gamma(Jf_n)(v)-g^{-1}i\gamma(Jf)(v)\|\leq
\| g_n^{-1}.\bbbra{i\gamma(Jf_n)(v)-i\gamma(Jf)(v)}\|+
\|(g_n^{-1}-g^{-1})i\gamma^*(Jf)(v)\|.$$
The first term tends to zero, since $g_n^{-1}$ gives an isometry, and $Jf_n$ converges to $Jf$. The second one also tends to zero, because $g_n$ converges to $g$.

Since $i\gamma^*(Jf)$ is a bounded operator, $g(\Dirac)-\Dirac$ is also a bounded operator.

We would like to prove that $[\Dirac,\Dirac-g(\Dirac)]\frac{\Dirac}{1+\Dirac^2+\lambda}$ is uniformly bounded in $\lambda$. Firstly, we note that $\Dirac-g(\Dirac)$ is a bounded operator, but we have not proved that $\Dirac-g(\Dirac)$ preserves the domain of $\Dirac$ for general $g\in U$.\footnote{If $g\in U_\fin$, $g(\Dirac)-\Dirac$ preserves $\dom(\Dirac)$.} However, it is not a serious problem in our case. This is because $\Dirac-g(\Dirac)$ sends $\ca{H}_\fin$ to $\dom(\Dirac)$, which we will prove soon, and $\Dirac$ preserves $\ca{H}_\fin$. Therefore the operator we are considering is well-defined on at least $\ca{H}_\fin$. Once the operator is proved to be defined on $\ca{H}_\fin$, and if it is bounded, it extends to $\ca{H}$ itself uniquely.

We check that $\Dirac-g(\Dirac)$ maps $\ca{H}_\fin$ into $\dom(\Dirac)$ here. Let $\{g_n\}\subseteq U_\fin$ be an approximating sequence of $g$, and let $v\in \ca{H}_\fin$. 
We check that $\{\gamma^*(Jf_n)(v)\}$ converges in the graph norm of $\Dirac$. It means that $[\Dirac-g(\Dirac)](v)\in \dom(\Dirac)$. Since $g_n$ converges to $g$, the sequence converges in ``$L^2$''. Hence we need to prove that $\{\Dirac[\gamma^*(Jf_n)(v)]\}$ is a Cuachy sequence.

Let $Jf_n=\sum_l(a_lz_l+b_l\overline{z_l})$. Since $g_n\in U_\fin$, there is no problem to exchange the order of sum. 
In the following, we omit $(v)$ in order to simplify the notations.
%
\begin{align*}
\Dirac\circ\gamma^*(Jf_n) &= \frac{i}{\sqrt{2}}
[{}^L\Dirac,\gamma^*(Jf_n)]-\gamma^*(Jf_n)\circ \Dirac \\
&= \frac{i}{\sqrt{2}}\bbbra{\sum_{k}\sqrt{k}\bra{d\rho(z_k)\otimes \gamma^*(\overline{z_k})+d\rho(z_k)\otimes \gamma^*(\overline{z_k})},\sum_l(a_l\gamma^*(z_l)+b_l\gamma^*(\overline{z_l}))} - \gamma^*(Jf_n)\circ \Dirac \\
&= \frac{i}{\sqrt{2}}\sum_{k,l}\sqrt{k} \bbbra{
d\rho(z_k)\otimes \gamma^*(\overline{z_k})+d\rho(z_k)\otimes \gamma^*(\overline{z_k}),a_l\gamma^*(z_l)+b_l\gamma^*(\overline{z_l})} -\gamma^*(Jf_n)\circ \Dirac \\
&= \frac{i}{\sqrt{2}}\sum_k\sqrt{k} \bra{a_kd\rho(z_k)\otimes [\gamma^*(\overline{z_k}),\gamma^*(z_k)]+b_kd\rho(\overline{z_k})\otimes [\gamma^*(z_k),\gamma^*(\overline{z_k})] } -\gamma^*(Jf_n)\circ \Dirac\\
&= \sqrt{2}i\sum_k\sqrt{k} \bra{a_kd\rho(z_k)+b_kd\rho(\overline{z_k})} \otimes \id -\gamma^*(Jf_n)\circ \Dirac \\
&= \sqrt{2}id\rho(|d|^{1/2}Jf_n)\otimes \id - \gamma^*(Jf_n)\circ \Dirac,
\end{align*}
where we put $|d|^{1/2}Jf_n:=\sum_l\sqrt{l}(a_lz_l+b_l\overline{z_l})$.
We need to check that $\Dirac\circ\gamma^*(Jf_n)(v)$ converges. Thanks to the above computation,
$$\Dirac\circ\gamma^*(Jf_n)(v)=\sqrt{2}i[d\rho(|d|^{1/2}Jf_n)\otimes \id](v) - \gamma^*(Jf_n)\circ \Dirac(v).$$
The right hand side converges, if $g_n$ converges to $g$. Moreover, for any $g\in U$, we find that 
$[\Dirac,\gamma^*(Jf)]=\sqrt{2}id\rho(|d|^{1/2}Jf)\otimes \id.$

We are in the position to prove that the operator norm of $[\Dirac,\Dirac-g(\Dirac)]\frac{\Dirac}{1+\Dirac^2+\lambda}$ is uniformly bounded in $\lambda$. 
Formally, this is a kind of elliptic regularity: $[\Dirac,\Dirac-g(\Dirac)]$ and $\Dirac$ are first order, and $\frac{1}{1+\Dirac^2+\lambda}$ is the inverse of an elliptic operator of second order; the product is of ``$0$-th order''.

Let us prove that $\frac{\Dirac}{1+\Dirac^2+\lambda}$ maps $\ca{H}$ into $\dom(\Dirac)$. Take $v\in \ca{H}$. Choose an approximation $\{v_n\}\subseteq \ca{H}_\fin$, of $v$. Let $b_1(x):=\frac{x}{1+x^2+\lambda}$. Notice that $b_1$ is a bounded function, and hence $b_1(\Dirac)$ is a bounded operator. Consequently, $b_1(\Dirac)(v_n)=\frac{\Dirac}{1+\Dirac^2+\lambda}(v_n)$ converges to $b_1(\Dirac)v$. Put $b_2(x)=\frac{x^2}{1+x^2+\lambda}$. Then, $b_2(\Dirac)=\Dirac b_1(\Dirac)$. Since $b_2$ is also a bounded function, $b_2(\Dirac)(v_n)=\Dirac\bbbra{b_1(\Dirac)(v_n)}$ converges again. Since $\Dirac$ is a closed operator, $b_1(\Dirac)(v)\in \dom(\Dirac)$ and $\Dirac[b_1(\Dirac)(v)]=b_2(\Dirac)(v)$.

Let us prove that $d\rho(X)$ maps $\dom(\Dirac)$ to $\ca{H}$ continuously, for all $X\in \Lie(U)$. For this aim, it is sufficient to prove the following inequality:
$$\|d\rho(X)\otimes\id(v)\|^2 \leq C\bra{\|v\|^2+\|\Dirac(v)\|^2},$$
for some $C$ which is independent of $v$. Suppose that $X=\sum(a_kz_k+b_k\overline{z_k})$.
Note that
$$\inpr{d\rho(z_k)v}{d\rho(z_k)v}{}=\inpr{-d\rho(\overline{z_k})d\rho(z_k)v}{v}{}\leq  \frac{2}{k}\inpr{\Dirac^2v}{v}{}=\frac{2}{k}\|\Dirac(v)\|^2,$$
and similar for $d\rho(\overline{z_k})$. Then, we find that
$$\|d\rho(X)\otimes\id(v)\| \leq 2\sum\bra{
\frac{|a_k|}{\sqrt{k}}\|\Dirac(v)\|+\frac{|b_k|}{\sqrt{k}}\|\Dirac(v)\|}\leq C\bra{\|v\|^2+\|\Dirac(v)\|^2}$$
where $C$ can be taken as ``the $l^2_{1/2}$ norm of $X$'' times a constant. Let us prove that using the Cauchy-Schwarz inequality.
\begin{align*}
\bbbra{\sum\bra{\frac{|a_k|}{\sqrt{k}}+\frac{|b_k|}{\sqrt{k}}}} &= 
\sum\bra{\frac{1}{k}\cdot \sqrt{k}|a_k|+\frac{1}{k}\cdot \sqrt{k}|b_k|} 
\\
&\leq \sqrt{2\sum \frac{1}{k^2}}\cdot \sqrt{\sum k(|a_k|^2+|b_k|^2)}.
\end{align*}

Finally, we must prove the $U$-continuity. It is clear by the explicit description: $g(\Dirac)-\Dirac=\frac{i}{\sqrt{2}}\gamma^*(Jf)$.

\end{proof}

As a result of these propositions, we obtain the following index element.

\begin{dfn-thm}\label{dfn-thm index element}
The pair $(\scr{S}\otimes \ca{H},\Dirac)$ is an unbounded $\tau$-twisted $U$-equivariant Kasparov $(\SC(U),\scr{S})$-module. The corresponding $KK$-element is denoted by $[\widetilde{\Dirac}]\in KK_U^\tau(\SC(U),\scr{S})$, and is called the {\bf index element}.


\end{dfn-thm}

\begin{rmk}\label{another perturbation}
The operator ${}^R\Dirac_1^N+\bbbra{\frac{1}{\sqrt{2}}{}^R\Dirac_{N+1}^\infty+\frac{i}{\sqrt{2}}{}^L\Dirac_{N+1}^\infty}$ defines the same $KK$-element. 
This operator is actually $U_N$-equivariant.

\end{rmk}

\subsection{Dirac element $[\widetilde{d_U}]$}

The original Dirac element $[d_X]\in KK(Cl_\tau(X),\bb{C})$ for a Riemannian manifold is given by the pair $(L^2(X,\wedge^*T^*X),d+d^*)$. However, it is possible to add a potential term. Let us check that using the definition of unbounded equivariant Kasparov modules.




Let $X$ be a complete Riemannian manifold, and consider the above Kasparov module $[d_X]$. We write $d+d^*$ as $D$ in the following two propositions, in order to simplify the notation.

\begin{pro}
Let $h$ be a smooth bundle homomorphism of $\wedge^*T^*X$ satisfying the following conditions: $h(x)$ is odd, self-adjoint, commutes with the $\Cl_-(T^*_xX)$-action (or the symbol of $D$); and $\nabla(h)$ is a bounded section, where $\nabla$ is induced by the Levi-Civita connection. We also impose that $D+th$ is self-adjoint for $t\in [0,1]$.
Then, $(L^2(X,\wedge^*T^*X),D+h)$ defines a $K$-homology class of $Cl_\tau(X)$, and it is the same with $[d_X]$ as $KK$-elements. More precisely, they are operator homotopic to each other through unbounded Kasparov modules, by $(L^2(X,\wedge^*T^*X),D+th)$.
\end{pro}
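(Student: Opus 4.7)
The plan is to verify that the pair $(L^2(X,\wedge^*T^*X), D+h)$ satisfies the axioms for an unbounded Kasparov $(Cl_\tau(X),\bb{C})$-module, and then to promote the family $\{D+th\}_{t\in[0,1]}$ to an unbounded Kasparov $(Cl_\tau(X), C[0,1])$-module realising the operator homotopy. Self-adjointness of $D+h$ holds by hypothesis, regularity is automatic on a Hilbert space, and for $a\in C_c^\infty(X,\Cl_+(T^*X))$ (a dense subset of $Cl_\tau(X)$) the commutator splits as $[a,D+h]=[a,D]+[a,h]$; the first summand is bounded because $(L^2,D)$ is already a Kasparov module, and the second vanishes (or is manifestly bounded on ${\rm supp}(a)$, where the smooth section $h$ is bounded) because $h$ commutes fibrewise with the Clifford action and $a$ acts by Clifford multiplication.

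The main analytic step, and the principal obstacle, is the compactness condition $a(1+(D+h)^2)^{-1}\in \ca{K}(L^2)$. First I would compute in a local orthonormal frame
\[ \{D,h\} := Dh + hD = \sum_k c(e_k)(\nabla_{e_k} h), \]
the first-order-derivative terms cancelling precisely because $h$ graded-commutes with each $c(e_k)$; by the hypothesis that $\nabla h$ is bounded, this shows $\{D,h\}$ extends to a bounded operator on $L^2$. Hence
\[ (D+h)^2 - D^2 = \{D,h\} + h^2 \]
is bounded on ${\rm supp}(a)$. I would then use the factorisation
\[ a(1+(D+h)^2)^{-1} = \bigl[a(1+D^2)^{-1}\bigr]\cdot\bigl[(1+D^2)(1+(D+h)^2)^{-1}\bigr] \]
together with
\[ (1+D^2)(1+(D+h)^2)^{-1} = 1 - \bigl(\{D,h\}+h^2\bigr)(1+(D+h)^2)^{-1}, \]
so that $a(1+(D+h)^2)^{-1}$ becomes a compact operator (the first factor, by the assumption on $D$) times a bounded one (the second factor, whose boundedness uses only the local boundedness of $h$ and $\nabla h$). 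One minor technical point to handle is that $h$ need not be globally bounded; one passes to the dense subalgebra of compactly supported $a$ and uses that the smooth $h$ is bounded on each compact set.

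For the homotopy, each of the above estimates depends on $h$ only through $\|\{D,h\}\|$ and $\|h^2\|$ on ${\rm supp}(a)$, which scale linearly (resp.\ quadratically) in $t$ for $D+th$; in particular they are uniform in $t\in[0,1]$. Hence every $(L^2, D+th)$ is an unbounded Kasparov module by an identical proof. To assemble these fibres into a single unbounded Kasparov $(Cl_\tau(X),C[0,1])$-module
\[ \bigl(C([0,1],L^2(X,\wedge^*T^*X)),\; \{D+th\}_{t\in[0,1]}\bigr), \]
I would verify regularity fibrewise and norm-continuity of the bounded transform $t\mapsto \fra{b}(D+th)$; the latter follows from the standard resolvent identity
\[ (D+th\pm i)^{-1} - (D+sh\pm i)^{-1} = -(t-s)(D+th\pm i)^{-1}\, h\, (D+sh\pm i)^{-1} \]
after multiplying on the left by any $a\in Cl_\tau(X)$ and invoking the local boundedness of $h$. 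This provides the desired operator homotopy between $(L^2,D)$ and $(L^2,D+h)$ through unbounded Kasparov modules, completing the identification of their $KK$-classes.
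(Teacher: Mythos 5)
Your overall strategy — split the commutator into $[a,D]+[a,h]$, reduce compactness of $a(1+(D+h)^2)^{-1}$ to that of $a(1+D^2)^{-1}$ via a resolvent identity built on the cancellation $\{D,h\}=\sum_k c(e_k)(\nabla_{e_k}h)$, and then assemble the family $\{D+th\}$ into a $(Cl_\tau(X),C[0,1])$-module — is the same as the paper's, and the algebraic identity you write for $(1+D^2)(1+(D+h)^2)^{-1}$ is equivalent to the paper's computation of $a(1+D_t^2)^{-1}-a(1+D^2)^{-1}$.

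However, there is a genuine gap at the crucial step. You claim the boundedness of the second factor, equivalently of $\bigl(\{D,h\}+h^2\bigr)(1+(D+h)^2)^{-1}$, ``uses only the local boundedness of $h$ and $\nabla h$,'' and propose to ``pass to compactly supported $a$ and use that $h$ is bounded on each compact set.'' This does not work. The troublesome term $h^2(1+(D+h)^2)^{-1}$ involves $h^2$ applied \emph{after} the resolvent; the resolvent has no finite propagation and does not preserve compact supports, so $a$ being compactly supported gives you no control over the region where $h^2$ acts. The hypothesis only bounds $\nabla h$, so $h$ may (and in the intended application does) grow linearly at infinity, and $h^2$ grows quadratically; its boundedness relative to $(1+(D+h)^2)$ is a global statement that must actually be proved. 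The paper does this by establishing the equivalence of graph norms
\[ \|v\|_{D_t}^2 \;=\; \|v\|_{L^2}^2 + \|Dv\|_{L^2}^2 + t\langle [D,h]v,v\rangle + t^2\|hv\|_{L^2}^2, \]
which (since $[D,h]$ is bounded) shows $h$ is $D_t$-bounded, giving the quantitative relative bound you need but did not supply. Without some estimate of this kind — a genuinely global, not local, one — your proof of compactness of the resolvent is incomplete, and consequently so is the claim that each fibre is an unbounded Kasparov module.
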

\begin{proof}
It suffices to prove the following: $(L^2(X,\wedge^*T^*X),D+th)$ is unbounded Kasparov modules for each $t$. Let $D_t:=D+th$. 

Firstly, we check that $[a,D_t]$ is bounded for each smooth and compactly supported $a$ and each $t$. Since $\Cl_+(T^*X)$-action commutes with the symbol of $D$, the commutator $[a,D]$ is bounded. Moreover, since $[a,h]$ is a compactly supported smooth section of the bundle $\Cl_+(T^*X)$, it gives a bounded operator.

$a(1+D^2)^{-1}$ is supposed to be compact. In order to prove that $a(1+D_t^2)^{-1}$ is compact, we compare it with $a(1+D^2)^{-1}$.
\begin{align*}
a(1+D_t^2)^{-1}-a(1+D^2)^{-1}&= a(1+D^2)^{-1}(1+D^2-1-D_t^2)(1+D_t^2)^{-1} \\
&=-a(1+D^2)^{-1}(t(hD+Dh)+t^2h^2)(1+D_t^2)^{-1}.
\end{align*}
It is sufficient to prove that $(t(hD+Dh)+t^2h^2)(1+D_t^2)^{-1}$ is a bounded operator. We note that the operator $(1+D_t^2)^{-1}$ maps $L^2$ to $\dom(D_t^2)$. The graph norm of $D_t$, which is denoted by $\|\bullet\|_{D_t}$, is given by
\begin{align*}
\|\bullet\|_{D_t}^2&=\|\bullet\|_{L^2}^2+\|D_t\bullet\|_{L^2}^2 \\
&=\|\bullet\|_{L^2}^2+\|D\bullet\|_{L^2}^2+t\inpr{[D,h]\bullet}{\bullet}{L^2}+t^2\|h\bullet\|_{L^2}^2.
\end{align*}
Since $[D,h]$ is bounded (as proved soon), the norm $\|\bullet\|_{D_t}$ is equivalent to the norm $\|\bullet\|_{L^2}+\|D\bullet\|_{L^2}+t^2\|h\bullet\|_{L^2}$. Therefore $h$ maps $\dom(D_t)$ continuously to $L^2$. Consequently, $(t(hD+Dh)+t^2h^2)(1+D_t^2)^{-1}$ is a bounded operator.

Finally, we must check that $[D,h]$ is bounded.
\begin{align*}
Dh+hD &= \sum\partial_{x_i}\otimes(dx_i\wedge-dx_i\rfloor)\circ h+h\circ \sum\partial_{x_i}\otimes(dx_i\wedge-dx_i\rfloor) \\
&= -\sum\frac{\partial h}{\partial x_i}\otimes(dx_i\wedge-dx_i\rfloor).
\end{align*}
This is a bounded operator by the assumption that $\nabla(h)$ is bounded.

\end{proof}

For the equivariant $KK$-theory version, we need extra conditions.

\begin{pro}
Suppose that a locally compact, second countable and Hausdorff group $\Gamma$ acts on $X$, isometrically, properly and cocompactly. Suppose that $h$ satisfies the following additional assumptions:
\begin{itemize}
\item $g(h)-h$ is a bounded operator for each $g\in \Gamma$.
\item The function $\Gamma\ni g\to g(h)-h\in \ca{B}(L^2)$ is norm continuous.
\end{itemize}
Then, two unbounded $\Gamma$-equivariant Kasparov modules $(L^2(X,S),D+h)$ and $(L^2(X,S),D)$ are operator homotopic.
\end{pro}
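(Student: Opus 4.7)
My plan is to reuse the straight-line path $D_t := D + th$ for $t \in [0,1]$ from the previous proposition (which already handles the non-equivariant Kasparov axioms) and simply verify that each intermediate cycle $(L^2(X,S), D_t)$ satisfies the four axioms for an unbounded $\Gamma$-equivariant Kasparov module, with estimates uniform in $t$, so that the family assembles into a $(Cl_\tau(X),C[0,1])$-cycle.

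For fixed $t\in[0,1]$, invariance of the domain is immediate because $h$ is bounded, so $\dom(D_t)=\dom(D)$ and $\Gamma$ already preserves $\dom(D)$. Since the given Dirac operator $D$ is genuinely $\Gamma$-equivariant, $g(D)=D$, hence
$$g(D_t)-D_t = t\bigl(g(h)-h\bigr),$$
which is bounded by the first added hypothesis, and norm-continuous in $g$ by the second, verifying axioms (ii) and (iv). For the commutator axiom, expand
$$[D_t,\,D_t-g(D_t)] = t[D,\,h-g(h)] + t^2[h,\,h-g(h)].$$
The second summand is a bounded multiplication operator. For the first, $[D,h]$ is bounded because $\nabla h$ is bounded (as in the preceding proof), and $[D,g(h)]=g([D,h])$ since $g$ acts by unitaries intertwining $D$; hence $[D_t,D_t-g(D_t)]$ is in fact already a bounded operator, with norm $O(t+t^2)$ uniform in $g$ on compacts. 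Multiplying by $\tfrac{D_t}{1+\lambda+D_t^2}$, whose norm is $\le\tfrac12$ for every $\lambda\ge 0$, gives a uniformly bounded operator and settles axiom (iii).

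To promote this pointwise verification to a genuine homotopy, I would package the family $\{D_t\}_{t\in[0,1]}$ as a single regular self-adjoint operator on the Hilbert $C[0,1]$-module $L^2(X,S)\otimes C[0,1]$; regularity and self-adjointness hold fiberwise by hypothesis, and since all the estimates above depend continuously (in fact polynomially) on $t$, they pass to the $C[0,1]$-valued cycle. The evaluation maps at $t=0$ and $t=1$ then recover the two Kasparov modules, giving the desired operator homotopy in the unbounded $\Gamma$-equivariant picture; by Proposition \ref{our condition is OK} the bounded transformations give the corresponding homotopy in the bounded equivariant picture. The main delicate point I expect is the uniformity of the estimates in $t$ and in compact subsets of $\Gamma$, but this is essentially automatic from the explicit $t$-dependence above together with the continuity of $g\mapsto g(h)-h$.
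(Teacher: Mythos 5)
Your overall strategy is the same as the paper's — run the straight-line path $D_t = D + th$ and verify the four equivariance axioms for each $t$, then assemble into a $C[0,1]$-cycle — so the comparison reduces to the details, and there is a genuine gap in your treatment of axiom (iii).

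The issue is the claim that the second summand $t^2[h,\,h-g(h)]$ is a bounded operator. Here $h$ and $h-g(h)$ are both \emph{odd} bundle endomorphisms, so the graded commutator is the ungraded anticommutator $h(h-g(h))+(h-g(h))h$. The difference $k := h-g(h)$ is bounded by hypothesis, but $h$ itself need not be (only $\nabla h$ is assumed bounded, so $h$ may grow linearly), and an anticommutator of a bounded odd term with an unbounded odd term is in general unbounded. The prototypical example is already the Bott--Dirac situation: there $h(x) = c^*(x)$ and $g(h)(x) = c^*(x-g)$, so $k = c^*(g)$ is a constant Clifford element and $\{h(x),k\} = \pm 2\langle x,g\rangle\,\mathrm{id}$, which grows linearly in $x$. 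So your assertion that $[D_t, D_t - g(D_t)]$ is ``in fact already a bounded operator, with norm $O(t+t^2)$'' is false, and the subsequent step — multiplying by $\frac{D_t}{1+\lambda+D_t^2}$ and using only that its \emph{norm} is $\le \tfrac12$ — cannot repair it: a bounded factor times an unbounded one is still unbounded.

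What the paper actually exploits is the \emph{mapping property} of $\frac{D_t}{1+\lambda+D_t^2}$, not just its norm: it maps $L^2$ continuously into $L^2_{1,t} = \dom(D_t)$, uniformly in $\lambda$, and the paper shows $L^2_{1,t}=\dom(D)\cap\dom(th)$. The term $(h-g(h))h + h(h-g(h))$ is then bounded as a map $L^2_{1,t}\to L^2$ (because $h$ is bounded from $\dom(th)$ to $L^2$ and $h-g(h)$ is bounded and, as the paper argues, preserves $\dom(th)$), so the composite $[D_t, D_t-g(D_t)]\cdot\frac{D_t}{1+\lambda+D_t^2}$ is bounded uniformly in $\lambda$ even though the commutator alone is not. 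This is exactly the ``elliptic regularity'' heuristic the paper flags earlier (first-order commutator $\times$ inverse of an order-two operator is order zero), and it is the step your argument skips. To fix your proof you should drop the claim that the commutator is bounded, and instead establish (a) $\frac{D_t}{1+\lambda+D_t^2}:L^2\to\dom(D_t)$ bounded uniformly in $\lambda$, (b) $\dom(D_t)=\dom(D)\cap\dom(th)$, and (c) $(h-g(h))h + h(h-g(h)):\dom(D_t)\to L^2$ bounded, then compose.
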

\begin{proof}
We will follow the same story: We will verify that $D+th$ gives an unbounded $\Gamma$-equivariant Kasparov module.
For this aim, we need to check the followings: $(1)$ $g(D+th)-(D+th)$ is bounded for any $g$; $(2)$ $[D+th-g(D+th),D+th]\frac{D+th}{1+(D+th)^2}$ is bounded for any $g$; and $(3)$ $g\mapsto g(D+th)-(D+th)$ is continuous.

$(1)$ and $(3)$ are clear from the assumptions. Recall that $D$ is actually $\Gamma$-equivariant: $g(D)=D$.
For $(2)$, we need some computation. Put $D_t:=D+th$, and $L^2_{k,t}:=\dom(D_t^k)$. $L^2=L^2_{0,t}$ denotes the original $L^2$-space. The operator $\frac{D+th}{1+(D+th)^2}$ maps $L^2$ into $L^2_{1,t}$ continuously. Thus it suffices to check that $[D+th-g(D+th),D+th]$ maps $L^2_{1,t}$ into $L^2$ continuously.
\begin{align*}
&[D+th-g(D+th),D+th] \\
&= [D-g(D),D]+t\bra{[D-g(D),h]+[h-g(h),D]}+t^2[h-g(h),h] \\
&=[D-g(D),D]+t\bra{[D,h]-[g(D),h]+[h,D]-g[h,g^{-1}(D)]}+t^2\bbra{(h-g(h))h+h(h-g(h))}.
\end{align*}
Note that $D$ is $G$-equivariant. Then, first two terms themselves turn out to be bounded operators on $L^2$. For the third one, we note that $L^2_{1,t}=\dom(D)\cap \dom(th)$, which will be proved in the next paragraph. By the assumption, $h-g(h)$ is a bounded function, and hence it preserves $\dom(th)$. In particular, the operator $\bbra{(h-g(h))h+h(h-g(h))}$ maps $\dom(th)\supseteq L^2_{1,t}$ into $L^2$ continuously.

Let us prove that $L^2_{1,t}=\dom(D)\cap \dom(th)$. There is a common domain $C_c^\infty$. Take a vector $v\in L^2_{1,t}$ and an approximating sequence $\{v_n\}$ from $C_c^\infty$ in the $L^2_{1,t}$-norm. Then, $v_n$ tends to $v$ in the $L^2$-norm, and $\{D_t(v_n)\}$ gives a Cauchy sequence.
Hence, $\|D_t(v_n)-D_t(v_m)\|^2=\|D(v_n-v_m)\|^2+t\inpr{(Dh+hD)(v_n-v_m)}{v_n-v_m}{}+t^2\|h(v_n-v_m)\|^2$ goes to zero as $n,m\to\infty$. The second one can be negative, but it goes to zero. This is because $Dh+hD$ is a bounded operator. Therefore the both of $\{Dv_n\}$ and $\{hv_n\}$ are Cauchy sequences, and hence $v\in \dom(D)\cap \dom(h)$.

\end{proof}

These results justify the name of the following $KK$-element.

\begin{dfn}[See \cite{HK}]
On a finite-dimensional Euclidean space $W$, take an orthonormal basis $\{e_i\}$. The corresponding coordinate is denoted by $\{x_i\}$. Let us consider two Clifford multiplications on $\bigwedge^*T^*W$: $c_1(v)=v\wedge-v\rfloor$ and $c_1^*(v)=v\wedge+v\rfloor$. Define the Bott-Dirac operator as
$$D_W:=\sum_{i}\bra{c_1(e_i)\frac{\partial}{\partial x_i}+\frac{1}{2}c_1^*(e_i)x_i}.$$

$W$ acts on itself by the parallel transformation. The pair $(L^2(W,\bigwedge^*T^*W),D_W)$ determines an unbounded $W$-equivariant Kasparov $(Cl_\tau(W),\bb{C})$-module. $[d_W]$ denotes the corresponding $KK_W$-element, and it is called the {\bf Dirac element}. To distinguish it from the original Dirac element, we sometimes call it the {\bf Bott-Dirac element}.
\end{dfn}

What we can generalize is not the original Dirac element, but a perturbed version of the Bott-Dirac element. It has a strong advantage: The operator itself is Fradholm. The cost we must pay is that the operator is not actually equivariant. However, thanks to the flexibility of the definition of unbounded equivariant Kasparov modules, it it not a big problem.

We have pointed out that our Dirac operator is in fact the Bott-Dirac operator in Remark \ref{our Dirac is Bott-Dirac}. The pair $(\scr{S}\otimes \ca{H},\Dirac)$ defines a $\tau$-twisted $U$-equivariant Kasparov $(\SC(U),\scr{S})$-module. We would like to define the Dirac element by the same module and operator {\bf with the different action}.

The natural ($\tau$-twisted) action on our ``$L^2$-space'' looks a combination of the translation and the multiplication of the $2$-cocycle. On the other hand, the action to define the Dirac element must look like the translation without any cocycles. Such new action is the following.



\begin{dfn-thm}\label{dfn-thm Dirac element}
The group action $A$ is given by the following: For $g=\exp(f)\in U$,
$$A_g:=L_{\exp\bra{\frac{f}{2}}}R_{\exp\bra{-\frac{f}{2}}}.$$

The pair $(\scr{S}\otimes \ca{H},\Dirac)$ with the action $A$ gives an unbounded $U$-equivariant \footnote{Not $\tau$-twisted!} Kasparov $(\SC(U),\scr{S})$-module. The corresponding $KK$-element is called the {\bf Dirac element} and denoted by $[d_U]\in KK_U(\SC(U),\scr{S})$.

\end{dfn-thm}

\begin{rmks}Before going to the proof, we give several remarks.

$(1)$ Recall that ${}^R\Dirac$ is given by $\sum_k\sqrt{k}(\cdots)$. Thus it has already been ``perturbed'' from the original definition, unlike the Bott-Dirac operator of \cite{HK}.

$(2)$ The Bott-Dirac operator in \cite{HK} is Fredholm, but it is not with compact resolvent.
To overcome this serious problem, N. Higson and G. Kasparov introduced a perturbation family, whose ``boundary'' gives the desired ``Kasparov module''. This family gives an equivariant $E$-theory element by the central invariant. Through the isomorphism between $KK$ and $E$ for their cases, they obtained the desired $KK$-element. 

$(3)$ Comparing their construction, our definition seems to be too simple. Why is this OK? The reason is our group action is very special: The linear part of the action is trivial and hence it has already been ``diagonalized''. On the other hand, in Lemma 5.7 in \cite{HK}, they ``weakly diagonalized'' the linear part of a group action, in order to perturb the Bott-Dirac operator. Their action is not actually diagonalizable, and hence the perturbed operator does not give an equivariant Kasparov module; The family is merely ``asymptotically equivariant''. 

In order to generalize our result (to the $LG$-case for non-commutative $G$, for example), we may need to follow Higson-Kasparov's method. However, in the present paper, we concentrate on our special case.

\end{rmks}

\begin{proof}
We have already proved everything, except for the group action issues. 

We check the following things: $(0)$ $A$ is actually a group action which is compatible with the action $U\circlearrowright \SC(U)$ and the $\SC(U)$-module structure on $\scr{S}\otimes \ca{H}$; $(1)$ $U$ preserves the domain of $\Dirac$;
$(2)$ $g(\Dirac)-\Dirac$ is bounded; $(3)$ $[\Dirac,\Dirac-g(\Dirac)]\cdot\frac{\Dirac}{1+\lambda +\Dirac^2}$ is bounded; and $(4)$ the map $g\mapsto g(\Dirac)-\Dirac$ is continuous.

$(0)$  Let $\sqrt{g}$ be $\exp(f/2)$ for $g=\exp(f)$. Note that $L_{g_1}\circ L_{g_2}=L_{g_1\cdot g_2}e^{\frac{i}{2}\omega(f_1,f_2)}$ and $R_{g_1}\circ R_{g_2}=R_{g_1\cdot g_2}e^{-\frac{i}{2}\omega(f_1,f_2)}$. Then, since $L$ and $R$ are commutative,
\begin{align*}
A_{g_1}\circ A_{g_2} &= L_{\sqrt{g_1}}\circ R_{\sqrt{g_1}^{-1}}\circ L_{\sqrt{g_2}}\circ R_{\sqrt{g_2}^{-1}} \\
&= L_{\sqrt{g_1}}\circ L_{\sqrt{g_2}}\circ R_{\sqrt{g_1}^{-1}}\circ R_{\sqrt{g_2}^{-1}} \\
&= L_{\sqrt{g_1}\cdot \sqrt{g_2}}e^{\frac{i}{2}\omega(f_1/2,f_2/2)}
L_{\sqrt{g_1}^{-1}\cdot \sqrt{g_2}^{-1}}e^{-\frac{i}{2}\omega(f_1/2,f_2/2)} \\
&=A_{g_1\cdot g_2}.
\end{align*}

In order to prove the compatibility with the $\SC(U)$-module structure, we need to recall the definition of $U\circlearrowright \SC(U)$. Formally, this action is given by $g.a(x)=a(x-g)$. Just like Theorem \ref{the U action}, we can check that $R_g(a\cdot \phi)=[g^{-1}.a]\cdot R_g(\phi)$ for $a\in \SC(U)$ and $\phi\in \scr{S}\otimes \ca{H}$. Thus,
\begin{align*}
A_g(a\cdot \phi)&= L_{\sqrt{g}}\circ R_{\sqrt{g}^{-1}}(a\cdot \phi) \\
&= L_{\sqrt{g}}[\sqrt{g}.a\cdot R_{\sqrt{g}^{-1}}(\phi)] \\
&= [\sqrt{g}.\sqrt{g}.a]\cdot L_{\sqrt{g}}\circ R_{\sqrt{g}^{-1}}(\phi)\\
&= [g.a]\cdot A_g(\phi).
\end{align*}

$(1)$ Roughly speaking, $\dom(\Dirac)=\scr{S}\otimes\dom({}^L\Dirac)\otimes \dom({}^R\Dirac)$. 
As Lemma \ref{U preserves dom of D},
the $U$-action $L$ preserves $\dom({}^L\Dirac)$. In the same way, one can see that the action $R$ preserves $\dom({}^R\Dirac)$. Since $A$ is a combination of $L$ and $R$, the action $A$ preserves $\dom(\Dirac)$.

$(2)$ Let us compute $g(\Dirac)-\Dirac$ for the action $A$. In the following, $A_g(\Dirac)$ means $A_g\circ \Dirac \circ A_{g^{-1}}$, and similarly for $L$ and $R$.
\begin{align*}
A_g(\Dirac)-\Dirac &= 
R_{\sqrt{g}^{-1}}L_{\sqrt{g}}\bra{\frac{i}{\sqrt{2}}{}^L\Dirac+\frac{1}{\sqrt{2}}{}^R\Dirac}-\frac{i}{\sqrt{2}}{}^L\Dirac-\frac{1}{\sqrt{2}}{}^R\Dirac \\
&= \bbbra{L_{\sqrt{g}}\bra{\frac{i}{\sqrt{2}}{}^L\Dirac}-\frac{i}{\sqrt{2}}{}^L\Dirac}+ \bbbra{R_{\sqrt{g}^{-1}}\bra{\frac{1}{\sqrt{2}}{}^R\Dirac}-\frac{1}{\sqrt{2}}{}^R\Dirac},
\end{align*}
where we used the $R$- and $L$-invariance of ${}^L\Dirac$ and ${}^R\Dirac$, respectively. The first term is bounded, thanks to Proposition 4.19. 
For the second term, repeat the same argument for the action $R$.

$(3)$ Let us compute the commutator $[\Dirac,\Dirac-g(\Dirac)]$.
\begin{align*}
[\Dirac,\Dirac-g(\Dirac)] &= 
\bbbra{\frac{i}{\sqrt{2}}{}^L\Dirac+\frac{1}{\sqrt{2}}{}^R\Dirac\ ,\ \frac{i}{\sqrt{2}}{}^L\Dirac+\frac{1}{\sqrt{2}}{}^R\Dirac-R_{\sqrt{g}^{-1}}L_{\sqrt{g}}\bra{\frac{i}{\sqrt{2}}{}^L\Dirac+\frac{1}{\sqrt{2}}{}^R\Dirac}} \\
&= \frac{1}{2}
[i{}^L\Dirac+{}^R\Dirac\ ,\ i{}^L\Dirac+{}^R\Dirac-R_{\sqrt{g}^{-1}}L_{\sqrt{g}}(i{}^L\Dirac+{}^R\Dirac)] \\
&= \frac{1}{2}\bbra{
-[{}^L\Dirac,{}^L\Dirac-L_{\sqrt{g}}({}^L\Dirac)]+[{}^R\Dirac,{}^R\Dirac-R_{\sqrt{g}^{-1}}({}^R\Dirac)]}.
\end{align*}

Since ${}^R\Dirac$ is $L$-invariant, ${}^L\Dirac-L_{\sqrt{g}}({}^L\Dirac)=\Dirac-L_{\sqrt{g}}(\Dirac)$. Moreover, since ${}^R\Dirac$ anti-commutes with ${}^L\Dirac$
and $L_{\sqrt{g}}({}^L\Dirac)$, we find that $[{}^L\Dirac,{}^L\Dirac-L_{\sqrt{g}}({}^L\Dirac)]=[\Dirac,\Dirac-L_{\sqrt{g}}(\Dirac)]$. 
Similarly, $[{}^R\Dirac,{}^R\Dirac-R_{\sqrt{g}^{-1}}({}^R\Dirac)]=[\Dirac,\Dirac-R_{\sqrt{g}^{-1}}(\Dirac)]$.
Thanks to Proposition \ref{equivariance condition on D} and the $R$-action version of it, we obtain the desired boundedness.

$(4)$ It is clear from the same proposition and the same computation in $(2)$.

\end{proof}

\subsection{Clifford symbol element $[\widetilde{\sigma_{\Dirac}^{Cl}}]$}

We will define the reformulated Clifford symbol element for our case. For this aim, we recall the formula in Lemma \ref{modified Kas modules}. We would like to define the Clifford symbol element by $(\SC(U),0)$, with a certain {\bf $\tau$-twisted action} which looks like the left regular representation on the line bundle $\ca{L}$. Formally, our Clifford symbol element is ``$(\scr{S}\otimes C_0(U,\ca{L}\otimes \Cl_+(T^*U)),0)$'' including the group action.

Let us define such a twisted action. Formally, the action is written as 
$$g.a(\bullet)=a(\bullet-g)e^{\frac{i}{2}\omega(f,\bullet)},$$
for $g=\exp(f)\in U$. In order to justify this definition, we use the finite-dimensional approximation, as usual. Let $g\in U_N$ and $a\in \SC(U_M)_\fin$. It is possible to assume that $N=M$, by $\beta_{M,N}:\SC(U_M)\to \SC(U_N)$ (when $M<N$) or the inclusion $U_N\subseteq U_M$ (when $N<M$). Since $a$ is of the form $\sum f_i\otimes \widetilde{a_i}$, $g.a$ should be defined by $\sum f_i\otimes g.\widetilde{a_i}$, where $ g.\widetilde{a_i}$ is defined by $\widetilde{a_i}(\bullet-g)e^{\frac{i}{2}\omega(f,\bullet)}$. The problem is whether this $U_\fin$-action extends to $U$ or not, as usual.

\begin{lem}
This action extends to a continuous $U$-action on $\SC(U)$. It is compatible with the original untwisted $U$-action on the $C^*$-algebra $\SC(U)$. That is, the pair $(\SC(U),0)$ equipped with the new action is a $\tau$-twisted $U$-equivariant Kasparov $(\SC(U),\SC(U))$-module.
\end{lem}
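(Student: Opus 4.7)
I plan to closely mirror the proof of Theorem \ref{the U action}: first define $\sigma_g(a)$ on $\SC(U)_\fin$ for $g \in U_\fin$ by applying the given formula to finite-dimensional representatives in $\SC(U_M)_\fin$, then extend to $\SC(U)$ by isometry, and finally extend from $U_\fin$ to $U$ by a Cauchy-in-$g$ argument for the $C^\infty$-topology. The crucial algebraic step is compatibility with the Bott map $\beta_{M,L}$: for $g = \exp(f)$ with $f \in \Lie(U_N) \subseteq \Lie(U_M)$, $a \in \SC(U_M)_\fin$, and $x \in U_L$ decomposed as $x_1 + x_2$ with $x_1 \in U_M$ and $x_2 \in U_L \ominus U_M$, the identity $\omega(f, x) = \omega(f, x_1)$ (distinct Fourier modes are symplectically orthogonal) together with $g$-invariance of the Clifford element $C_{M+1}^L$ yields $\beta_{M,L}(\sigma_g(a)) = \sigma_g(\beta_{M,L}(a))$. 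Pointwise $\sigma_g$ is a translation followed by multiplication by the unit-modulus scalar $e^{\frac{i}{2}\omega(f,\bullet)}$, hence it is an isometric linear self-bijection of each $\SC(U_M)$ and thus of $\SC(U)$ for every $g \in U_\fin$.

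\textbf{Continuity and extension to $U$.} The heart of the proof, and the main obstacle I foresee, is estimating $\|\sigma_g(a) - \sigma_{g'}(a)\|$ uniformly for $a$ a generator of $\SC(U)_\fin$ and $g, g' \in U_\fin$. Reducing to $a = f_e \otimes \tilde a$ or $f_o \otimes \tilde a$ with $\tilde a \in \scr{C}(U_N)_\fin$ and adding and subtracting $e^{-(C_{N+1}^M(x-g'))^2}\tilde a(x-g')\,e^{\frac{i}{2}\omega(f,x)}$ splits the difference into (i) the translation part already controlled in Theorem \ref{the U action}, modulated by the unit-modulus factor $e^{\frac{i}{2}\omega(f,x)}$, and (ii) the new cocycle piece
\[
e^{-(C_{N+1}^M(x-g'))^2}\,\tilde a(x-g')\cdot\bigl(e^{\frac{i}{2}\omega(f,x)} - e^{\frac{i}{2}\omega(f',x)}\bigr).
\]
To bound the sup norm of (ii) uniformly in $M$, I would write $x = g' + y$, split $\omega(f-f', x) = \omega(f-f', g') + \omega(f-f', y)$, and use $|e^{it}-1| \leq |t|$; the $\omega(f-f', g')$ term is trivial, while for $\omega(f-f', y)$ the first $N$ coordinates are handled by the compact support of $\tilde a$, and the remaining coordinates by rescaling $u_k = k^l \tilde u_k$, $v_k = k^l \tilde v_k$ and applying Cauchy--Schwarz, giving
\[
\sup_y e^{-\sum_{k>N}(\tilde u_k^2 + \tilde v_k^2)}\cdot\Bigl|\sum_{k > N} k^l\bigl[(g_k-g'_k)\tilde v_k - (h_k-h'_k)\tilde u_k\bigr]\Bigr| \;\lesssim\; \|f-f'\|_{L^2_l}.
\]
Since the $C^\infty$-topology on $U$ is finer than the $L^2_l$-topology, this proves continuity of $g \mapsto \sigma_g(a)$ on $U_\fin$, and then $\sigma_g(a) := \lim_{g_n \to g} \sigma_{g_n}(a)$ defines an isometric linear bijection for arbitrary $g \in U$ by the triangle-inequality argument of Theorem \ref{the U action}.

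\textbf{Cocycle, bimodule structure, and Kasparov axioms.} By density and continuity all remaining items reduce to pointwise identities on $\SC(U)_\fin$ with $g \in U_\fin$. A direct computation
\[
\sigma_{g_1}\sigma_{g_2}(a)(x) = a(x - g_1 - g_2)\,e^{\frac{i}{2}\omega(f_2, x - g_1) + \frac{i}{2}\omega(f_1, x)} = e^{\frac{i}{2}\omega(f_1, f_2)}\sigma_{g_1 g_2}(a)(x)
\]
exhibits the central cocycle of Definition \ref{central extension of LT}. Bimodule compatibility with the original action $\sigma^{\mathrm{orig}}$ of Theorem \ref{the U action} is immediate because $e^{\frac{i}{2}\omega(f,x)}$ is a central unit-modulus scalar: $\sigma_g(a \cdot e) = \sigma^{\mathrm{orig}}_g(a)\cdot \sigma_g(e)$ and $\sigma_g(e \cdot b) = \sigma_g(e)\cdot \sigma^{\mathrm{orig}}_g(b)$, while the $\SC(U)$-valued inner product $\inpr{e_1}{e_2}{\SC(U)} = e_1^* e_2$ is preserved, $\inpr{\sigma_g(e_1)}{\sigma_g(e_2)}{\SC(U)} = \sigma^{\mathrm{orig}}_g\inpr{e_1}{e_2}{\SC(U)}$, because the cocycle cancels its complex conjugate under the involution. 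Finally the Kasparov axioms for the pair $(\SC(U), 0)$ are trivial: $[a, 0] = 0$ and $a(g(0)-0) = 0$ are compact, $a(1-0^2) = a$ is $\SC(U)$-compact on $\SC(U)$ via the standard identification $\ca{K}_{\SC(U)}(\SC(U)) \cong \SC(U)$ by left multiplication, and $g \mapsto g(aF) = 0$ is trivially norm-continuous.
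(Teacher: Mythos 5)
Your proposal is correct and follows essentially the same route as the paper's proof: reduce to $U_\fin$ acting on $\SC(U)_\fin$, verify compatibility with the Bott maps $\beta_{N,M}$, split the difference $\|\sigma_g(a)-\sigma_{g'}(a)\|$ into the already-controlled translation piece (Theorem~\ref{the U action}) plus a new cocycle piece, estimate the latter uniformly in $M$ by Cauchy--Schwarz against the Gaussian, and observe that the cocycle identity, bimodule compatibility, and Kasparov axioms reduce to pointwise checks on $\SC(U)_\fin$ with $g\in U_\fin$. The only difference is cosmetic: you bound the cocycle piece in one stroke using $|e^{it}-1|\le |t|$ together with the boundedness of $\tilde r e^{-\tilde r^2}$ after rescaling, whereas the paper divides the configuration space into a far region (where the Gaussian kills everything) and a near region (where Cauchy--Schwarz gives $|\omega(g-g',x)|\lesssim\|g-g'\|_{L^2_l}$); the two arguments yield the same $L^2_l$-continuity and use the same tools.
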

\begin{rmk}
The above ``compatibility'' means the following. To be precise, we introduce some symbols used only here: $\sigma$ is the original untwisted action, and $\rho$ is the introduced twisted action. For $a,a',a_1,a_2\in \SC(U)$, we will prove that $\rho_g(a\cdot a_1\cdot a')=\sigma_g(a)\cdot \rho_g(a_1)\cdot \sigma_g(a')$, and $[\rho_g(a_1)]^*\rho_g(a_2)=\sigma_g(a_1^*a_2)$.

\end{rmk}

\begin{proof}
Once the action is extended, the properties listed above are clear from the formula for $U_\fin$. We check that the action is well-defined and continuous. Just like the proof of Theorem \ref{the U action}, the above action extends to the action on $\SC(U)$, since each $g$ gives an isometry. It is enough to check that the action is continuous.

To prove the continuity, we will use the usual technique. Let $a\in \SC(U)$ and $g,g'\in U_\fin$. For any $\varepsilon>0$, choose $\sum f_i\otimes \widetilde{a_i}\in \SC(U)_\fin$ such that $\|a-\sum f_i\otimes \widetilde{a_i}\|<\varepsilon$. 
We may assume that $a$ can be approximated by a single element $f\otimes \widetilde{a}$. Moreover, we may assume that the both of $f$ and $\widetilde{a}$ are real-scalar-valued, since the $U$-action is trivial on the fiber $\Cl_+$.
We may prove that $\|g.(f\otimes \widetilde{a})-g'.(f\otimes \widetilde{a})\|$ is less than $\varepsilon$ when $g$ and $g'$ are close enough. We prove this estimate when $f=f_e$ and $f_o$. For other functions, one can use the technique of the proof of Theorem \ref{the U action}. 

Suppose that $g,g'\in U_N$ and $\widetilde{a}\in \SC(U_M)$. Using $\beta_{M,L}$ for $L\geq N$, 
we identify $f_e\otimes \widetilde{a}$ with
$$f_e\otimes e^{-(C_{M+1}^L)^2}\otimes \widetilde{a}.$$
Since the above twisted action on $\Cl_\tau(U_N)$ is continuous, we may ignore the $U_N$-part, just like the proof of Theorem \ref{the U action}. Thus, we may assume that $M=0$. Let us estimate
$$\left| e^{-\sum_b [b^{-2l}(x_b-g_b)^2+(y_b-h_b)^2]+\frac{i}{2}\sum (g_by_b-h_bx_b)}-
e^{-\sum_b b^{-2l}[(x_b-g'_b)^2+(y_b-h'_b)^2]+\frac{i}{2}\sum (g'_by_b-h'_bx_b)}\right|.$$

Put $r^2:=\sum_b b^{-2l}[(x_b-g_b)^2+(y_b-h_b)^2]$, $(r')^2:=
\sum_bb^{-2l}[(x_b-g'_b)^2+(y_b-h'_b)^2]$, $\omega(g,x):=\sum (g_by_b-h_bx_b)$ and $\omega(g',x):=\sum (g'_by_b-h'_bx_b)$. Then,
\begin{align*}
| e^{-r^2+\frac{i}{2}\omega(g,x)}-
e^{-(r')^2+\frac{i}{2}\omega(g',x)}| 
&\leq | e^{-r^2}-e^{-(r')^2}|+e^{-r^2}|e^{\frac{i}{2}\omega(g,x)}-e^{\frac{i}{2}\omega(g',x)}|.
\end{align*}
The first term is small, thanks to Theorem \ref{the U action}. 
The second one can be written as $e^{-2r^2}|e^{\frac{i}{2}\omega(g-g',x)}-1|$. Roughly speaking, this is small, because the growth of $\omega(g-g',x)$ as $x\to \infty$ is slow enough. The rigorous proof is the following.

Take a positive real number $\varepsilon$. Since $e^{-2r^2}$ converges to $0$ as $r\to \infty$, we can find a large number $K$ satisfying the following: if $r \geq K$, $e^{-r^2} < \varepsilon/2$. Note that the condition is satisfied, if $\sum_b b^{-2l}(x_b^2+y_b^2)\geq \bra{K+\sqrt{\sum_bb^{-2l}(g_b^2+h_b^2)}}^2$. Consequently, if $\sum_b b^{-2l}(x_b^2+y_b^2)\geq \bra{K+\sqrt{\sum_bb^{-2l}(g_b^2+h_b^2)}}^2$, then $e^{-2(r')^2}|e^{\frac{i}{2}\omega(g-g',x)}-1|<\varepsilon$.

Next, we must estimate the quantity for ``small'' $x$. Firstly, choose a positive real number $\delta$ such that $|e^{\frac{i}{2}t}-1|< \varepsilon$ if $|t|< \delta$. 
Then, from the following computation using the Cauchy Schwarz type inequality, we can find positive number $\delta'$ such that $|\omega(g-g',x)| < \delta$ for all $x$ satsfying that $\sum_b b^{-2l}(x_b^2+y_b^2)\leq \bra{K+\sum_bb^{-2l}(g_b^2+h_b^2)}^2+1$, if $\sum_kk^{2l}[(g_k-g_k')^2+(h_k-h_k')^2] < \delta'$. 
\begin{align*}
|\omega(g-g',x)|^2&= \left| \sum_b[(g_b-g_b')y_b-(h_b-h_b')x_b]\right|^2 \\
&\leq \sum_bb^{2l}[(g_b-g_b')^2+(h_b-h_b')^2]\times \sum_bb^{-2l}(x_b^2+y_b^2).
\end{align*}

Combining the technique for the twisted action on $f_e$ and the untwisted action on $f_o$, one can deal with the twisted action on $f_o$. We leave it to the reader.

\end{proof}

Thanks to this lemma, the $(\SC(U),\SC(U))$-bimodule $\SC(U)$ admits a $\tau$-twisted $U$-action which is compatible with the original untwisted $U$-action on $\SC(U)$. We have reached the following definition-theorem.

\begin{dfn-thm}\label{dfn-thm symbol element}
The pair $(\SC(U),0)$ is a $\tau$-twisted $U$-equivariant Kasparov $(\SC(U),\SC(U))$-module. The corresponding $KK$-element is called the {\bf Clifford symbol element} of $\Dirac$, and denoted by
$$[\widetilde{\sigma_{\Dirac}^{Cl}}]:=[(\SC(U),0)]\in KK_U^\tau(\SC(U),\SC(U)).$$
\end{dfn-thm}

\subsection{The $KK$-theoretical index theorem}

The following is the main theorem of the present paper, but it is almost a corollary of three definition-theorems in this section.

\begin{thm}\label{Main theorem}
The Kasparov module defined in Definition-Theorem \ref{dfn-thm index element} is a Kasparov product of the one defined in Definition-Theorem \ref{dfn-thm symbol element} and the one defined in Definition-Theorem \ref{dfn-thm Dirac element}. Formally, in $KK_U^\tau(\SC(U),\scr{S})$,
$$[\widetilde{\Dirac}]= [\widetilde{\sigma_{\Dirac}^{Cl}}] \otimes_{\SC(U)}[\widetilde{d_U}].$$
\end{thm}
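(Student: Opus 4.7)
The plan is to apply the unbounded Kasparov product criterion (Proposition \ref{Kucs criterion}) to the two Kasparov modules $(\SC(U),0)$ and $(\scr{S}\otimes \ca{H},\Dirac)$. Because the first operator is zero, the three conditions of that criterion collapse into essentially a single nontrivial statement, so the main task is actually the identification of modules and of group actions.

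First I would identify the tensor-product module $\SC(U)\otimes_{\SC(U)}(\scr{S}\otimes\ca{H})$ with $\scr{S}\otimes \ca{H}$ via the canonical map $a\otimes \phi\mapsto a\cdot\phi=\Phi(a)\phi$; this is a $\tau$-twisted $U$-equivariant unitary isomorphism of Hilbert $\scr{S}$-modules once one verifies that the diagonal action agrees with the $L$-action appearing in Definition-Theorem \ref{dfn-thm index element}. In the formal function picture, the twisted action on $\SC(U)$ is $\rho_g(a)(x)=a(x-f)e^{\frac{i}{2}\omega(f,x)}$ for $g=\exp(f)$, while the action $A_g=L_{\sqrt g}R_{\sqrt g^{-1}}$ on $\scr{S}\otimes\ca{H}$ reduces to pure translation $A_g(\phi)(x)=\phi(x-f)$ (the cocycle contributions from $\sqrt g$ and $\sqrt g^{-1}$ cancel against one another). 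Hence $\rho_g(a)\cdot A_g(\phi)$ carries the single cocycle $e^{\frac{i}{2}\omega(f,x)}$ and matches $L_g(a\cdot \phi)$. I would make this formal computation rigorous by the usual finite-dimensional approximation argument already used in Theorem \ref{the U action} and in the lemma preceding Definition-Theorem \ref{dfn-thm symbol element}, reducing to elements in $\SC(U_N)_\fin$ and approximating vectors by elements of $\ud{L^2(U_N)_\fin}\otimes^\alg S^*_\fin\otimes^\alg S_\fin$.

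Once the module structures (including the $\tau$-twisted $U$-action and the left $\SC(U)$-action) have been identified, I would verify the three conditions of Proposition \ref{Kucs criterion} with $D_1=0$ and $D_2=\Dirac$. Conditions (2) and (3) are automatic: $\dom(\Dirac)\subseteq \dom(0\otimes \id)=\scr{S}\otimes \ca{H}$, and the graded commutator $[0\otimes \id,\Dirac]=0$ is trivially bounded below. Condition (1) is the statement that for a dense subset of $v\in \SC(U)\cdot \SC(U)=\SC(U)$, the graded commutator of $\mathrm{diag}(\Dirac,\Dirac)$ with the off-diagonal matrix having entries $T_v,T_v^*$ is bounded; unwinding the matrix formalism, this amounts to showing that $[\Dirac,\Phi(v)]$ extends to a bounded operator for $v$ in a dense subalgebra. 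But this is precisely the content of the proposition in Section 5.1 stating that $[a,\Dirac]$ is bounded on $\scr{S}_\fin\otimes^\alg \ca{H}_\fin$ for every $a\in \SC(U)_\fin$, so I can simply quote that result. Combined with the $\tau$-twisted $U$-equivariance already established in Definition-Theorem \ref{dfn-thm index element}, this shows that $(\scr{S}\otimes\ca{H},\Dirac)$ represents the desired Kasparov product at the module level.

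The only genuine obstacle is the second step: verifying, at the level of Hilbert modules and not just formally, that the diagonal $U$-action on $\SC(U)\otimes_{\SC(U)}(\scr{S}\otimes\ca{H})$ coincides with the index-element action on $\scr{S}\otimes\ca{H}$ after the canonical identification. The point is that $A_g$ is defined as a twisted combination $L_{\sqrt g}R_{\sqrt g^{-1}}$ of \emph{twisted} representations whose $\tau$-cocycles cancel, and the multiplication map $\SC(U)\otimes (\scr{S}\otimes \ca{H})\to \scr{S}\otimes\ca{H}$ must be shown to intertwine $\rho_g\otimes A_g$ with $L_g$ despite the absence of any honest ``pointwise evaluation at $x\in U$''. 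I expect the cleanest way to carry this out is to check the intertwining on the dense subalgebra $\SC(U_N)_\fin$ and the dense subspace $L^2(U_N)_\fin\otimes^\alg S^*_{U_N}\otimes^\alg S_{U_N}\otimes \Vac_{N+1}^\infty$, where all expressions reduce to a finite-dimensional computation involving ordinary functions on $U_N$, and then extend by norm-continuity using the uniform bounds on all three actions (each of which is isometric). After that, all the remaining bookkeeping is just the formal $KK$-identity already encoded in the criterion.
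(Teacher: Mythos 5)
Your proposal is correct and takes essentially the same route as the paper: the non-equivariant Kasparov product with the identity element $(\SC(U),0)$ is tautological (you verify this by noting the three Kucerovsky conditions collapse, the paper just invokes $[\widetilde{\sigma^{Cl}_{\Dirac}}]=[\id]$ and $[\widetilde{\Dirac}]=[\widetilde{d_U}]$ after forgetting the group action), so the only substantive point is that the canonical isomorphism $\SC(U)\otimes_{\SC(U)}(\scr{S}\otimes\ca{H})\to\scr{S}\otimes\ca{H}$ intertwines the diagonal $\rho_g\otimes A_g$ with $L_g$. Your observation that the cocycles of $L_{\sqrt g}$ and $R_{\sqrt g^{-1}}$ cancel (making $A_g$ pure translation), so that $\rho_g\otimes A_g$ carries exactly one cocycle matching $L_g$, is the formal content of the paper's remark that ``the $U$-actions on the index element and on the Clifford symbol element have the same formula, and the $U$-action on the HKT algebra and on the Dirac element have the same formula''; the rigorous reduction to $\SC(U_N)_\fin$ and a dense subspace of $\ca{H}_\fin$ followed by extension by isometry is also exactly the paper's argument.
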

\begin{rmk}
As explained in Section 3, the $LT$-equivariant $KK$-theory has not been extensively studied, and we have not proved that the following thing: {\it For two given $LT$-equivariant $KK$-elements, there is a Kasparov product of them, and such modules are unique up to homotopy}. This theorem means, however, there is an example of two Kasparov modules which has a Kasparov product.
\end{rmk}

\begin{proof}
If we forget the group action, we find that $[\widetilde{\sigma_{\Dirac}^{Cl}}] =[\id]$ and $[\widetilde{\Dirac}]=[\widetilde{d_U}]$, and hence the statement is obvious. The non-trivial thing is only the following: The isomorphism $\SC(U)\otimes_{\SC(U)}[\scr{S}\otimes \ca{H}]\to \scr{S}\otimes \ca{H}$ is $\tau$-twisted $U$-equivariant, that is, the following diagram commutes:
$$\begin{CD}
\SC(U)\otimes_{\scr{S}}[\scr{S}\otimes \ca{H}] @>>> \scr{S}\otimes \ca{H}\\
@Vg\otimes gVV @VVgV \\
\SC(U)\otimes_{\scr{S}}[\scr{S}\otimes \ca{H}] @>>> \scr{S}\otimes \ca{H}.
\end{CD}$$

In order to prove this commutativity, recall the precise definition of the group actions and the left $\SC(U)$-action on $\scr{S}\otimes \ca{H}$: All of them are defined by the limit of the finite-dimensional approximations. Therefore, we only have to prove the equivariance on $U_N$ for all $N$. Remember that the $U$-actions on the index element and the Clifford symbol element have the same formula, and the $U$-action on the HKT algebra $\SC(U)$ and the Dirac element have the same formula. Once noticing these things, one can prove the statement immediately.
\end{proof}

\section{Assembly maps}

We should study the connection between the main result of the present paper and the previous one of \cite{T2}. In this paper, we studied the assembly map and the analytic index without the index element: We defined the ``crossed product of $C_0(U)$ by $U$'' $\ud{C_0(U)\rtimes U}$, the ``$\tau$-twisted group $C^*$-algebra of $U$'' $\ud{\bb{C}\rtimes_\tau U}$,
the ``image of the index element along the descent homomorphism'' $\ud{j_U^\tau([\Dirac])}\in KK(\ud{C_0(U)\rtimes U},\ud{\bb{C}\rtimes_\tau U})$, the ``Mishchenko line bundle'' $\ud{[c]}\in KK(\bb{C},\ud{C_0(U)\rtimes U})$, and the ``analytic index in the sense of \cite{Kas15}'' $\ud{\ind(\Dirac)}\in KK(\bb{C},\ud{\bb{C}\rtimes_\tau U})$. These $KK$-elements satisfy the relation $\ud{j_U^\tau([\Dirac])}=\ud{[c]}\otimes_{\ud{C_0(U)\rtimes U}} \ud{j_U^\tau([\Dirac])}$.\footnote{Unlike the main result of this paper, this Kasparov product makes rigorous sense.}

On the other hand, we have studied the index element, the Clifford symbol element and the Dirac element in the present paper, without index. 

In this section, we try constructing ``an assembly map'', whose value is the same with the analytic index in \cite{T2}. This  construction is not elegant\footnote{This is the reason why we add the quotation mark.}, but it might suggest the existence of a (much better) assembly map which is compatible with the latter half of the assembly map constructed in \cite{T2}. In the latter half of this section, we discuss the satisfactory form of the assembly maps and the crossed products.

\subsection{``An assembly map'' by the restrictions to finite-dimensional subgroups}
In this subsection, we define ``an assembly map'' in an indirect way. 
We begin with the definitions of several ingredients: the Bott elements 
and the $\tau$-twisted group $C^*$-algebras of $U_N$'s. 

\begin{dfn}
For any finite-dimensional Euclidean space $V$, $Cl_\tau(V)$ is $KK$-equivalent to $\bb{C}$ via the $KK$-element
$$[b_V]:=[Cl_\tau(V),c]\in KK(\bb{C},Cl_\tau(V)),$$
where $c$ is the Clifford multiplication. At $v\in V$, $c$ acts on the fiber $\Cl_+(T^*_vV)$ via $c(v)$.
\end{dfn}

The followings are well-known: $[b_V]\otimes_{Cl_\tau(V)}[d_V]=1_\bb{C}$ and $[d_V]\otimes_{\bb{C}}[b_V]=1_{Cl_\tau(V)}$. The direct proof is not difficult, thanks to Proposition \ref{Kucs criterion}. The Kasparov product of the former one is given by the harmonic oscillator, whose index is $1$. We will prove an infinite-dimensional analogue of the former one, in our language.

The reformulated Bott element is just $\tau_{\scr{S}}([b_V])$.
It is truly a module, but it has been realized as a $*$-homomorphism $\beta_V:\scr{S}\to \SC(V)$ in \cite{HKT}. This reformulated one has an infinite-dimensional analogue $[\beta_{U_N^\perp}]$ and so on. For the special one $[\beta_U]$, we use the symbol $[\beta]$. In the following, we forget the equivariant structure on $[\widetilde{d_U}]$.

\begin{dfn}
For a real Hilbert space $V$, the $*$-homomorphism $\beta_V:\scr{S}\to \SC(V)$ defines a $KK$-element $[\beta_V]\in KK(\scr{S},\SC(V))$. In particular, when $V$ is given by our infinite-dimensional Lie group $U$, the Bott element is denoted by $[\beta]\in KK(\scr{S},\SC(U))$.

\end{dfn}

\begin{pro}
$[\beta]\otimes_{\SC(U)} [\widetilde{d_U}]=[1_{\scr{S}}]$ in $KK(\scr{S},\scr{S})$.
\end{pro}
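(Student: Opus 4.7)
The plan is to exhibit the product Kasparov module explicitly and then homotope it to the identity module $(\scr{S},0,M_\bullet)$. Since $[\beta]$ is represented by a $*$-homomorphism, the Kasparov product is represented at the module level by $(\scr{S}\otimes \ca{H},\,\Dirac)$ with the left $\scr{S}$-action $f\mapsto \Phi(\beta(f))$. So I must show this triple is homotopic to $(\scr{S},0,M_\bullet)$ in $KK(\scr{S},\scr{S})$.

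First I would analyze $\ker(\Dirac)$. The graded tensor product convention forces ${}^L\!\Dirac$ and ${}^R\!\Dirac$ to anti-commute, so
\[
\Dirac^2 \;=\; \tfrac{1}{2}\bigl[-({}^L\!\Dirac)^2 + ({}^R\!\Dirac)^2\bigr].
\]
Both summands are non-negative (from skew/self-adjointness). Reorganizing $\ca{H}\cong (\ud{L^2(\bb{R}^\infty)}\otimes S^*)\otimes(\ud{L^2(\bb{R}^\infty)^*}\otimes S)$, the operators ${}^L\!\Dirac,\,{}^R\!\Dirac$ act on the two factors separately as the infinite-dimensional (perturbed) Bott--Dirac operators analyzed in the proof that $\Dirac$ is self-adjoint and in Theorem 5.33 of \cite{T3}; each has one-dimensional kernel $\bb{C}({\bf 1}_b\otimes {\bf 1}_f^*)$ and $\bb{C}({\bf 1}_b^*\otimes{\bf 1}_f)$ respectively. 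Hence $\ker(\Dirac)=\bb{C}\Vac$ and $\Dirac^2$ has a uniform spectral gap $\ker(\Dirac)^\perp\subseteq\{\lambda\geq \tfrac{1}{2}\}$.

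Second, I would perform the operator homotopy $\Dirac_t:=t\Dirac$ for $t\in[1,\infty)$. The bounded transform $\fra{b}(\Dirac_t)$ converges in norm on $\ker(\Dirac)^\perp$ to $\mathrm{sign}(\Dirac)$ and vanishes on $\ker(\Dirac)$. The limiting Kasparov module splits as
\[
(\scr{S}\otimes\ker(\Dirac),\,0) \;\oplus\; \bigl(\scr{S}\otimes\ker(\Dirac)^\perp,\,\mathrm{sign}(\Dirac)\bigr),
\]
and the second summand is degenerate once we verify that $[\Phi(\beta(f)),\mathrm{sign}(\Dirac)]$ is $\scr{S}$-compact. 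That verification uses the key algebraic observation that $\Phi(C)^2=\sum_k k^{-2l}(x_k^2+y_k^2)$ (the Clifford cross terms in Section 4.3 cancel), so $\Phi(C)^2$ is a genuine scalar multiplier, bounding the off-diagonal mixing of $\beta(f)$ between spectral subspaces.

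Third, I identify the residual summand $(\scr{S}\otimes\ker(\Dirac),0)$ with $1_\scr{S}$ via the isometry $V:\phi\mapsto \phi\otimes\Vac$. The induced left action of $f_e\otimes e^{-C^2}$ on $\phi\otimes\Vac$ is $f_e(X)\phi\otimes e^{-\Phi(C)^2}\Vac$, and $e^{-\Phi(C)^2}\Vac$ differs from $\Vac$ only by a positive scalar Gaussian factor on the ``$L^2$-part''. A second homotopy $s\mapsto sC$ ($s\in[0,1]$) deforms this residual action to $f\mapsto M_f\otimes\id_{\Vac}$, and the pairing identifies the resulting module with $(\scr{S},0,M_\bullet)$, completing the proof. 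The hard part will be justifying the first homotopy uniformly in $t$ — in particular, that $\Phi(\beta(a))(1+t^2\Dirac^2)^{-1}$ stays $\scr{S}$-compact and $[\Phi(\beta(f)),\fra{b}(\Dirac_t)]$ remains $\scr{S}$-compact with uniform bounds — as the computations in Section 5.1 only give $t=1$. If this uniform control proves too delicate, the safer fallback is to invoke Higson--Kasparov--Trout's Bott periodicity for $\SC(U)$ from \cite{HKT} and show that $[\widetilde{d_U}]$ coincides with the HKT inverse of $[\beta]$ by pairing against the Bott generator at each finite stage $\SC(U_N)$ and passing to the limit using $\beta_{N,\infty}$.
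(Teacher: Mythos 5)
Your approach differs from the paper's in the \emph{order} of operations, and this ordering matters. The paper's proof deforms the left representation first: it sets $\phi_t(f):=f(X\otimes\id+\id\otimes tC)$ for $t\in[0,1]$, so that at $t=0$ the action becomes $\phi_0(f)=f\otimes\id$, the Kasparov module factors as the exterior product of $[(\scr{S},0)]\in KK(\scr{S},\scr{S})$ with $[(\ca{H},\Dirac)]\in KK(\bb{C},\bb{C})$, and the latter equals $1_{\bb{C}}$ because $\Dirac$ is a Fredholm operator of index $1$. You instead scale the \emph{operator} first ($\Dirac_t=t\Dirac$, $t\to\infty$), propose to split off a kernel summand, and only then deform the representation. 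That ordering runs into a genuine obstruction at the splitting step.

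The decomposition
$$(\scr{S}\otimes\ker(\Dirac),\,0)\;\oplus\;\bigl(\scr{S}\otimes\ker(\Dirac)^{\perp},\,\mathrm{sign}(\Dirac)\bigr)$$
is \emph{not} a decomposition of Kasparov modules, because the left $\scr{S}$-action $\phi(f)=f(X\otimes\id+\id\otimes\Phi(C))$ does not preserve the two summands. Your observation that $\Phi(C)^2=\sum_k k^{-2l}(x_k^2+y_k^2)$ is scalar-valued is correct, but it controls only the even generator $\phi(f_e)=f_e(X)\otimes e^{-\Phi(C)^2}$. The odd generator $\phi(f_o)=f_e(X)\otimes\Phi(C)e^{-\Phi(C)^2}$ carries one unmatched factor of the Clifford multiplier $\Phi(C)$, and $\Phi(C)\Vac$ is not proportional to $\Vac$ (it both shifts the $\ud{L^2(U)}$ component and lowers/raises the Spinor part), so $\phi(f_o)$ mixes $\ker(\Dirac)$ with $\ker(\Dirac)^{\perp}$. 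Consequently the second summand is not a sub-Kasparov-module at all. Even if it were, requiring $[\Phi(\beta(f)),\mathrm{sign}(\Dirac)]$ to be merely $\scr{S}$-compact would not give degeneracy — a degenerate cycle requires the commutator to vanish exactly. This is precisely the difficulty the paper's ordering avoids: by turning off the $C$-part of the representation \emph{before} touching the operator, the representation becomes block-diagonal by construction, the module factors as an exterior product, and the harmonic-oscillator index argument applies with nothing left to split. Your fallback of invoking HKT Bott periodicity at each finite stage and taking a limit is reasonable in spirit, but as stated it is a sketch of a different proof rather than a repair of the present one.
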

\begin{proof}
Since $\beta$ is a $*$-homomorphism,
$[\beta]\otimes_{\SC(U)}[\widetilde{d_U}]$ is given by $[(\scr{S}\otimes \ca{H},\Dirac)]$, where the left action of $\scr{S}$ is defined by the composition
$$\phi:\scr{S}\xrightarrow{\beta}\SC(U)\xrightarrow{\Phi}\End_\scr{S}(\scr{S}\otimes\ca{H});$$
$\phi(f)=f(X\otimes \id+\id\otimes C)$, where $C$ is the Clifford operator on $\ca{H}$.
In order to find much simpler description, we deform $\phi$. Let 
$\phi_t(f):=f(X\otimes \id+\id\otimes tC)$. Note that $\phi_1=\phi$, and $\phi_0(f)=f\otimes \id$. Then, two Kasparov modules $(\scr{S}\otimes\ca{H},\phi,\Dirac)$ and $(\scr{S}\otimes\ca{H},\phi_0,\Dirac)$ are homotopic by the homotopy
$$(\scr{S}\otimes \ca{H}[0,1],\{\phi_t\}_{t\in [0,1]},\Dirac).$$
We need to prove that the homotopy $(\scr{S}\otimes \ca{H}[0,1],\{\phi_t\}_{t\in [0,1]},\Dirac)$ is actually a Kasparov $(\scr{S},\scr{S}[0,1])$-module. We leave it to the reader.

The new representative $(\scr{S}\otimes\ca{H},\phi_0,\Dirac)$ coincides with the tensor product of the Kasparov $(\scr{S},\scr{S})$-module $[(\scr{S},0)]$ and the Kasparov $(\bb{C},\bb{C})$-module $[(\ca{H},\Dirac)]$. The latter is $1_\bb{C}$, because $\Dirac$ is a Fredholm operator with index $1$.

\end{proof}

Let us recall the structure of the $\tau$-twisted group $C^*$-algebra $\bb{C}\rtimes_\tau U_N$ from \cite{T1}. $U_N$ has only one $\tau$-twisted irreducible unitary representation on $L^2(\bb{R}^N)$, and so $\bb{C}\rtimes_\tau U_N$ is isomorphic to $\ca{K}(L^2(\bb{R}^N)^*)$. The representation space $L^2(\bb{R}^N)^*$ has a specific vector denoted by ${\bf 1}_b^*$ which is of the highest weight with respect to the infinitesimal rotation action $d\rho^*(d)$.

\begin{dfn}[Definition 4.12 in \cite{T1}]
The $*$-homomorphism $i_N$ from $\bb{C}\rtimes_\tau U_N$ to $\bb{C}\rtimes_\tau U_{N+1}$ is defined as follows: Let $P$ be the rank one projection onto $\bb{C}{\bf 1}_b^*\in L^2(\bb{R}^{N+1}\ominus \bb{R}^N)^*$, and let $i_N(k)$ be $k\otimes P$ for $k\in \ca{K}(L^2(\bb{R}^N)^*)$. Using these homomorphisms, define the $C^*$-algebra $\ud{\bb{C}\rtimes_\tau U}$ by
$$\ud{\bb{C}\rtimes_\tau U}:=\varinjlim \bb{C}\rtimes_\tau U_N.$$
$j_N:\bb{C}\rtimes_\tau U_N\to \ud{\bb{C}\rtimes_\tau U}$ denotes the canonical $*$-homomorphism to define the inductive limit. As a result, $\ud{\bb{C}\rtimes_\tau U}$ is isomorphic to $\ca{K}(\ud{L^2(\bb{R}^\infty)^*})$.
\end{dfn}

Let us define assembly maps, parametrized by $N\in \bb{N}$. 
Recall the following isomorphisms: $\SC(U)\cong \SC(U_N^\perp)\otimes \scr{C}(U_N)$; $\SC(U)\rtimes U_N\cong \SC(U_N^\perp)\otimes \bra{\scr{C}(U_N)\rtimes U_N}$.

\begin{dfn}
The $N$-assembly map ${}^N\mu_{U}^\tau:KK_{U}^\tau(\SC(U),\scr{S}) \to KK(\scr{S},\scr{S}\rtimes_\tau U)$ is the composition of the following sequence:
$$KK_{U}^\tau(\SC(U),\scr{S}) \xrightarrow{\text{forget}} KK_{U_N}^\tau(\SC(U),\scr{S}) \xrightarrow{j_{U_N}^\tau} KK(\SC(U)\rtimes U_N,\scr{S}\rtimes_\tau U_N)$$
$$ \xrightarrow{([\beta_{U_N^\perp}]\otimes_\bb{C} [\widetilde{c}_{U_N}])\otimes-}
KK(\scr{S},\scr{S}\rtimes_\tau U_N)\xrightarrow{-\otimes[j_{N}]} KK(\scr{S},\scr{S}\rtimes_\tau U).$$

For an element $[D]\in KK_{U}^\tau(\SC(U),\scr{S})$, suppose that ${}^N\mu_{U}^\tau([D])={}^{N+1}\mu_{U}^\tau([D])=\cdots$ for sufficiently large $N$. We say that such $[D]$ is {\bf assemblable}.
The value of the {\bf total assembly map} is defined by $\mu_U^\tau([D]):={}^N\mu^\tau_{U}([D])$ for sufficient large $N$.
\end{dfn}

The main theorem of this section is that our index element is assemblable. 
Firstly, we recall Remark \ref{another perturbation}. Our index element $[\widetilde{\Dirac}]$ has another representative defined by the operator 
$${}_M\Dirac:={}^R\Dirac_1^M+\frac{1}{\sqrt{2}}{}^R\Dirac_{M+1}^\infty+\frac{i}{\sqrt{2}}{}^L\Dirac_{M+1}^\infty,$$
which is actually $U_N$-equivariant for $N\leq M$.
The new representative is denoted by $[{}_M\widetilde{\Dirac}]$.

Note that 
$$[{}_M\widetilde{\Dirac}]=[{}_M\widetilde{\Dirac}_{N+1}^\infty]\otimes_\bb{C}[{}_M\Dirac_{1}^N]\in KK_{U_N^\perp}^\tau(\SC(U_N^\perp),\scr{S})\otimes KK_{U_N}^\tau(Cl_\tau(U_N),\bb{C}).$$
Let us compute $j_{U_N}^\tau([{}_M\widetilde{\Dirac}])$, after forgetting the $U_N^\perp$-action. It is easy to see that
$j_{U_N}^\tau([{}_M\widetilde{\Dirac}])=[{}_M\widetilde{\Dirac}_{N+1}^\infty]\otimes_\bb{C}j_{U_N}^\tau[{}_M\Dirac_{1}^N].$
We have computed this element. See \cite{T2} for the details.

\begin{pro}[Theorem 3.7 in \cite{T2}]
$j_{U_N}^\tau([{}_M\widetilde{\Dirac}_{1}^N])$ is given by
$$[(L^2(U_N)\otimes S\otimes [\bb{C}\rtimes_\tau U_N],D\otimes_2\otimes \id+\id\otimes_1 {}^L\Dirac_1^N)]\in KK(C_0(U_N)\rtimes U_N,\bb{C}\rtimes_\tau U_N).$$
\end{pro}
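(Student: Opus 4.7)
The plan is to compute $j_{U_N}^\tau$ by hand on an explicit model of the Kasparov module ${}_M\widetilde{\Dirac}_{1}^N$, and then transport the result through the Peter--Weyl-type identifications assembled in Section~4 to recognize it in the stated form. First I would note that for $N\leq M$ the operator ${}_M\Dirac_1^N$ is simply ${}^R\Dirac_1^N$, since the $L$- and $R$-contributions in ${}_M\Dirac$ only mix in the $U_N^\perp$-direction; in particular ${}^R\Dirac_1^N$ is honestly $U_N$-equivariant, so no averaging tricks are needed. The underlying $\tau$-twisted $U_N$-equivariant Kasparov $(Cl_\tau(U_N),\bb{C})$-module is therefore carried on $L^2(\bb{R}^N)\otimes L^2(\bb{R}^N)^*\otimes S^*_{U_N}\otimes S_{U_N}$ with the $U_N$-action via $L\otimes R^*$ and the Clifford action through the Spinor factors.

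Next I would apply the definition of the partial descent: take $C_c(U_N^\tau, E)$, keep only the weight-$1$ subspace (so $i(z)$ acts as multiplication by $z$), and complete with respect to the $\bb{C}\rtimes_\tau U_N$-valued inner product. The crucial input is the Peter--Weyl isomorphism $\Psi: L^2(\bb{R}^N)\otimes L^2(\bb{R}^N)^*\cong L^2(U_N)$ together with $\bb{C}\rtimes_\tau U_N\cong \ca{K}(L^2(\bb{R}^N)^*)$. Under these identifications, integrating a section $s:U_N^\tau\to E$ against an $L^2(\bb{R}^N)^*$-bra should convert the twisted convolution structure into the simple Hilbert-module structure on $L^2(U_N)\otimes S^*_{U_N}\otimes S_{U_N}\otimes [\bb{C}\rtimes_\tau U_N]$, which rewrites as $L^2(U_N)\otimes S \otimes [\bb{C}\rtimes_\tau U_N]$ after absorbing $S^*_{U_N}\otimes S_{U_N}\cong \End(S_{U_N})$.

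The main obstacle, and the point where the bookkeeping is genuinely delicate, is to track the image of ${}^R\Dirac_1^N$ under this identification. The derivations $d\rho^*(z_k),d\rho^*(\overline{z_k})$ on the $L^2(\bb{R}^N)^*$-factor get split by $\Psi$ into two contributions: one acting through the $L^2(U_N)$-factor as the left Dirac operator ${}^L\Dirac_1^N$ (via the formula $dR=dL+(\text{multiplication})$ for the regular representations), and a residual operator $D$ coupling the Spinor slot to the $\bb{C}\rtimes_\tau U_N$-factor, which is exactly the ``Dirac on $S$ valued in $\bb{C}\rtimes_\tau U_N$'' appearing in the statement. Once this decomposition is established, the match with $D\otimes\id+\id\otimes{}^L\Dirac_1^N$ is direct, and the Kasparov module axioms follow from those of ${}_M\widetilde{\Dirac}_{1}^N$ together with the fact that the descent preserves the structural conditions. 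This is the computation carried out in Theorem~3.7 of \cite{T2}, and I would cite that reference for the quantitative details.
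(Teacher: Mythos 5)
The paper does not actually prove this statement: it is imported verbatim as a citation of Theorem~3.7 in \cite{T2}, and Section~6 of the present paper simply uses the formula. Your sketch of the underlying computation is of the right shape (note ${}_M\Dirac_1^N={}^R\Dirac_1^N$ for $N\leq M$, apply the partial twisted descent, and transport through the Peter--Weyl identification $\Psi$ and the isomorphism $\bb{C}\rtimes_\tau U_N\cong\ca{K}(L^2(\bb{R}^N)^*)$), and your ultimate deferral to \cite{T2} for the quantitative details is exactly what the paper does.

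A couple of points in your sketch would need tightening before they would amount to a proof rather than a plan. You set up the pre-descent module as a $\tau$-twisted $(Cl_\tau(U_N),\bb{C})$-module, but the stated target group is $KK(C_0(U_N)\rtimes U_N,\bb{C}\rtimes_\tau U_N)$ and the descended module is the $C_0(U_N)$-module $L^2(U_N)\otimes S\otimes[\bb{C}\rtimes_\tau U_N]$; so one should descend the ordinary index element $(L^2(U_N,S),{}^R\Dirac_1^N)\in KK^{\tau}_{U_N}(C_0(U_N),\bb{C})$, not the Clifford-symbol reformulated one, or else explicitly apply the Morita equivalence $[S]$ after descent. Your claim that under $\Psi$ the derivations ``split via $dR=dL+(\text{multiplication})$'' is also not literally right: with the paper's normalizations $dR(e_k)-dL(e_k)=2\partial_{x_k}$ and $dR(e_k)+dL(e_k)=iy_k$, so the decomposition of $\widetilde{{}^R\Dirac_1^N}$ into the two terms $D\otimes\id$ and $\id\otimes{}^L\Dirac_1^N$ is an identification at the level of the descended Hilbert module, not an algebraic identity between regular representations; making this precise is exactly the delicate bookkeeping you flag. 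Since the paper itself offloads this to \cite{T2}, your proposal is aligned in approach, but as written it does not yet produce the stated representative.
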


\begin{pro}[Proposition 3.9 in \cite{T2}]
$\mu_{U_N}^\tau([{}_M\Dirac_1^N])$ is given by
$$[c_{U_N}]\otimes_{C_0(U_N)\rtimes U_N}j_{U_N}^\tau([{}_M\widetilde{\Dirac}])=[(S_{U_N}\otimes (\bb{C}\rtimes_\tau U_N),{}^L\Dirac_1^N)].$$
This element is also represented as $[(L^2(\bb{R}^N)\widehat{\oplus} 0,0)]$.
\end{pro}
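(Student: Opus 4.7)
The plan is to carry out the Kasparov product explicitly at the module level and then identify the resulting Fredholm class in $K_0(\bb{C}\rtimes_\tau U_N)$ with the class of a rank-one projection.

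For the first equality, I would start by representing $[c_{U_N}]$ as the rank-one projection $[c](g,x)=\sqrt{c(x)c(g^{-1}.x)}$ in $C_0(U_N)\rtimes U_N$, and using the canonical isomorphism $C_0(U_N)\rtimes U_N\cong \ca{K}(L^2(U_N))$ under which $[c]$ becomes a one-dimensional projection. The product module $[c_{U_N}]\cdot\bra{L^2(U_N)\otimes S_{U_N}\otimes (\bb{C}\rtimes_\tau U_N)}$ then collapses to $S_{U_N}\otimes (\bb{C}\rtimes_\tau U_N)$, since the first $L^2(U_N)$-factor is absorbed by the Mishchenko projection. Under this identification the operator $D\otimes \id+\id\otimes {}^L\Dirac_1^N$ restricts to $\id\otimes {}^L\Dirac_1^N$, because the Bott--Dirac factor $D$ on $U_N\cong \bb{R}^{2N}$ contributes only its index through the standard equality $[c_{U_N}]\otimes j_{U_N}([D])=1\in K_0(\bb{C}\rtimes U_N)$ (Bott periodicity). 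I would verify the three conditions of Proposition \ref{Kucs criterion} to certify this pair as a representative of the Kasparov product.

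For the second equality, I would use the isomorphism $\bb{C}\rtimes_\tau U_N\cong \ca{K}(L^2(\bb{R}^N)^*)$ coming from the unique irreducible $\tau$-twisted unitary representation of $U_N$. The resulting Morita equivalence turns $S_{U_N}\otimes (\bb{C}\rtimes_\tau U_N)$ into the finite-dimensional module $S_{U_N}\otimes L^2(\bb{R}^N)^*$, and ${}^L\Dirac_1^N$ becomes the finite-dimensional Bargmann--Fock Dirac-type operator already studied in \cite{FHTII}. Its kernel is one-dimensional, concentrated in the even part, and spanned by the tensor product of the Spinor lowest-weight vector with the Gaussian vacuum ${\bf 1}_b^*$. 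This kernel gives the class of a rank-one projection in $K_0(\ca{K}(L^2(\bb{R}^N)^*))\cong \bb{Z}$, which, when transported back through the Morita equivalence and the bounded transformation, is represented by $[(L^2(\bb{R}^N)\widehat{\oplus}0,0)]$.

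The main obstacle is the operator-level simplification in the first step: showing that after the product with $[c_{U_N}]$ the $U_N$-Dirac $D$ is genuinely absorbed into a scalar, leaving only ${}^L\Dirac_1^N$. I expect to handle this either through the Connes--Skandalis connection formalism (taking the zero connection for the ${}^L\Dirac_1^N$-part, which is legitimate because this operator commutes with the translation $U_N$-action on $L^2(U_N)$ used in the descent module) or, equivalently, by factorizing $[{}_M\Dirac_1^N]$ into a purely $U_N$-equivariant finite-dimensional Dirac class tensored with a class involving only ${}^L\Dirac_1^N$, and then invoking the product formula of Proposition \ref{product formula} together with the descent-compatibility lemma of Section 2.2.
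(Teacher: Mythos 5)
Your outline reaches the right answers, but the operator-level step in the first equality --- the heart of the matter --- is not actually justified. Saying that the Bott--Dirac factor $D$ ``contributes only its index'' does not explain how the Kasparov product $[c_{U_N}]\otimes_{C_0(U_N)\rtimes U_N}(\cdot)$ yields the operator ${}^L\Dirac_1^N$ on the cut-down module: the Kasparov product is not a naive restriction of the operator along the Mishchenko projection, and the restriction of the first-order operator $D$ to the range of $[c]$ is not automatically zero. The label ``Bott periodicity'' for $[c_{U_N}]\otimes j_{U_N}([D])=1$ is also a misnomer; that equality is the $\bb{R}^{2N}$ instance of the Kasparov index/assembly computation, of exactly the same shape as what is being proved, so invoking it there is close to restating the problem rather than resolving it.

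The alternative you mention only in passing --- factorizing the cycle of Theorem 3.7 in [T2] as an exterior Kasparov tensor product of a $(C_0(U_N)\rtimes U_N,\bb{C})$-cycle carrying $D$ and a $(\bb{C},\bb{C}\rtimes_\tau U_N)$-cycle carrying ${}^L\Dirac_1^N$, then applying Proposition \ref{product formula} and Section 2.2 --- is the route that can be made to work, but the factorization is precisely the content that must be checked. It is not automatic: the left $C_0(U_N)\rtimes U_N$-action on the descent module involves the $U_N$-translation on $L^2(U_N)$ and therefore interacts with the $\bb{C}\rtimes_\tau U_N$-valued inner product on the third tensor factor, and the Spinor factor $S$ is shared by the two summands of the operator $D\otimes_2\otimes\id+\id\otimes_1{}^L\Dirac_1^N$. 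As written, the proposal defers exactly the step that needs work. Your treatment of the second equality, passing through $\bb{C}\rtimes_\tau U_N\cong\ca{K}(L^2(\bb{R}^N)^*)$ and the one-dimensional kernel of ${}^L\Dirac_1^N$, is correct.
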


Let us compute the rest part of the $N$-assembly map in our language. Since the $KK$-element $[\widetilde{{}_M\Dirac_{N+1}^\infty}]$ does not depend on $M$ in $KK(\SC(U),\scr{S})$, we put $M=N$, and we remove $M$ in the symbol. What we should compute is $[b_{U_N^\perp}]\otimes_{\SC(U_N^\perp)}[\widetilde{\Dirac_{N+1}^\infty}]$. If we forget the group action, $[\widetilde{d_U}]$ coincides with $[\widetilde{\Dirac}]$. Thus, $[b_{U_N^\perp}]\otimes_{\SC(U_N^\perp)}[\widetilde{\Dirac}_{N+1}^\infty]=1$. Moreover, 
since $j_{N}$ maps $[(L^2(\bb{R}^N)\widehat{\oplus} 0,0)] \in KK(\bb{C},\bb{C}\rtimes_\tau U_N)$ to $[(\ud{L^2(\bb{R}^\infty)}\widehat{\oplus} 0,0)]\in KK(\bb{C},\bb{C}\rtimes_\tau U)$, we obtain the following theorem.

\begin{thm}\label{thm assembly map}
${}^N\mu_{U}^\tau([{}_M\Dirac]) = [(\ud{L^2(\bb{R}^\infty)}\widehat{\oplus} 0,0)]$ for all $N$. 
The RHS is independent of $N$, and our index element is assemblable; $\mu_{U}^\tau([\Dirac])=[(\ud{L^2(\bb{R}^\infty)}\widehat{\oplus} 0,0)]$.

This value coincides with $\ud{\ind({}^R\Dirac)}$ defined in \cite{T2}.
\end{thm}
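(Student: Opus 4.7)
The plan is to replace the canonical representative $[\widetilde{\Dirac}]$ of the index element by the modified representative $[{}_M\widetilde{\Dirac}]$ from Remark \ref{another perturbation}, with $M\geq N$, so that the operator is actually $U_N$-equivariant. Once this is done, restriction to $U_N$ followed by the descent $j^\tau_{U_N}$ can be computed explicitly, because the Kasparov module factorises as a (non-equivariant) exterior product
\[
[{}_M\widetilde{\Dirac}]=[{}_M\widetilde{\Dirac}_{N+1}^\infty]\otimes_{\bb{C}}[{}_M\Dirac_1^N]
\]
with the first factor carrying only the trivial $U_N$-action and the second being a genuine $U_N^\tau$-equivariant module. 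By the multiplicativity of the descent under such exterior products (Section 2.2), $j_{U_N}^\tau$ leaves the first factor untouched and acts on the second factor as the descent for the finite-dimensional case studied in \cite{T2}, which is given by Proposition 3.7 of loc.\ cit.

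Next I would run the two halves of the $N$-assembly map through this product decomposition. The Kasparov product by $[\beta_{U_N^\perp}]\otimes_{\bb{C}}[\widetilde{c}_{U_N}]$ also respects the factorisation, so the computation splits into
\[
[\beta_{U_N^\perp}]\otimes_{\SC(U_N^\perp)}[{}_M\widetilde{\Dirac}_{N+1}^\infty]\quad\text{and}\quad [\widetilde{c}_{U_N}]\otimes_{C_0(U_N)\rtimes U_N}j_{U_N}^\tau([{}_M\Dirac_1^N]).
\]
For the first factor, after forgetting the group action $[{}_M\widetilde{\Dirac}_{N+1}^\infty]$ agrees with $[\widetilde{d_{U_N^\perp}}]$, so the proposition proved just before Theorem \ref{thm assembly map} ($[\beta]\otimes_{\SC(U)}[\widetilde{d_U}]=1_{\scr{S}}$), applied to $U_N^\perp$ in place of $U$, yields $1_{\scr{S}}$. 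For the second factor, Proposition 3.9 of \cite{T2} identifies the product with $[(L^2(\bb{R}^N)\widehat{\oplus}0,0)]\in KK(\bb{C},\bb{C}\rtimes_\tau U_N)$. Putting the two halves back together produces $[(\scr{S}\otimes L^2(\bb{R}^N)\widehat{\oplus}0,0)]\in KK(\scr{S},\scr{S}\rtimes_\tau U_N)$.

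Finally I would push the result forward through $j_N:\bb{C}\rtimes_\tau U_N\to \ud{\bb{C}\rtimes_\tau U}$. By definition of $j_N$ and of the inductive limit Hilbert space $\ud{L^2(\bb{R}^\infty)}$, the image of the rank-one projection onto $({\bf 1}_b^*)_{N+1}^\infty$ sends $L^2(\bb{R}^N)$ isometrically onto its image in $\ud{L^2(\bb{R}^\infty)}$, and in the limit the module becomes the full $\ud{L^2(\bb{R}^\infty)}$. This yields ${}^N\mu_U^\tau([{}_M\Dirac])=[(\ud{L^2(\bb{R}^\infty)}\widehat{\oplus}0,0)]$, manifestly independent of $N$, which proves assemblability and gives the value of $\mu_U^\tau([\Dirac])$. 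The last sentence of the theorem is then a direct comparison with the explicit description of $\ud{\ind({}^R\Dirac)}$ in \cite{T2}, where this same Hilbert space was identified as the analytic index.

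The main obstacle, as I see it, is being careful that each of the standard manipulations (exterior-product factorisation of descent, compatibility of Kasparov products with exterior products, interchange of the Bott map with the limit $M\to\infty$) is actually justified in our infinite-dimensional, only-at-the-module-level setting described in Section 3; in particular, the claim that $[\beta]\otimes_{\SC(U)}[\widetilde{d_U}]=1_{\scr{S}}$ relies on the homotopy $\phi_t(f)=f(X\otimes\id+\id\otimes tC)$ being a well-defined Kasparov $(\scr{S},\scr{S}[0,1])$-module, which in turn requires the operator-norm continuity in $t$ of the compact-resolvent conditions. Once those technicalities are verified, the rest of the argument is essentially bookkeeping.
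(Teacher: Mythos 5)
Your proposal is correct and follows essentially the same route as the paper: replace $[\widetilde{\Dirac}]$ by the $U_N$-equivariant representative $[{}_M\widetilde{\Dirac}]$, factor it as an exterior product, split the assembly map through the factorization using the descent-multiplicativity of Section 2.2, invoke $[\beta]\otimes_{\SC(U)}[\widetilde{d_U}]=1_{\scr{S}}$ for the infinite-dimensional factor and Propositions 3.7 and 3.9 of \cite{T2} for the finite-dimensional factor, and finally push through $j_N$. The technical caveats you flag at the end are appropriate but do not represent gaps the paper itself closes differently.
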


\subsection{A satisfactory form of the assembly map}

The previous subsection might suggest that we can define the total assembly map without making reference to the finite-dimensional approximations: $\ud{j_U^\tau}([\widetilde{\Dirac}])$ should be ``$\varprojlim {j_{U_N}^\tau}([\widetilde{\Dirac}])$'' in some sense. We discuss such things in this subsection. There are no rigorous results here, but we will write several conjectures and observations on them.

\begin{conj}
We can define two $C^*$-algebras which play roles of ``$\SC(U)\rtimes U$'' and ``$\SC(U)\rtimes_\tau U$'', denoted by $\ud{\SC(U)\rtimes U}$ and $\ud{\SC(U)\rtimes_\tau U}$, respectively. 
\end{conj}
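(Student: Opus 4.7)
The plan is to define both $C^*$-algebras as inductive limits of genuine finite-dimensional crossed products, in the spirit of Definition~6.4 for $\ud{\bb{C}\rtimes_\tau U}$. Concretely, for each $N$ one forms $\SC(U_N)\rtimes U_N$ and $\SC(U_N)\rtimes_\tau U_N$, both of which make sense as ordinary $C^*$-crossed products since $U_N$ is a finite-dimensional Lie group acting continuously (by the restrictions to $U_N$ of the actions constructed in Theorem~\ref{the U action} and in Section~5.3). The target algebras should then be set to
\begin{equation*}
\ud{\SC(U)\rtimes U}:=\varinjlim_N \SC(U_N)\rtimes U_N,\qquad \ud{\SC(U)\rtimes_\tau U}:=\varinjlim_N \SC(U_N)\rtimes_\tau U_N.
\end{equation*}

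The content of the conjecture is the construction of a compatible system of connecting $*$-homomorphisms. I would build them in two stages. First, the Bott map $\beta_{N,N+1}:\SC(U_N)\to\SC(U_{N+1})$ should be $U_N$-equivariant for both the untwisted and the $\tau$-twisted actions, since the Clifford element $C_{N+1}^{N+1}$ and the added cocycle $e^{\frac{i}{2}\omega(f,\bullet)}$ both depend only on coordinates orthogonal to $U_N$. This yields $*$-homomorphisms $\SC(U_N)\rtimes U_N\to \SC(U_{N+1})\rtimes U_N$ and $\SC(U_N)\rtimes_\tau U_N\to \SC(U_{N+1})\rtimes_\tau U_N$. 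Second, one needs embeddings $\SC(U_{N+1})\rtimes U_N\hookrightarrow \SC(U_{N+1})\rtimes U_{N+1}$ and likewise in the twisted setting. In the twisted case the pattern of $\bb{C}\rtimes_\tau U_N\hookrightarrow \bb{C}\rtimes_\tau U_{N+1}$ from \cite{T1} applies: the extra piece $\bb{C}\rtimes_\tau(U_{N+1}\ominus U_N)\cong\ca{K}(L^2(\bb{R})^*)$ is simple and contains a canonical rank-one vacuum projection $P_{{\bf 1}_b^*}$, so that $k\mapsto k\otimes P_{{\bf 1}_b^*}$ provides the inclusion.

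The untwisted case is the main obstacle. There $\bb{C}\rtimes (U_{N+1}\ominus U_N)\cong C_0(\bb{R}^2)$ is not simple, so no analogue of $P_{{\bf 1}_b^*}$ is available, and a naive Pontryagin-dual description of the inclusion fails. My proposed fix is to exploit the $\scr{S}$-factor intrinsic to $\SC$: on the extra direction one can replace the missing projection by the Gaussian vacuum state built out of the Bott element on $\SC(U_{N+1}\ominus U_N)$, which after Fourier transform looks like an essentially rank-one concentration in both position and momentum variables. Equivalently, one builds the connecting map by combining $\beta_{N,N+1}$ in the $\SC$-direction with a choice of ``vacuum'' state on the added two-dimensional cotangent bundle.

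The hard part will be verifying that these connecting maps are isometric (so that the inductive limit is non-degenerate), functorial in the inclusions $U_N\subseteq U_M$ (so that the cocycle condition $\beta_{N,L}=\beta_{M,L}\circ\beta_{N,M}$ lifts to the crossed products), and --- most importantly --- compatible with the $N$-assembly maps of Section~6.1, in the sense that the inductive limits of the elements $j_{U_N}^\tau\circ(\text{forget})[\widetilde{\ca{D}}]$ assemble to a genuine $KK$-element $\ud{j_U^\tau}([\widetilde{\ca{D}}])\in KK(\ud{\SC(\ca{M})\rtimes U},\ud{\SC(\ca{M})\rtimes_\tau U})$ recovering the analytic index of \cite{T2,T3}. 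The absence of a Haar measure forces all averaging to be replaced by inductive limits, and proving independence of the various choices --- analogous to the verification of assemblability in Theorem~\ref{thm assembly map} --- will require quantitative Gaussian estimates in the spirit of those already used throughout Sections~4 and~5.
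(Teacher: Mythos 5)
The statement you are addressing is left as a \textbf{conjecture} in the paper: Section~6.2 opens with ``there are no rigorous results here,'' so there is no proof in the paper to measure yours against. What the paper \emph{does} offer is a concrete obstruction which your construction does not resolve. Two natural candidates are proposed for the crossed product, $\scr{S}\otimes\ca{K}(\ud{L^2(U)}\otimes S^*)$ and $\ca{K}(\ud{L^2(U)})\otimes\CAR$, and they are observed to be non-isomorphic because $K_0(\ca{K})\cong\bb{Z}$ whereas $K_0(\CAR)\cong\bb{Z}[\tfrac{1}{2}]$. Any inductive-limit construction of the kind you propose must in effect choose one of these (or a third thing), and the paper is pointing out that the ``right'' choice is not clear.

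Concretely, you can sidestep the difficulty you raise (the absence of projections in $\bb{C}\rtimes W\cong C_0(\bb{R}^2)$ for $W:=U_{N+1}\ominus U_N$) by defining the connecting map directly $\SC(U_N)\rtimes U_N\to\SC(U_{N+1})\rtimes U_{N+1}\cong\bra{\SC(U_N)\rtimes U_N}\otimes\bra{\scr{C}(W)\rtimes W}$, which only requires a projection in $\scr{C}(W)\rtimes W\cong\ca{K}(L^2(W))\otimes M_2(\bb{C})$, and that algebra has plenty of them. But the ambiguity now reappears in the $\Cl_+$ factor: tensoring with a rank-one projection of $M_2(\bb{C})$ gives $\ca{K}$ in the limit of the Clifford direction, while tensoring with $\id_{M_2}$ (i.e.\ using the unital inclusion $\Cl_+(\bb{R}^{2N})\hookrightarrow\Cl_+(\bb{R}^{2N+2})$) gives $\CAR$. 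The $K$-theory computation in the paper shows these two choices of connecting map produce non-isomorphic inductive limits. Your ``Gaussian vacuum state'' does not repair this: it is aimed at the function (boson) factor, where a projection already exists, while the real indeterminacy sits in the Clifford (fermion) factor; and in any case a state is not a projection and does not by itself furnish a $*$-homomorphism. Until you supply a principle that selects the projection canonically --- most plausibly a compatibility requirement with $[\widetilde{d_U}]$ or with $\ud{\ind(\Dirac)}$ --- the conjecture remains genuinely open, which is exactly what the paper is flagging.
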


The $C^*$-algebra $\scr{S}\otimes \ca{K}(\ud{L^2(U)}\otimes S^*)$ is a strong candidate.
We have defined a $C^*$-algebra $\ud{ C_0(U)\rtimes U}$ in \cite{T2}, by ``believing'' that the well-known isomorphism $C_0(G)\rtimes G\cong \ca{K}(L^2(G))$ for any locally compact group $G$, is valid even for infinite-dimensional cases. Just like this, for even-dimensional $Spin^c$-Lie group $G$, $Cl_\tau(G)\rtimes G\cong \ca{K}(L^2(G,S_G))$ for the trivial Spinor bundle $S_G$. This is because $Cl_\tau(G)\cong C_0(G)\otimes \Cl_+(T^*_eG)$ and $\Cl_+(T^*_eG)\cong \End(S_e)$.

On the other hand, for an $N$-dimensional parallelizable manifolrd $X$, we may regard $Cl_\tau(X)$ as $C_0(X)\otimes \Cl_+(\bb{R}^N)$. Thus the $C^*$-algebra $\ca{K}(\ud{L^2(U)})\otimes \CAR$ is also a strong candidate, where $\CAR$ is the canonical anti-commutation relation algebra.\footnote{$\CAR$ is defined by the inductive limit $\varinjlim \Cl_+(\bb{R}^{2N})$. It is also regarded as the infinite tensor product $M_2(\bb{C})\otimes M_2(\bb{C})\otimes M_2(\bb{C})\otimes \cdots$.}

These two algebras are truly different: $K_0(\CAR)\cong\bb{Z}[\frac{1}{2}]$ while $K_0(\ca{K})\cong \bb{Z}$. We must choose the most appropriate $C^*$-algebra for the crossed products.

Suppose that the above conjecture is true. We need the descent homomorphisms.

\begin{conj}
We can also define the following homomorphisms:
\begin{itemize}
\item $\ud{j_{U}^\tau}:KK_{U}^\tau(\SC(U),\scr{S})\to KK(\ud{\SC(U)\rtimes U},\ud{\scr{S}\rtimes_\tau U})$.
\item $\ud{j_{U}^\tau}:KK_{U}^\tau(\SC(U),\SC(U))\to KK(\ud{\SC(U)\rtimes U},\ud{\SC(U)\rtimes_\tau U})$.
\item $\ud{j_{U}^{1}}: KK_{U}(\SC(U),\scr{S})\to KK(\ud{\SC(U)\rtimes_\tau U},\ud{\scr{S}\rtimes_\tau U})$.
\end{itemize}

These descent homomorphisms are related in the following sense: for $[x]\in KK_{U}^\tau(\SC(U),\scr{S})$, $[y]\in KK_{U}^\tau(\SC(U),\SC(U))$ and $[z]\in KK_{U}(\SC(U),\scr{S})$ satisfying that $[x]=[y]\otimes [z]$,
$$\ud{j_{U}^\tau}([x])=\ud{j_U^\tau}([y])\otimes \ud{j_{U}^1}([z]).$$

Moreover, the reformulated Mishchenko line bundle $\ud{[\widetilde{c}]}\in KK(\scr{S},\ud{\SC(U)\rtimes U})$ can be defined, and 
$$\tau_\scr{S}(\ud{\ind(\Dirac)})=\ud{[\widetilde{c}_U]}\otimes\ud{ j_{U}^\tau}([\widetilde{\Dirac}]).$$ 
\end{conj}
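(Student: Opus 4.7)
The plan is to build every ingredient by taking inductive limits of finite-dimensional crossed products, mirroring the construction of $\SC(U) = \varinjlim \SC(U_N)$ itself, and then to transfer the finite-dimensional Kasparov index theorem by exploiting the stabilization phenomenon already observed in Theorem \ref{thm assembly map}. Concretely, I would first define the crossed product algebras by combining the Bott maps $\beta_{N,M}$ with the stabilization homomorphisms $i_N : \bb{C}\rtimes_\tau U_N \to \bb{C}\rtimes_\tau U_{N+1}$ from \cite{T2}. Since $\beta_{N,M}$ is only $U_N$-equivariant, a naive inductive limit fails; instead, setting
$$\ud{\SC(U)\rtimes_\tau U} := \varinjlim \bigl[\SC(U_N^\perp)\otimes (\scr{C}(U_N)\rtimes_\tau U_N)\bigr],$$
with connecting maps that on the second factor apply $i_N$ and on the first factor use $\beta_{N,M}$ restricted to the orthogonal complement, should give $\scr{S}\otimes \ca{K}(\ud{L^2(U)}\otimes S^*)$ (or an analogous candidate built from $\CAR$), and similarly for the untwisted version. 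The Hilbert $\scr{S}$-module $\scr{S}\otimes\ca{H}$ of Section 5 is already the infinite-dimensional analogue of an ``$L^2$-section'' module, so the underlying module for $\ud{j_U^\tau}([\widetilde{\Dirac}])$ should be $\scr{S}\otimes\ca{H}$ tensored over $\ud{\bb{C}\rtimes_\tau U}$ with its natural right action, together with the operator $\widetilde{\Dirac}$.

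Next, the three descent homomorphisms should be defined directly at the module level by the finite-dimensional formulas, making essential use of the fact that our representation $\Phi$ is built levelwise from $\Phi_N : \SC(U_N) \to \ca{L}_\scr{S}(\scr{S}\otimes\ca{H})$. Well-definedness reduces to verifying that the finite-dimensional crossed-product modules $E_N\rtimes U_N$ are compatible with the connecting maps, which in turn amounts to essentially the same Peter-Weyl type stabilization used to build $\ud{L^2(\bb{R}^\infty)}$. The compatibility identity $\ud{j_U^\tau}([x]) = \ud{j_U^\tau}([y])\otimes \ud{j_U^1}([z])$ should then follow by the same strategy as Lemma 2.10 of the exterior-tensor compatibility: check at the level of modules that the Kasparov product decomposes under crossed products, bypassing the Kasparov technical theorem entirely.

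For $\ud{[\widetilde{c}]}$, the plan is to construct a ``cut-off projection'' directly in $\ud{\SC(U)\rtimes U}$ using the vacuum vector $\Vac$ as a substitute for the cut-off function, in the spirit of Lemma 4.1 of \cite{T2}. The final equality $\tau_\scr{S}(\ud{\ind(\Dirac)}) = \ud{[\widetilde{c}_U]}\otimes \ud{j_U^\tau}([\widetilde{\Dirac}])$ should then be verified by checking both sides project to the $N$-assembly values $[(\ud{L^2(\bb{R}^\infty)}\widehat{\oplus}0,0)]$ of Theorem \ref{thm assembly map}, using the compatibility between descent and the Peter-Weyl picture of $\ud{\bb{C}\rtimes_\tau U}\cong \ca{K}(\ud{L^2(\bb{R}^\infty)^*})$.

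The main obstacle is twofold. First, one must choose the ``correct'' crossed product algebra: the candidates $\scr{S}\otimes\ca{K}(\ud{L^2(U)}\otimes S^*)$ and $\ca{K}(\ud{L^2(U)})\otimes \CAR$ have different $K$-theory (the latter has $K_0 = \bb{Z}[1/2]$, the former $\bb{Z}$), so the entire theory will depend sensitively on this choice and only one will make the index formula come out integrally correct. Second, and more seriously, Caution \ref{caution} flags that the Kasparov technical theorem is unavailable in the $LT$- (or even $U$-) equivariant setting, so every statement phrased as an equality in $KK$ must actually be proved at the level of explicit modules, as in Theorem \ref{Main theorem}. This forces one to make convergence arguments for the Mishchenko-line--Dirac pairing at least as delicate as those in Section 5.1, with both the cut-off and the operator $\widetilde{\Dirac}$ interacting simultaneously with the crossed-product structure; this analytical step is where I expect the bulk of the real work to lie.
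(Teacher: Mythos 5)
The statement you are trying to prove is stated in the paper as a \emph{Conjecture}, and the paper explicitly does not prove it. In the paragraph following the Conjecture the author writes that it ``is still difficult. This is because we must define the images for all Kasparov modules,'' offering only the crossed-product candidates $\scr{S}\otimes\ca{K}(\ud{L^2(U)}\otimes S^*)$ and $\ca{K}(\ud{L^2(U)})\otimes\CAR$ as hints. So there is no paper proof to compare against; what follows is an assessment of your sketch on its own merits.

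You correctly identify both candidate crossed-product algebras and the $K$-theory discrepancy between them ($\bb{Z}$ vs.\ $\bb{Z}[\tfrac12]$), and you correctly flag that Caution \ref{caution} forces all equalities to be verified at the level of explicit modules. These match the obstacles the paper raises. However, there is a genuine gap that is exactly the one the author singles out: your construction of $\ud{j_U^\tau}$ presupposes that the source Kasparov module carries a compatible system of finite-dimensional approximations. You write that the underlying module ``should be $\scr{S}\otimes\ca{H}$ tensored over $\ud{\bb{C}\rtimes_\tau U}$'' and that well-definedness ``reduces to verifying that the finite-dimensional crossed-product modules $E_N\rtimes U_N$ are compatible with the connecting maps.'' But a general cycle $(E,D)$ in $KK_U^\tau(\SC(U),\scr{S})$ comes with no preferred filtration of $E$ by $U_N$-invariant submodules and no decomposition of the representation analogous to the levelwise $\Phi_N$. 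Without such structure, your recipe does not produce a map on the whole group $KK_U^\tau(\SC(U),\scr{S})$, only on a subclass of cycles. A further issue you should address before the first step can even start: in the proposed inductive system $\SC(U_N^\perp)\otimes(\scr{C}(U_N)\rtimes_\tau U_N)$, passing from level $N$ to $N+1$ must move the Clifford variables of $U_{N+1}\ominus U_N$ out of the $\SC(U_N^\perp)$ factor and into the crossed-product factor, which is not simply $\beta_{N,M}\otimes i_N$; the two maps do not act on complementary tensor factors in the way your sketch suggests. These are precisely the places where the paper stops, and they are where your proposal would need new ideas rather than routine adaptations of Section~5.
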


We have defined the $KK$-element $\ud{j_U^\tau([\Dirac])}$ in \cite{T2}. However, the above conjecture is still difficult. This is because we must define the images for all Kasparov modules. 
However, the construction of the ``crossed products'' will definitely give us some nice hints.

If all of these conjectures are correct, we will obtain the index formula using the symbol element.
$$\tau_\scr{S}(\ud{\ind(\Dirac)})=\ud{[\widetilde{c}_U]}\otimes \ud{j_{U}^\tau}([\widetilde{\sigma_{\Dirac}^{Cl}}])\otimes \ud{j_{U}^1}([\widetilde{d_U}]).$$

We hope to solve these conjectures in the immediate future!

\section*{Acknowledgements}
I am grateful to Nigel Higson for teaching me the important point of \cite{HK}. My operator $\Dirac$ (which looks like the perturbed Bott-Dirac operator) is inspired by that suggestion. 

I am supported by JSPS KAKENHI Grant Number 18J00019.

Graduate School of Mathematical Sciences, the University of Tokyo, 3-8-1 Komaba Meguro-ku Tokyo, Japan

E-mail address: {\tt dtakata@ms.u-tokyo.ac.jp}

\end{document}